\documentclass[psamsfonts]{amsart}

\usepackage{amssymb,amsfonts}
\usepackage[all,arc]{xy}
\usepackage{tikz-cd}
\usepackage{enumitem}
\usepackage{mathrsfs}
\usepackage{xcolor}
\usepackage{hyperref}
\usepackage{mathtools}
\usepackage{tensor} 
\usepackage{bbm}

\input xy
\xyoption{all}

\newtheorem{thm}{Theorem}[section]
\newtheorem{cor}[thm]{Corollary}
\newtheorem{prop}[thm]{Proposition}
\newtheorem{lem}[thm]{Lemma}

\newcommand{\continued}{??}
\newenvironment{continuethm}[1]
  {\renewcommand{\continued}{\ref{#1}}\thmcont[continued]}
  {\endthmcont}

\theoremstyle{definition}
\newtheorem{defn}[thm]{Definition}

\newtheorem{con}[thm]{Construction}
\newtheorem{exmp}[thm]{Example}

\newtheorem{notn}[thm]{Notation}

\theoremstyle{remark}
\newtheorem{rem}[thm]{Remark}
\newtheorem{rems}[thm]{Remarks}

\newtheorem{claim}[thm]{Claim}

\makeatletter
\let\c@equation\c@thm
\makeatother
\numberwithin{equation}{section}

\bibliographystyle{plain}

\setlist[enumerate]{label = \roman*.}

\def\F{\mathbb{F}}
\def\N{\mathbb{N}}

\def\Z{\mathbb{Z}}

\def\1{\mathbbm{1}}

\def\a{\alpha}
\def\b{\beta}
\def\g{\gamma}
\def\d{\delta}
\def\e{\varepsilon}

\def\s{\sigma}
\def\t{\tau}
\def\th{\theta}

\def\vp{\varphi}

\def\vp{\varphi}

\def\S{\Sigma}

\def\O{\Omega}

\def\cA{\mathcal{A}}

\def\cC{\mathcal{C}}
\def\cD{\mathcal{D}}

\def\cS{\mathcal{S}}

\def\fC{\mathsf{G}} 
\def\fG{\mathsf{G}}
\def\mycat{\operatorname{\mathsf{Ch}_{\F}}}
\def\myset{U}
\def\bitimes{\circ} 
\def\biact{\boxtimes}
\def\bifvp{\F_{\vp}^{\text{bm}}}

\def\H{H}
\def\Fvp{\F_{\vp}}

\def\gr{\operatorname{gr}}
\def\colim{\operatorname{colim}}
\def\coeq{\operatorname{coeq}}
\def\Sym{\operatorname{Sym}}
\def\Hom{\operatorname{Hom}}
\def\End{\operatorname{End}}
\def\Aut{\operatorname{Aut}}
\def\Nat{\operatorname{Nat}}
\def\Fun{\operatorname{\mathsf{Fun}}}
\def\Vect{\operatorname{\mathsf{Vect}}}
\def\GrVect{\operatorname{\mathsf{GrVect}}}
\def\Ch{\operatorname{\mathsf{Ch}}}
\def\Ho{\operatorname{\mathsf{Ho}}}
\def\Mod{\operatorname{\mathsf{Mod}}}
\def\Mon{\operatorname{\mathsf{Mon}}}
\def\CMon{\operatorname{\mathsf{CMon}}}
\def\LMod{\operatorname{\mathsf{LMod}}}
\def\Alg{\operatorname{\mathsf{Alg}}}
\def\CAlg{\operatorname{\mathsf{CAlg}}}
\def\Bimod{\operatorname{\mathsf{Bimod}}}
\def\Set{\operatorname{\mathsf{Set}}}
\def\hofib{\operatorname{hofib}}
\def\GL{\operatorname{GL}}
\def\SL{\operatorname{SL}}
\def\sgn{\operatorname{sgn}}
\def\im{\operatorname{im}}
\def\id{\operatorname{id}}
\def\ob{\operatorname{ob}}
\def\tr{\operatorname{tr}}
\def\dc{\circledast}

\def\cop{\operatorname{op}}
\newcommand{\op}[4]{I_{{#1},{#2},{#3}}^{#4}}

\newcommand{\bigdc}{%
  \mathop{\scalebox{1.5}{\raisebox{-0.2ex}{$\circledast$}}
  }
}

\newcommand{\arity}[2]{{#1}\langle{#2}\rangle}
\newcommand{\gpower}[2]{{#1}^{#2}}

\newcommand{\h}[1]{\operatorname{\mathsf{Ho}({#1})}}

\newcommand{\W}[2]{W({#2},{#1})}
\newcommand{\newW}{\mathbf{W}}

\newcounter{lowerindex}

\title{Twisted homology operations for $E_{\infty}$-algebras}

\author{Calista Bernard}
\date{2 April 2023}

\begin{document}


\begin{abstract}
We develop a theory of operations on the twisted homology of
$E_{\infty}$-algebras, generalizing a classical theory developed by J.P.~May.
First we describe a framework suitable for discussing twisted coefficients,
which requires working with $E_{\infty}$-algebras in certain categories of functors.
 In this context, we define twisted versions of the
classical Dyer--Lashof operations, as well as a product.

Moreover, we prove that these distinguished operations generate all operations on
twisted homology by giving a (non-canonical) explicit basis for the homology of free
$E_{\infty}$-algebras in terms of these operations. 
We also make this statement functorial
by proving that the homology of a free $E_{\infty}$-algebra is
a free object in an appropriate category of objects equipped with an action of
the Dyer--Lashof operations.

This theory has applications to the study of $E_{\infty}$-spaces with local
coefficients, though these are not discussed in detail here.
\end{abstract}

\maketitle
\tableofcontents

\section{Introduction}
It is a well-known phenomenon that the homology of an $E_n$-algebra carries
extra structure in the form of homology operations. Such operations arise
directly from
the $E_n$-structure and are natural in maps of $E_n$-algebras. The most
ubiquitous of these operations are a product and a bracket, but there are also additional
operations defined only on mod $p$ homology that exhibit a very rich theory,
and that have proved vital for the study of
$E_n$-algebras.

Classically, these mod $p$ homology operations have been widely studied, and
they have been used to classify all operations on the mod $p$ homology of
$E_n$-algebras \cite{AK,DL,IteratedLoops}.
However, this classical theory applies only to ordinary mod $p$ homology. Given
that homology with twisted coefficients arises naturally in a number of
settings, a version of this theory for twisted coefficients is
foundational work and will likely have a
number of applications. Developing such a theory for the homology of
$E_{\infty}$-algebras with certain twisted coefficient
systems is the goal of this work.

Although potential applications of this theory
are not discussed in detail here, Section \ref{subsection:examples} briefly outlines
how to apply our setup to a few concrete examples.

\subsection{Overview of classical results}
Much like the cup product's failure to be commutative on the chain level gives
rise to the Steenrod operations, the failure of an $E_{\infty}$-multiplication to be
strictly commutative gives rise to operations on the mod $p$ homology of
an $E_{\infty}$-algebra. These operations, usually known as Dyer--Lashof or
Dyer--Lashof--Kudo--Araki operations, were originally defined by Kudo and Araki
for $p=2$ and by Dyer and Lashof for $p>2$ \cite{AK,DL}, and they behave very
similarly to Steenrod operations.

In this work, we predominanly deal with odd primes, so we summarize here the
classical story for $p>2$. Let $\F_p$ denote the field with $p$ elements, and
let $X$ be an $E_{\infty}$-algebra in spaces. The classical Dyer--Lashof
operations are linear maps
\begin{align*}
Q^s&\colon H_*(X;\F_p)\to H_{*+2s(p-1)}(X;\F_p)\\
\b Q^s &\colon H_*(X;\F_p)\to H_{*+2s(p-1)-1}(X;\F_p)
\end{align*}
defined for integers $s\geq 0$ that are natural in maps of $E_{\infty}$-algebras.
These operations satisfy a number of relations, such as the Adem relations and
Cartan formulas, and they are stable in that they commute with
suspension.

In \cite{IteratedLoops}, J.P.~May gives a complete description of the $\F_p$-homology
of free $E_{\infty}$-algebras in terms of the Dyer--Lashof operations together
with a product, a result that is not only useful for computation but also
classifies all homology operations in terms of these particular
operations.
May gives two different versions of this result, one of which gives a functorial
description of the homology of a free $E_{\infty}$-algebra, and the other of
which gives an explicit (non-canonical) basis.

\subsection{Idea of setup}\label{subsection:setup}
In this paper we work with $E_{\infty}$-algebras in categories of functors
$\Fun(\fC,\Ch_{\F})$ from a Picard groupoid $\fG$ to the category $\Ch_{\F}$ of
$\Z$-graded chain
complexes over a field $\F$. A more rigorous discussion of this functor category is
given in Section \ref{section:preliminaries}. Here the
motivating example of $\fC$ is the fundamental groupoid $\pi_{\leq 1}(Y)$ of an
infinite loop
space $Y$.

We can take homology of such functors after evaluating on objects of $\fG$. That
is, given a functor $T\in \Fun(\fG,\Ch_{\F})$, an object $g\in \fG$, and
$m\in\Z$, the $(g,m)$ homology of $T$ is the $\F$--vector space
$$H_{g,m}(T)\coloneqq H_m(T(g)).$$
Thus, our homology inherits an extra grading
from the objects of $\fG$ and carries an action of the morphisms of $\fG$.
There is a monoidal structure known as
\emph{Day convolution} on this functor category, and this allows us to define
homology with coefficients.
\begin{defn}
Let $S,T\in \Fun(\fG,\Ch_{\F})$. For each $g\in \fG$, $m\in \Z$, the $(g,m)$
homology of $T$ with coefficients in $S$ is the $\F$--vector space
$$H_{g,m}(T;S)\coloneqq H_{g,m}(T\dc S),$$
where $\dc$ denotes the Day convolution of functors.
\end{defn}

\subsubsection{Choice of coefficient systems}\label{subsect:coeffs}
We do not expect to be able to develop a reasonable theory of homology operations on
homology with \emph{any} twisted coefficients.
Even the classical theory
due to May is developed only for $\F_p$ coefficients, rather than
for coefficients in $\Z$ or a more general abelian group. The reason for this is
computational: although it is possible to define operations more generally, it
is difficult or impossible to classify all operations in a reasonable way.
Therefore, our first restriction in this work is to consider only coefficient
systems valued in a field $\F$ of characteristic $p$. We do not include a
discussion of operations for fields of characteristic 0, since it is easy to
show that such operations are rather trivial.

The particular coefficient systems we consider, denoted $\Fvp$, are defined
precisely in
Section \ref{section:homology}. When described as a functor
$$\Fvp\colon \fG\to \Ch_{\F},$$
the coefficient system $\Fvp$ vanishes outside the isomorphism class of the
identity object,
and to the identity
$\1_{\fG}\in\fG$, $\Fvp$ assigns the chain complex $\F[0]$ containing a copy of $\F$ in
degree 0 and zeros in other degrees. Finally, $\Aut_{\fG}(\1_{\fG})$ acts on
$\F[0]$ as multiplication along a
homomorphism $\vp\colon \Aut_{\fG}(\1_{\fG})\to \F^{\times}$.

\subsection{Overview of operations}
As discussed in \ref{subsect:coeffs}, we will consider homology with
coefficients in a system $\Fvp$,
which is determined by a field $\F$ and an action of the morphisms of $\fC$ on $\F$.
 This work contains a description of a complete theory of
operations on $\Fvp$-homology for $E_{\infty}$-algebras.
Here we assume that $\F$ has odd characteristic; as we discuss in Section
\ref{section:DLops}, the theory in our setting is nearly identical to the
classical one for characteristic 2.

In Section \ref{section:DLops}, we define versions of the Dyer--Lashof
operations on $\Fvp$-homology. In the literature there are two standard ways of
writing these operations: using lower indexing (which emphasizes how the
operations are defined), and using upper indexing (which makes the operations
stable). Since the upper-indexed operations are more commonly used, we
discuss them here. The lower-indexed operations are defined in
\ref{subsection:lower},
and Theorem \ref{thm:lower index} gives a list of the relations they satisfy. 
There are two classes of upper-indexed operations:
\begin{itemize}
\item the ``untwisted'' Dyer--Lashof operations
\begin{align*}
Q^s&\colon H_{g,n}(-;\Fvp)\to H_{\gpower{g}{p},n+2s(p-1)}(-;\Fvp)\\
\b Q^s&\colon H_{g,n}(-;\Fvp)\to H_{\gpower{g}{p},n+2s(p-1)-1}(-;\Fvp),
\end{align*}
which are defined on $H_{g,n}$ for $g\in \fG$, $n\in\Z$, $s\in \Z$ when the braiding morphism $g\oplus g\to g\oplus
g$ acts
trivially on $\F$;
\item the ``twisted'' Dyer--Lashof operations
\begin{align*}
Q^{s+\tfrac12}&\colon H_{g,n}(-;\Fvp)\to H_{\gpower{g}{p},n+(2s+1)(p-1)}(-;\Fvp)\\
\b Q^{s+\tfrac12}&\colon H_{g,n}(-;\Fvp)\to H_{\gpower{g}{p},n+(2s+1)(p-1)-1}(-;\Fvp),
\end{align*}
which are defined on $H_{g,n}$ for $g\in\fG$, $n\in\Z$, $s\in \Z$ when the braiding morphism $g\oplus g\to g\oplus
g$ acts nontrivially.
\end{itemize}
These operations satisfy versions of the usual relations, such as the Adem
relations and Cartan formulas, a complete list of which is stated in
Theorem \ref{thm:upper index}.

Since $\Fvp$-homology is a representable functor when defined suitably,
we show in Proposition \ref{prop:yoneda} that all $\Fvp$-homology operations are
classified by the homology of certain free $E_{\infty}$-algebras. We prove in
two ways that the homology of free $E_{\infty}$-algebras may be described in
terms of the Dyer--Lashof operations and the product. First, in Section
\ref{section:basis}, we give a non-canonical basis for this homology in terms of
certain ``admissible'' compositions of Dyer--Lashof operations.

\begin{thm}[Informal statement]
If $S$ is a basis for the $\Fvp$-homology of $X$, then the $\Fvp$-homology of
the free $E_{\infty}$-algebra on $X$ is isomorphic as an algebra to the free commutative algebra on
$\{Q^I x\}$, where $x\in S$, and $Q^I$ ranges over all ``admissible''
compositions of Dyer--Lashof operations subject to a degree condition.
\end{thm}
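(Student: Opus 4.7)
The plan is to mimic May's classical strategy, reducing the problem to an analysis of the $p$-th extended power and then iterating multiplicatively, but carried out throughout in the functor category $\Fun(\fG,\Ch_{\F})$ with Day convolution.

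First I would note that the free $E_{\infty}$-algebra on $X$ splits as a coproduct of its ``weight pieces''
\[
\text{Free}_{E_{\infty}}(X) \;\simeq\; \bigoplus_{k\geq 0}\, \bigl(E_{\infty}(k)\dc X^{\dc k}\bigr)_{h\Sigma_k},
\]
so computing its $\Fvp$-homology reduces to computing the $\Fvp$-homology of each extended power. Over an odd-characteristic field, a standard transfer argument shows that the only weight pieces contributing new (non-product) classes are those with $k=p^j$, and by induction it is enough to understand $k=p$. The key computation is therefore:
\[
H_{*,*}\bigl((E_{\infty}(p)\dc X^{\dc p})_{h\Sigma_p};\Fvp\bigr).
\]

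For this I would restrict from $\Sigma_p$ to a Sylow $\Z/p\subset\Sigma_p$, compute the $\Z/p$-equivariant homology using the standard resolution, and then take $(\Sigma_p/\Z/p)$-invariants. The functorial setting changes the calculation only in one essential way: the cyclic permutation on $g^{\oplus p}$ and the braiding $g\oplus g\to g\oplus g$ act on the $\Fvp$-coefficients through $\vp$. When the braiding acts trivially one recovers the classical formulas and obtains $Q^s,\b Q^s$; when the braiding acts by $-1$ (the only nontrivial option into $\F^{\times}$ of relevance here, for $p$ odd), the sign cocycle shifts the indexing by $\tfrac12$, producing the twisted operations $Q^{s+\tfrac12},\b Q^{s+\tfrac12}$. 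This case split is the main obstacle: I would need to verify carefully that the transfer/coinvariant computation really does dichotomize according to the braiding action, and that the resulting classes coincide with the operations defined in Section~\ref{section:DLops}. This is where most of the actual work lies.

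Having identified the Dyer--Lashof classes in weight $p$, I would combine them with the $E_{\infty}$-product (also obtained from the weight-$1$ piece of the monad) via the Cartan formula, and induct on $k$ using the filtration by weight. The Adem relations, proved in the operad-level computation, show that arbitrary compositions $Q^I$ can be rewritten as sums of admissible compositions, so the admissible $Q^I x$ for $x\in S$ span the iterated extended powers modulo products.

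Finally, to upgrade spanning to a basis I would argue linear independence by comparison with the universal example: it suffices to check the statement when $X=\Fvp\langle x\rangle$ is a free generator in a single bidegree, where the free commutative algebra on admissible $Q^I x$ and the $\Fvp$-homology of the corresponding free $E_{\infty}$-algebra are both computable, and have matching Poincar\'e series by the weight-$p$ calculation above plus induction. A basis element by basis element comparison, together with the fact that admissibility is preserved by the Adem rewriting, then yields the isomorphism of algebras claimed in the theorem.
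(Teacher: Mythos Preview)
Your outline follows a recognizable classical strategy, but it diverges substantially from the paper's proof, and the final step has a real gap.

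The paper does \emph{not} reduce to prime-power charge by transfer and then count dimensions. Instead it constructs the comparison map $\Psi_X\colon W(S,H_{*,*}(X))\to H_{*,*}(E_\infty(X))$ and proves it is an isomorphism charge by charge using a spectral sequence argument. The spectral sequence comes from filtering $E_\infty(Y)$ with $Y$ the cone on $X$; its $E^1$-page is $H_*(E_\infty(\Sigma X))\dc H_*(E_\infty(X))$ and it converges to zero in positive charge. On the algebraic side a model spectral sequence $\tilde E$ is built by hand out of elementary pieces (one for each admissible $Q^I\Sigma x$), with differentials dictated by a Kudo-type transgression. A Zeeman-style comparison theorem (proved in the appendix) then shows: if $\Psi_{\Sigma X}$ is an isomorphism in a range of degrees, so is $\Psi_X$ in a shifted range. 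Iterating suspension and using connectivity bootstraps this to all degrees, and a filtered-colimit argument removes the bounded-below hypothesis. Spanning and linear independence are thus handled simultaneously; neither Poincar\'e series nor a separate computation of $H_*(\Sigma_k;\F)$ is invoked.

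Your linear-independence step is where the proposal breaks down. You assert that for $X=\Fvp\langle x\rangle$ the two sides ``have matching Poincar\'e series by the weight-$p$ calculation above plus induction,'' but computing the Poincar\'e series of $H_{*,*}(E_\infty(\S^{g,n}\Fvp))$ is exactly the content of the theorem: it requires knowing $\dim H_*(\Sigma_k;\F(q))$ for \emph{all} $k$ and both sign characters, not just $k=p$. The ``plus induction'' hides the entire difficulty---you would need to show that iterated $\Sym^p$ together with products exhausts $\Sym^{p^j}$ without redundancy, which is precisely what the Adem relations and excess conditions are calibrated to do, and proving that they do so without overcounting is not a formality. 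Your transfer claim is also stated too strongly: transfer tells you that $(X^{\dc k})_{h\Sigma_k}\simeq (X^{\dc k})^{\Sigma_k}$ when $p\nmid k$, but for general $k=p^am$ it does not by itself reduce the computation to $k=p$; one still has to organize the interaction of the $p$-primary part with the product structure, which is what the paper's spectral sequence does in a different way.
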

Of course, the notions of ``basis,'' ``free commutative algebra,'' and
``admissible composition'' must all be made precise. The rigorous version of
this statement appears as Theorem \ref{thm:W}.

The above theorem does not capture the full structure of the homology of free
$E_{\infty}$-algebras, since it does not remember the action of the
operations. Moreover, given that the operations are natural in maps of
$E_{\infty}$-algebras, there ought to be a functorial version of this
description. In Section \ref{section:allowable}, we make precise the notion of
an \emph{allowable Dyer--Lashof algebra}. Informally, such an object has a
product and an action of the Dyer--Lashof operations subject to all of the
relations that these operations on homology of $E_{\infty}$-algebras
satisfy.

Our notion of allowable Dyer--Lashof algebras is inspired by that of May
\cite{IteratedLoops}. However, we make two modifications. First, we
rephrase all definitions in the language of modules over $\Fvp$, rather than for
ordinary algebras over $\F_p$. Second, our definition holds for any field of
characteristic $p$, not only for $\F_p$. The main difference here is that the
Dyer--Lashof operations are \emph{Frobenius} linear, rather than linear. We
must therefore define the Dyer--Lashof algebra in a category of
$\F$-$\F$-bimodules, rather than in a category of $\F$--vector spaces to allow
for different right and left vector space structures.

\begin{thm}[Informal statement.]
The $\Fvp$-homology of the free $E_{\infty}$-algebra on $X$ is naturally isomorphic
to the free allowable Dyer--Lashof algebra on $H_{*,*}(X;\Fvp)$.
\end{thm}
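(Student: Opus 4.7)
The plan is to first construct a natural comparison map from the free allowable Dyer--Lashof algebra on $H_{*,*}(X;\Fvp)$ to the $\Fvp$-homology of the free $E_{\infty}$-algebra on $X$, and then verify this map is an isomorphism by comparing with the explicit basis furnished by Theorem \ref{thm:W}. For the first step, I would show that for any $E_{\infty}$-algebra $A$, the bigraded bimodule $H_{*,*}(A;\Fvp)$ carries a canonical allowable Dyer--Lashof algebra structure: the product is the Pontryagin product induced by the $E_{\infty}$-multiplication, and the operations $Q^s, \b Q^s$ (twisted or untwisted, depending on how the braiding of the grading $g$ acts on $\Fvp$) are those constructed in Section \ref{section:DLops}. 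The axioms that define an allowable Dyer--Lashof algebra are, by design, precisely the Adem relations, Cartan formulas, instability relations, and Frobenius compatibility recorded in Theorem \ref{thm:upper index}, so this structure is automatic. The unit map $X \to \text{free } E_{\infty}(X)$ then induces a morphism of underlying bimodules on homology, and the universal property of the free allowable Dyer--Lashof algebra produces the comparison map
\[
\psi \colon F_{DL}\bigl(H_{*,*}(X;\Fvp)\bigr) \longrightarrow H_{*,*}\bigl(\text{free } E_{\infty}(X); \Fvp\bigr).
\]

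For the second step, fix a basis $S$ of $H_{*,*}(X;\Fvp)$. Theorem \ref{thm:W} supplies an explicit basis of the target consisting of free commutative monomials in elements of the form $Q^I x$ with $x \in S$ and $I$ an admissible sequence satisfying the appropriate excess/instability condition. I would then argue that the same collection of symbols constitutes a basis of the domain $F_{DL}(H_{*,*}(X;\Fvp))$. The spanning half is formal from the defining relations of an allowable Dyer--Lashof algebra: the Adem relations of Theorem \ref{thm:upper index} rewrite an arbitrary iterated operation as a sum of admissible ones, the instability relations cut this down to the allowable range, and the free commutative algebra structure then identifies the product monomials with those on the target side. Linear independence then follows immediately by pushing forward along $\psi$, since the images are already known to be linearly independent in the target. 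Naturality in $X$ is inherited from naturality of $\psi$ and of the basis construction.

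The main obstacle will be the careful bookkeeping of Frobenius semilinearity and the $\Fvp$-$\Fvp$-bimodule structure throughout the argument. Since each $Q^s$ is Frobenius linear rather than linear, the free allowable Dyer--Lashof algebra must be constructed inside a category of bimodules where the two $\F$-actions on a monomial $Q^I x$ differ from those on $x$ by the appropriate iterated Frobenius twist; one must verify that the relations of Theorem \ref{thm:upper index} suffice to reduce any word in the operations to admissible form \emph{without} conflating distinct bimodule twistings, and that the dichotomy between twisted and untwisted operations (determined by the braiding action on $\Fvp$ at each factor) is tracked correctly at every stage of the reduction. Once these bookkeeping issues are settled, the spanning-plus-pushforward argument against the basis of Theorem \ref{thm:W} supplies the structural content of the isomorphism.
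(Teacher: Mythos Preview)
Your proposal is correct and follows essentially the same route as the paper: construct the comparison map $\psi$ by the universal property of the free allowable Dyer--Lashof algebra, then prove it is an isomorphism by matching bases against Theorem~\ref{thm:W}, with spanning via Adem reduction to admissibles (the paper's Lemma~\ref{lem:Dbasis}) and linear independence via pushforward along $\psi$. The one point your sketch glosses over is that the admissibility condition on the module side is $e(I)\geq n$ while $Q(S)$ uses the strictly sharper $e(I)+\e_1>n$; the paper closes this gap in Proposition~\ref{prop:same} by using the $p$-th power relation $Q^{n/2}_g(x)=x^p$ (part of the allowable \emph{algebra} axioms, not just the module axioms) together with Lemma~\ref{lem:admprops}\ref{list:powers} to identify the borderline operations $Q^I_g x$ with $\e_1=0$, $e(I)=n$ as iterated $p$-th powers of elements already in $Q(S)$.
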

The rigorous version of this statement appears as Corollary \ref{cor:free}.
Since this theorem describes the homology of a free $E_{\infty}$-algebra in terms of
the Dyer--Lashof operations and the product, we have thus shown that
these operations generate all $\Fvp$-homology operations for
$E_{\infty}$-algebras.

\subsection{Motivating examples}\label{subsection:examples}
The remainder of this work is concerned with developing a theory of operations
for $E_{\infty}$-algebras; it does not discuss applications of this theory. We
therefore here briefly discuss some motivating examples in order to give the
reader an idea of how this theory may be applied.

First we must describe how the ordinary twisted homology of a space relates to
the setup outlined in \ref{subsection:setup}.

Given a space $X$ with an $E_{\infty}$-algebra structure, we may consider the
\emph{group completion} of $X$, which is a map $X\to Y$ of
$E_{\infty}$-algebras, where $Y$ is \emph{grouplike} (that is, $\pi_0(Y)$ is a
group with respect to the monoid structure inherited from the
$E_{\infty}$-algebra structure). According to the \emph{recognition principle},
$Y$ has the homotopy type of an infinite loop space \cite{M}. Let $f$ denote the
induced map $X\to Y_{\leq 1}$ to the 1-truncation of $Y$, and let
$T\in \Fun(\pi_{\leq 1}(Y),\Ch_{\F})$ be such that for any $y\in Y$,
\begin{equation}\label{equation:hofiber}
T(y)\coloneqq C_*(\hofib_y(f);\F).
\end{equation}
Paths in $Y$ act on the homotopy fiber in the usual way. The functor $T$
inherits the structure of an $E_{\infty}$-algebra in $\Fun(\pi_{\leq
1}(Y),\Ch_{\F})$, and thus our results describe operations on the homology
$H_{*,*}(T;\Fvp)$ for $\Fvp$ any coefficient system as in \ref{subsect:coeffs}.

Suppose we wish to study the ordinary twisted homology $H_*(X;\cA)$ of the space
$X$ with coefficients in a functor
$$\cA\colon \pi_{\leq 1}(X)\to \Vect_{\F}$$
to the category $\Vect_{\F}$ of $\F$--vector spaces. Equip $\Vect_{\F}$ with the
tensor product monoidal structure.
If $\cA$ is a strong (but not necessarily symmetric!) monoidal functor valued in $1$-dimensional vector spaces, then
$\cA$ factors canonically through a strong monoidal functor $\cA'\colon
\pi_{\leq 1}(Y)\to \Vect_{\F}$, and
\begin{equation}\label{eq:twisted-homology}
H_*(X;\cA)\cong \bigoplus_{[a]\in\pi_0 X} H_{f(a),*}(T;\Fvp)
\end{equation}
as graded vector spaces. Here the homomorphism $\vp$ is obtained by restricting
$\cA'$ to $\pi_1(Y,y)$, where $y\in Y$ is the basepoint.

Thus, when presented with a suitable coefficient system, we may use the methods of
this paper to study twisted homology.
Perhaps the simplest example of such a situation is that of symmetric groups with coefficents in the sign
representation.

\begin{exmp}\label{exmp:sgnrep}
Let $X=\bigsqcup BS_n,$ where $S_n$ denotes the symmetric group on $n$ letters.
Then $X$ is equivalent to the free $E_{\infty}$-algebra on a point. If we
wish to compute the homology of the symmetric groups with coefficients in the
sign representation, we may equivalently compute the homology of
$X$ with certain twisted coefficients.

By either the Barratt--Priddy--Quillen theorem \cite{BP,Q} or the recognition
principle \cite{M}, we see that $Y=QS^0=\O^{\infty} S^{\infty}$ is a group
completion of $X$. We therefore work in the category of functors
$\Fun(\fG,\Ch_{\F})$, where $\fG=\pi_{\leq 1}(Y)$. Note that $\pi_{\leq 1}(Y)$
is equivalent
to a skeletal category with objects the integers and with the endomorphisms of
each object isomorphic to $\Z/2\Z$, so for simplicity we replace $\fG$ by its
skeleton.

The coefficient system $\Fvp\in
\Fun(\fG,\Ch_{\F})$ we wish to
consider is the one determined by the homomorphism $\vp\colon \Z/2\Z\to \F^{\times}$
sending the nontrivial element of $\Z/2\Z$ to $-1$. Explicitly, $\Fvp$
 takes on the value $\F$ on the identity element $0$ in $\Z=\ob{\fG}$, and the
 automorphism group $\Z/2\Z$ of 0 acts on $\F$ as multiplication along $\vp$.

If $T\in \Fun(\fG,\Ch_{\F})$ is the functor described in \ref{equation:hofiber}, then the homology of $X$ with
coefficients in the sign representation is isomorphic to the homology of $T$
with coefficients in $\Fvp$, as in \ref{eq:twisted-homology}. That is,
$$\bigoplus_{n\in\N} H_*(S_n;\F^{\text{sgn}})\cong \bigoplus_{n\in\N}
H_{n,*}(T;\Fvp).$$ Moreover, this homology is isomorphic to that of a free $E_{\infty}$-algebra (in the notation
of Section \ref{section:bigradedsusp}, $H_{*,*}(T\dc \Fvp)\cong H_{*,*}(E_{\infty}(\S^{1,0}\Fvp))$).
Theorem
\ref{thm:W} gives an explicit basis for this homology in terms of certain compositions of
Dyer--Lashof operations applied to a single generating class $x$, which should be
thought of as coming from
$H_0(S_1;\F)$. We note that all Dyer--Lashof operations
in this example are ``twisted,'' so we have for instance elements
$$Q^{\tfrac12}(x)\in H_{p-1}(S_p;\F^{\sgn}) \text{  and  } \b Q^{\tfrac12}(x)\in
H_{p-2}(S_p;\F^{\sgn}).$$ Moreover, $x^2=0$. 
\end{exmp}

The previous example of symmetric groups with coefficients in the sign
representation can also be used to study the homology of alternating groups.

\begin{exmp}
If $A_n$ denotes the alternating group on $n$ elements, then for each $n$ there
is a fibration
$$\Z/2\Z\to B\! A_n\to B S_n$$
whose Serre spectral sequence collapses, so that
$$H_*(A_n;\F)\cong H_*(S_n;\F[\Z/2\Z]),$$
where the left-hand side is ordinary $\F$-homology and the right-hand side is
homology with twisted coefficients. A permutation $\s\in S_n$ acts on the group
ring $\F[\Z/2\Z]$ as the identity
if $\sigma$ is even and as multiplication by the nontrivial element of $\Z/2\Z$
if $\sigma$ is odd. In particular, this means that when the characteristic of
$\F$ is not 2, $\F[\Z/2\Z]$ splits as a direct sum of two one-dimensional
representations: a trivial representation
and a sign representation.
Hence, $$H_*(A_n;\F)\cong H_*(S_n;\F)\oplus H_*(S_n;\F^{\sgn}).$$

The ordinary $\F$-homology of symmetric groups is well-known, and in particular
may be computed as the homology of the free $E_{\infty}$-algebra on a point
using May's result \cite{IteratedLoops}. As outlined in the previous example,
the homology of symmetric groups with coefficients in the sign representation
may be computed as the homology of a free $E_{\infty}$-algebra with twisted
coefficients using the methods of this work. 
This then gives a complete description of the homology of
alternating groups in terms of homology operations.

As an example, an elementary computation shows that the abelianizations of $A_3$
and $A_4$ are both $\Z/3\Z$, so $H_1(A_3;\F_3)\cong H_1(A_4;\F_3)\cong \F_3$.
The class $\b Q^{\tfrac12} (x)$ described in the previous example then generates $H_1(A_3;\F_3)$, and $x\b
Q^{\tfrac12} (x)$ generates $H_1(A_4;\F_3)$.
\end{exmp}

\begin{rem}
In the category of spaces, the space $\bigsqcup B\! A_n$ of classifying spaces of
alternating groups does \emph{not} form an $E_{\infty}$-algebra in an obvious
way. However, if we work in the category $\Fun(\fG,\Ch_{\F})$, where again $\fG$
is the fundamental groupoid of
$Y=QS^0$, we \emph{can} consider the alternating groups as an
$E_{\infty}$-algebra. That is,
there is a free $E_{\infty}$-algebra in this functor category whose homology is isomorphic to that of
the alternating groups. Hence, we could directly study operations on the
homology of the alternating groups as an $E_{\infty}$-algebra. However, doing so
requires working with a two-dimensional coefficient system, which does not
fit into the theory developed in this work, and therefore we used the approach
of the previous example instead. It is nonetheless interesting to note
that by working in a different category we can obtain a wider class of
$E_{\infty}$-algebras.
\end{rem}

The previous two examples were very close to the free $E_{\infty}$-algebra on a
point. A more complicated example of a similar flavor is that of general and special linear groups
of a ring $R$.

\begin{exmp}
The space
$$X\coloneqq \bigsqcup B\!\GL_k(R)$$
of classifying spaces of general linear groups has a well-known $E_{\infty}$-structure, although it is not a \emph{free}
$E_{\infty}$-algebra. Galatius--Kupers--Randal-Williams have used
this $E_{\infty}$-structure to study homological stability of general linear
groups for $R$ a field \cite{E3,E4}, and Kupers--Miller--Patzt have used similar
methods for certain other rings $R$, including $R=\Z$ \cite{KMP}. In these works
the general strategy is to build a model out of free $E_{\infty}$-algebras that
approximates the homology of $X$. The homology of this model can then be easily
understood in terms of classical homology operations.

It is tempting to use a similar method to study the space obtained from special linear groups,
$$\bigsqcup B\!\SL_k(R),$$
but as in the example of alternating groups, this space does not inherit an
$E_{\infty}$-algebra structure from that on general linear groups. Once again,
however, the Serre spectral sequence of the fibrations
$$R^{\times}\to B\!\SL_k(R)\to B\!\GL_k(R)$$
yields an isomorphism
$$H_*(\SL_k(R);\F)\cong H_*(\GL_k(R);\F[R^{\times}]).$$
Again the right-hand side is homology with twisted coefficients, where
$\GL_k(R)$ acts on $R^{\times}$ by multiplication by the determinant.

If $R=\Z$ and if the characteristic is not 2, the $\GL_k(\Z)$-representation
$\F[\Z^{\times}]$ splits as a direct sum of two one-dimensional representations, a trivial representation and a
determinant representation. Hence, in order to understand the homology of
$\SL_k(\Z)$, it suffices to understand both the ordinary homology of $\GL_k(\Z)$
and the homology with coefficients in the determinant representation. The former
has already been studied by Kupers--Miller--Patzt \cite{KMP}. The latter could
also be studied using similar methods in combination with the theory of twisted homology operations for
$E_{\infty}$-algebras developed in this work. Here, one should work in the
functor category $\Fun(\pi_{\leq 1}(Y),\Ch_{\F})$, where $Y$ is the group
completion of $\sqcup B\!\GL_k(\Z)$, using a setup as in Example \ref{exmp:sgnrep}.
\end{exmp}

\subsection*{Acknowledgements}
I am deeply grateful to S\o{}ren Galatius for guidance throughout this project. I
would also like to thank Tyler Lawson for a number of very helpful
conversations. My understanding of cellular $E_n$-algebras and of potential
applications of this work benefitted from conversations with Alexander Kupers,
Jeremy Miller, and Oscar Randal-Williams.

Over the course of this project I received partial support from the Danish National Research
Foundation (CPH-GEOTOP-DNRF151) during a stay at the University of Copenhagen
and from the Swedish
Research Council (grant no. 2016-06596) during a stay at the Institut
Mittag-Leffler.

\section{Preliminaries}\label{section:preliminaries}

We begin by recalling some notions about Picard groupoids and Day convolution,
and we introduce key notation for functor categories that will be used
throughout this work. 

Let
$\mathsf{G}=(\mathsf{G},\oplus_{\fG},\1_{\fG},\a^{\fG},\b^{\fG})$ be a Picard groupoid; that is, a symmetric monoidal groupoid in
which every object of $\mathsf{G}$ is invertible. Here, $\a^{\fG}$ denotes the
associator and $\b^{\fG}$ the braiding. 
We will often omit the $\fG$ superscripts when
the category is clear from context.

Since $\fG$ is a Picard groupoid, $\Aut_{\fG}(\1_{\fG})$ is an \emph{abelian}
group, and for every object $g$ in $\fG$, there is a
canonical isomorphism
\begin{equation}\label{eq:trace}
\tr_g\colon \Aut_{\fG}(g)\to \Aut_{\fG}(\1_{\fG}),
\end{equation}
which we refer to as the \emph{trace} of an automorphism.
Moreover, the map
$$\ob{\fG}\to \Aut_{\fG}(\1_{\fG})$$
sending $g\mapsto \tr_{g\oplus g}(\b_{g,g}^{\fG})$ factors through the group
$\pi_0(\fG)$ of isomorphism classes of objects of $\fG$, and this induced map
\begin{equation}\label{eq:trhomo}
\pi_0(\fG)\to \Aut_{\fG}(\1_{\fG})
\end{equation}
is a group homomorphism.

For the remainder of this work, we will utilize the functor category
$$\Ch_{\F}^{\fG}\coloneqq \Fun(\fG,\mycat),$$
where $\fG$ is a Picard groupoid as above, and
$\mycat$ denotes the category of $\Z$-graded chain complexes over a field $\F$.
We endow this functor category with the \emph{Day convolution} monoidal
structure.

\begin{defn}
The \emph{Day convolution} of $F,G\in \Ch_{\F}^{\fC}$, denoted by $F\dc G$, is
the left Kan extension of $\otimes_{\mycat} \circ (F\times G)$ along the product
in $\fC$: 
$$
\begin{tikzcd}
\fC\times \fC \arrow[r, "F\times G"] \arrow[dr, "\oplus_{\fC}"'] & \mycat\times\mycat
\arrow[r,"\otimes_{\mycat}"] \arrow[Rightarrow, d] & \mycat\\
& \fC. \arrow[ur, dotted, "F\dc G"'] & 
\end{tikzcd}
$$
\end{defn}
This endows $\Ch_{\F}^{\fC}$ with a closed symmetric monoidal structure
(see \cite{Day}). 
Note that in the literature $\fG$ is usually taken to be enriched in $\Ch_{\F}$.
Here we will find it useful to distinguish between $\fG$ and a
$\Ch_{\F}$-enriched version of $\fG$. 

\begin{defn}
Let $\F\fG$ denote the category with
objects the objects of $\fG$, and where
$\Hom_{\F\fG}(g,h)$ is the free $\F$--vector space on the set
$\Hom_{\fG}(g,h)$ considered as a chain complex
concentrated in $\Z$-degree 0 with trivial differentials.
\end{defn}

With this notation, the unit for the Day
convolution monoidal structure on $\Ch_{\F}^{\fG}$ is the functor
$$\Hom_{\F\fG}(\1_{\fG},-).$$

We may describe the associators and braiding of $\Ch^{\fG}_{\F}$ concretely
using the universal property of the Day convolution. Let $X,Y,Z\in
\Ch^{\fG}_{\F}$ and $g_1,g_2,g_3\in \fG$. The associator in $\Ch_{\F}$ gives a
map
$$(X(g_1)\otimes Y(g_2))\otimes Z(g_3)\to X(g_1)\otimes (Y(g_2)\otimes
Z(g_3)),$$
and the universal property of Day convolution gives a map
$$X(g_1)\otimes (Y(g_2)\otimes Z(g_3))\to (X\dc (Y\dc Z))(g_1\oplus (g_2\oplus
g_3)).$$
Postcomposition with $X\dc (Y\dc Z)$ applied to the inverse associator
$\a^{-1}_{g_1,g_2,g_3}$ in $\fG$ yields a map
$$(X(g_1)\otimes Y(g_2))\otimes Z(g_3)\to (X\dc (Y\dc Z))((g_1\oplus g_2)\oplus
g_3)$$
that gives the associator
$$(X\dc Y)\dc Z\to X\dc (Y\dc Z)$$
in $\Ch^{\fG}_{\F}$.
Similarly, the braiding is constructed from the map
\begin{equation}\label{eq:braiding}
X(g_1)\otimes Y(g_2)\to Y(g_2)\otimes X(g_1)\to (Y\dc X)(g_2\oplus g_1)\to
(Y\dc X)(g_1\oplus g_2),
\end{equation}
where the first map uses the braiding in $\Ch_{\F}$ and the last map is $(Y\dc
X)(\b^{-1}_{g_1,g_2})$.

Recall that $\mycat$ can be given a cofibrantly-generated monoidal model structure where
weak equivalences are quasi-isomorphisms, and fibrations are degreewise
epimorphisms, and where the unit is cofibrant (see, for example, \cite{Hovey}
Proposition 4.2.13). Since $\fC$ is a small category, this then gives
$\Ch_{\F}^{\fC}$ a cofibrantly generated model structure whose weak
equivalences and fibrations are the natural transformations that are objectwise
weak equivalences and fibrations, respectively (Theorem 11.6.1,
\cite{modelcats}). Moreover, this is a \emph{monoidal} model structure, and the
unit is cofibrant (see the
proof of Lemma 7.9, \cite{SOS}).

\section{Twisted homology}\label{section:homology}
As in Section \ref{section:preliminaries}, we fix a Picard groupoid
$\fG$. In this section we make explicit what we mean by
twisted homology in the context of $\Ch_{\F}^{\fC}$.

Using ordinary homology of chain complexes, we may define the twisted homology of
functors in $\Ch_{\F}^{\fC}$. Our homology now carries an extra grading from the
objects of $\fC$.

\begin{defn}
Let $X,Y\in \Ch_{\F}^{\fC}$. For any object $g$ in $\fG$ and $n\in \Z$, the
\emph{$(g,n)$
homology of $X$ with coefficients in $Y$} is the $\F$--vector space
$$H_{g,n}(X)\coloneqq H_n((X\dc Y)(g)).$$
\end{defn}

\subsection{The coefficient systems $\Fvp$}\label{subsection:Fvp}
In order to obtain a reasonable theory of operations, we work only with
a particularly well-behaved class of coefficient systems.

Fix a group homomorphism
$$\vp\colon \Aut_{\fG}(\1_{\fG})\to \F^{\times}.$$
In Notation \ref{notn:Fvp}, we will define a functor $\Fvp\in \Ch_{\F}^{\fG}$ that is isomorphic to
$$\Hom_{\F\fG}(\1_{\fG},-)\otimes_{\Aut_{\fG}(\1_{\fG})}\F[0],$$ where $\F[0]$ denotes the unit in $\Ch_{\F}$ (that is, $\F[0]$ is the chain
complex with a copy of $\F$ in degree 0 and and zeros elsewhere), and where
$\Aut_{\fG}(\1_{\fG})$ acts on $\F[0]$ as multiplication along $\vp$.
We will then consider homology with coefficients in $\Fvp$.

Although we can give the above explicit description of $\Fvp$, it is
preferable to make a more formal definition that makes it easier to define extra
structure on $\Fvp$.

Let $\mathsf{B}$ denote the category with a single object,
$\1_{\mathsf{B}}$, with
$$\End_{\mathsf{B}}(\1_{\mathsf{B}},\1_{\mathsf{B}})=\Aut_{\fG}(\1_{\fG}).$$ For
ease of notation, we will sometimes write $B$ for
$\Aut_{\mathsf{B}}(\1_{\mathsf{B}})$.
The inclusion $$\iota\colon \mathsf{B}\hookrightarrow \fG$$ is
strong symmetric monoidal if $\mathsf{B}$ is given the obvious symmetric
monoidal structure, and it thus induces a lax symmetric monoidal
functor
$$\iota^*\colon \Ch_{\F}^{\fG}\to \Ch_{\F}^{\mathsf{B}}$$
if $\Ch_{\F}^{\mathsf{B}}$ is also equipped with the Day convolution monoidal
structure. Left Kan extension along $\iota$
defines a left adjoint
$$\iota_*\colon \Ch_{\F}^{\mathsf{B}}\to \Ch_{\F}^{\fG}$$
to $\iota^*$ that is strong monoidal (see Lemma 2.12 of \cite{SOS}).

Let $\Fvp\in \Ch_{\F}^{\mathsf{B}}$ be the functor with
$\Fvp(\1_{\mathsf{B}})=\1_{\Ch_{\F}}=\F[0]$. A morphism $f$ in
$B=\Aut_{\mathsf{B}}(\1_{\mathsf{B}})$ acts
on $\Fvp(\1_{\fG})$ as multiplication by $\vp(f)$.

\begin{notn}\label{notn:Fvp}
In this section (Section \ref{subsection:Fvp}) and in the following section
(Section \ref{subsect:modules}), we will write
$\Fvp$ for the functor we have just defined in $\Ch_{\F}^{\mathsf{B}}$, but
throughout this work we wish to work with coefficients in the functor
$\iota_*(\Fvp)\in \Ch_{\F}^{\fG}$. Therefore, outside of these two sections, we will
by slight abuse of notation write $\Fvp$ for $\iota_*(\Fvp)$.
\end{notn}

\subsection{Modules over $\Fvp$}\label{subsect:modules}
Rather than working with functors in $\Ch_{\F}^{\fC}$ and needing to Day convolve
with $\iota_*(\Fvp)$ before taking homology, it is preferable to work instead with
modules over $\iota_*(\Fvp)$. For this to make sense, we need to endow
$\iota_*(\Fvp)$ with the
structure of a commutative monoid object.

A functor $\mathsf{B}\to \Ch_{\F}$ carries the same
data as a chain complex with an action of the group $B= \Aut_{\fG}(\1_{\fG})$, and
the Day convolution monoidal structure coincides with the tensor product
$\otimes_{B}$ over the group ring $\F[B]$. Thus, there is a canonical multiplication map
$$\Fvp(\1_{\mathsf{B}})\otimes_{B} \Fvp(\1_{\mathsf{B}})\to
\Fvp(\1_{\mathsf{B}})$$
that determines a multiplication map $\Fvp\dc\Fvp\to \Fvp$ in $\Ch_{\F}^{\mathsf{B}}$. The
ring quotient map $\F[B]\to \F$ sending $b\in B$ to $\vp(b)$ determines a unit map
that, together with the above multiplication map, makes $\Fvp$ into a monoid
object in $\Ch_{\F}^{\mathsf{B}}$.

Moreover, we claim that $\Fvp$ is a \emph{commutative} monoid object.
Recall from \ref{eq:braiding}
that the braiding on the Day convolution consists of applying the braiding in
$\Ch_{\F}$ and then acting by the braiding morphism in $\mathsf{B}$. Since
$\Fvp(\1_{\mathsf{B}})$ is concentrated in $\Z$-degree 0, the
braiding in $\Ch_{\F}$ is trivial; moreover, $\1_{\mathsf{B}}$ braids trivially with
itself, since $\mathsf{B}$ is a skeletal category.
 Hence,
the braiding $\Fvp\dc \Fvp\to \Fvp\dc\Fvp$ is trivial, so $\Fvp$ is commutative.

Since $\iota_*$ is strong symmetric monoidal, $\iota_*(\Fvp)$ inherits the structure of a
commutative monoid in $\Ch_{\F}^{\fG}$, and therefore it makes sense to consider
the category $\Mod_{\iota_*(\Fvp)}$ of modules over $\iota_*(\Fvp)$ in
$\Ch_{\F}^{\fG}$. Recall from Notation \ref{notn:Fvp} that we will write $\Mod_{\Fvp}$ for
$\Mod_{\iota_*(\Fvp)}$ outside of this section.

The monoidal structure on $\Mod_{\iota_*(\Fvp)}$ is given by $\dc_{\iota_*(\Fvp)}$, which is defined via the
coequalizer
$$X\dc \Fvp\dc Y \rightrightarrows X\dc Y \to X\dc_{\iota_*(\Fvp)} Y,$$
where $X,Y\in \Mod_{\iota_*(\Fvp)}$, and the two arrows are given by the module
structures
on $X$ and $Y$.

In fact, we claim that being a module over $\iota_*(\Fvp)$ is a \emph{property}, rather
than extra structure.

\begin{prop}\label{prop:property}
Any $X\in \Ch_{\F}^{\fG}$ may be endowed with an
$\iota_*(\Fvp)$-module structure if and only if for 
every object $g\in \fG$ and $f\in \Aut_{\fG}(g)$, the morphism
$$X(f)\colon X(g)\to X(g)$$
agrees with multiplication by $\vp(\tr_g(f))$, where $\tr_g$ is the isomorphism
of Equation \ref{eq:trace}. If such a structure exists, it is
unique, and the map
$$\iota_*(\Fvp)\dc X\to X$$
is an isomorphism.
\end{prop}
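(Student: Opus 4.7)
The plan is to exploit the \emph{idempotency} of $\iota_*(\Fvp)$, namely that the multiplication $\mu\colon \iota_*(\Fvp)\dc \iota_*(\Fvp)\to \iota_*(\Fvp)$ is an isomorphism. Granted this, general facts about idempotent commutative monoids imply that $\Mod_{\iota_*(\Fvp)}$ embeds as a full subcategory of $\Ch_{\F}^{\fG}$, the essential image being those $X$ for which $\iota_*(\Fvp)\dc X\to X$ is an isomorphism, and the module structure being uniquely given by this iso. The content of the proposition then reduces to identifying this subcategory with those $X$ satisfying the trace condition.

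First I would verify idempotency. Since $\iota_*$ is strong symmetric monoidal, this reduces to showing that $\Fvp\dc \Fvp\to \Fvp$ in $\Ch_{\F}^{\mathsf{B}}$ is an iso. On the unique object of $\mathsf{B}$ this is the map $\F[0]\otimes_{B}\F[0]\to \F[0]$; since $B$ acts via the same character $\vp$ on all three copies, it is the identity.

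Next I would compute $(\iota_*(\Fvp)\dc X)(g)$ at each $g$ using the coend formula for Day convolution. Since $\iota_*(\Fvp)$ is supported on the isomorphism class of $\1_\fG$, only pairs $(h_1,h_2)$ with $h_1\cong \1_\fG$ and $h_2\cong g$ contribute. The coend then collapses to a quotient of $X(g)$ by the relations $X(f)(x)\sim \vp(\tr_g(f))\cdot x$ for $f\in \Aut_\fG(g)$, with the natural map to $X(g)$ being the canonical quotient. The action map is thus an iso exactly when these relations are trivial on $X(g)$, which is precisely the stated trace condition.

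The main technical obstacle is this coend computation: one must carefully track how precomposition by $\gamma\oplus \delta$ in $\Hom_\fG(\1_\fG\oplus g, g)$ combines with the $\vp$-action on $\iota_*(\Fvp)(\1_\fG)$ and with functoriality on $X(g)$. The trace isomorphism (\ref{eq:trace}) emerges as the conversion factor: by multiplicativity of $\tr$, the relation $\gamma\oplus \delta = \id$ under the canonical iso $\Aut_\fG(\1_\fG\oplus g)\cong \Aut_\fG(g)$ forces $\gamma = \tr_g(\delta)^{-1}$, so the $\vp$-action on $\F$ is identified with the $\vp\circ \tr_g$-action on $X(g)$, yielding exactly the claimed quotient.
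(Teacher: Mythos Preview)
Your approach is correct and close in spirit to the paper's, but the framing differs in a useful way. The paper argues directly that the unit $\1_{\Ch_{\F}^{\fG}}\to \iota_*(\Fvp)$ is an epimorphism (a coequalizer), so $\1\dc X\to \iota_*(\Fvp)\dc X$ is also a quotient; the action map then exists precisely when the relations imposed by this quotient already vanish in $X$, and in that case it is automatically the inverse of the quotient map. You instead package this via the idempotency of $\iota_*(\Fvp)$ and the standard consequence that modules over an idempotent commutative monoid form a full reflective subcategory. Both routes reduce to the same coend computation identifying the quotient relations with the trace condition, and your description of how $\tr_g$ enters is exactly right. Your framing has the advantage of invoking a recognizable general principle that immediately yields uniqueness and the isomorphism claim; the paper's is slightly more elementary in that it never needs to check idempotency separately, only that the unit is a quotient.

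One small wording issue: you write ``the natural map to $X(g)$ being the canonical quotient,'' but the quotient map goes \emph{from} $X(g)$ \emph{to} $(\iota_*(\Fvp)\dc X)(g)$ --- this is the unit-induced map $X\to \iota_*(\Fvp)\dc X$, and the action map (when it exists) is its inverse. This does not affect the correctness of your argument, but it is worth stating the direction carefully.
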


\begin{proof}
Note that the unit map $\1_{\Ch^{\mathsf{B}}}\to \Fvp$ may be
described as a quotient: after evaluating on $\1_{\mathsf{B}}$, it is the
coequalizer
$$\bigoplus_{b\in B} \F[B]\rightrightarrows \F[B]\to \F,$$
where as before $B=\Aut_{\fG}(\1_{\fG})$, and the two arrows on the component
indexed by $b$ are multiplication by $b$ and multiplication by $\vp(b)$.
(The vector spaces in this coequalizer should be interpreted as chain complexes
concentrated in $\Z$-degree 0.)

Since left Kan extension preserves colimits, we see that similarly
$\iota_*(\Fvp)$ is a
quotient of $\1_{\Ch^{\fG}}$. Moreover, the map
$$\1_{\Ch^{\fG}}\dc X\to \iota_*(\Fvp)\dc X$$
is also a quotient, since Day convolution is obtained as a left Kan extension.
Thus, a map $\iota_*(\Fvp)\dc X\to X$ making the diagram
$$
\begin{tikzcd}
\1_{\Ch_{\F}^{\mathsf{G}}}\dc X \ar[r] \ar[dr] &
\iota_*(\Fvp)\dc X \ar[d]\\
& X
\end{tikzcd}
$$
commute exists if and only if the relations imposed by the quotient map
$$\1_{\Ch^{\fG}}\to \iota_*(\Fvp)$$ already hold in $X$. If this is the case, the map
$\iota_*(\Fvp)\dc X\to X$ must be an isomorphism.

Explicitly, the canonical isomorphism
$$\1_{\Ch^{\fG}}\dc X=\Hom_{\F\fG}(\1_{\fG},-)\dc X\to X$$
is defined from maps
$$\Hom_{\F\fG}(\1_{\fG},g)\otimes X(h)\to X(g\oplus_{\fG} h)$$
for all $g,h\in \fG$. These maps take a morphism $\1_{\fG}\to g$ to obtain a
morphism $h\to \1_{\fG}\oplus_{\fG} h\to g\oplus_{\fG} h$ in $\fG$, which then induces a map
$X(h)\to X(g\oplus_{\fG} h)$.
We can define maps
$$\iota_*(\Fvp)(g)\otimes X(h)\cong
(\Hom_{\F\fG}(\1_{\fG},g)\otimes_{\Aut_{\fG}(\1_{\fG})}\F[0])\otimes X(h)\to X(g\oplus_{\fG}
h)$$
in exactly the same way, but such maps are well-defined if and only if
precomposing a morphism $\1_{\fG}\to g$ by an automorphism $b\colon \1_{\fG}\to
\1_{\fG}$ results in multiplication by $\vp(b)$. Tracing through the definition
of $\tr_h$, we see that this is equivalent to saying that any morphism $f\colon
h\to h$ acts on $X(h)$ as multiplication by $\vp(\tr_h(f))$.

\end{proof}

In fact, this proposition implies that the natural map
$$X\dc Y\to X\dc_{\iota_*(\Fvp)} Y$$
is an isomorphism: the two arrows
$$X\dc\iota_*(\Fvp)\dc Y\rightrightarrows X\dc Y$$
given by the the module structures on $X$ and $Y$ already commute, and therefore
$X\dc Y$ satisfies the universal property of the coequalizer. In light of this,
we will typically write $\dc$ for the monoidal structure in $\Mod_{\iota_*(\Fvp)}$ rather
than $\dc_{\iota_*(\Fvp)}$.

Note that for any $X\in \Mod_{\iota_*(\Fvp)}$, the isomorphism $X\dc
\iota_*(\Fvp)\to X$
induces an isomorphism
$$H_{g,n}(X;\iota_*(\Fvp))\cong H_{g,n}(X).$$
For any $Y\in \Ch_{\F}^{\fC}$, we can therefore express $H_{*,*}(Y;\iota_*(\Fvp))$ as the
homology of the free $\iota_*(\Fvp)$-module on $Y$. We thus choose to consider the
category $\Mod_{\iota_*(\Fvp)}$ for the purposes of discussing $\iota_*(\Fvp)$-homology.

\subsubsection{Model structure on $\Mod_{\Fvp}$}
We wish to endow $\Mod_{\iota_*(\Fvp)}$ with a cofibrantly-generated monoidal model structure.

Let $\pi_0\fG$ denote the set of iosmorphism classes of objects in $\fG$, and
choose a representative $g_0\in \fG$ of each class $[g]\in\pi_0\fG$.
Consider the forgetful functor
$$\Mod_{\iota_*(\Fvp)}\to \prod_{ \pi_0\fG} \Ch_{\F}$$
taking a functor $X$ to $\prod X(g_0)$. In light of Proposition
\ref{prop:property}, this has a left adjoint given by taking $\prod
X_{g_0}$ to $$\bigoplus_{g_0\in\pi_0\fG}
\Hom_{\F\fG}(g_0,-)\otimes_{\Aut_{\fG}(g_0)}X_{g_0},$$ where an automorphism $b\in
\Aut_{\fG}(g_0)$ acts on $X_{g_0}$ by multiplication by
$\vp(\tr_{g_0}(b))$.

We claim that this is in fact an \emph{equivalence} of categories. It is easy to
see that $\prod \Hom_{\F\fG}(g_0,g_0)\otimes_{\Aut_{\fG}(g_0)} X_{g_0}$ is naturally
isomorphic to $\prod X_{g_0}$. For the other composition, note that for any
$X\in \Mod_{\iota_*(\Fvp)}$ there is a map
$$\Hom_{\F\fG}(g_0,g)\otimes_{\Aut_{\fG}(g_0)} X(g_0)\to X(g)$$
obtained by using the morphism $g_0\to g$ to map $X(g_0)\to X(g)$. This has
inverse
$$X(g)\to \Hom_{\F\fG}(g_0,g)\otimes_{\Aut_{\fG}(g_0)} X(g_0)$$
obtained by choosing any $\g\colon g_0\to g$ and sending $x\in X(g)$ to
$\g\otimes \g^{-1}(x)$. This map is independent of the choice of $\g$, and these maps are part of a natural isomorphism between $X$
and $\bigoplus_{\pi_0\fG}\Hom_{\F\fG}(g_0,-)\otimes_{\Aut_{\fG}(g_0)} X(g_0).$

Since the category $\Ch_{\F}$ has a cofibrantly-generated model
structure with weak equivalences the quasi-isomorphisms and fibrations the levelwise
surjections (see \cite{Hovey} Section 2.3), it follows that $\prod \Ch_{\F}$
may be given
the ``product'' model structure. That is, a morphism $f\in \prod \Ch_{\F}$ is a
weak equivalence/fibration/cofibration precisely when $f_g$ is a weak
equivalence/fibration/cofibration in $\Ch_{\F}$ for all $g\in \fG$.
Thus, $\Mod_{\iota_*(\Fvp)}$ inherits a cofibrantly-generated model structure whose weak
equivalences and fibrations are precisely those that are weak
equivalences/fibrations in $\Ch_{\F}$ after evaluating at each object $g\in \fG$.
Note that in this model structure all objects are fibrant. Since we are working
over a field, every chain complex in $\Ch_{\F}$ is cofibrant, and hence every
object of $\Mod_{\iota_*(\Fvp)}$ is cofibrant as well.

Checking that this is a monoidal model structure amounts to checking the
pushout-product axiom and that the unit is cofibrant (see \cite{Hovey} Section
4.2). In our case the unit $\iota_*(\Fvp)$ is clearly cofibrant since all objects of
$\Mod_{\iota_*(\Fvp)}$ are cofibrant. Checking the
pushout-product axiom reduces to checking that the pushout-product axiom holds
in $\Ch_{\F}$, since pushouts in $\Mod_{\iota_*(\Fvp)}$ are computed levelwise. This may
be checked easily by hand (see \cite{Hovey} Proposition 4.2.13). Thus, we have
proven the following.

\begin{prop}
There exists a cofibrantly-generated monoidal model structure on
$\Mod_{\iota_*(\Fvp)}$
whose fibrations are level-wise surjections of chain complexes and whose weak
equivalences are level-wise quasi-isomorphisms.
\end{prop}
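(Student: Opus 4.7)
The plan is to transport the standard cofibrantly-generated model structure on $\Ch_{\F}$ through the equivalence of categories
$$\Mod_{\iota_*(\Fvp)} \simeq \prod_{\pi_0\fG} \Ch_{\F}$$
established in the paragraphs just above the statement, and then verify the two extra conditions needed for monoidality. Concretely, I would first observe that the product model structure on $\prod_{\pi_0\fG} \Ch_{\F}$, where weak equivalences, fibrations, and cofibrations are those that are so factorwise, is cofibrantly generated: sets of generating (trivial) cofibrations are obtained by taking, for each $g_0 \in \pi_0\fG$, the pushforward of a generating (trivial) cofibration in $\Ch_{\F}$ into the factor indexed by $g_0$ and extending by zero on the rest. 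Standard smallness and small-object-argument considerations transfer from $\Ch_{\F}$ (cf.\ \cite{Hovey} Section 2.3).

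Next, I would use the equivalence of categories to declare a morphism $f$ in $\Mod_{\iota_*(\Fvp)}$ to be a weak equivalence (resp.\ fibration, cofibration) if and only if its image in $\prod_{\pi_0\fG}\Ch_{\F}$ is one. Since under the equivalence the functor $X$ corresponds to the tuple $(X(g_0))_{[g_0]\in\pi_0\fG}$, and since by Proposition \ref{prop:property} the value $X(g)$ for any $g$ isomorphic to $g_0$ is canonically identified with $X(g_0)$, the level-wise surjection condition and the level-wise quasi-isomorphism condition on all of $\fG$ are both equivalent to the factorwise conditions on the representatives $g_0$. So the fibrations and weak equivalences have the desired description.

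It remains to check the unit is cofibrant and the pushout-product axiom holds. Cofibrancy of $\iota_*(\Fvp)$ is automatic: over a field every object of $\Ch_{\F}$ is cofibrant, hence every object of $\prod_{\pi_0\fG}\Ch_{\F}$, and therefore every object of $\Mod_{\iota_*(\Fvp)}$, is cofibrant. For the pushout-product axiom, I would use the key observation that Day convolution of $\iota_*(\Fvp)$-modules can be computed level-wise in a way controlled by the equivalence: if $X$ and $Y$ correspond to tuples $(X_{g_0})$ and $(Y_{h_0})$, then $(X\dc Y)(k)$ is, on each isomorphism class, a direct sum of tensor products $X_{g_0}\otimes_{\F} Y_{h_0}$ (indexed by ways of writing $k$ as $g_0 \oplus h_0$ up to the obvious automorphism action). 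Since pushouts in $\Mod_{\iota_*(\Fvp)}$ are likewise computed level-wise, the pushout-product of two (trivial) cofibrations reduces, factor by factor and summand by summand, to a pushout-product of (trivial) cofibrations in $\Ch_{\F}$. That axiom for $\Ch_{\F}$ is classical (\cite{Hovey} Proposition 4.2.13).

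The step I expect to require the most care is this last reduction of the pushout-product axiom: one must check that the level-wise description of $\dc$ under the equivalence really does send pushout-products to factorwise pushout-products of tensor products in $\Ch_{\F}$, which involves tracking the automorphism actions in the coinvariants $\otimes_{\Aut_{\fG}(g_0)}$. Once this bookkeeping is confirmed, the axiom reduces to the well-known case over $\Ch_{\F}$ and the proposition follows.
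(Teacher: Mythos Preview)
Your proposal is correct and follows essentially the same approach as the paper: transport the product model structure on $\prod_{\pi_0\fG}\Ch_{\F}$ across the equivalence, observe that every object is cofibrant since we are over a field, and reduce the pushout-product axiom to the known case in $\Ch_{\F}$ using that pushouts and Day convolution are computed level-wise. The paper is somewhat terser about the pushout-product reduction than you are, but the argument is the same.
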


It will be useful to relate this model structure to that of $\Ch_{\F}^{\fC}$. There is a free-forgetful adjunction
$$\begin{tikzcd}
\iota_*(\Fvp)\dc- \colon\Ch^{\fG}_{\F}\arrow[r,shift left] & \Mod_{\iota_*(\Fvp)}\colon U
\arrow[l,shift left].
\end{tikzcd}$$
If a morphism in $\Mod_{\iota_*(\Fvp)}$ is a
fibration, and therefore levelwise surjective, then its image under $U$ is also
levelwise surjective and therefore a fibration in $\Ch^{\fG}_{\F}$.
Similarly, $U$ preserves quasi-isomorphisms. Thus, this is a Quillen adjunction.

\begin{rem}
We could instead have defined the model structure on $\Mod_{\iota_*(\Fvp)}$ via right
transfer along this adjunction. However, since $\iota_*(\Fvp)$ is not cofibrant in
$\Ch^{\fC}_{\F}$, checking that this is indeed a model structure requires more
work. While this is not impossible, it was faster to use $\prod \Ch_{\F}$ as
above.
\end{rem}

\subsubsection{Bigraded suspension of functors}\label{section:bigradedsusp}

Now we define the bigraded suspension $\S^{g,n}$ of modules over $\iota_*(\Fvp)$, since these will be useful later in
discussing homology. On the level of objects, 
\begin{equation}\label{eq:informalsusp}(\S^{g,n}X)(h)=\S^nX(h\oplus
g^{-1}),\end{equation} but we
will need to make this precise (also note that we have not chosen unique
inverses for each object $g\in \fG$, so the right-hand expression does not make
sense).

For $n\in \Z$, let $\S^n\colon \Ch\to \Ch$ denote the usual $n$th suspension
functor. Given any category of functors into $\Ch$, postcomposition with $\S^n$
defines an endofunctor. In light of Proposition
\ref{prop:property}, we see that it also makes sense to consider $\S^n$ as an
endofunctor of $\Mod_{\iota_*(\Fvp)}$.

We also wish to consider ``suspension'' in the $\fG$ direction.
For each $g\in\fG$, we can consider the functor
$$\iota_g\colon \mathsf{B}\to \mathsf{G}$$
taking $\1_{\mathsf{B}}\mapsto g$. An automorphism of $\1_{\mathsf{B}}$ is sent
to its image under
$$\tr^{-1}_g\colon \Aut_{\fG}(\1_{\fG})\to \Aut_{\fG}(g).$$ As before,
precomposition with $\iota_g$ induces a functor
$$\iota^*_{g}\colon \Ch^{\fG}_{\F}\to \Ch^{\mathsf{B}}_{\F}$$
that has a left adjoint
$$(\iota_g)_*\colon \Ch^{\mathsf{B}}_{\F}\to \Ch^{\fG}_{\F}.$$

\begin{defn}\label{defn:Gsus}
For $g\in\fG$ and $n\in\Z$, let $\S^{g,n}\iota_*(\Fvp)\coloneqq (\iota_g)_*(\Fvp)$.
If $X$ is an object of $\Mod_{\iota_*(\Fvp)}$, let $$\S^{g,n}X\coloneqq \S^{g,n}\iota_*(\Fvp)\dc X\in
\Mod_{\iota_*(\Fvp)}.$$
\end{defn}

Note that when $g=\1_{\fG}$, $\S^{g,0}\iota_*(\Fvp)=\iota_*(\Fvp)$.
Moreover,
$$\S^{g,n}\iota_*(\Fvp)\cong
\Hom_{\F\fG}(g,-)\otimes_{\Aut_{\fG}(g)}\F[n],$$
where $\Aut_{\fG}(g)$ acts on $\F[n]=\S^n\F[0]$ as multiplication along $\vp$ after
applying the trace homomorphism. From this description, combined with
Proposition \ref{prop:property}, we see that $\S^{g,n}X$ is indeed a module
over $\iota_*(\Fvp)$, and that this definition is consistent with the informal
description of \ref{eq:informalsusp}.

We remark that the $\S$ notation is here ambiguous; when the
superscript is an element $n\in \Z$, $\S^n$ will always denote the suspension
with respect to the chain complex grading, and when the superscript is bigraded,
$\S^{g,n}$ will
be as in Definition \ref{defn:Gsus}.

We claim that this bigraded suspension behaves well with respect to the monoidal
structures on both gradings.

\begin{prop}
Let $\fG\times \Z$ denote the product of $\fG$ with the category whose objects
are the integers and that has no non-identity morphisms. The functor
$$\fG\times\Z\to \Mod_{\iota_*(\Fvp)}$$
sending $(g,n)\mapsto \S^{g,n}\iota_*(\Fvp)$ is strong monoidal. In particular,
$$\S^{g,n}\iota_*(\Fvp)\dc \S^{h,m}\iota_*(\Fvp)\cong \S^{g\oplus h,
n+m}\iota_*(\Fvp).$$
\end{prop}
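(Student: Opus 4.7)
The plan is to unwind the definition $\S^{g,n}\iota_*(\Fvp) = (\iota_g)_*(\S^n\Fvp)$ and exploit the characterization of Day convolution as a left Kan extension: for any $F, G \in \Ch_{\F}^{\fG}$ one has $F \dc G \cong (\oplus_{\fG})_*(F \boxtimes G)$, where $F \boxtimes G \in \Ch_{\F}^{\fG \times \fG}$ is the external tensor product, and analogously in $\Ch_{\F}^{\mathsf{B}}$ using the multiplication functor $\mu \colon \mathsf{B} \times \mathsf{B} \to \mathsf{B}$ induced by the abelian group structure on $B = \Aut_{\fG}(\1_{\fG})$. The crux is the commutative square
\[
\begin{tikzcd}
\mathsf{B} \times \mathsf{B} \ar[r, "\iota_g \times \iota_h"] \ar[d, "\mu"'] & \fG \times \fG \ar[d, "\oplus_{\fG}"] \\
\mathsf{B} \ar[r, "\iota_{g \oplus h}"'] & \fG,
\end{tikzcd}
\]
which on objects is tautological and on morphisms reduces to the identity $\tr_{g\oplus h}(f \oplus f') = \tr_g(f) \cdot \tr_h(f')$, i.e., multiplicativity of the trace of \ref{eq:trace} under $\oplus_{\fG}$, which is a standard fact in any Picard groupoid.

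Applying $(-)_*$ to both composites in this square and using the functoriality $(\beta\alpha)_* = \beta_* \alpha_*$ of left Kan extension, together with the fact that left Kan extension along a product functor commutes with the external tensor product $\boxtimes$ (which follows from the colimit formula for Kan extension and the fact that $\otimes$ in $\Ch_{\F}$ preserves colimits in each variable), I obtain for any $F, G \in \Ch_{\F}^{\mathsf{B}}$ a natural isomorphism
\[
(\iota_g)_*(F) \dc (\iota_h)_*(G) \cong (\iota_{g \oplus h})_*(F \dc G).
\]
Taking $F = \S^n \Fvp$ and $G = \S^m \Fvp$ and using $\S^n\Fvp \dc \S^m \Fvp \cong \S^{n+m}(\Fvp \dc \Fvp) \cong \S^{n+m}\Fvp$ (where the last isomorphism uses that the multiplication on $\Fvp$ is invertible, as $\F \otimes_{\F[B]} \F \cong \F$) yields the required isomorphism $\S^{g,n}\iota_*\Fvp \dc \S^{h,m}\iota_*\Fvp \cong \S^{g\oplus h, n+m}\iota_*\Fvp$. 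The unit compatibility $\S^{\1_{\fG}, 0}\iota_*\Fvp \cong \iota_*\Fvp$ is immediate since $\iota_{\1_{\fG}}$ coincides with $\iota$ (the trace on $\1_{\fG}$ being the identity).

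To upgrade this collection of isomorphisms to strong monoidal structure, the associativity and unit coherence diagrams must commute. By naturality of all the constructions involved — the Day convolution universal property, the interchange of left Kan extension with $\boxtimes$, and the commutations of $\iota_g$'s encoded in the square above — the verification reduces to coherence of the associators and unitors on $\fG$ and on $\Ch_{\F}$, both of which are given by hypothesis. The main obstacle is confirming that the commutativity of the square above on morphisms really does hold as an equality (not merely up to some natural isomorphism that would introduce a cocycle); once this is settled, every subsequent step is a formal manipulation of universal properties of left Kan extensions.
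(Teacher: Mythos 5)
Your proof is correct but takes a genuinely different, more categorical route than the paper's. The paper constructs the comparison map explicitly: it uses the universal property of $(\iota_g)_*$ and $(\iota_h)_*$ as left Kan extensions to produce, by hand, a map $\S^{g,n}\Fvp(g)\otimes \S^{h,m}\Fvp(h)\to \S^{g\oplus h,n+m}\Fvp(g\oplus h)$, then invokes Proposition \ref{prop:property} to show this map is independent of the choice of morphisms $\gamma\colon g\to k$, $\delta\colon h\to \ell$ and hence extends to a natural transformation $\S^{g,n}\iota_*(\Fvp)\dc \S^{h,m}\iota_*(\Fvp)\to \S^{g\oplus h,n+m}\iota_*(\Fvp)$, which it then declares to be ``elementarily'' an isomorphism. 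In contrast, you factor the entire construction through the strict commutativity of the square $\iota_{g\oplus h}\circ\mu = \oplus_{\fG}\circ(\iota_g\times\iota_h)$, reduce that to multiplicativity of the trace $\tr_{g\oplus h}(a\oplus b) = \tr_g(a)\cdot\tr_h(b)$, and then propagate everything by the formal interchange of left Kan extension with external tensor product and Day convolution. Your approach has the advantage of making the source of the isomorphism transparent and yielding strong monoidality (including coherence) as a formal consequence of coherence in $\fG$, whereas the paper's version produces the isomorphism componentwise and leaves both the isomorphism and coherence checks to the reader. The one spot you correctly flag as the load-bearing ingredient --- strict commutativity of the square on morphisms --- is indeed exactly multiplicativity of the trace, which holds on the nose in any Picard groupoid (it is the statement $\tr(a\otimes b)=\tr(a)\cdot\tr(b)$ for endomorphisms in a symmetric monoidal category, applied with all objects invertible). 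Since you only cite this as ``standard'' rather than proving it, one could argue the explicit verification burden is comparable to the paper's ``elementary'' check; but the conceptual clarity gained is real, especially for the coherence part of ``strong monoidal,'' which the paper's proof does not address at all.
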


\begin{proof}
Since $(\iota_g)_*$ and $(\iota_h)_*$ are left Kan extensions, we have a map
$$\S^n\Fvp(\1_{\mathsf{B}})\otimes \S^m\Fvp(\1_{\mathsf{B}})\to \S^{g,n}\Fvp(g)\otimes
\S^{h,m}\Fvp(h)$$
that is easily seen to be an isomorphism. Using the inverse of this isomorphism,
we obtain the composition
$$\S^{g,n}\Fvp(g)\otimes \S^{h,m}\Fvp(h)\to \S^n\Fvp(\1_{\mathsf{B}})\otimes
\S^m\Fvp(\1_{\mathsf{B}})\to \S^{n+m}\Fvp(\1_{\mathsf{B}})\to \S^{g\oplus
h,n+m}\Fvp(g\oplus h).$$
Given any morphisms $\gamma\colon g\to k$ and $\delta\colon h\to \ell$, we
obtain the composition
$$
\begin{tikzcd}[column sep=large]
\S^{g,n}\Fvp(k)\otimes \S^{h,m}\Fvp(\ell) \arrow[r, "\gamma^{-1}\otimes
\delta^{-1}"] \arrow[d, dashed] & \S^{g,n}\Fvp(g)\otimes \S^{h,m}\Fvp(h)\ar[d]\\
\S^{g\oplus h,n+m}\Fvp(k\oplus \ell) &
\S^{g\oplus h,n+m}\Fvp(g\oplus h)\ar[l, "\gamma\oplus \delta"]
\end{tikzcd}
$$
Proposition \ref{prop:property} implies that this map is independent of the
choice of morphisms $\gamma,\delta$, and therefore this map is natural with
respect to morphisms in $\fG$. Noting that the left-hand side is zero when $k\not\cong
g$ or $\ell\not\cong h$, we thus have maps
$$(\S^{g,n}\iota_*(\Fvp))(k)\otimes (\S^{h,m}\iota_*(\Fvp))(l)\to (\S^{g\oplus
h,n+m}\iota_*(\Fvp))(k\oplus l)$$
for all $k,\ell\in \fG$
that assemble together into a natural transformation
$$\S^{g,n}\iota_*(\Fvp)\dc \S^{h,m}\iota_*(\Fvp)\to \S^{g\oplus
h,n+m}\iota_*(\Fvp).$$
Checking that this map is an isomorphism is elementary.
\end{proof}

\subsection{A K{\"u}nneth theorem for $\Fvp$-homology}
In this section, and in all subsequent sections, we write $\Fvp$ for
$\iota_*(\Fvp)\in \Ch_{\F}^{\fG}$ as described in Notation \ref{notn:Fvp}.

Note that for any $X\in \Mod_{\Fvp}(\Ch_{\F}^{\fC})$, we may regard $H_{*,*}(X)$ as a functor
$$H_{*,*}(X)\colon \mathsf{G}\to \GrVect_{\F}$$
to the category of $\Z$-graded $\F$--vector spaces.
By a slight abuse of notation, we again write $\Fvp$ 
for $H_{*,*}(\Fvp)\in \GrVect_{\F}^{\fC}$. 

We may make $\GrVect_{\F}^{\fC}$
into a symmetric monoidal category using Day convolution and the symmetries in
$\fC$ and $\GrVect_{\F}$. 
Once again, $\Fvp$ is a commutative
monoid object in $\GrVect_{\F}^{\fC}$, and being a monoid over $\Fvp$ is a
property rather than extra structure as in Proposition \ref{prop:property}.
Moreover, the monoidal structure $\dc_{\Fvp}$ of
$\Mod_{\Fvp}(\GrVect_{\F}^{\fC})$ again coincides with the monoidal structure
$\dc$ of $\GrVect_{\F}^{\fC}$. Note that the unit
object in $\Mod_{\Fvp}(\GrVect_{\F}^{\fC})$ is $\Fvp$.

\begin{rem}
We could instead have chosen to work with the equivalent category
$\Vect_{\F}^{\fC\times\Z}$, where $\Z$ denotes the symmetric monoidal category
of the natural numbers under addition with no non-identity morphisms. In either
case, we are essentially working with $(\ob{\fG}\times \Z)$-graded vector spaces with
extra structure.
\end{rem}

We conclude this discussion by proving a K{\"u}nneth theorem for
$\Fvp$-homology.

\begin{prop}\label{prop:Kunneth}
Suppose $X,Y\in \Mod_{\Fvp}(\Ch^{\fC}_{\F})$. Then $$H_{*,*}(X\dc Y)\cong
H_{*,*}(X)\dc H_{*,*}(Y)$$
in $\GrVect_{\F}^{\fC}$. In particular, $$H_{*,*}(X)\cong H_{*,*}(\Fvp\dc X)\cong \Fvp\dc
H_{*,*}(X),$$
so $H_{*,*}(X)$ inherits the structure of an $\Fvp$-module in
$\GrVect^{\fC}_{\F}$.
\end{prop}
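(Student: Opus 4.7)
The plan is to reduce the statement to the classical chain-level Künneth theorem for complexes over a field. The main tool is the equivalence $\Mod_{\Fvp} \simeq \prod_{[g] \in \pi_0 \fG} \Ch_{\F}$ established in the discussion of the model structure, combined with the isomorphism $\S^{g,0}\Fvp \dc \S^{h,0}\Fvp \cong \S^{g \oplus h, 0}\Fvp$ from the previous proposition.

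The first step is to fix a representative $g_0$ of each class $[g] \in \pi_0 \fG$ and decompose $X \in \Mod_{\Fvp}$ as
\[
X \;\cong\; \bigoplus_{[g] \in \pi_0 \fG} \S^{g,0}\Fvp \otimes_{\F} X(g_0),
\]
where $X(g_0)$ is now regarded as a plain chain complex (the $\Aut_{\fG}(g_0)$-action being absorbed into the suspension factor via Proposition \ref{prop:property}); similarly for $Y$. Since Day convolution preserves colimits in each variable and commutes with tensoring by a chain complex, this yields
\[
X \dc Y \;\cong\; \bigoplus_{[g_1], [g_2] \in \pi_0 \fG} \S^{g_1 \oplus g_2,0}\Fvp \otimes_{\F} \bigl(X(g_1^0) \otimes_{\F} Y(g_2^0)\bigr).
\]
Evaluating at an arbitrary representative $g_0$ collapses the outer sum to those pairs with $[g_1]+[g_2]=[g_0]$, producing
\[
(X \dc Y)(g_0) \;\cong\; \bigoplus_{[g_1] + [g_2] = [g_0]} X(g_1^0) \otimes_{\F} Y(g_2^0).
\]
Passing to homology and invoking the classical Künneth isomorphism over $\F$ (no Tor term, since $\F$ is a field) then identifies $H_n((X \dc Y)(g_0))$ with $(H_{*,*}(X) \dc H_{*,*}(Y))(g_0)$ in homological degree $n$; naturality of chain-level Künneth ensures that these isomorphisms assemble into a morphism in $\GrVect_{\F}^{\fG}$.

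For the ``in particular'' clause, specialize to $Y = \Fvp$: because $\Fvp \dc X \cong X$ is the unit isomorphism in $\Mod_{\Fvp}$, the main statement produces $H_{*,*}(X) \cong H_{*,*}(\Fvp \dc X) \cong \Fvp \dc H_{*,*}(X)$. The main obstacle in the argument above is the first step, where one must verify that the decomposition of $X$ as a direct sum of ``monomials'' $\S^{g, 0}\Fvp \otimes_{\F} X(g_0)$ is natural enough to commute with Day convolution---concretely, this comes down to carefully tracking automorphism actions through the trace $\tr_g$ and checking that the tensor-collapses above genuinely respect the actions of $\Aut_{\fG}(g_0 \oplus h_0)$ arising in the coend defining Day convolution. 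Once this bookkeeping is handled, every further step is either an abstract colimit-preservation property of Day convolution or a direct appeal to classical Künneth.
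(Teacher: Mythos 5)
Your argument is correct and amounts to a slightly repackaged version of the paper's proof. The paper unwinds the Day convolution directly as a colimit over the comma category $\fG\times\fG/k$, groups summands by isomorphism class, and then identifies the $g_0$-summand as $X(g_0)\otimes_{\Aut_{\fG}(\1_{\fG})}Y(h_0)$, where Proposition \ref{prop:property} is what lets one replace $\otimes_{\Aut}$ by $\otimes_{\F}$. You instead invoke two earlier results as black boxes: the equivalence $\Mod_{\Fvp}\simeq\prod_{\pi_0\fG}\Ch_{\F}$, which packages the same $\pi_0$-decomposition, and the strong monoidality of $(g,n)\mapsto\S^{g,n}\Fvp$. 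Since the equivalence is proved using exactly the same bookkeeping (tracking automorphism actions through $\vp\circ\tr$ and Proposition \ref{prop:property}), the underlying mathematics is identical; the benefit of your version is modularity -- the ``obstacle'' you flag (naturality of the decomposition through Day convolution) is already discharged by the fact that the decomposition is a categorical equivalence, and what remains is just colimit-preservation of Day convolution plus the compatibility of the $\Ch_{\F}$-tensoring with $\dc$, both of which the paper has recorded. One small point worth spelling out, which you have essentially done, is that the summand $X(g_0)$ in the decomposition $X\cong\bigoplus\S^{g_0,0}\Fvp\otimes_{\F}X(g_0)$ must be regarded as a plain chain complex with trivial $\Aut_{\fG}(g_0)$-action, with the $\vp$-twist carried entirely by the $\S^{g_0,0}\Fvp$ factor; this is what makes the later collapse to $\otimes_{\F}$ legitimate.
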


In light of this, it is more natural to view homology as a functor
$$H_{*,*}\colon \Mod_{\Fvp}(\Ch_{\F}^{\fC})\to
\Mod_{\Fvp}(\GrVect_{\F}^{\fC})$$
landing in the category of modules over $\Fvp$ in $\GrVect_{\F}^{\fC}$.

\begin{proof}
Recall from Section \ref{subsect:modules} that
$X\dc_{\Fvp}Y\cong X\dc Y.$ For an object $k$ of $\fG$, the Day convolution can
be described as a colimit over the category $\fG\times \fG/k$ whose objects are
triples $(g,h,f)$, where $g,h\in \fG$ and $f\colon g\oplus h\to k$, and whose
morphisms
$$(g,h,f)\to (g',h',f')$$ consist of morphisms $\a\colon g\to
g'$ and $\b\colon h\to h'$ such that $f=f'\circ (\a\oplus \b)$. Explicitly,
$$(X\dc Y)(k)\cong \colim_{\fG\times \fG/k} X(g)\otimes Y(h),$$
where a morphism $(\a,\b)$ from $(g,h,f)\to (g',h',f')$ induces the map
$$X(\a)\otimes Y(\b)\colon X(g)\otimes Y(h)\to X(g')\otimes Y(h').$$
If we choose one representative $g_0$ of each isomorphism class in $\pi_0\fG$,
we can express this colimit as
$$\colim_{\fG\times \fG/k} X(g)\otimes Y(h)\cong \bigoplus_{[g_0]\in \pi_0\fG}
\colim_{\{(g,h,f)\mid g\cong g_0\}} X(g)\otimes Y(h).$$
Note that if $g\cong g_0$, this forces $h\cong g_0^{-1}\oplus k$, where
$g_0^{-1}$ is any inverse of $g_0$.
The $g_0$ summand of this coproduct is isomorphic to
$$X(g_0)\otimes_{\Aut_{\fG}(\1_{\fG})}Y(h_0),$$
where $h_0$ is any choice of representative of the class $[g_0^{-1}\oplus k]$.
Here $\Aut_{\fG}(\1_{\fG})$ acts on $X(g_0)$ and $Y(h_0)$ as multiplication along
$\vp$, which is consistent with the actions of automorphisms of $g_0$ and $h_0$,
respectively, in light of Proposition \ref{prop:property}.
Since the action of $\Aut_{\fG}(\1_{\fG})$ factors through $\F^{\times}$,
$$X(g_0)\otimes_{\Aut_{\fG}(\1_{\fG})}Y(h_0)\cong X(g_0)\otimes_{\F} Y(h_0).$$

Since we are working over a field,
$$H_*(X(g_0)\otimes_{\F}Y(h_0))\cong H_*(X(g_0))\otimes_{\F} H_*(Y(h_0)).$$
A similar argument to that above shows that
$$(H_{*,*}(X)\dc H_{*,*}(Y))(k)\cong \bigoplus_{\substack{[g_0],[h_0]\in\pi_0\fG\\
[g_0\oplus h_0]=[k]}} H_*(X(g_0))\otimes_{\F}
H_*(Y(h_0)).$$
This shows that the desired isomorphism holds abstractly.

For any $g,h\in \fG$, the usual K{\"u}nneth isomorphism together with the
universal property of Day convolution give a map
$$H_*(X(g))\otimes_{\F} H_*(Y(h))\to H_*(X(g)\otimes_{\F} Y(h))\to H_*((X\dc
Y)(g\oplus h))$$
that induces a map
$$H_{*,*}(X)\dc H_{*,*}(Y)\to H_{*,*}(X\dc Y).$$ This map is compatible with the
above isomorphisms, so it is an isomorphism.
\end{proof}

Note that the K{\"u}nneth theorem need not hold for arbitrary functors in
$\Ch_{\F}^{\fG}$; it was necessary that $X$ and $Y$ be modules over $\F_{\vp}$
so that the action of automorphisms in $\fG$ can be expressed as multiplication
by an element of $\F$.

\section{$E_{\infty}$-algebras in $\Mod_{\Fvp}$}\label{section:einfty}

Given a functor $F\in\Ch_{\F}^{\fC}$, we may tensor a chain complex $V$ with
$F$ to obtain a functor $V\otimes F$. Explicitly,
$$(V\otimes F)(g)=V\otimes_{\F} F(g),$$
and a morphism $b\in \Aut(g)$ acts as $\text{id}_{V}\otimes b$ on $V\otimes
F(g)$. This defines a functor $-\otimes-\colon  \Ch_{\F}\times \Ch_{\F}^{\fC}\to
\Ch_{\F}^{\fC}$ satisfying unit an associativity axioms, so that
$\Ch_{\F}^{\fC}$ is tensored over $\Ch_{\F}$.
This
induces a levelwise tensor product
$$-\otimes-\colon \Ch_{\F}\times \Mod_{\Fvp}\to \Mod_{\Fvp}$$
so that $\Mod_{\Fvp}$ is tensored over $\Ch_{\F}$ as well, and the induced
map $\Ch_{\F}\to \Mod_{\Fvp}$ given by tensoring with the unit $\Fvp$ is strong symmetric monoidal. 

For any operad $\mathscr{O}$ in chain complexes, we can now make sense of
algebras over $\mathscr{O}$ in $\Mod_{\Fvp}$.

\begin{defn}
Let $\mathscr{O}$ be an operad in chain complexes. An
\emph{$\mathscr{O}$-algebra} in $\Mod_{\Fvp}$ consists of an object
$X\in\Mod_{\Fvp}$ together with maps
$$\mathscr{O}(n)\otimes X^{\dc n}\to X$$
for each $n$ that satisfy the usual associativity, unit, and equivariance
relations.
Here $\mathscr{O}(n)$ denotes the $n$th
arity of the operad.

A morphism of $\mathscr{O}$-algebras is a map in $\Mod_{\Fvp}$ that preserves
the action of $\mathscr{O}$. We write $\Alg_{\mathscr{O}}(\Mod_{\Fvp})$ for the
category of $\mathscr{O}$-algebras in $\Mod_{\Fvp}$.
\end{defn}

Of course, we are interested in the case where $\mathscr{O}$ is an
$E_{\infty}$-operad. In order to obtain a reasonable homotopy theory for
$E_{\infty}$-algebras, we need to make some technical assumptions on
$E_{\infty}$-operads.

First of all, the category of symmetric sequences in
$\Mod_{\Fvp}$ inherits a model structure where a map is a weak equivalence
(respectively fibration) if and only if its $n$th arity map is a weak
equivalence (respectively fibration) in $\Ch_{\F}$ for all $n$ (see, e.g.,
\cite{Fresse}). We say an operad
is \emph{$\S$-cofibrant} if its underlying symmetric sequence is cofibrant with
respect to this model structure. 

When an operad $\mathscr{O}$ is
$\S$-cofibrant, the category $\Alg_{\mathscr{O}}(\Mod_{\Fvp})$
inherits a ``semi-model'' structure (in the sense of \cite{Fresse}), where a map
of $\mathscr{O}$-algebras
is a weak equivalence (respectively, fibration) if and only if its underlying
map in $\Mod_{\Fvp}$ is a weak equivalence (respectively, fibration)
(Theorem 12.3.A, \cite{Fresse}). 
Moreover, a weak equivalence of $\S$-cofibrant operads induces a (semi-model) Quillen
equivalence of categories of algebras (Theorem 12.5.A, \cite{Fresse}). Thus, we
consider the following definition of $E_{\infty}$-operads.

\begin{defn}
An \emph{$E_{\infty}$-operad} consists of a $\S$-cofibrant operad $\cC_{\infty}$
in chain complexes together with a zig-zag of weak equivalences to singular
chains on the little
$\infty$-disks operad. 
An \emph{$E_{\infty}$-algebra} is an algebra over an $E_{\infty}$-operad.
\end{defn}

In particular, this means that for any $E_{\infty}$-operad $\cC_{\infty}$, the
category $\Alg_{\cC_{\infty}}(\Mod_{\Fvp})$ inherits a (semi-)model structure,
and any two $E_{\infty}$-algebras have Quillen-equivalent categories of algebras.

We may describe the free
$\cC_{\infty}$-algebra on $X\in\Mod_{\Fvp}$ by
$$\cC_{\infty}(X)\coloneqq \bigoplus_{k\geq 0} \cC_{\infty}(k)\otimes_{\S_k}
X^{\circledast k},$$
and we write
$$\Sym_{\cC_{\infty}}^k(X)\coloneqq \cC_{\infty}(k)\otimes_{\S_k} X^{\circledast
k}$$
for the $k$th symmetric power of $X$.
The functor $\cC_{\infty}(-)$ is left adjoint to the forgetful functor
$$\Alg_{\cC_{\infty}}(\Mod_{\Fvp})\to \Mod_{\Fvp}.$$

The following lemma will allow us to simplify some computations by replacing functors in $\Mod_{\Fvp}$ by their
homology.

\begin{lem}\label{lem:preservewes}
If $\cC_{\infty}$ is any $E_{\infty}$-operad, the functors $\cC_{\infty}(-)$ and $\Sym^k_{\cC_{\infty}}(-)$ preserve weak
equivalences in $\Mod_{\Fvp}$.
\end{lem}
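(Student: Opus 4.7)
The plan is to deduce both claims from a single model-categorical argument, using the fact (noted at the end of the discussion of the model structure on $\Mod_{\Fvp}$) that \emph{every} object of $\Mod_{\Fvp}$ is cofibrant. First, since $\cC_{\infty}$ is $\S$-cofibrant, the semi-model structure on $\Alg_{\cC_{\infty}}(\Mod_{\Fvp})$ is transferred along the forgetful functor $U$, so the free-forgetful adjunction
$$\cC_{\infty}(-)\colon \Mod_{\Fvp}\rightleftarrows \Alg_{\cC_{\infty}}(\Mod_{\Fvp})\colon U$$
is a (semi-model) Quillen adjunction. By Ken Brown's lemma, the left adjoint $\cC_{\infty}(-)$ preserves weak equivalences between cofibrant objects; since every object of the source $\Mod_{\Fvp}$ is cofibrant, $\cC_{\infty}(-)$ preserves all weak equivalences in $\Mod_{\Fvp}$.

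For the symmetric powers, I would use the natural decomposition
$$\cC_{\infty}(X)=\bigoplus_{k\geq 0}\Sym_{\cC_{\infty}}^k(X).$$
If $f\colon X\to Y$ is a weak equivalence in $\Mod_{\Fvp}$, the first step shows that $\cC_{\infty}(f)=\bigoplus_k \Sym_{\cC_{\infty}}^k(f)$ is a levelwise quasi-isomorphism. Since a direct sum of chain maps is a quasi-isomorphism if and only if each summand is, each $\Sym_{\cC_{\infty}}^k(f)$ is a levelwise quasi-isomorphism, i.e.\ a weak equivalence in $\Mod_{\Fvp}$.

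The only point that requires care is whether Ken Brown's lemma genuinely applies in the presence of a semi-model structure on the target. This is not a real obstacle: the standard proof of Ken Brown's lemma factors the weak equivalence $f\colon A\to B$ in the \emph{source} (where we have a full model structure) via the coproduct $A\sqcup B$, and only uses that the functor sends acyclic cofibrations between cofibrant objects to weak equivalences, which is exactly what a left Quillen functor to a semi-model category does. So the argument goes through, and both conclusions follow.
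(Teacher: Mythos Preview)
Your argument is correct, but it runs in the opposite direction from the paper's. The paper proves the statement for $\Sym^k_{\cC_{\infty}}$ first and then deduces the claim for $\cC_{\infty}(-)$ by summing over $k$; you do the reverse, establishing $\cC_{\infty}(-)$ via Ken Brown and then splitting off the summands. The paper's argument is more elementary and self-contained: it uses the K\"unneth theorem (Proposition~\ref{prop:Kunneth}) to see that $(-)^{\dc k}$ preserves weak equivalences, and then the $\S$-cofibrancy of $\cC_{\infty}$ to see that $\cC_{\infty}(k)\otimes_{\S_k}-$ preserves quasi-isomorphisms of $\F[\S_k]$-complexes. Your route is slicker but leans on the transferred semi-model structure of Fresse (Theorem~12.3.A), whose proof already contains essentially the same homotopical input. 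Either approach is fine here; the paper's has the advantage of making transparent exactly where $\S$-cofibrancy enters, which is used again later (e.g.\ in Lemma~\ref{lem:homindep}).
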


\begin{proof}
Since 
$$\cC_{\infty}(X)\cong \bigoplus \Sym^k_{\cC_{\infty}}(X),$$
we have that $\cC_{\infty}(-)$ preserves weak equivalences if
$\Sym^k_{\cC_{\infty}}(-)$ does for all $k$.
The $k$th power functor $(-)^{\dc
k}\colon \Mod_{\Fvp}\to \Mod_{\Fvp}$ preserves weak equivalences by the K\"{u}nneth
Theorem \ref{prop:Kunneth}. Since the operad $\cC_{\infty}$ is 
$\S$-cofibrant, we have in particular that $\cC_{\infty}(k)$ is cofibrant as a
chain complex with a $\S_k$ action; that is, $\cC_{\infty}(k)$ is a chain
complex of projective $\F[\S_k]$-modules. Hence, $\cC_{\infty}(k)\otimes_{\S_k}
-\colon \Ch_{\F[\S_k]}\to \Ch_{\F[\S_k]}$ preserves quasi-isomorphisms.

Since a map in
$\Mod_{\Fvp}$ is a weak equivalence precisely when all of the underlying maps of chain
complexes are quasi-isomorphisms after evaluating at objects in $\fG$, it follows
that a weak equivalence $X\to Y$ in $\Mod_{\Fvp}$ gives quasi-isomorphisms
$$X^{\dc k}(g)\to Y^{\dc k}(g).$$ We may view this as a quasi-isomorphism in
$\Ch_{\F[\S_k]}$ using the $\S_k$-action inherited from the braiding, and hence,
we have that $\Sym^k(X)\to \Sym^k(Y)$ is a quasi-isomorphism after evaluating on
each $g\in\fG$, which is equivalent to being a weak equivalence in $\Mod_{\Fvp}$. 
\end{proof}

For constructing homology operations, it is convenient to work with a choice of
$E_{\infty}$-operad, so we conclude this discussion by showing that the theory of homology
operations is independent of the choice of $\cC_{\infty}$.

\begin{lem}\label{lem:homindep}
If $\cC_{\infty}$ and $\cD_{\infty}$ are two $E_{\infty}$-operads equipped with
a weak equivalence $\cC_{\infty}\to \cD_{\infty}$, then for any $X\in
\Mod_{\Fvp}$,
$$H_{*,*}(\cC_{\infty}(X))\cong H_{*,*}(\cD_{\infty}(X))$$
as objects in $\GrVect^{\fC}_{\F}$.
\end{lem}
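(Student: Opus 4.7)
The plan is to use the splitting $\cC_{\infty}(X) \cong \bigoplus_{k \geq 0} \Sym^k_{\cC_{\infty}}(X)$ (and similarly for $\cD_{\infty}$) and reduce to showing the induced map $\Sym^k_{\cC_{\infty}}(X) \to \Sym^k_{\cD_{\infty}}(X)$ is a weak equivalence in $\Mod_{\Fvp}$ for each $k$. Since homology commutes with direct sums, it then suffices to produce an isomorphism on each summand, and taking direct sums gives the desired isomorphism in $\GrVect_{\F}^{\fG}$; naturality in morphisms of $\fG$ comes for free because the map $\cC_{\infty}(X) \to \cD_{\infty}(X)$ is already a morphism in $\Ch_{\F}^{\fG}$ (in fact in $\Mod_{\Fvp}$).

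First I would check the reduction: a map in $\Mod_{\Fvp}$ is a weak equivalence iff it is a quasi-isomorphism after evaluating on every $g \in \fG$, and evaluation commutes with the direct sum. So I need to show that for every $g$, the map of chain complexes
\[
\cC_{\infty}(k) \otimes_{\S_k} X^{\circledast k}(g) \longrightarrow \cD_{\infty}(k) \otimes_{\S_k} X^{\circledast k}(g)
\]
is a quasi-isomorphism. This is the heart of the argument. By the $\S$-cofibrancy hypothesis in our definition of $E_{\infty}$-operads, both $\cC_{\infty}(k)$ and $\cD_{\infty}(k)$ are cofibrant objects in the projective model structure on $\Ch_{\F[\S_k]}$, so each of the two functors $\cC_{\infty}(k) \otimes_{\S_k} (-)$ and $\cD_{\infty}(k) \otimes_{\S_k} (-)$ is a model for the derived tensor product $(-) \otimes^{L}_{\S_k} X^{\circledast k}(g)$ of its respective argument with $X^{\circledast k}(g)$. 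Since the derived tensor product is homotopy invariant in its first slot, the weak equivalence $\cC_{\infty}(k) \to \cD_{\infty}(k)$ in $\Ch_{\F[\S_k]}$ induces the desired quasi-isomorphism.

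The main obstacle, if one exists, is verifying this last homotopy-invariance claim cleanly without invoking too much model-categorical machinery; an efficient self-contained version is to form the mapping cone $M$ of $\cC_{\infty}(k) \to \cD_{\infty}(k)$, which is an acyclic cofibrant object in $\Ch_{\F[\S_k]}$ and hence chain-homotopy equivalent to zero, and then observe that $M \otimes_{\S_k} X^{\circledast k}(g)$ inherits a contracting chain homotopy from $M$, so it is acyclic; this acyclicity identifies the mapping cone of our map on $g$-level tensors as acyclic, giving the quasi-isomorphism. Assembling these isomorphisms across $k$ and $g$ and invoking Proposition \ref{prop:Kunneth} (to see that homology of the symmetric powers is a functor in $\GrVect_{\F}^{\fG}$) yields the stated isomorphism $H_{*,*}(\cC_{\infty}(X)) \cong H_{*,*}(\cD_{\infty}(X))$.
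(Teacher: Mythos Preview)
Your proposal is correct and follows essentially the same approach as the paper: reduce to each $\Sym^k$ and show the map $\cC_{\infty}(k)\otimes_{\S_k}X^{\dc k}\to \cD_{\infty}(k)\otimes_{\S_k}X^{\dc k}$ is a quasi-isomorphism. The only minor difference is in the justification for this last step---the paper cites cofibrancy of $X^{\dc k}$ in $\Mod_{\Fvp}$, whereas you invoke $\S$-cofibrancy of the operads (so that $\cC_{\infty}(k)\to\cD_{\infty}(k)$ is a weak equivalence between cofibrant $\F[\S_k]$-complexes, hence a chain homotopy equivalence preserved by $-\otimes_{\S_k}X^{\dc k}(g)$); your mapping-cone argument makes this explicit and is arguably the more direct route.
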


\begin{proof}
The weak equivalence $\cC_{\infty}\to \cD_{\infty}$ induces a map
$$\Alg_{\cD_{\infty}}(\Mod_{\Fvp})\to \Alg_{\cC_{\infty}}(\Mod_{\Fvp})$$
that is the identity on underlying objects, and where the
$\cC_{\infty}$-structure on a $\cD_{\infty}$-algebra $X$ is given by the
composition
$$\Sym^k_{\cC_{\infty}}(X)\to \Sym^k_{\cD_{\infty}}(X)\to X.$$
This functor has a left adjoint that sends free $\cC_{\infty}$-algebras to free
$\cD_{\infty}$-algebras. 
In particular, we claim this induces an isomorphism on homology
$$H_{*,*}(\cC_{\infty}(X))\to H_{*,*}(\cD_{\infty}(X)).$$
It suffices to check this on $\Sym^k$ for each $k$ separately. 
Since $\cC_{\infty}(k)\to \cD_{\infty}(k)$ is a
quasi-isomorphism and $X^{\dc k}$ is cofibrant in $\Mod_{\Fvp}$ (as all objects
of $\Mod_{\Fvp}$ are cofibrant), we have a quasi-isomorphism
$$\cC_{\infty}(k)\otimes_{\S_k} X^{\dc k}\to \cD_{\infty}(k)\otimes_{\S_k}
X^{\dc k}.$$
\end{proof}

We will see later that operations on the $\Fvp$-homology of algebras over an
$E_{\infty}$-operad $\cC_{\infty}$ are classified by homology of free
$\cC_{\infty}$-algebras. In light of the above proposition, therefore, the
theory of homology operations is independent of the choice of
$E_{\infty}$-operad. (See also Remark \ref{rem:anyoperad}.) For convenience, we thus fix $\cC_{\infty}$ to be singular
chains on the
little $\infty$-disks operad, and write
$$\Alg_{E_{\infty}}(\Mod_{\Fvp}) \text{, } E_{\infty}(X)\text{, and } \Sym^k(X)$$
for $\Alg_{\cC_{\infty}}(\Mod_{\Fvp})$, $\cC_{\infty}(X)$, and
$\Sym^k_{\cC_{\infty}}(X)$, respectively.

\subsection{Monoidal structure on $\Alg_{E_{\infty}}(\Mod_{\Fvp})$}

Since the little $\infty$-disks operad $\cC_{\infty}$ arises as singular chains
of an operad in spaces, it inherits a coproduct
$$\cC_{\infty}(k)\to \cC_{\infty}(k)\otimes_{\F} \cC_{\infty}(k)$$
in each arity from the diagonal map of underlying spaces. Using this structure,
we may define a monoidal structure on $E_{\infty}$-algebras.

If $X,Y\in \Alg_{E_{\infty}}(\Mod_{\Fvp})$, we may endow the product $X\dc Y$ in
$\Mod_{\Fvp}$ with an $E_{\infty}$-algebra structure. First we use the coproduct on
$\cC_{\infty}$ to obtain
$$\cC_{\infty}(k)\otimes (X\dc Y)^{\dc k}\to (\cC_{\infty}(k)\otimes
\cC_{\infty}(k))\otimes (X\dc Y)^{\dc k}.$$
By coherence of the axioms of symmetric monoidal categories (see, for instance,
section VII.1 of \cite{CftWM} for coherence between different parenthesizations;
see section 4 of \cite{JoyalStreet} for coherence of the braiding), there is a
unique shuffle map
$$(X\dc Y)^{\dc k}\to X^{\dc k}\dc Y^{\dc k}$$
preserving the ordering of each of the $X$ factors and each of the $Y$ factors.
(Implicit in all tensor powers in a monoidal category is a choice of
parenthesization for $k$-fold products; for instance, we may take $X^{\dc
k}=X\dc (X\dc (\dots \dc X))$.)
Using this shuffle map in $\Mod_{\Fvp}$ along with the lax monoidality
of the functor $\Ch_{\F}\to \Mod_{\Fvp}$, we obtain a map $$ (\cC_{\infty}(k)\otimes
\cC_{\infty}(k))\otimes (X\dc Y)^{\dc k}\to (\cC_{\infty}(k)\otimes X^{\dc
k})\dc (\cC_{\infty}(k)\otimes Y^{\dc k}).$$
Putting these maps together and using the $E_{\infty}$-structures on $X$ and $Y$, we therefore obtain a map
\begin{equation}\label{eq:Einftyprod}\cC_{\infty}(k)\otimes (X\dc Y)^{\dc k}\to
X\dc Y.\end{equation}
It is standard to check that these maps are compatible with the operad
composition and the symmetric group actions.

The commutative monoid $\Fvp$ may be given an $E_{\infty}$-algebra
structure: as usual there is a map of operads from $\cC_{\infty}$ to the
commutative operad (which in $\Ch_{\F}$ consists of $\F[0]$ in each arity
equipped with the trivial symmetric group action), and
the commutative monoid structure on $\Fvp$ endows it with the structure of an algebra
over the commutative operad. With this $E_{\infty}$-algebra structure, $\Fvp$ is
the monoidal unit for the product of $E_{\infty}$-algebras.

\begin{prop}
The $E_{\infty}$-algebra structure on the Day convolution \ref{eq:Einftyprod}
equips $\Alg_{E_{\infty}}(\Mod_{\Fvp})$ with a monoidal structure with $\Fvp$ as
the monoidal unit. Moreover, the
forgetful functor $\Alg_{E_{\infty}}(\Mod_{\Fvp})\to \Mod_{\Fvp}$ is strong
monoidal.
\end{prop}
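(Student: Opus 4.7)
The plan is to verify the axioms of a symmetric monoidal category on $\Alg_{E_{\infty}}(\Mod_{\Fvp})$ by lifting the symmetric monoidal structure from $\Mod_{\Fvp}$ along the forgetful functor, using the fact that $\cC_{\infty}$ is a Hopf operad via its coproduct coming from the diagonal of spaces. First, I would check that the maps of \ref{eq:Einftyprod} really do define an $E_{\infty}$-algebra structure on $X\dc Y$: associativity follows from coassociativity of the coproduct on $\cC_{\infty}$ together with associativity of the $E_{\infty}$-structures on $X$ and $Y$ and coherence of the shuffle maps $(X\dc Y)^{\dc k}\to X^{\dc k}\dc Y^{\dc k}$; the unit axiom uses compatibility of the coproduct with the operadic unit in arity zero; and equivariance is naturality of the shuffle under the diagonal $\Sigma_k$-action on $(X\dc Y)^{\dc k}$.

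Second, I would show that the associator, unitors, and braiding of $\Mod_{\Fvp}$ lift to maps of $E_{\infty}$-algebras. For the associator, one compares the two ways of equipping $X\dc Y\dc Z$ with an $E_{\infty}$-structure: both arise by applying a $3$-fold iterated coproduct on $\cC_{\infty}$ (which agrees by coassociativity) followed by shuffling the factors, and the symmetric monoidal coherence theorem applied to the shuffles identifies the two resulting actions. The case of the braiding is analogous, using that the diagonal of spaces is cocommutative (so that the coproduct on $\cC_{\infty}$ is cocommutative after swapping tensor factors) together with naturality of the braiding in $\Ch_{\F}$ and $\Mod_{\Fvp}$. The pentagon and hexagon axioms then follow formally from the coherence data already present in $\Mod_{\Fvp}$.

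Third, for the unit: $\Fvp$ inherits an $E_{\infty}$-structure from the canonical map from $\cC_{\infty}$ to the commutative operad, composed with the commutative monoid structure on $\Fvp$ in $\Mod_{\Fvp}$. I would verify that the canonical isomorphism $\Fvp\dc X\cong X$ from Proposition \ref{prop:property} is a map of $E_{\infty}$-algebras by tracing the definition: the $E_{\infty}$-action on $\Fvp\dc X$ factors, through the augmentation of $\cC_{\infty}$ applied to one tensor factor of the coproduct, to give the $E_{\infty}$-action on $X$ combined with the structure map of $\Fvp$, which on the $\Fvp$ factor is unital. Strong monoidality of the forgetful functor is then immediate, since the underlying module of $X\dc Y$ as an $E_{\infty}$-algebra is literally $X\dc Y$ in $\Mod_{\Fvp}$ and the underlying module of the unit is $\Fvp$.

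The main obstacle will be the coherence bookkeeping required in the second step: while everything ultimately reduces to the symmetric monoidal coherence theorem applied to shuffle maps together with the Hopf-operad structure of $\cC_{\infty}$, one must carefully unpack several nested shuffle, associator, and braiding diagrams to reduce each desired identity to an instance of coherence. I expect no genuinely new input beyond the standard story for algebras over a Hopf operad, adapted to the present functor-category setting.
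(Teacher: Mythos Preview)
Your proposal is correct and is precisely the standard Hopf-operad argument that the paper has in mind; indeed, the paper omits the proof entirely with the remark that it ``is standard and therefore omitted.'' Note that the proposition as stated only asks for a monoidal structure, not a symmetric one, so your verification of the braiding and hexagon axiom goes slightly beyond what is required, but this causes no harm.
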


The proof of this proposition is standard and therefore omitted.

If we choose any 0-chain $\mu\in
\cC_{\infty}(2)$ coming from a point in the arity 2 space of the little disks
operad, then there is an induced map
$$X\dc X\to \F[0]\otimes X\dc X\to \cC_{\infty}(2)\otimes X\dc X\to X,$$
where $\F[0]\to \cC_{\infty}(2)$ picks out $\mu$. We will refer to such a
map as a \emph{chain-level product}, since on the homology level it gives the
usual product. Although such a map is not exactly a map of
$E_{\infty}$-algebras, it is after passing to homology.

\begin{lem}\label{lem:chain prod}
Let $X\in \Alg_{E_{\infty}}(\Mod_{\Fvp})$. The diagram
$$
\xymatrix{
H_{*,*}(E_{\infty}(X\dc X)) \ar[r] \ar[d] & H_{*,*}(X\dc X) \ar[d]\\
H_{*,*}(E_{\infty}(X)) \ar[r] & H_{*,*}(X)
}
$$
induced by any chain-level product $X\dc X\to X$ commutes.
\end{lem}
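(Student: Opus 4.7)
The plan is to analyze the diagram arity-by-arity on the decomposition $E_{\infty}(X\dc X) = \bigoplus_{k\geq 0} \cC_{\infty}(k)\otimes_{\S_k} (X\dc X)^{\dc k}$. On the $k$-th summand, I would unpack both composites using the explicit construction of the $E_{\infty}$-structure on $X\dc X$ given in the preceding proposition. Writing $\Delta\colon \cC_{\infty}(k)\to \cC_{\infty}(k)^{\otimes 2}$ for the coproduct used there and $\gamma$ for operad composition, the top-right composite sends a representative $c\otimes (x_1\dc y_1)\dc\cdots\dc (x_k\dc y_k)$ to $\sum \mu\bigl(c_{(1)}(x_1,\ldots,x_k),\, c_{(2)}(y_1,\ldots,y_k)\bigr)$ in $X$, which by operadic associativity equals the action of $\gamma(\mu;\Delta c)\in \cC_{\infty}(2k)$ on the shuffled input. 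The left-bottom composite first applies $m$ factorwise to produce $c\otimes \mu(x_1,y_1)\dc\cdots\dc \mu(x_k,y_k)$, and then the $E_{\infty}$-action of $c$ on $X$ yields the action of $\gamma(c;\mu,\ldots,\mu)\in \cC_{\infty}(2k)$ on the interleaved input.

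The next step is to compare these two chain maps on homology. Using the K\"unneth isomorphism (Proposition \ref{prop:Kunneth}) to identify $H_{*,*}(X\dc X)\cong H_{*,*}(X)\dc H_{*,*}(X)$, both routes descend to operations $H_{*,*}\bigl(\cC_{\infty}(k)\otimes_{\S_k}(X\dc X)^{\dc k}\bigr)\to H_{*,*}(X)$ parametrised by classes in $H_*(\cC_{\infty}(2k))$. Since $\mu$ is a $0$-chain representing the generator of $H_0(\cC_{\infty}(2))=\F$, both $\gamma(\mu;\Delta c)$ and $\gamma(c;\mu,\ldots,\mu)$ (after accounting for the shuffle) represent the same class: both are chain-level realizations of the composite operadic operation obtained by gluing the operation $c$ with $k$ copies of the multiplication $\mu$, and the $E_{\infty}$-property of $\cC_{\infty}$ ensures this is unique up to $\S_k$-equivariant chain homotopy.

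The main obstacle is producing the required $\S_k$-equivariant chain homotopy without circularly invoking the Cartan-type formulas later derived using this lemma. A clean resolution is to observe that both composites agree on the arity-$1$ summand $X\dc X\subset E_{\infty}(X\dc X)$, where both equal $m$, and then to propagate the agreement to all arities using the K\"unneth identification together with the fact that, on $H_{*,*}(X)$, the multiplication $m_*$ is graded-commutative; the $\S$-cofibrancy of $\cC_{\infty}$ makes the comparison of the two operadic compositions of the cycle $c$ with the $0$-chain $\mu$ a formal consequence of the $E_{\infty}$-operad structure, so that both composites induce the same map on $H_{*,*}$.
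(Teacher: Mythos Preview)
Your setup is exactly right: on the $k$-th summand the two routes are governed by the two $\S_k$-equivariant chain maps
\[
f=\gamma(-;\mu,\ldots,\mu)\colon \cC_{\infty}(k)\to \cC_{\infty}(2k),\qquad
g=\gamma(\mu;\Delta(-))\cdot\sigma\colon \cC_{\infty}(k)\to \cC_{\infty}(2k),
\]
with $\sigma$ the shuffle permutation, and the lemma reduces to showing $f$ and $g$ induce the same map after applying $-\otimes_{\S_k}(X\dc X)^{\dc k}$ and passing to homology. This is precisely how the paper frames it.

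Where your argument goes soft is the justification that $f$ and $g$ are $\S_k$-equivariantly chain homotopic. Phrases like ``the $E_{\infty}$-property of $\cC_{\infty}$ ensures this is unique up to $\S_k$-equivariant chain homotopy'' and ``a formal consequence of the $E_{\infty}$-operad structure'' are not proofs, and the ``propagate from arity $1$ via K\"unneth and graded commutativity'' route in your last paragraph does not work: agreement on arity $1$ is just $m=m$, and nothing about K\"unneth or commutativity of $m_*$ on $H_{*,*}(X)$ lets you deduce the higher-arity statement without already knowing $f\simeq g$.

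The clean argument, which the paper uses, is a direct acyclic-models style step. Both $f$ and $g$ are maps of $\F[\S_k]$-complexes (for the $\S_k$-action on $\cC_{\infty}(2k)$ permuting adjacent pairs). Both preserve the canonical augmentations $\cC_{\infty}(k)\to\F$ and $\cC_{\infty}(2k)\to\F$: this uses that $\mu$ is the chain of a \emph{point}, so $\gamma(c;\mu,\ldots,\mu)$ and $\gamma(\mu;\Delta c)$ are again chains of points when $c$ is. Now $\cC_{\infty}(k)$ is free over $\F[\S_k]$ (this is your $\S$-cofibrancy), and $\cC_{\infty}(2k)$ is acyclic as an augmented complex. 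Hence any two augmentation-preserving $\F[\S_k]$-chain maps $\cC_{\infty}(k)\to\cC_{\infty}(2k)$ are $\S_k$-equivariantly chain homotopic, so $f$ and $g$ agree on homology after taking $-\otimes_{\S_k}(X\dc X)^{\dc k}$. Replace your final paragraph with this, and the proof is complete.
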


\begin{proof}
This is a generalization of Lemma 1.9 in \cite{M}, which proves a similar
statement for $E_{\infty}$-algebras in spaces. We define two maps
$$f,g\colon \cC_{\infty}(k)\to \cC_{\infty}(2k)$$
for each $k$. Let $\theta$ denote the operad composition map. The map $f$ is
given by $\theta(-\otimes \mu^k)$. The map $g$ is given by $\theta(\mu\otimes
\psi(-))\s$, where $\psi$ is the coproduct on $\cC_{\infty}(k)$, and $\s$
denotes the shuffle permutation in $\S_{2k}$ that takes $(S\times T)^k\to
S^k\times T^k$ for sets $S,T$.

Let $\S_k$ act on $\cC_{\infty}(2k)$ by permuting pairs; that is,
partition $\{1,2,3,4,\dots,2k\}$ into $k$ sets of 2 as
$\{\{1,2\},\{3,4\},\dots,\{2k-1,2k\}\}$, and then an element of $\S_k$ permutes
these 2-element sets. Then both $f$ and $g$ are $\S_k$-equivariant. Note that
$f$ and $g$ both preserve the canonical augmentation maps of $\cC_{\infty}(k)$
and $\cC_{\infty}(2k)$---for this to be true for $f$ it was essential that $\mu$
is the class of a point in the underlying space. Thus, since $f$ and $g$ are
both maps of $\F[\S_k]$ complexes extending the identity map on $\F$ (in degree
$-1$), since $\cC_{\infty}(k)$ is free over $\F[\S_k]$, and since
$\cC_{\infty}(2k)$ is acyclic when considered as an augmented complex, we have
that $f$ and $g$ agree on homology.

Using the associativity of the $E_{\infty}$-multiplication map on $X$ together
with the definition of the $E_{\infty}$-structure on $X\dc X$, it is easy to
check that the map $$\Sym^k(X\dc X)\to X\dc X\to X$$ is equal to
$$\Sym^k(X\dc X)\xrightarrow{g\otimes \text{id}} \cC_{\infty}(2k)\otimes_{\S_k} (X\dc
X)^{\dc k} \to X,$$
and the map
$\Sym^k(X\dc X)\to \Sym^k(X)\to X$ is equal to
$$\Sym^k(X\dc X) \xrightarrow{f\otimes \text{id}} \cC_{\infty}(2k)\otimes_{\S_k} (X\dc
X)^{\dc k}\to X.$$
Since $f$ and $g$ induce identical maps in homology, we see that the desired
diagram commutes.
\end{proof}

Note also that for $X\in \Alg_{E_{\infty}}(\Mod_{\Fvp})$, the $E_{\infty}$
multiplication map
$$E_{\infty}(X)\to X$$
in particular restricts to a map
$$\cC_{\infty}(0)\otimes \Fvp\cong \Sym^0(X)\to X$$
which means that the obvious map
$$\Fvp\to \Sym^0(X)\to E_{\infty}(X)$$
is a map of $E_{\infty}$-algebras, and therefore each $X\in
\Alg_{E_{\infty}}(\Mod_{\Fvp})$ comes equipped with a unit map
\begin{equation}\label{eq:unit}
\Fvp\to X
\end{equation} of
$E_{\infty}$-algebras.
Thus, for any $X,Y\in \Alg_{E_{\infty}}(\Mod_{\Fvp})$, there are canonical maps
$$X\cong X\dc \Fvp\to X\dc Y$$
and
$$Y\cong \Fvp\dc Y\to X\dc Y$$ that together induce a
natural map
$$X\sqcup_{E_{\infty}} Y\to X\dc Y$$
of $E_{\infty}$-algebras, where $\sqcup_{E_{\infty}}$ denotes the coproduct in
$\Alg_{E_{\infty}}(\Mod_{\Fvp})$.

In particular, when $X$ and $Y$ are free $E_{\infty}$-algebras, say
$X=E_{\infty}(U)$, $Y=E_{\infty}(V)$, we obtain a natural map
$$E_{\infty}(U)\sqcup_{E_{\infty}} E_{\infty}(V)\cong E_{\infty}(U\oplus V)\to E_{\infty}(U)\dc E_{\infty}(V).$$
Here we have used the fact that the left adjoint $E_{\infty}(-)$ preserves
coproducts.
In fact, this map is a weak equivalence.
The following statement appears in the proof of Proposition 16.5 in
\cite{SOS}. Note that there $X$ and $Y$ are required to be cofibrant, but we may
omit this assumption as every module over $\Fvp$ is cofibrant.

\begin{prop}\label{prop:sum to prod} 
For any 
$X,Y\in \Mod_{\Fvp}$, the map
$$E_{\infty}(X\oplus Y)\to E_{\infty}(X)\dc E_{\infty}(Y)$$
induces an isomorphism
$$H_{*,*}(E_{\infty}(X\oplus Y))\cong H_{*,*}(E_{\infty}(X))\dc
H_{*,*}(E_{\infty}(Y))$$
of objects in $\GrVect^{\fC}_{\F}.$
\end{prop}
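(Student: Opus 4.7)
The plan is to prove this by decomposing both sides into direct sums indexed by pairs $(i,j) \in \N \times \N$, identifying the natural map summand-wise, and then verifying that each restricted map is a weak equivalence using the $\S$-cofibrancy of $\cC_\infty$. First I would unwind the formula $E_\infty(W) = \bigoplus_n \cC_\infty(n) \otimes_{\S_n} W^{\dc n}$ and use distributivity of Day convolution over direct sums to obtain
$$(X \oplus Y)^{\dc n} \cong \bigoplus_{i+j=n} \F[\S_n/(\S_i \times \S_j)] \otimes (X^{\dc i} \dc Y^{\dc j}),$$
where the $\S_i \times \S_j$-action on a summand comes from the braiding in $\Mod_{\Fvp}$. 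Taking $\S_n$-orbits yields
$$E_\infty(X \oplus Y) \cong \bigoplus_{i,j \geq 0} \cC_\infty(i+j) \otimes_{\S_i \times \S_j} (X^{\dc i} \dc Y^{\dc j}),$$
and since $\dc$ commutes with both colimits and tensoring by chain complexes in each variable,
$$E_\infty(X) \dc E_\infty(Y) \cong \bigoplus_{i,j \geq 0} (\cC_\infty(i) \otimes \cC_\infty(j)) \otimes_{\S_i \times \S_j} (X^{\dc i} \dc Y^{\dc j}).$$

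Next I would unwind the natural map, using the universal property of $E_\infty(X \oplus Y)$ applied to the composites $X, Y \to E_\infty(X) \dc E_\infty(Y)$ coming from the unit maps of \ref{eq:unit}, together with the $E_\infty$-structure on Day convolution (which is built from the operad comultiplication on $\cC_\infty$ coming from the diagonal of the little disks spaces). A chain-level inspection shows that the map respects the $(i,j)$-decomposition and is induced on the $(i,j)$-summand by a $\S_i \times \S_j$-equivariant map $\cC_\infty(i+j) \to \cC_\infty(i) \otimes \cC_\infty(j)$ compatible with the canonical augmentations to $\F$. The final step is to show this map becomes a quasi-isomorphism after applying $- \otimes_{\S_i \times \S_j} (X^{\dc i} \dc Y^{\dc j})$. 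By $\S$-cofibrancy of $\cC_\infty$ and standard transfer arguments, $\cC_\infty(i+j)$ restricted along $\S_i \times \S_j \hookrightarrow \S_{i+j}$ and $\cC_\infty(i) \otimes \cC_\infty(j)$ are both cofibrant $\F[\S_i \times \S_j]$-chain complexes quasi-isomorphic to $\F$; any augmentation-preserving $\S_i \times \S_j$-equivariant chain map between them is therefore a chain-homotopy equivalence, and tensoring over $\S_i \times \S_j$ with $X^{\dc i} \dc Y^{\dc j}$ preserves such equivalences. Combining over $(i,j)$ gives a weak equivalence $E_\infty(X \oplus Y) \to E_\infty(X) \dc E_\infty(Y)$ in $\Mod_{\Fvp}$, and the K{\"u}nneth isomorphism of Proposition \ref{prop:Kunneth} then identifies $H_{*,*}(E_\infty(X) \dc E_\infty(Y))$ with $H_{*,*}(E_\infty(X)) \dc H_{*,*}(E_\infty(Y))$, yielding the stated conclusion.

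The main obstacle is the bookkeeping in the middle step: tracing the universal property through the Day convolution $E_\infty$-structure to confirm that the natural map decomposes summand-wise, and that the resulting chain map on $\cC_\infty(i+j)$ is genuinely $\S_i \times \S_j$-equivariant. This is particularly delicate because the braiding in $\Mod_{\Fvp}$ may act nontrivially through $\vp$ (via Proposition \ref{prop:property}), so one must check that the twists appearing in the decomposition of $(X \oplus Y)^{\dc n}$ match those appearing in the shuffle maps used to define the $E_\infty$-structure on $E_\infty(X) \dc E_\infty(Y)$. Once this compatibility is verified, the quasi-isomorphism on each summand is a routine consequence of comparing free resolutions of $\F$ over $\F[\S_i \times \S_j]$.
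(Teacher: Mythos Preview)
The paper does not give its own proof of this proposition; it simply cites the proof of Proposition~16.5 in \cite{SOS}, remarking only that the cofibrancy hypothesis there is automatic since every object of $\Mod_{\Fvp}$ is cofibrant. Your direct argument is correct and is the standard one: decompose both sides over pairs $(i,j)$, identify the map on each summand as induced by an augmentation-preserving $\S_i\times\S_j$-equivariant chain map between $\S_i\times\S_j$-cofibrant resolutions of $\F$, and conclude that it is a chain homotopy equivalence. The restriction of $\cC_\infty(i+j)$ along $\S_i\times\S_j\hookrightarrow\S_{i+j}$ is indeed cofibrant because $\F[\S_{i+j}]$ is free over $\F[\S_i\times\S_j]$, and the final K\"unneth step is exactly Proposition~\ref{prop:Kunneth}.

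The bookkeeping you flag as the main obstacle is real but resolves cleanly: the twists through $\vp$ arising from the braiding in $\Mod_{\Fvp}$ appear identically in the decomposition of $(X\oplus Y)^{\dc n}$ and in the shuffle maps defining the $E_\infty$-structure on the Day convolution (both are built from the same symmetric monoidal structure on $\Mod_{\Fvp}$), so they match. Your write-up is, if anything, more explicit than what the paper provides.
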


\section{Homology operations}\label{section:homops}
Now we explain how to construct operations on $\Fvp$-homology. In essence, a
homology operation should be a natural transformation of homology functors.
In previous sections, we considered $(g,n)$ homology as a functor
$$H_{g,n}\colon\Mod_{\Fvp}\to \Vect_{\F},$$
but here we need to make two modifications. First, we wish to consider
operations that are natural in maps of $E_{\infty}$-algebras. Second, we do not
wish to require that our operations are linear. Therefore, in this section, we
consider the compositions
$$\Alg_{E_{\infty}}(\Mod_{\Fvp})\to \Mod_{\Fvp} \xrightarrow{H_{g,n}} \Vect_{\F}\to
\Set,$$
where $\Set$ is the category of sets, and the final map is the forgetful
functor. This map factors through the
homotopy category $\Ho(\Alg_{E_{\infty}}(\Mod_{\Fvp}))$, and hence we will
consider the functors
$$\H_{g,n}\colon \Ho(\Alg_{E_{\infty}}(\Mod_{\Fvp}))\to \Set$$
for each $(g,n)\in \ob{\fG}\times \Z$. By slight abuse of notation, we continue to call
these functors $H_{g,n}$. We will always make clear when we are considering
natural transformations of $\Set$-valued functors.

Note that taking levelwise Cartesian products defines a product $\prod$ on the
category $\Fun(\Ho(\Alg_{E_{\infty}}(\Mod_{\Fvp}),\Set))$.

\begin{defn}
A \emph{$j$-ary $\Fvp$-homology operation} is a natural transformation
$$\prod_{i=1}^j \H_{g_i,n_i}\to \H_{h,m}$$
of functors from $\Ho(\Alg_{E_{\infty}}(\Mod_{\Fvp}))$ to $\Set$, where
$(g_i,n_i),(h,m)\in \ob{\fG}\times \Z$ and $j\in \N$.
\end{defn}

\begin{rem}
Note that this definition and the following discussion do not require anything special about
$E_{\infty}$-operads. In fact, for any operad $\mathscr{O}$ in chain complexes,
we can define $\Fvp$-homology operations of $\mathscr{O}$-algebras to be natural
transformations of homology functors from $\Ho(\Alg_{\mathscr{O}}(\Mod_{\Fvp}))$
to $\Set$. See \cite{Lawson} for a similar discussion (but for spectra rather
than chain complexes). However, for a general operad, we are unlikely to be able
to reasonably classify all operations. The theory for $E_{\infty}$-algebras is both
computable and useful.
\end{rem}

\subsection{The multicategory of homology operations}
In fact, the $\Fvp$-homology operations fit together into a set-enriched symmetric multicategory $\mathsf{Op}_{\Fvp}$. The objects of $\mathsf{Op}_{\Fvp}$ are the elements
of $\ob{\fG}\times \Z$. For objects $(g_i,n_i),(h,m)\in \ob{\fG}\times\Z$, the morphisms from
$(g_1,n_1),\dots,(g_j,n_j)$ to $(h,m)$ are given by
$$\Hom_{\mathsf{Op}_{\Fvp}}((g_1,n_1),\dots,(g_j,n_j);(h,m))=\Nat\left(\prod_{i=1}^j
\H_{g_i,n_i},\H_{h,m}\right),$$
where natural transformations are taken to be in the category of functors from
$\h{\Alg_{E_{\infty}}(\Mod_{\Fvp})}$ to $\mathsf{Set}$. Composition in this
multicategory is given by composition of natural transformations.
The symmetric group $\S_j$ acts on the morphisms
$$\Hom((g_1,n_1),\dots,(g_j,n_j);(h,m))$$
via permutation of inputs. Explicitly, we precompose with the natural
transformation of homology functors that applies the appropriate permutation
levelwise in $\mathsf{Set}$.

Now we may use the Yoneda lemma to obtain a relationship between
\emph{all} $\Fvp$-homology operations and the homology of free
$E_{\infty}$-algebras.

\begin{prop}\label{prop:yoneda}
The set of homology operations
$$\Nat\left(\prod_{i=1}^j
\H_{g_i,n_i},\H_{h,m}\right)$$
of functors from $\Ho(\Alg_{E_{\infty}}(\Mod_{\Fvp}))$ to $\Set$
is in bijective correspondence with
$$\H_{h,m}\left(E_{\infty}(\oplus_{i=1}^j \S^{g_i,n_i}\Fvp)\right).$$
\end{prop}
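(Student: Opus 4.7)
The plan is a two-step Yoneda argument: first represent each $H_{g,n}$ by a free $E_\infty$-algebra on a bigraded suspension of $\Fvp$, then use that $E_\infty(-)$ turns direct sums into coproducts of $E_\infty$-algebras, which correspond to products of representable functors.

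First I would establish the representability claim: for every $X \in \Ho(\Alg_{E_\infty}(\Mod_{\Fvp}))$ and every $(g,n) \in \ob{\fG}\times \Z$,
\[
\H_{g,n}(X) \;\cong\; \Hom_{\Ho(\Alg_{E_\infty}(\Mod_{\Fvp}))}\!\bigl(E_\infty(\S^{g,n}\Fvp),\,X\bigr).
\]
To prove this I would chain together the free-forgetful adjunction $E_\infty \dashv U$ between $\Alg_{E_\infty}(\Mod_{\Fvp})$ and $\Mod_{\Fvp}$ (which descends to the homotopy categories since $E_\infty$ preserves weak equivalences by Lemma~\ref{lem:preservewes} and $U$ preserves fibrations and weak equivalences), followed by the equivalence $\Mod_{\Fvp} \simeq \prod_{\pi_0\fG} \Ch_{\F}$ from Section~\ref{subsect:modules}. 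Under this equivalence $\S^{g,n}\Fvp$ corresponds to $\F[n]$ in the $[g]$-factor and $0$ elsewhere, so the hom set in the homotopy category computes as chain homotopy classes of maps $\F[n]\to X(g_0)$, which is exactly $H_n(X(g_0)) = H_{g,n}(X)$. Here the fact that every object of $\Mod_{\Fvp}$ is both cofibrant and fibrant removes the need to take functorial replacements.

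Next I would combine the representing objects to handle arbitrary arity. Since $E_\infty(-)$ is a left adjoint it preserves coproducts, and by the universal property of coproducts in $\Alg_{E_\infty}(\Mod_{\Fvp})$,
\[
\prod_{i=1}^j \H_{g_i,n_i}(X) \;\cong\; \Hom\!\Bigl(\, \bigsqcup\nolimits_{E_\infty} E_\infty(\S^{g_i,n_i}\Fvp),\,X\Bigr) \;\cong\; \Hom\!\Bigl(E_\infty\!\bigl(\oplus_{i=1}^j \S^{g_i,n_i}\Fvp\bigr),\,X\Bigr),
\]
where all homs are in the homotopy category. The product of the representable functors $\H_{g_i,n_i}$ is therefore itself representable, with representing object $E_\infty(\oplus_i \S^{g_i,n_i}\Fvp)$.

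Finally, the conclusion follows from the (enriched) Yoneda lemma applied to $\Set$-valued functors on $\Ho(\Alg_{E_\infty}(\Mod_{\Fvp}))$:
\[
\Nat\!\Bigl(\textstyle\prod_{i=1}^j \H_{g_i,n_i},\,\H_{h,m}\Bigr) \;\cong\; \Nat\!\Bigl(\Hom(E_\infty(\oplus_i \S^{g_i,n_i}\Fvp),-),\,\H_{h,m}\Bigr) \;\cong\; \H_{h,m}\!\bigl(E_\infty(\oplus_i \S^{g_i,n_i}\Fvp)\bigr).
\]
The main obstacle is the representability step: one has to verify carefully that $\Hom$ in the homotopy category of $\Mod_{\Fvp}$ from a suspended unit really does compute homology, which requires pinning down how the equivalence $\Mod_{\Fvp}\simeq \prod_{\pi_0\fG}\Ch_{\F}$ interacts with $\S^{g,n}\Fvp$ and noting that since the unit $\Fvp$ is cofibrant and every object is fibrant, no derived replacement is needed. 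Everything after that is a formal consequence of adjunction, preservation of coproducts by a left adjoint, and the Yoneda lemma.
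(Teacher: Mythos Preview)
Your proposal is correct and follows essentially the same strategy as the paper: establish that $\prod_i H_{g_i,n_i}$ is representable by $E_{\infty}(\oplus_i \S^{g_i,n_i}\Fvp)$ via a chain of free-forgetful adjunctions, then invoke Yoneda. The only cosmetic difference is that you compute the hom in $\Ho(\Mod_{\Fvp})$ using the equivalence $\Mod_{\Fvp}\simeq \prod_{\pi_0\fG}\Ch_{\F}$, whereas the paper passes instead through the Quillen adjunction $\Mod_{\Fvp}\rightleftarrows \Ch_{\F}^{\fG}$ and then evaluates at $g$; both routes identify $\S^{g,n}\Fvp$ as the image of $\F[n]$ under the relevant left adjoint and yield the same representability statement.
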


Here, the notation $\S^{g_i,n_i}\Fvp$ refers to the bigraded suspension of
\ref{section:bigradedsusp}.

\begin{proof}
We have already described free-forgetful Quillen adjunctions
$$\Alg_{E_{\infty}}(\Mod_{\Fvp})\rightleftarrows \Mod_{\Fvp}\rightleftarrows
\Ch_{\F}^{\fC}.$$
Given $g\in\fG$, the functor $\Ch^{\fC}_{\F}\to \mycat$ that evaluates a
functor on $g$ has a left adjoint defined by left Kan extension that sends a
chain complex $D$ to $$\Hom_{\F\fG}(g,-)\otimes_{\F} D.$$ This is also a Quillen adjunction (in fact the model
structure on $\Ch_{\F}^{\fC}$ is defined via transfer along the product of
these maps over all $g\in\fG$.) Moreover, the image
of
$\F[n]=\S^n \F[0]$ under the composition of left adjoints $\mycat\to
\Ch_{\F}^{\fG}\to \Mod_{\Fvp}$ is isomorphic to $\S^{g,n}\Fvp$. 

The homology functor
$$\H_{g,n}\colon \h{\Alg_{E_{\infty}}(\Mod_{\Fvp})}\to \mathsf{Set}$$
factors as a composition
$$\h{\Alg_{E_{\infty}}(\Mod_{\Fvp})}\to \h{\Mod_{\Fvp}}\to
\h{\Ch_{\F}^{\fC}}\to\h{\mycat}\to\mathsf{Set},$$
where the final functor is induced from the $n$th homology of a chain complex. Let $X\in
\Alg_{E_{\infty}}(\Mod_{\Fvp})$. Using the notation $[-,-]_{\mathsf{D}}$ to
denote morphisms in the homotopy category of a category $\mathsf{D}$, we then
have the following chain of isomorphisms (where by abuse of notation, we omit
forgetful functors):
\begin{align*}
[\F[n],X(g)]_{\mycat} 
&\cong
[\S^{g,n}\Fvp,X]_{\Mod_{\Fvp}}\\
&\cong [E_{\infty}(\S^{g,n}\Fvp),X]_{\Alg_{E_{\infty}}(\Mod_{\Fvp})}.
\end{align*}
Since $$\H_{g,n}(X)\cong H_n(X(g))\cong [\F[n],X(g)]_{\mycat},$$ we see that
$\H_{g,n}$ is representable with representing object
$E_{\infty}(\S^{g,n}\Fvp)$. Similarly, we can factor $\prod_{i=1}^j
\H_{g_i,n_i}$ as
$$\h{\Alg_{E_{\infty}}(\Mod_{\Fvp})}\to \h{\Mod_{\Fvp}}\to
\h{\Ch_{\F}^{\fC}}\to\prod_{i=1}^j\h{\mycat}\to\mathsf{Set}.$$
Here we have that
$$\prod_{i=1}^j \H_{g_i,n_j}(X)\cong \prod_{i=1}^n H_{n_i}(X(g_i))\cong
\prod_{i=1}^j[\F[n_i],X(g_i)]_{\mycat},$$
so using the same argument as before, we see that $\prod_{i=1}^j
\H_{g_i,n_i}$ is representable with representing object
$E_{\infty}(\oplus_{i=1}^j \S^{g_i,n_i}\Fvp).$ The statement then follows from the
Yoneda lemma. 

\end{proof}

\begin{rem}\label{rem:anyoperad}
A weak equivalence $\cC_{\infty}\to \cD_{\infty}$ of $E_{\infty}$-operads
induces a (semi) Quillen equivalence 
$$\Alg_{\cD_{\infty}}(\Mod_{\Fvp})\to \Alg_{\cC_{\infty}}(\Mod_{\Fvp}),$$
which in turn induces a multifunctor
$\mathsf{Op}_{\Fvp}^{\cC_{\infty}}\to \mathsf{Op}^{\cD_{\infty}}_{\Fvp}.$ In
light of Proposition \ref{prop:yoneda} and Lemma \ref{lem:homindep}, this is an
equivalence of multicategories. This is the sense in which the theory of
homology operations does not depend on the choice of $E_{\infty}$-operad.
\end{rem}

Although this is standard, given the ubiquity of this bijective correspondence
in this work, it is worth
giving an explicit description of how such a homology class determines a
homology operation. Suppose $X\in \Alg_{E_{\infty}}(\Mod_{\Fvp})$. Classes
$x_i\in \H_{g_i,n_i}(X)$ may be represented by maps
$\S^{g_i,n_i}\Fvp\to X$
in $\Mod_{\Fvp}$, which together determine a map
$$\bigoplus \S^{g_i,n_i}\Fvp\to X.$$ Applying the functor $E_{\infty}(-)$, such a map gives rise to
a map
$$E_{\infty}(\oplus \S^{g_i,n_i}\Fvp)\to E_{\infty}(X)\to X.$$
Thus, the image of a class $\g\in \H_{b,m}(E_{\infty}(\oplus \S^{g_i,n_i}\Fvp))$
under this map determines a class in $\H_{b,m}(X)$; the homology
operation $\g$ applied to the tuple $(x_i)\in \prod \H_{g_i,n_i}(X)$ is given by
this class.

\subsection{Charge and weight of operations}
Using the explicit construction of free $E_{\infty}$-algebras, we can develop
terminology to describe more explicitly how an operation arises. First we define
a grading on all free $E_{\infty}$-algebras, which we will use extensively in
giving a description of the homology of free $E_{\infty}$-algebras. The
terminology is inspired by \cite{jeremysander}, although the definition is
slightly different.

\begin{defn}
The isomorphism $$E_{\infty}(X)\cong \bigoplus_{k\geq 0}\Sym^k(X)$$ gives
$E_{\infty}(X)$ an $\N$ grading, which we refer to as the \emph{charge} grading.
\end{defn}

 Thus, Proposition
\ref{prop:yoneda} gives us a charge grading on homology operations as well.

\begin{defn}
A homology operation determined by a class in $$\H_{h,m}(\Sym^k(\oplus_{i=1}^j
\S^{g_i,n_i}\Fvp))$$ has \emph{charge $k$}.
\end{defn}

It is convenient (perhaps more so for operations for $E_n$-algebras for finite
$n$) to have a further refinement on the notion of the charge of an operation.
Notice that there is a further splitting
\begin{align*}
\Sym^k(\oplus_{i=1}^j \S^{g_i,n_i}\Fvp) &= \cC_{\infty}(k)\otimes_{\S_k}
\left(\oplus_{i=1}^j \S^{g_i,n_i}\Fvp\right)^{\dc k}\\
&\cong \bigoplus_{\substack{(r_1,\dots,r_{j})\\ \sum r_i=k}}
\cC_{\infty}(k)\otimes_{\S_{r_1}\times\dots\times\S_{r_j}}\bigdc_{i=1}^j
(\S^{g_i,n_i}\Fvp)^{\dc r_i}.
\end{align*}

\begin{defn}
For any tuple $(r_1,\dots,r_j)$ of natural numbers, a \emph{weight
$(r_1,\dots,r_j)$} homology operation is one determined by a class in
$$\H_{h,m}\left(
\cC_{\infty}(k)\otimes_{\S_{r_1}\times\dots\times\S_{r_j}}\bigdc_{i=1}^j
(\S^{g_i,n_i}\Fvp)^{\dc r_i}\right),$$
where $k=\sum r_i$.
\end{defn}

Intuitively, the weight tells us to what
``power'' each input is taken in the operation. Note that for unary operations, the notions of
weight and charge coincide, and in general the charge can be read off from the
weight by summing the indices $r_i$.
The following lemma tells us how weight (and therefore also charge) behaves under composition of
operations.

\begin{lem}\label{lemma:comp}
Let $\g\in \Nat(\prod_{i=1}^j \H_{g_i,n_i},\H_{h,m})$ have
weight $(r_1,\dots,r_j)$, and let $$\eta_i\in
\Nat(\prod_{\ell=1}^{k_i}\H_{c^{(i)}_{\ell},q^{(i)}_{\ell}},\H_{g_i,n_i})$$
have weight $(s^{(i)}_1,\dots,s^{(i)}_{k_i})$. Then the composition
$$\g\circ (\eta_1,\dots,\eta_j)\in
\Nat(\prod_{i,{\ell}}\H_{c^{(i)}_{\ell},q^{(i)}_{\ell}}, \H_{h,m})$$
has weight
$(r_1s^{(1)}_1,\dots,r_1s^{(1)}_{k_1},\dots,r_js^{(j)}_1,\dots,r_js^{(j)}_{k_j}).$
\end{lem}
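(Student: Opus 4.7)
The plan is to apply the Yoneda correspondence of Proposition \ref{prop:yoneda} and then count input factors through the resulting map of free $E_\infty$-algebras. Under that correspondence, a weight-$(r_1,\dots,r_j)$ operation $\gamma$ is represented by a homology class in the subspace
$$A_\gamma \coloneqq \cC_\infty(k)\otimes_{\S_{r_1}\times\cdots\times\S_{r_j}} \bigdc_{i=1}^j \left(\S^{g_i,n_i}\Fvp\right)^{\dc r_i}$$
of $E_\infty\!\left(\bigoplus_i \S^{g_i,n_i}\Fvp\right)$, characterized by the fact that its underlying Day-convolution factor involves exactly $r_i$ copies of $\S^{g_i,n_i}\Fvp$ for each $i$. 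Analogously, $\eta_i$ is represented by a class in a summand $A_{\eta_i}$ of $E_\infty\!\left(\bigoplus_\ell \S^{c^{(i)}_\ell,q^{(i)}_\ell}\Fvp\right)$ of total charge $K_i \coloneqq \sum_\ell s^{(i)}_\ell$, whose Day-convolution factor involves exactly $s^{(i)}_\ell$ copies of $\S^{c^{(i)}_\ell,q^{(i)}_\ell}\Fvp$ for each $\ell$.

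Writing $W \coloneqq \bigoplus_{i,\ell}\S^{c^{(i)}_\ell,q^{(i)}_\ell}\Fvp$ and pre-composing with the inclusions of the appropriate summands, the chosen representatives of the $\eta_i$ together determine a morphism $\tilde\eta\colon \bigoplus_i \S^{g_i,n_i}\Fvp\to E_\infty(W)$ in the homotopy category of $\Mod_{\Fvp}$. Following the explicit description of composition given immediately after Proposition \ref{prop:yoneda}, the composite $\gamma\circ(\eta_1,\dots,\eta_j)$ is classified by the image of the class representing $\gamma$ under
$$E_\infty\!\left(\bigoplus_i \S^{g_i,n_i}\Fvp\right) \xrightarrow{E_\infty(\tilde\eta)} E_\infty(E_\infty(W)) \xrightarrow{\mu} E_\infty(W),$$
where $\mu$ is the $E_\infty$-action on the free algebra $E_\infty(W)$, realized at the chain level by the operadic composition of $\cC_\infty$ tensored with the canonical shuffle of Day-convolution factors. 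The first task is to show that this composite sends the charge-$k$ summand $A_\gamma$ into $\Sym^{K}(W)$ for $K = \sum_i r_i K_i = \sum_{i,\ell} r_i s^{(i)}_\ell$; this is immediate from charge-additivity under the operadic composition.

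Within $\Sym^K(W)$, the weight-$(r_1 s^{(1)}_1,\dots,r_j s^{(j)}_{k_j})$ summand is exactly the piece whose Day-convolution factor contains $r_i s^{(i)}_\ell$ copies of $\S^{c^{(i)}_\ell,q^{(i)}_\ell}\Fvp$ for each $(i,\ell)$, and this decomposition is manifestly preserved by the $\S_K$-action on $\cC_\infty(K)\otimes W^{\dc K}$ since the action only permutes factors of $W$ amongst themselves. An element of $A_\gamma$ involves $r_i$ copies of $\S^{g_i,n_i}\Fvp$, each of which $E_\infty(\tilde\eta)$ replaces with an element of $A_{\eta_i}$ supplying $s^{(i)}_\ell$ copies of $\S^{c^{(i)}_\ell,q^{(i)}_\ell}\Fvp$; after $\mu$ concatenates these factors via the operad composition $\theta$, the result contains exactly $r_i s^{(i)}_\ell$ copies of $\S^{c^{(i)}_\ell,q^{(i)}_\ell}\Fvp$, placing it in the desired weight summand. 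The main obstacle is in making each of these chain-level identifications precise: unwinding $\mu$ in terms of $\theta$ and the shuffle maps, then checking that the composite descends to symmetric-group coinvariants compatibly with the weight decomposition. Once this bookkeeping is complete, the weight of the composition is forced by the purely combinatorial factor count described above.
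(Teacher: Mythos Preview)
Your proposal is correct and follows essentially the same approach as the paper: both represent $\gamma$ and the $\eta_i$ in their respective weight summands via the Yoneda correspondence, compute the composition as the image of $\gamma$ under $E_\infty(\oplus_i \S^{g_i,n_i}\Fvp)\to E_\infty(E_\infty(W))\to E_\infty(W)$, and then track the target by counting how many copies of each $\S^{c^{(i)}_\ell,q^{(i)}_\ell}\Fvp$ appear after operadic composition. The paper writes out the explicit target summand $\cC_\infty(\sum r_i s^{(i)}_\ell)\otimes_{\prod \S_{s^{(i)}_\ell}}\bigdc(\S^{c^{(i)}_\ell,q^{(i)}_\ell}\Fvp)^{\dc r_i s^{(i)}_\ell}$ rather than phrasing it as a factor count, but the content is the same.
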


\begin{proof}
Since $\g$ has weight $(r_1,\dots,r_j)$, it is an element of
$$\H_{h,m}(\cC_{\infty}(\sum r_i)\otimes_{\S_{r_1}\times\dots\times
\S_{r_j}}\bigdc_{i=1}^j (\S^{g_i,n_i}\Fvp)^{\dc r_i}).$$
We can represent each $\eta_i$ as a homotopy class of maps
$$\S^{g_i,n_i}\Fvp\to \cC_{\infty}(s^{(i)})\otimes_{\S_{s_1^{(i)}}\times
\dots\times\S_{s^{(i)}_{k_i}}}\bigdc_{\ell}(\S^{c_{\ell}^{(i)},q_{\ell}^{(i)}}\Fvp)^{\dc
s^{(i)}_{\ell}},$$
where $s^{(i)}=\sum s^{(i)}_{\ell}.$
Tracing through the Yoneda correspondence, we see that  
composition of these operations is obtained by taking the image of $\g$ under
the map
$$E_{\infty}(\oplus_i \S^{g_i,n_i}\Fvp)\to
E_{\infty}(E_{\infty}(\oplus_{i,\ell}\S^{c^{(i)}_{\ell},q^{(i)}_{\ell}}\Fvp))\to
E_{\infty}(\oplus_{i,\ell}\S^{c^{(i)}_{\ell},q^{(i)}_{\ell}}\Fvp).$$
Restricting to the appropriate summands according to our assumptions on the
weight of $\g$ and $\eta_i$, we may replace the first map by
$$
\begin{tikzcd}
\cC_{\infty}(\sum r_i)\otimes_{\S_{r_1}\times\dots\times\S_{r_j}}
\bigdc_{i=1}^j (\S^{g_i,n_i}\Fvp)^{\dc r_i}
\arrow{d}\\
\cC_{\infty}(\sum r_i)\otimes_{\S_{r_1}\times\dots\times\S_{r_j}} \bigdc_i
\left(\cC_{\infty}(s^{(i)})\otimes_{\S_{s_1^{(i)}}\times
\dots\times\S_{s^{(i)}_{k_i}}}\bigdc_{\ell}(\S^{c^{(i)}_{\ell},q^{(i)}_{\ell}}\Fvp)^{\dc
s^{(i)}_{\ell}}\right)^{\dc r_i}.
\end{tikzcd}
$$
However, under the $E_{\infty}$ composition, the second map now has
target
$$
\cC_{\infty}(\sum_{i,\ell} r_is^{(i)}_{\ell})\otimes_{\S_{s^{(1)}_1}\times
\dots\times \S_{s^{(j)}_{k_j}}} \bigdc_{i,\ell}
(\S^{c^{(i)}_{\ell},q^{(i)}_{\ell}}\Fvp)^{\dc s^{(i)}_{\ell}r_i}.
$$
Thus, the composition $\g\circ (\eta_1,\dots,\eta_j)$ lands in the weight
$(r_1s^{(1)}_{\ell},\dots,r_js^{(j)}_{k_j})$ summand.
\end{proof}

In particular, charge is multiplicative under composition of unary operations.
Moreover, if an input has weight 1, then the operation is linear in that input.

\begin{lem}\label{lem:weight1}
Let $\g\in \Nat(\prod_{i=1}^j\H_{g_i,n_i},\H_{h,m})$ be an operation of
weight $(r_1,\dots,r_j)$. If $r_i=1$ for some $i$, then the operation $\g$ is
linear in the $i$th input. That is, if $X\in\Alg_{E_{\infty}}(\Mod_{\Fvp})$,
$x_k\in
H_{g_k,n_k}(X)$ for $1\leq k\leq j$, $x_i'\in H_{g_i,n_i}(X)$, and
$\a\in\F$, then
$$\g(x_1,\dots,\a
x_i+x_i',\dots,x_k)=\a\g(x_1,\dots,x_i,\dots,x_k)+\g(x_1,\dots,x_i',\dots,x_k).$$
\end{lem}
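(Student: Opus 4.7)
The plan is to use the representing object description from Proposition~\ref{prop:yoneda} together with $\F$-bilinearity of Day convolution, exploiting the fact that when $r_i=1$ the $i$-th input appears as a single tensor factor with only the trivial symmetric group acting on it.

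The first step is to observe that the weight-$(r_1,\ldots,r_j)$ summand
$$\cC_\infty(k)\otimes_{\S_{r_1}\times\cdots\times\S_{r_j}}\bigdc_{\ell=1}^j(\S^{g_\ell,n_\ell}\Fvp)^{\dc r_\ell}$$
splits as $B\dc \S^{g_i,n_i}\Fvp$, where
$$B\coloneqq\cC_\infty(k)\otimes_{\prod_{\ell\neq i}\S_{r_\ell}}\bigdc_{\ell\neq i}(\S^{g_\ell,n_\ell}\Fvp)^{\dc r_\ell}$$
and $\prod_{\ell\neq i}\S_{r_\ell}$ acts on $\cC_\infty(k)$ via the block embedding into $\S_k$ that fixes the single position assigned to the $i$-th input. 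This uses only that $\S_{r_i}=\S_1$ is trivial and therefore does not interact with the other Day convolution factors or with the symmetric group coinvariants.

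The second step is to unwind the Yoneda correspondence. Under the identification $H_{h,m}(Y)\cong[\S^{h,m}\Fvp,Y]_{\h{\Mod_{\Fvp}}}$, represent $\gamma$ as a homotopy class
$$\tilde\gamma\colon\S^{h,m}\Fvp\longrightarrow B\dc \S^{g_i,n_i}\Fvp.$$
Fix representatives $f_\ell\colon\S^{g_\ell,n_\ell}\Fvp\to X$ for $\ell\neq i$ and let $f\colon\S^{g_i,n_i}\Fvp\to X$ represent an arbitrary class $y\in H_{g_i,n_i}(X)$. The explicit description of the Yoneda bijection given after Proposition~\ref{prop:yoneda} shows that the restriction of the composite $E_\infty(\oplus_\ell \S^{g_\ell,n_\ell}\Fvp)\to E_\infty(X)\to X$ to this weight summand factors as
$$\S^{h,m}\Fvp\xrightarrow{\tilde\gamma}B\dc \S^{g_i,n_i}\Fvp\xrightarrow{\id_B\dc f}B\dc X\xrightarrow{\mu}X,$$
where $\mu$ is assembled from the fixed maps $f_\ell$ ($\ell\neq i$) together with the $E_\infty$-action on $X$ and does not depend on $f$. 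Consequently, $\gamma(x_1,\ldots,y,\ldots,x_j)$ is represented by $\mu\circ(\id_B\dc f)\circ\tilde\gamma$.

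The final step is to note that the assignment
$$[f]\longmapsto[\mu\circ(\id_B\dc f)\circ\tilde\gamma]$$
from $[\S^{g_i,n_i}\Fvp,X]_{\h{\Mod_{\Fvp}}}$ to $[\S^{h,m}\Fvp,X]_{\h{\Mod_{\Fvp}}}$ is $\F$-linear. Hom-sets in $\Mod_{\Fvp}$ carry a canonical $\F$-vector space structure since every object is a chain complex over $\F$; the operation $\id_B\dc(-)$ is $\F$-linear on morphisms because Day convolution is built from $\otimes_\F$; and pre- and postcomposition with the fixed maps $\tilde\gamma$ and $\mu$ preserve linearity. These properties descend to the homotopy category, and the identification of hom-sets with homology groups then gives the asserted formula. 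The main (mild) obstacle is the bookkeeping in the second step: one must verify that tracing the Yoneda correspondence through the weight-summand splitting really does isolate the dependence on $f$ to a single tensor factor of the form $\id_B\dc f$; once this factorisation is established, the linearity is automatic from the $\F$-enrichment of $\Mod_{\Fvp}$.
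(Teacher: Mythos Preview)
Your proof is correct and follows essentially the same approach as the paper: both restrict to the weight summand, observe that when $r_i=1$ the $i$-th input appears as a single Day convolution factor, and then invoke the $\F$-linearity of Day convolution on morphisms. The paper's argument is slightly more direct---it pushes the homology class $\gamma$ forward under the map $\id\otimes(\dc_k x_k^{\dc r_k})$ and notes that Day convolution satisfies $y\dc(\alpha x_i+x_i')\dc y'=\alpha\,y\dc x_i\dc y'+y\dc x_i'\dc y'$---whereas you go through the extra step of explicitly naming $B$, representing $\gamma$ as a morphism $\tilde\gamma\colon\S^{h,m}\Fvp\to B\dc\S^{g_i,n_i}\Fvp$, and factoring the evaluation as $\mu\circ(\id_B\dc f)\circ\tilde\gamma$; but these are Yoneda-dual descriptions of the same computation.
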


\begin{proof}
Let $x_k\colon \S^{g_k,n_k}\Fvp\to X$ be maps representing the classes $x_k\in
H_{g_k,n_k}(X)$.
Recall that we can evaluate $\g$ on
$(x_1,\dots,x_j)$ by taking its image under the composition
$$E_{\infty}(\oplus \S^{g_k,n_k}\Fvp)\xrightarrow{E_{\infty}(\sum x_k)}
E_{\infty}(X)\to X.$$
Since $\g$ has weight $(r_1,\dots,r_j)$, it is an element of
$$\H_{h,m}(\cC_{\infty}(r)\otimes_{\S_{r_1}\times\dots\times\S_{r_j}}
\bigdc_k (\S^{g_k,n_k}\Fvp)^{\dc r_k}),$$
where $r=\sum r_k$.
Restricting the above composition to this summand, we obtain the composition
$$\cC_{\infty}(r)\otimes_{\S_{r_1}\times\dots\times\S_{r_j}}
\bigdc_k (\S^{g_k,n_k}\Fvp)^{\dc r_k} \xrightarrow{id\otimes (\dc x_k^{\dc r_k})}
\cC_{\infty}(r)\otimes_{\S_{r_1}\times\dots\times\S_{r_k}} \bigdc X^{\dc r_k} \to
X.$$
But Day Convolution of maps is linear; that is, $$y\dc (\a x_i+x_i')\dc y'= \a
y\dc x_i \dc y' + y\dc x_i'\dc y'$$ for any maps $y,y'$. Hence, if $r_i=1$, the
image of $\g$ under the map determined by $(x_1,\dots,\a x_i+x_i',\dots,x_j)$
coincides with that under the map determined by
$$\a(x_1,\dots,x_i,\dots,x_j)+(x_1,\dots,x_i',\dots,x_j).$$
\end{proof}

Proposition \ref{prop:yoneda} tells us that in order to classify \emph{all} $\Fvp$-homology operations, it suffices to
understand the homology of free $E_{\infty}$-algebras together with its weight
grading. In Theorem
\ref{thm:W}, we will show that the homology of free $E_{\infty}$-algebras can
be expressed entirely in terms of certain unary, charge $p$ operations---known as
the \emph{Dyer--Lashof} operations---and a product, a binary weight $(1,1)$ operation.
Thus, these operations generate all $\Fvp$-homology operations.

\subsection{The product}\label{section:product}
We provide a brief note on the product, a collection of binary weight $(1,1)$
operations in $\Hom_{\mathsf{Op}_{\Fvp}}((g_1,n_1),(g_2,n_2);(g_1\oplus g_2,n_1+n_2)).$ By
Lemma \ref{lem:weight1}, such operations are linear in both inputs, and so can
be thought of as operations
$$H_{g_1,n_1}\otimes H_{g_2,n_2}\to H_{g_1\oplus g_2,n_1+n_2}$$
that are natural transformations of functors from
$\Ho(\Alg_{E_{\infty}}(\Mod_{\Fvp}))$ to $\Vect_{\F}$.
Moreover,
such operations are in bijection with classes in
$$\H_{g_1\oplus g_2,n_1+n_2}(\cC_{\infty}(2)\otimes_{\S_1\times \S_1}
\S^{g_1,n_1}\Fvp\dc
\S^{g_2,n_2}\Fvp).$$
Since $\S^{g_1,n_1}\Fvp\dc \S^{g_2,n_2}\Fvp\cong \S^{g_1\oplus g_2,n_1+n_2}\Fvp$, we have
\begin{align*}
\H_{g,n}(\cC_{\infty}(2)\otimes \S^{g_1,n_1}\Fvp\dc
\S^{g_2,n_2}\Fvp) &\cong \H_{g,n}(\cC_{\infty}(2)\otimes
\S^{g_1\oplus g_2,n_1+n_2}\Fvp)\\
&\cong H_{n_1+n_2}(\cC_{\infty}(2)\otimes \F[n_1+n_2])\\
&\cong H_0(\cC_{\infty}(2))\\
&\cong \F,
\end{align*}
where $g=g_1\oplus g_2$ and $n=n_1+n_2$.
Defining a product thus amounts to choosing a generator of this one-dimensional
vector space. There is a canonical choice of generator of $H_0(\cC_{\infty}(2))$, namely the homology class
 determined by any point in the second arity space of the little disks operad. 
We use this distinguished class to define the product.

\begin{prop}\label{prop:product}
 The product is associative and commutative. That is, for homology classes $x_i\in
H_{g_i,n_i}(X)$, the canonical map
$$H_{(g_1\oplus g_2)\oplus g_3,n_1+n_2+n_3}(X)\to H_{g_1\oplus (g_2\oplus g_3),n_1+n_2+n_3}(X)$$
induced by the associator $(g_1\oplus g_2)\oplus g_3\to g_1\oplus (g_2\oplus
g_3)$ sends
$$(x_1x_2)x_3\mapsto x_1(x_2x_3),$$
and the map
$$H_{g_1\oplus g_2,n_1+n_2}(X)\to H_{g_2\oplus g_1,n_1+n_2}(X)$$
induced by the braiding $g_1\oplus g_2\to g_2\oplus g_1$ sends
$$x_1x_2\mapsto (-1)^{n_1n_2}x_2x_1.$$
\end{prop}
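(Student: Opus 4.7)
The plan is to reduce both claims to statements about the chain-level product $m_\mu : X \dc X \to X$ induced by the distinguished 0-chain $\mu \in \cC_\infty(2)$, and then invoke Lemma \ref{lem:chain prod} to transfer these statements back to the $E_\infty$-product on homology. The essential homotopical input throughout is that each operad space $\cC_\infty(n)$ is singular chains on a contractible space, so $H_0(\cC_\infty(n)) \cong \F$ and any two $0$-chains coming from points of the underlying space of little disks are homologous via a $1$-chain realizing a path between them.

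For associativity, I would form the two chain-level iterated products
\[
L = m_\mu \circ (m_\mu \dc \id), \qquad R = m_\mu \circ (\id \dc m_\mu) \circ \a,
\]
viewed as maps $(X \dc X) \dc X \to X$ in $\Mod_{\Fvp}$, where $\a$ is the Day-convolution associator. Unwinding the $E_\infty$-structure axioms, each of $L$ and $R$ factors through $\cC_\infty(3) \otimes X^{\dc 3} \to X$ via the operadic compositions $\g(\mu; \mu, 1)$ and $\g(\mu; 1, \mu)$, respectively. By the remark above these two $0$-cycles are homologous in $\cC_\infty(3)$, so a standard chain-homotopy argument (acting by any $1$-chain $\nu$ with $\partial \nu = \g(\mu; \mu, 1) - \g(\mu; 1, \mu)$) shows $L$ and $R$ are chain-homotopic in $\Mod_{\Fvp}$, hence induce the same map on homology. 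Evaluating both sides on a K{\"u}nneth class $x_1 \otimes x_2 \otimes x_3$ via Proposition \ref{prop:Kunneth} and tracking how the $\fG$-associator enters through the explicit formula for the Day-convolution associator, one obtains $L_*(x_1\otimes x_2\otimes x_3) = (x_1 x_2) x_3$ and $R_*(x_1\otimes x_2\otimes x_3) = X(\a^{-1})_*\bigl(x_1(x_2 x_3)\bigr)$, from which the desired identity follows.

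For commutativity, I would exploit the $\S_2$-equivariance of the $E_\infty$-structure to write $m_\mu = m_{\sigma \cdot \mu} \circ \tau$, where $\sigma \in \S_2$ is the transposition and $\tau : X \dc X \to X \dc X$ is the monoidal braiding. Since $\mu$ and $\sigma \cdot \mu$ are homologous in $\cC_\infty(2)$, the maps $m_\mu$ and $m_{\sigma \cdot \mu}$ are chain-homotopic, so on homology $(m_\mu)_* = (m_\mu)_* \circ \tau_*$. Applied to a K{\"u}nneth class $x_1 \otimes x_2 \in H_{g_1 \oplus g_2, n_1+n_2}(X \dc X)$, the explicit description of the braiding in Equation \ref{eq:braiding} shows that $\tau_*$ produces a Koszul sign $(-1)^{n_1 n_2}$ together with an application of $(X\dc X)(\b^{-1}_{g_1,g_2})$; naturality of $m_\mu$ commutes this inverse $\fG$-braiding past the product, and rearranging yields $X(\b)_*(x_1 x_2) = (-1)^{n_1 n_2} x_2 x_1$.

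The main obstacle I anticipate is not conceptual but notational: one must carefully bookkeep the associators, braidings, and Koszul signs contributed separately by $\fG$ and $\Ch_\F$ through the Day convolution. Coherence for symmetric monoidal categories guarantees that the relevant diagrams commute, reducing the proof to a routine (if somewhat lengthy) diagram chase once the genuine mathematical input --- the homological triviality of each $\cC_\infty(n)$ as a $\S_n$-complex --- has been extracted.
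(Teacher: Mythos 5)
Your proposal is correct and follows essentially the same line of argument as the paper: both reduce the claim to the observation that the relevant $0$-cycles (for associativity, the two operadic composites in $\cC_\infty(3)$; for commutativity, $\mu$ and $\sigma\cdot\mu$ in $\cC_\infty(2)$) are homologous because the arity spaces of the little disks operad are connected, and both then unwind the explicit Day-convolution associator and braiding to account for the $\fG$-morphisms and Koszul signs. The only stylistic difference is that the paper works on the universal (Yoneda) side, tracking the chains $\th(c;c,e)\otimes(\iota_1\otimes\iota_2)\otimes\iota_3$ and $\th(c;e,c)\otimes\iota_1\otimes(\iota_2\otimes\iota_3)$ inside $H_{*,*}(\cC_\infty(3)\otimes\cdots)$, whereas you work directly with the chain-level maps $m_\mu$ on an arbitrary $X$; one small note is that your invocation of Lemma \ref{lem:chain prod} is superfluous here --- the identification of $(m_\mu)_*$ with the Yoneda-defined product already follows from the explicit unwinding given right after Proposition \ref{prop:yoneda}, and Lemma \ref{lem:chain prod} is really needed later for iterated applications in the internal Cartan formula.
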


\begin{rem}\label{rem:product}
In general, it does not make sense to ask whether $(x_1x_2)x_3$ \emph{equals}
$x_1(x_2x_3)$ or whether $x_1x_2$ \emph{equals} a multiple of $x_2x_1$, since
these classes live in different homology groups.

However, if $g_1=g_2=g$, we can use the description of the braiding given in
Section \ref{section:preliminaries} to obtain
$$x_1x_2=(-1)^{n_1n_2}\vp(\tr_{g\oplus g}(\b_{g,g}^{-1}))x_2x_1\in H_{g\oplus
g,n_1+n_2}(X),$$
where $\b_{g,g}$ is the braiding $g\oplus g\to g\oplus g$.
In particular, if $x_1=x_2=x$, this says that $x^2=0$ when
$(-1)^{n_1}\vp(\tr_{g\oplus g}(\b_{g,g}^{-1}))=-1$.
\end{rem}

\begin{proof}
Note that $x_1(x_2x_3)$ may be described as the product of the identity
operation applied to $x_1$ with the product operation applied to $x_2\otimes x_3$.
The identity operation on $H_{g_1,n_1}$ is represented by a map
$$\S^{g_1,n_1}\Fvp\to \cC_{\infty}(1)\otimes \S^{g_1,n_1}\Fvp,$$
and the product on $H_{g_2,n_2}\otimes H_{g_3,n_3}$ is represented by a map
$$\S^{g_2\oplus g_3,n_2+n_3}\Fvp\to \cC_{\infty}(2)\otimes \S^{g_2,n_2}\Fvp\dc
\S^{g_3,n_3}\Fvp.$$
Thus, the product on $H_{g_1,n_1}\otimes H_{g_2\oplus g_3,n_2+n_3}$ is given by the image
of the product class in 
$$H_{g_{1(23)},n}(
\cC_{\infty}(2)\otimes \S^{g_1,n_1}\Fvp\dc \S^{g_2\oplus g_3,n_2+n_3}\Fvp),$$
where $g_{1(23)}=g_1\oplus (g_2\oplus g_3)$ and $n=n_1+n_2+n_3$, under the composition
$$
\begin{tikzcd}
\cC_{\infty}(2)\otimes \S^{g_1,n_1}\Fvp\dc \S^{g_2\oplus g_3,n_2+n_3}\Fvp\arrow{d}\\
\cC_{\infty}(2)\otimes (\cC_{\infty}(1)\otimes \S^{g_1,n_1}\Fvp)\dc
(\cC_{\infty}(2)\otimes \S^{g_2,n_2}\Fvp\dc \S^{g_3,n_3}\Fvp)
\arrow{d}\\
\cC_{\infty}(3)\otimes \S^{g_1,n_1}\Fvp\dc (\S^{g_2,n_2}\Fvp\dc \S^{g_3,n_3}\Fvp).
\end{tikzcd}
$$
Explicitly, if $c\in \cC_{\infty}(2)$ is any chain determined by a point in
arity 2 of the little discs operad, and if we write $\iota_i$ for $1\in
\S^{g_i,n_i}\Fvp(g_i)_{n_i}$, then the product operation on $H_{g_1,n_1}\otimes
(H_{g_2,n_2}\otimes H_{g_3,n_3})$ is represented by the chain $$\th(c;e,c)\otimes
\iota_1\otimes (\iota_2\otimes \iota_3).$$ Here, $e$ is the class of any point in
the arity 1 space of the little discs operad, and $\th$ denotes the operad
composition.

Similarly, the product on $(H_{g_1,n_1}\otimes H_{g_2,n_2})\otimes H_{g_3,n_3}$ is represented by the chain $$\th(c;c,e)\otimes
(\iota_1\otimes \iota_2)\otimes \iota_3$$
in
$$\cC_{\infty}(3)\otimes (\S^{g_1,n_1}\Fvp\dc \S^{g_2,n_2}\Fvp)\dc \S^{g_3,n_3}\Fvp.$$

Since $c$ and $e$ both come from points in a space, the classes $\th(c;e,c)$ and
$\th(c;c,e)$ both come from points in the arity 3 space of the operad, and thus
are homologous since this space is connected.

The canonical map comparing the two parenthesizations is obtained from the
composition
$$
\begin{tikzcd}
H_{g_{(12)3},n}(\cC_{\infty}(3)\otimes (\S^{g_1,n_1}\Fvp\dc \S^{g_2,n_2}\Fvp)\dc
\S^{g_3,n_3}\Fvp) \ar[d]\\
H_{g_{(12)3},n}(\cC_{\infty}(3)\otimes \S^{g_1,n_1}\Fvp\dc (\S^{g_2,n_2}\Fvp\dc
\S^{g_3,n_3}\Fvp))\ar[d]\\
H_{g_{1(23)},n}(\cC_{\infty}(3)\otimes \S^{g_1,n_1}\Fvp\dc (\S^{g_2,n_2}\Fvp\dc
\S^{g_3,n_3}\Fvp)),
\end{tikzcd}
$$
where the first map comes from applying the associator in $\Mod_{\Fvp}$, and the
second map is induced by the associator $\a_{g_1,g_2,g_3}$ in $\fC$.
Recall from the description given in Section \ref{section:preliminaries} that
the
associator in $\Mod_{\Fvp}$ is defined using a composition whose final map is
induced by $\a^{-1}_{g_1,g_2,g_3}$, so these associators cancel. Since the
associators in $\Ch_{\F}$ are trivial,
we see that
this composition sends $$\th(c;c,e)\otimes ((\iota_1\otimes \iota_2)\otimes
\iota_3)\mapsto \th(c;c,e)\otimes (\iota_1\otimes (\iota_2\otimes \iota_3))$$
on the chain level. Passing to homology, and using that $\th(c;e,c)$ and
$\th(c;c,e)$ are homologous proves the result.

Comparing the product on $H_{g_1,n_1}\otimes H_{g_2,n_2}$ 
with the product on $H_{g_2,n_2}\otimes H_{g_1,n_1}$ amounts to considering the
image of the class $c\otimes \iota_1\otimes \iota_2$ under the
composition
$$
\begin{tikzcd}
H_{g_1\oplus g_2,n_1+n_2}(\cC_{\infty}(2)\otimes \S^{g_1,n_1}\Fvp\dc
\S^{g_2,n_2}\Fvp)
\ar[d]\\
H_{g_1\oplus g_2,n_1+n_2}(\cC_{\infty}(2)\otimes \S^{g_2,n_2}\Fvp\dc
\S^{g_1,n_1}\Fvp)
\ar[d]\\
H_{g_2\oplus g_1,n_1+n_2}(\cC_{\infty}(2)\otimes \S^{g_2,n_2}\Fvp\dc
\S^{g_1,n_1}\Fvp),
\end{tikzcd}
$$
where the first map is obtained by applying the braiding
$$\S^{g_1,n_1}\Fvp\dc \S^{g_2,n_2}\Fvp\to \S^{g_2,n_2}\Fvp\dc \S^{g_1,n_1}\Fvp$$
in $\Mod_{\Fvp}$,
and the second is induced from the braiding $\b_{g_1,g_2}$ in $\fC$. From the
discussion in Section \ref{section:preliminaries}, we know that the former
braiding morphism multiplies by $(-1)^{n_1n_2}$ and acts by $\b^{-1}_{g_1,g_2}$.
Therefore before taking homology, the composition sends
$$c\otimes \iota_1\otimes \iota_2\mapsto (-1)^{n_1n_2} c\otimes \iota_2\otimes
\iota_1.$$
Passing to homology, we see that the product on $H_{g_1,n_1}\otimes H_{g_2,n_2}$
is mapped to $(-1)^{n_1n_2}$ times the product on $H_{g_2,n_2}\otimes
H_{g_1,n_1}$.
\end{proof}

Recall from \ref{eq:unit} that any $X\in \Alg_{E_{\infty}}(\Mod_{\Fvp})$ comes
equipped with a unit map $\Fvp\to X$ of $E_{\infty}$-algebras. The canonical generator
$1\in H_{\1_{\fG},0}(\Fvp)$ then determines a class $\mathbbm{1}\in H_{\1_{\fG},0}(X)$,
which we claim acts as a unit for the product.

\begin{prop}\label{prop:unit}
If $X\in \Alg_{E_{\infty}}(\Mod_{\Fvp})$, then $\mathbbm{1}\in H_{\1_{\fG},0}(X)$
is the unit for the product on $H_{*,*}(X)$.
\end{prop}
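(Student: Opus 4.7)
The plan is to unwind the definitions of both the product and the unit, then exploit the fact that the arity 1 space of the little $\infty$-disks operad is contractible. Represent a class $x \in H_{g,n}(X)$ by a map $\tilde{x}: \S^{g,n}\Fvp \to X$ in $\Mod_{\Fvp}$, and observe that the class $\mathbbm{1} \in H_{\1_\fG, 0}(X)$ is represented by the unit map $\tilde{u}: \Fvp \to X$, which by its construction factors as
\[
\Fvp \to \cC_\infty(0)\otimes \Fvp \cong \Sym^0(X) \to E_\infty(X)\to X,
\]
where the first map picks out a 0-chain $e_0 \in \cC_\infty(0)$ coming from the (unique) point in the arity 0 space of the little $\infty$-disks operad.

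Following the explicit description of the product given in the proof of Proposition \ref{prop:product}, and using the canonical identifications $\1_\fG \oplus g \cong g$ and $\Fvp \dc \S^{g,n}\Fvp \cong \S^{g,n}\Fvp$, the class $\mathbbm{1}\cdot x$ is represented by the image of the 0-chain $c \otimes 1 \otimes \iota$ in $\cC_\infty(2)\otimes \Fvp \dc \S^{g,n}\Fvp$ under the composition
\[
\cC_\infty(2)\otimes \Fvp\dc \S^{g,n}\Fvp \xrightarrow{\mathrm{id}\otimes\, \tilde{u}\,\dc\,\tilde{x}} \cC_\infty(2)\otimes X\dc X \to X,
\]
where the second map uses the $E_\infty$-structure on $X$. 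Using the compatibility of the unit map with the $E_\infty$-structure (i.e.\ the operad composition axiom for $X$ applied to $c$ composed with $e_0$ in the first slot and the identity $e_1 \in \cC_\infty(1)$ in the second), this composition factors through the chain $\theta(c; e_0, e_1)\otimes \iota \in \cC_\infty(1)\otimes \S^{g,n}\Fvp$, where $\theta$ denotes operad composition.

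The crucial step is now to observe that $c$, $e_0$, and $e_1$ all come from points in the corresponding spaces of the little $\infty$-disks operad, so $\theta(c;e_0,e_1)$ comes from a point in the arity 1 space, which is contractible. Hence $\theta(c;e_0,e_1)$ is homologous to $e_1$ as a 0-chain in $\cC_\infty(1)$, and so induces the same map in homology. But applying $e_1$ through the $E_\infty$-structure recovers $\tilde{x}$ by the unit axiom for algebras over an operad, so $\mathbbm{1}\cdot x = x$ in $H_{g,n}(X)$. The opposite relation $x\cdot \mathbbm{1} = x$ follows by the symmetric argument (with $e_0$ in the second slot instead of the first), or alternatively from Proposition \ref{prop:product}, noting that the braiding $\1_\fG \oplus g \to g \oplus \1_\fG$ is trivial by coherence and introduces no sign since one of the degrees is $0$.

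The main obstacle is the bookkeeping required to identify the two chain-level descriptions $c\otimes 1\otimes \iota$ and $\theta(c;e_0,e_1)\otimes \iota$; this involves tracking the shuffle map implicit in the $E_\infty$-structure on $X\dc X$ (from \ref{eq:Einftyprod}) and the lax monoidality of $\Ch_\F \to \Mod_{\Fvp}$, but once this identification is made the homological argument is immediate.
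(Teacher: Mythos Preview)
Your proof is correct and follows essentially the same approach as the paper's: both represent the relevant classes by maps, identify the product with the unit as coming from an operad composition $\theta(c;e_0,e_1)\in\cC_\infty(1)$ of point-classes, and use contractibility of the arity~1 space to conclude this is homologous to the identity. The only cosmetic differences are that the paper computes $x\cdot\mathbbm{1}$ first (unit in the second slot) and phrases the argument more explicitly in terms of the composition-of-operations formalism developed in Section~\ref{section:homops}.
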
 

\begin{proof}
Let $x\colon \S^{g,n}\Fvp\to X$ represent a class in $H_{g,n}(X)$. We can replace any
class with the identity operation applied to it, which amounts to considering
instead the composition
$$\S^{g,n}\Fvp\to \cC_{\infty}(1)\otimes \S^{g,n}\Fvp\to \Sym^1(X)\to X.$$
The class $\mathbbm{1}$ may be represented by a map
$$\S^{\1_{\fG},0}\Fvp\to \cC_{\infty}(0)\otimes \S^{\1_{\fG},0}\Fvp\to \Sym^0(X)\to X.$$
Thus, the product of $x$ with $\mathbbm{1}$ is given by composing the product
operation with the identity and the unit, which is given by the composition
$$
\begin{tikzcd}
\cC_{\infty}(2)\otimes \S^{g,n}\Fvp\dc \S^{\1_{\fG},0}\Fvp \arrow{d}\\
\cC_{\infty}(2)\otimes (\cC_{\infty}(1)\otimes \S^{g,n}\Fvp)\otimes
(\cC_{\infty}(0)\otimes \S^{\1_{\fG},0}\Fvp) \arrow{d}\\
\cC_{\infty}(1)\otimes \S^{g,n}\Fvp.
\end{tikzcd}
$$
The operation we obtain is given by the operad composition
$$\cC_{\infty}(2)\otimes \cC_{\infty}(1)\otimes \cC_{\infty}(0)\to
\cC_{\infty}(1).$$
Since the product class in $\cC_{\infty}(2)$, the identity class in
$\cC_{\infty}(1)$, and the unit class in $\cC_{\infty}(0)$ all come from the
class of a point in a space, their composition also comes from
the class of a point in $\cC_{\infty}(1)$. Any such point is homologous to
``the'' identity element of $\cC_{\infty}(1)$, namely the chain determined by
the identity embedding of a single disc into itself, and thus any point
determines the identity map on homology.

Similarly, multiplying in the other order also gives the identity.
\end{proof}

\section{Dyer--Lashof operations}\label{section:DLops}

We construct certain operations that generalize the classical Dyer--Lashof
operations. In the following section, we will show that these operations,
together with the product, generate all $\Fvp$-homology operations.

The characteristic $p$ Dyer--Lashof operations on $H_{g,n}$ come from certain classes in the
homology $H_*(\S_p;\F)$ of the $p$th symmetric group with twisted coefficients
in $\F$. Here, $\S_p$ acts on $\F$ as multiplication by a unit using the map
$$\S_p\to \Z/2\Z\to \F^{\times},$$
where the nonidentity element in $\Z/2\Z$ maps to $(-1)^n\vp(\tr_{g\oplus
g}(\b_{g,g}^{-1}))$.
For homology with constant coefficients, the value of $\vp(\tr_{g\oplus
g}(\b_{g,g}^{-1}))$ is assumed to be 1, but in our case it
is also possible to have $\vp(\tr_{g\oplus g}(\b_{g,g}^{-1}))=-1$. Thus, if $p=2$, there is no
difference between the classical definition of the operations and the twisted
operations we construct. Therefore, for the remainder of this work, we suppose
$p>2$.

Let $C_p$ denote the cyclic group of order $p$ with generator $\a$, and let $W$ be the resolution of
$\F$ by free $\F[C_p]$-modules that has one generator, $e_i$, in each degree,
and such that
$$d(e_{2i})=(1+\a+\dots +\a^{p-1})e_{2i-1} \text{  and  }
d(e_{2i-1})=(\a-1)e_{2i-2} \text{ for } i>0.$$
Since $\cC_{\infty}(p)$ is, when given the usual augmentation map to $\F$, an
acyclic $\F[\S_p]$-complex (and therefore also an acyclic $\F[C_p]$-complex), we
obtain a map $W\to \cC_{\infty}(p)$ of $\F[C_p]$-complexes that is unique up to
homotopy. In particular, given any $X\in \Mod_{\Fvp}$, we obtain a unique map 
$$H_{*,*}(W\otimes_{C_p} X^{\dc p})\to H_{*,*}(\Sym^p(X)).$$

We also define a coproduct $\psi$ on $W$ (see Definition 1.2 of \cite{Mayalg}):
\begin{equation}\label{eq:Wcoprod}
\psi(e_{2i+1})=\sum_{j+k=i}e_{2j}\otimes e_{2k+1}+e_{2j+1}\otimes \a e_{2k}
\end{equation}
and
$$\psi(e_{2i})=\sum_{j+k=i}e_{2j}\otimes e_{2k}+\sum_{j+k=i-1} \sum_{0\leq
r<s<p}\a^r e_{2j+1}\otimes \a^s e_{2k+1}.$$
This coproduct is a chain map and resolves the canonical isomorphism
$$\F\to \F\otimes_{\F}\F.$$ 

Since $\cC_{\infty}$ arises from a space operad and so we have a chain-level
coproduct $\cC_{\infty}(p)\to \cC_{\infty}(p)\otimes \cC_{\infty}(p)$, we claim that the diagram
$$
\xymatrix{
W\ar[d]^{\psi} \ar[r] & \cC_{\infty}(p)\ar[d]\\
W\otimes W \ar[r] & \cC_{\infty}(p)\otimes \cC_{\infty}(p)
}
$$
commutes (at least up to homotopy).
Note that the diagram commutes in degree 0, since the coproduct of a zero chain
$c$ determined by a point in the arity $p$ space is simply $c\otimes c$, and
$\psi(e_0)=e_0\otimes e_0$. Since $\cC_{\infty}(p)\otimes \cC_{\infty}(p)$ is
acyclic and $W\otimes W$ is a free $\F[C_p]$-complex, maps $W\otimes W\to
\cC_{\infty}(p)\otimes \cC_{\infty}(p)$ lifting this map are unique up to
homotopy. 

\begin{lem}\label{lem:zero}
Let $X=\S^{g,n}\Fvp$ for $g\in \fG$ and $n\in \Z$. Then
$$H_{h,m}(W\otimes_{C_p} X^{\circledast p})\cong
\begin{cases}
\F & \text{if }h\cong \gpower{g}{p} \text{ and } m\geq pn\\
0 & \text{otherwise.}
\end{cases}
$$
The image of $e_i$ under the map
$$H_{\gpower{g}{p},i+pn}(W\otimes_{C_p} X^{\circledast p}) \to H_{\gpower{g}{p},i+pn}(\Sym^p(X))$$
is zero, unless
$$
\begin{cases}
i=2j(p-1) \text{ or } 2j(p-1)-1 & \text{if }(-1)^n\vp(\tr_{g\oplus g}(\b_{g,g}^{-1}))=1\\
i=(2j+1)(p-1) \text{ or } (2j+1)(p-1)-1 & \text{if }(-1)^n\vp(\tr_{g\oplus g}(\b_{g,g}^{-1}))=-1.
\end{cases}
$$
\end{lem}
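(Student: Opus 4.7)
The plan is to reduce both claims to classical computations about $H_*(C_p)$ and $H_*(\S_p)$ with twisted coefficients, after identifying the relevant $\S_p$-action on $X^{\dc p}$. First I would apply the K\"unneth Theorem \ref{prop:Kunneth} to conclude that $X^{\dc p}(h)$ vanishes unless $h \cong \gpower{g}{p}$, in which case it is $\F[pn]$ as a chain complex. The $\S_p$-action on $\F[pn]$ arises from the braiding in $\Mod_{\Fvp}$, which by Equation \ref{eq:braiding} combines the Koszul sign in $\Ch_\F$ with the $\fG$-braiding $\b_{g,g}^{-1}$ acting via Proposition \ref{prop:property}. Writing $\e \coloneqq (-1)^n \vp(\tr_{g \oplus g}(\b_{g,g}^{-1}))$, a transposition of adjacent factors thus acts as multiplication by $\e$. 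Since $p$ is odd and a $p$-cycle is a product of $p-1$ transpositions, the generator $\a \in C_p$ acts by $\e^{p-1} = 1$, so the restriction of the $\S_p$-action to $C_p$ is trivial. The full $\S_p$-action therefore factors through $\S_p/A_p \cong \Z/2$, which acts on $\F[pn]$ as multiplication by $\e$.

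For the first claim, I would then compute $W \otimes_{C_p} \F[pn]$ directly. Because $C_p$ acts trivially on $\F[pn]$, the norm $1 + \a + \cdots + \a^{p-1}$ acts as multiplication by $p = 0$ in characteristic $p$, and $\a - 1$ acts as zero, so every differential in $W \otimes_{C_p} \F[pn]$ vanishes. The resulting complex has $\F$ in each degree $i + pn$ for $i \geq 0$ and zero elsewhere; combined with the vanishing of $X^{\dc p}(h)$ for $h \not\cong \gpower{g}{p}$, this gives $H_{h,m}(W \otimes_{C_p} X^{\dc p}) \cong \F$ exactly when $h \cong \gpower{g}{p}$ and $m \geq pn$.

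For the second claim, the map $W \otimes_{C_p} X^{\dc p} \to \Sym^p(X) = \cC_\infty(p) \otimes_{\S_p} X^{\dc p}$, evaluated at $\gpower{g}{p}$, sends the class of $e_i$ to a class in $H_i(\S_p; \F[pn])$, where the $\S_p$-action on $\F[pn]$ is as determined above: trivial when $\e = 1$ and through the sign representation when $\e = -1$. Since $[\S_p : C_p] = (p-1)!$ is coprime to $p$, one may identify $H_*(\S_p; \F[pn])$ with the $(\Z/p)^{\times}$-invariants of $H_*(C_p; \F[pn])$ using the normalizer $N_{\S_p}(C_p) = C_p \rtimes (\Z/p)^\times$. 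The classical Dyer--Lashof computation of May \cite{Mayalg, IteratedLoops} then determines the surviving degrees: in the trivial-coefficient case $\e = 1$ only $e_{2j(p-1)}$ and $e_{2j(p-1)-1}$ survive, and in the sign case $\e = -1$ only $e_{(2j+1)(p-1)}$ and $e_{(2j+1)(p-1)-1}$ do, matching the conclusion of the lemma.

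The main obstacle will be justifying that the $\S_p$-action on $X^{\dc p}(\gpower{g}{p})$ really is multiplication by $\e^{\sgn(\cdot)}$; for non-adjacent transpositions one must combine Koszul signs with a composite of $\fG$-braidings, and invoke Proposition \ref{prop:property} together with the multiplicativity of the trace to see that the resulting automorphism of $\gpower{g}{p}$ acts by $\vp$ of its trace, reducing the general case to the adjacent-transposition computation. Once this is settled, May's Weyl-group invariance argument applies verbatim to finish the proof.
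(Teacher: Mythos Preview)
Your proposal is correct and follows essentially the same route as the paper: identify $X^{\dc p}$ with $\S^{\gpower{g}{p},pn}\Fvp$, recognize the $\S_p$-action as the sign-type representation $\F(q)$ with $q=(-1)^n\vp(\tr_{g\oplus g}(\b_{g,g}^{-1}))$, observe that $C_p$ acts trivially because $q^{p-1}=1$, and then invoke May's Lemma~1.4 in \cite{Mayalg} for the image in $H_*(\S_p;\F(q))$. Your explicit mention of the normalizer/Weyl-group argument is a slight unpacking of what the paper leaves inside the citation to May, but the strategy is identical.
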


\begin{proof}
First note that
$$(\S^{g,n}\Fvp)^{\dc p}\cong \S^{\gpower{g}{p},pn}\Fvp,$$ 
so the $(h,m)$ homologies of both $W\otimes_{C_p} X^{\dc p}$ and
$\Sym^p(X)$ are zero unles $h\cong \gpower{g}{p}$. Moreover, any morphism $h\to
\gpower{g}{p}$ induces an isomorphism $H_{h,m}\to
H_{\gpower{g}{p},m}$, so it suffices to consider $h=\gpower{g}{p}$.

The symmetric group action on $(\S^{g,n}\Fvp)^{\dc p}$ comes from
permuting the factors $\S^{g,n}\Fvp$. Recall that the symmetry
$$\S^{g,n}\Fvp\dc \S^{g,n}\Fvp\to \S^{g,n}\Fvp\dc \S^{g,n}\Fvp$$
is given by multiplication by the unit $q=(-1)^n\vp(\tr_{g\oplus g}(\b_{g,g}^{-1}))$. Thus, 
$$H_{\gpower{g}{p},m}(W\otimes_{C_p} X^{\dc p})\cong H_{m}(W_{*-pn}\otimes_{C_p}
\F(q))$$
and
$$H_{\gpower{g}{p},m}(\Sym^p(X))\cong
H_m(\cC_{\infty}(p)_{*-pn}\otimes_{\S_p}\F(q)),$$
where by $\F(q)$, we mean the $\S_p$-representation $\F$, where the generator
$\s_j$ that permutes $j$ and $j+1$ acts by multiplication by $q$. But since $W$
is a free $C_p$-resolution of $\F(q)$, and $\cC_{\infty}(p)$ is a free
$\S_p$-resolution of $\F(q)$, we have that
$$H_{\gpower{g}{p},m}(W\otimes_{C_p} X^{\dc p})\cong H_{m-pn}(C_p;\F(q))$$ and
$$H_{\gpower{g}{p},m}(\Sym^p(X))\cong H_{m-pn}(\S_p;\F(q)).$$
A generator of $C_p$ can be written as a product of $p-1$ transpositions, and
hence it acts trivially on $\F$, since $q^{p-1}=1$. The homology of
$W\otimes_{C_p} \F(q)$ is then isomorphic to $H_*(C_p;\F)$, which is
one-dimensional in each degree. This homology is generated by $e_i\otimes 1\in
W\otimes \F$.

The remainder of the statement follows directly from Lemma 1.4 in \cite{Mayalg}. 
\end{proof}

\subsection{Lower indexing}\label{subsection:lower}

Recall from Proposition \ref{prop:yoneda} that a class in
$$H_{\gpower{g}{p},i+pn}(\Sym^p(\S^{g,n}\Fvp))$$ determines a homology operation from
$$H_{g,n}\to H_{\gpower{g}{n},i+pn}.$$
We will use such classes coming from the homology of $C_p$ to define particular
operations.

\begin{defn}\label{defn:lowerindex}
Let $Q_{i(p-1)}$ denote the homology operation determined by the class $e_{i(p-1)}\in
H_{i(p-1)}(C_p;\F)$. That
is, for any $(g,n)$,
$$Q_{i(p-1)}\colon H_{g,n}\to H_{\gpower{g}{p},i(p-1)+pn}$$
is the operation determined by the image of $$e_{i(p-1)}\in H_{i(p-1)}(C_p;\F)\cong
H_{\gpower{g}{p},i(p-1)+pn}(W\otimes_{C_p} (\S^{g,n}\Fvp)^{\dc p})$$ in
$H_{\gpower{g}{p},i(p-1)+pn}(\Sym^p(\S^{g,n}\Fvp)).$
Similarly, we let $\b Q_{i(p-1)}$ denote the homology operation determined by
$e_{i(p-1)-1}\in H_{i(p-1)-1}(C_p;\F)$.
\end{defn}

These operations are the \emph{Dyer--Lashof operations} written with lower
indexing.
By Lemma \ref{lem:zero}, these operations account for all nontrivial operations
coming from $H_*(C_p;\F)$ (although some of the operations we have defined are
trivial).

\begin{rem}
Note that $\b Q_{i(p-1)}$ is \emph{not} defined to
be the Bockstein homomorphism applied to $Q_{i(p-1)}$. Since our chain complexes
need not come from integral chain complexes, we may not have a Bockstein
homomorphism.
\end{rem}

The Dyer--Lashof operations satisfy versions of all of the classical relations,
listed below (compare
with Theorem  1.1 of \cite{IteratedLoops}). Much of the proof follows the methods of
\cite{Mayalg}, though we take care to rephrase everything in the language of
$\Mod_{\Fvp}$ and sometimes give modified arguments.

\begin{thm}\label{thm:lower index}
The Dyer--Lashof operations on $\Alg_{E_{\infty}}(\Mod_{\Fvp})$ consist of
natural transformations
\begin{align*}
Q_{i(p-1)}&\colon H_{g,n}\to H_{\gpower{g}{p},i(p-1)+pn}  &\text{ for } i\geq 0\\
\b Q_{i(p-1)}&\colon H_{g,n}\to H_{\gpower{g}{p},i(p-1)+pn-1} &\text{ for }i>0
\end{align*}
satisfying the following relations.
\begin{enumerate}
\item (``Frobenius'' linearity) For $x,y\in H_{g,n}(X)$ and $t\in \F$,
$$Q_{i(p-1)}(t x+y)=t^p Q_{i(p-1)}(x)+Q_{i(p-1)}(y)$$ and $$\b Q_{i(p-1)}(t
x+y)= t^p\b
Q_{i(p-1)}(x)+\b Q_{i(p-1)}(y).$$ \label{additivity}
\item If
$(-1)^{n+i}\vp(\tr_{g\oplus g}(\b_{g,g}^{-1}))=-1$, then $Q_{i(p-1)}=0$ and $\b Q_{i(p-1)}=0$ on
$H_{g,n}$.  \label{zero}
\item For any $x\in H_{*,*}(X)$, $Q_0(x)=x^p$. \label{power}
\item If $\mathbbm{1}\in H_{\1_{\fG},0}(X)$ is the identity, then
$Q_{i(p-1)}(\mathbbm{1})=0$ and $\b Q_{i(p-1)}(\mathbbm{1})=0$ for all $i>0$.
\label{unit}
\item (External Cartan formulas) If $X,Y\in \Alg_{E_{\infty}}(\Mod_{\Fvp})$ with
homology classes
$x\in H_{g,n}(X)$, $y\in H_{h,m}(Y)$, then the canonical map
$$H_{\gpower{(g\oplus h)}{p},*}(X\dc Y)\to H_{\gpower{g}{p}\oplus \gpower{h}{p},*}(X\dc Y)$$
induced by the shuffle map $\gpower{(g\oplus h)}{p}\to \gpower{g}{p}\oplus
\gpower{h}{p}$ in $\fC$ sends
\begin{align*}
Q_{i(p-1)}(x\otimes y)\mapsto (-1)^{\tfrac{nm(p-1)}{2}}\sum_{j+k=i}
Q_{j(p-1)}(x)\otimes Q_{k(p-1)}(y)
\end{align*}
and
\begin{align*}
\b Q_{i(p-1)}(x\otimes y)\mapsto (-1)^{\tfrac{nm(p-1)}{2}}\sum_{j+k=i}
&\big(\b Q_{j(p-1)}(x) \otimes Q_{k(p-1)}(y)\\
 &+(-1)^nQ_{j(p-1)}(x) \otimes \b Q_{k(p-1)}(y)\big).
\end{align*}
\label{external cartan}

\item (Internal Cartan formulas) If $x\in H_{g,n}(X)$ and $y\in H_{h,m}(X)$,
then the canonical map
$$H_{\gpower{(g\oplus h)}{p},*}(X)\to H_{\gpower{g}{p}\oplus \gpower{h}{p},*}(X)$$
sends
$$Q_{i(p-1)}(xy)\mapsto (-1)^{\tfrac{nm(p-1)}{2}}\sum_{j+k=i}
Q_{j(p-1)}(x)Q_{k(p-1)}(y)$$ 
and
\begin{align*}
\b Q_{i(p-1)}(xy)\mapsto (-1)^{\tfrac{nm(p-1)}{2}} \sum_{j+k=i}
&\big(\b Q_{j(p-1)}(x) Q_{k(p-1)}(y)\\
 &+(-1)^nQ_{j(p-1)}(x) \b Q_{k(p-1)}(y)\big).
\end{align*}
\label{internal cartan}
\item (Adem relations)
For $x\in H_{g,n}$, if $r>s$, then

$$Q_{r(p-1)}Q_{s(p-1)}(x)= q_{g,n,s}\sum_j
t_{r,s,j}\, Q_{(r+ps-pj)(p-1)}
Q_{j(p-1)}(x)$$ 
and
$$\b Q_{r(p-1)}Q_{s(p-1)}(x)=q_{g,n,s} \sum_j t_{r,s,j}\,
 \b Q_{(r+ps-pj)(p-1)}
Q_{j(p-1)}(x),$$
where $$q_{g,n,s}=[\vp(\tr_{g\oplus g}(\b_{g,g}^{-1}))(-1)^n]^{\tfrac{p-1}{2}}(-1)^{\tfrac{s(p-1)}{2}}$$
and $$t_{r,s,j}=(-1)^{\tfrac{r-j}{2}}
\binom{\tfrac{(j-s)(p-1)}{2}-1}{\tfrac{r-j}{2}-1};$$
if $r\geq s$, then
\begin{align*}
Q_{r(p-1)}\b
Q_{s(p-1)}(x)=q_{g,r}&\sum_j
\tfrac{(j-s)(p-1)}{pj-s(p-1)-r+1} t_{r,s,j}\, \b
Q_{(r-1+ps-pj)(p-1)}Q_{j(p-1)}(x)\\
&-q_{g,n,s}\sum_j t_{r,s,j}\,
Q_{(r+ps-pj)(p-1)}\b Q_{j(p-1)}(x)
\end{align*}
and
\begin{align*}
\b Q_{r(p-1)}\b
Q_{s(p-1)}(x)=-q_{g,n,s}\sum_j
t_{r,s,j}\,
\b Q_{(r+ps-pj)(p-1)}\b Q_{j(p-1)}(x)),
\end{align*}
where
$$q_{g,r}=\vp(\tr_{g\oplus g}(\b_{g,g}^{-1}))^{\tfrac{p-1}{2}}(-1)^{\tfrac{r(p-1)}{2}}\left(\tfrac{p-1}{2}!\right),$$ 
$$q_{g,n,s}=[\vp(\tr_{g\oplus g}(\b_{g,g}^{-1}))(-1)^n]^{\tfrac{p-1}{2}} (-1)^{\tfrac{s(p-1)}{2}},$$ and
$$t_{r,s,j}=(-1)^{\tfrac{r-1-j}{2}}
\binom{\tfrac{(j-s)(p-1)}{2}-1}{\tfrac{r-1-j}{2}}.$$
Here, if an expression in a binomial coefficient is not an integer, the binomial
coefficient is defined to be zero.
\label{item:adem}
\setcounter{lowerindex}{\value{enumi}}
\end{enumerate}
\end{thm}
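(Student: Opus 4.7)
The plan is to adapt May's classical arguments from \cite{Mayalg} and \cite{IteratedLoops} to the twisted setting, using the free $C_p$-resolution $W$, the map $W \to \cC_{\infty}(p)$, the coproduct $\psi$ on $W$, and Lemma \ref{lem:zero}. Throughout, the main adaptation is tracking signs and unit factors arising from the twisted braiding, encoded in the quantity $q = (-1)^n \vp(\tr_{g\oplus g}(\b_{g,g}^{-1}))$ from Lemma \ref{lem:zero}.

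I would handle the formal properties (\ref{additivity})--(\ref{unit}) first. Frobenius linearity (\ref{additivity}) follows from expanding $(tx + y)^{\dc p}$ and observing that mixed terms (in which neither $x$ nor $y$ appears $p$ times) lie in free $C_p$-orbits of size $p$ and therefore vanish after taking $C_p$-coinvariants; only the diagonal terms $t^p x^{\dc p}$ and $y^{\dc p}$ contribute, and the map $W \otimes_{C_p} (-)^{\dc p} \to \Sym^p$ is linear in the tensor factor. The vanishing statement (\ref{zero}) is immediate from Lemma \ref{lem:zero}, since $e_{i(p-1)}$ and $e_{i(p-1)-1}$ have the wrong parity relative to $q$ unless the stated sign condition holds. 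For (\ref{power}), $e_0 \in W$ maps to a generator of $H_0(\cC_{\infty}(p))$, which represents the chain-level $p$-fold product; Lemma \ref{lem:chain prod} then yields $Q_0(x) = x^p$. For (\ref{unit}), evaluating on the unit $\Fvp \to X$ reduces the computation to $H_{*,*}(W \otimes_{C_p} \Fvp^{\dc p}) \cong H_*(C_p;\F)[0]$, whose positive-degree classes map to zero in $\Sym^p(\Fvp) \simeq \Fvp$.

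The Cartan formulas (\ref{external cartan}) and (\ref{internal cartan}) follow from the coproduct $\psi$ on $W$ together with the shuffle isomorphism $(X\dc Y)^{\dc p} \cong X^{\dc p} \dc Y^{\dc p}$. Applying $\psi$ and the (homotopy-commutative) square relating it to the coproduct on $\cC_{\infty}(p)$, the class $e_{i(p-1)}$ distributes across the two factors according to the formula (\ref{eq:Wcoprod}); the terms involving $\sum_{r<s}\a^r e_{2j+1}\otimes \a^s e_{2k+1}$ do not contribute to the $Q$-part of the formula after passing to $C_p$-coinvariants. The sign $(-1)^{nm(p-1)/2}$ arises from commuting the $m$-graded $Y$-factors past the $n$-graded $X$-factors in the shuffle; counting transpositions and using that $p$ is odd produces the stated exponent. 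The internal Cartan formula (\ref{internal cartan}) follows from (\ref{external cartan}) by composing with the $E_{\infty}$-multiplication $X\dc X \to X$ and invoking Lemma \ref{lem:chain prod}.

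The Adem relations (\ref{item:adem}) are the main obstacle, and I expect the bulk of the work to lie here. These come from comparing two descriptions of a class in $H_{*,*}(\Sym^{p^2}(\S^{g,n}\Fvp))$ obtained by iterating the $Q$-construction through the two natural inclusions of $C_p \wr C_p$ into $\S_{p^2}$. The algebraic heart of the computation -- invariant theory of the relevant subgroups and the binomial identities yielding the coefficients $t_{r,s,j}$ -- is essentially identical to that in Section 5 of \cite{Mayalg}. The novel bookkeeping is the tracking of how the twisting character $\vp$ and the degree-induced signs enter: each transposition of two copies of $\S^{g,n}\Fvp$ contributes a factor of $q$, and the two-level wreath structure produces the powers of $q$ that assemble into the coefficients $q_{g,n,s}$ and $q_{g,r}$. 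After verifying that these sign factors are consistent across the two iterated compositions (by expanding everything in $H_*(C_p \wr C_p; \F(q^{?}))$ with the correctly twisted coefficient), the relations reduce term-by-term to the classical Adem relations with the stated correction factors.
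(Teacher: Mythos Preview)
Your outline follows the paper's strategy for parts (\ref{additivity})--(\ref{internal cartan}), and those sketches are essentially right. One small correction on (\ref{additivity}): the mixed terms in $(x+y)^{\dc p}$ do not literally vanish in $C_p$-coinvariants (coinvariants of a free orbit are one-dimensional, not zero). The argument is that each mixed summand is a norm $(1+\alpha+\cdots+\alpha^{p-1})m'$, and in $W$ one has $(1+\alpha+\cdots+\alpha^{p-1})e_i$ equal to either $d((\alpha-1)^{p-2}e_{i+1})$ or $d(e_{i+1})$, so $e_i\otimes(\text{norm})$ is a \emph{boundary} in $W\otimes_{C_p}X^{\dc p}$.

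Your treatment of the Adem relations has a more substantive gap. First, the mechanism is not ``two natural inclusions of $C_p\wr C_p$ into $\S_{p^2}$''; rather, one passes to the subgroup $C_p\times C_p\leq \tau=C_p\wr C_p$, and the element $\gamma\in\S_{p^2}$ swapping matrix coordinates conjugates $C_p\times C_p$ to itself (interchanging factors). Since inner automorphisms act trivially on $H_*(\S_{p^2};\F(q))$, one gets $gf=gf\gamma_*$ on $H_*(C_p\times C_p;\F(q))$; computing $\gamma_*$ produces the factor $q^{(p-1)/2}$, and May's explicit formula for $f$ gives a raw relation. Second, and more importantly, this raw relation is \emph{not} yet the stated Adem relation: both sides are sums, and one must collapse the left side to a single term. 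The paper does this only for special values $s=2(1+p+\cdots+p^t)$ (resp.\ $s=2p^t$), where Lucas' theorem (Lemma~\ref{lem:binom}) kills all but one binomial coefficient on the left. For general $s$ one then invokes an auxiliary $E_\infty$-algebra $Y_k\in\Mod_{\Fvp}$ built from cochains on a sphere (Lemma~\ref{lem:sphere}), containing a class $y$ with $Q_{k(p-1)}(y)=y$ and all other operations trivial; tensoring $x$ with $y$ and applying the already-proved external Cartan formula shifts the inner index from $s$ to $s+k$, reducing to the special case. This two-step structure, and in particular the construction of $Y_k$ in the twisted category via the strong monoidal functor $\Ch_\F\to\Mod_{\Fvp}$, is essential and is not something that falls out automatically from ``tracking signs through May's argument.''
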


In the above theorem, we also wish to include a statement about the
compatibility of the Dyer--Lashof operations with suspension in the chain
complex degree. Making a rigorous
statement of this nature requires some additional definitions. We found the perspective outlined in Lecture 3 of
some course notes by Lurie useful \cite{Lurie}. 
Below we write $\S$ and $\O$ for $\S^{\1_{\fG},1}$ and
$\S^{\1_{\fG},-1}$, respectively.

Given $X\in \Mod_{\Fvp}$, we will construct a map 
\begin{equation}\label{eq:loops}
H_{*,*}(E_{\infty}(\O X))\to  H_{*,*}(\O E_{\infty}(X))
\end{equation}
that is natural in $X$. 
 In particular, if we let $\O
X=\S^{g,n}\Fvp$, we obtain a map
$$\s\colon H_{h,m}(E_{\infty}(\S^{g,n}\Fvp))\to H_{h,m}(\O
E_{\infty}(\S^{g,n+1}\Fvp))\xrightarrow{\cong} H_{h,m+1}(E_{\infty}(\S^{g,n+1}\Fvp))$$
that defines the \emph{suspension of an operation} $H_{g,n}\to H_{h,m}$.

In fact, this is a special case of how suspension interacts with operations.
Suppose $X,\O X\in \Alg_{E_{\infty}}(\Mod_{\Fvp})$. Given any homology class
represented by a map $x\colon \S^{g,n}\Fvp\to \O X$, we may consider the diagram
\begin{equation}\label{eq:susp diagram}
\xymatrix{
H_{*,*}(E_{\infty}(\S^{g,n}\Fvp))\ar[r]^-{\s}\ar[d]^{H_{*,*}(E_{\infty}(x))} & H_{*,*}(\O
E_{\infty}(\S^{g,n+1}\Fvp))\ar[r]^{\cong}\ar[d] &
H_{*,*+1}(E_{\infty}(\S^{g,n+1}\Fvp))\ar[d]^{H_{*,*}(E_{\infty}(\S x))} \\
H_{*,*}(E_{\infty}(\O X)) \ar[r] \ar[d] & H_{*,*}(\O E_{\infty}(X))
\ar[r]^{\cong} & H_{*,*+1}(E_{\infty}(X)) \ar[d]\\
H_{*,*}(\O X) \ar[rr]^{\cong} & & H_{*,*+1}(X)
}
\end{equation}
where the suspension maps are the canonical ones, and the map
$$H_{*,*}(E_{\infty}(\O X))\to H_{*,*}(\O E_{\infty}(X))$$ comes from
\ref{eq:loops}. Given a class $Q\in H_{*,*}(E_{\infty}(\S^{g,n}\Fvp))$, which
determines a homology operation by sending $x$ to the image of $Q$ under the two
vertical maps on the left-hand side of the diagram, it makes sense to ask
whether $\S Q(x)$ equals $\s Q(\S x)$.
We then have the following
addition to Theorem \ref{thm:lower index}:

\begin{continuethm}{thm:lower index}
\begin{enumerate}
\setcounter{enumi}{\value{lowerindex}}
\item \label{item:susp1} The suspension map
$$\s\colon \Nat(H_{g,n},H_{\gpower{g}{p},pn+i(p-1)})\to
\Nat(H_{g,n+1},H_{\gpower{g}{p},pn+i(p-1)+1})$$
sends $Q_{i(p-1)}$ to
$$\s Q_{i(p-1)}=(-1)^{\tfrac{(p-1)n}{2}}\left(\tfrac{p-1}{2}\right)!\,
Q_{(i-1)(p-1)}$$
and $\b Q_{i(p-1)}$ to
$$\s \b Q_{i(p-1)}=-(-1)^{\tfrac{(p-1)n}{2}}\left(\tfrac{p-1}{2}\right)!\, \b
Q_{(i-1)(p-1)}.$$
\item If $X, \O X\in \Alg_{E_{\infty}}(\Mod_{\Fvp})$ with compatible
$E_{\infty}$ structures, in the sense that the lower half of diagram
\ref{eq:susp diagram} commutes, then the suspension
isomorphism
$\S\colon H_{g,n}(\O X)\to H_{g,n+1}(X)$
satisfies $$\S
Q_{i(p-1)}(x)=(-1)^{\tfrac{(p-1)n}{2}}\left(\tfrac{p-1}{2}\right)!\,
Q_{(i-1)(p-1)}(\S x)$$
and
$$\S\b Q_{i(p-1)}(x)=-(-1)^{\tfrac{(p-1)n}{2}}\left(\tfrac{p-1}{2}\right)!\,
\b Q_{(i-1)(p-1)}(\S x).$$
\label{item:susp2}
\end{enumerate}
\end{continuethm}

Before providing a proof of Theorem \ref{thm:lower index}, we first construct
the promised zigzag \ref{eq:loops}.
 
\begin{con}\label{con:zigzag}
Let $X\in \Mod_{\Fvp}$. Let $Y$ be the mapping cone of the identity map $\O X\to
\O X$ so that we have a sequence
$$\O X\to Y \to X,$$
where $Y$ is acyclic.
Taking symmetric powers, we obtain maps
$$\Sym^k(\O X)\to \Sym^k(Y)\to \Sym^k(X).$$
Let $Y$ have the filtration $F_iY=0$ if $i<0$, $F_iY=Y$ if $i>0$, and $F_0Y$ is
the image of $\O X$ in $Y$. This induces a filtration on $\Sym^k(Y)$ via
$$F_i \Sym^k(Y)=\im\left(\cC_{\infty}(k)\otimes_{\S_k} \bigoplus_{i_1+\dots+i_k=i}
F_{i_1}Y\dc\cdots \dc F_{i_k}Y\right).$$
Note that the image of $\Sym^k(\O X)$ in $\Sym^k(Y)$ is contained in filtration
$F_{k-1}$ (in fact, it is contained in filtration $F_0$), so we have a map
$\Sym^k(\O X)\to F_{k-1}\Sym^k(Y).$
Since the map $\Sym^k(Y)\to \Sym^k(X)$ is surjective, we may also
define a map
$$H_{*,*}(\O \Sym^k(X))\to H_{*,*}(F_{k-1}\Sym^k(Y))$$
by taking a representative in $\Sym^k(X)$, choosing a lift in $\Sym^k(Y)$, and applying the differential in
$\Sym^k(Y);$ this induces a well-defined map on homology that is independent of
the choice of lift. Moreover, we claim
this map is an isomorphism. 

Since $Y$ is acyclic, $\Sym^k(Y)$ is acyclic as well for $k>0$
by Lemma \ref{lem:preservewes}.
 Thus, any
cycle $x$ in $F_{k-1}\Sym^k(Y)$ is equal to $d(y)$ for some $y\in \Sym^k(Y)$. 
Now, $F_{k-1}\Sym^k(Y)$ is the kernel of the map $\Sym^k(Y)\to \Sym^k(X)$, so
since $d(y)$ is in $F_{k-1}\Sym^k(Y)$, $y$ maps to a cycle $z\in \Sym^k(X)$ under
this map. Then $y$ is a lift of $z$, and hence the homology class of $z$ maps to
the homology class of $x$ under the map
$$H_{*,*}(\O\Sym^k(X))\to H_{*,*}(F_{k-1}\Sym^k(Y)),$$
which gives us that this map is surjective.
If the class of a cycle $z\in H_{*,*}(\O \Sym^k(X))$ is in the kernel of this
map, then $z$ has a lift $y\in \Sym^k(Y)$ with $d(y)=d(y')$ for 
$y'\in F_{k-1}\Sym^k(Y)$. Since $y-y'$ is also a lift of $z$, we may thus assume
that $d(y)=0$. However, if $y$ is a cycle, it is also a boundary, since
$\Sym^k(Y)$ is acyclic. Thus, if $y=d(z')$, then $z$ is given by the
differential applied to the image of $z'$. Hence, the map is injective as well.
\end{con}

Now we are ready to prove Theorem \ref{thm:lower index}. The proof is separated
into different parts for each statement, in order to make it easier to find the
proof of a particular statement.

\begin{proof}[Proof (of Theorem \ref{thm:lower index}, Statement
\ref{additivity})]
Recall that we defined the Dyer--Lashof operations on $H_{g,n}$ using classes in the homology
of $W\otimes_{C_p}
(\S^{g,n}\Fvp)^{\dc p}$. For any map $f\colon \S^{g,n}\Fvp\to X$ representing a
homology class, the diagram
$$
\begin{tikzcd}
W\otimes_{C_p} (\S^{g,n}\Fvp)^{\dc p} \arrow[r, "\text{id}\otimes f^{\dc p}"]
\arrow{d} & W\otimes_{C_p} X^{\dc p} \arrow{d}\\
\Sym^p(\S^{g,n}\Fvp) \arrow[r,"\Sym^p(f)"] & \Sym^p(X)
\end{tikzcd}
$$
commutes. A Dyer--Lashof operation is defined by a class $$e_{i}\otimes \iota^p\in
(W\otimes_{C_p} (\S^{g,n}\Fvp)^{\dc p})(\gpower{g}{p}),$$ where $\iota$ denotes $1\in
\S^{g,n}\Fvp(g)_n$. Applying this operation to $f$ is given by taking the
image of $e_{i}\otimes \iota^p$ under either of these compositions after evaluating on
$\gpower{g}{p}$, and then applying the $E_{\infty}$-multiplication for $X$. Thus, it
suffices to show Frobenius linearity in $\Sym^p(X)$ before applying the
$E_{\infty}$-multiplication. If we write $x$ for $f(\iota)$, then the image of $e_i\otimes \iota^p$ is given by
$$e_{i}\otimes x^{\otimes p}\in
\cC_{\infty}(p)\otimes_{\S_p} X(g)^{\otimes p}.$$
First we wish to show that
$$e_{i}\otimes (x+y)^{\otimes p}= e_{i}\otimes x^{\otimes
p}+e_{i}\otimes y^{\otimes p}$$
after passing to homology $H_{\gpower{g}{p},*}(\Sym^p(X))$.
We prove this on the chain level (see also \cite{Mayalg}). Note that
$$(x+y)^{\otimes p}=x^{\otimes p}+y^{\otimes p}+\sum_{j=1}^{p-1}
(1+\a+\dots+\a^{p-1})x^{\otimes j}y^{\otimes p-j},$$
where $\a$ is our chosen generator of the cyclic group $C_p$, which cyclicly
permutes the monomials $x^{\otimes j}y^{\otimes p-j}$. 
In $W$, if $i$ is even, we have that $$(1+\a+\dots+\a^{p-1})
e_{i}=d((\a-1)^{p-2} e_{i+1}),$$
and if $i$ is odd, we have that
$$(1+\a+\dots+\a^{p-1})e_i=d(e_{i+1}).$$
Thus, 
$$e_{i}\otimes \sum_{j=1}^{p-1}(1+\a+\dots+\a^{p-1})x^{\otimes j}y^{\otimes
p-j}$$
is a boundary, which implies that the Dyer--Lashof operations are additive. 
Since $$e_{i}\otimes (t x)^{\otimes p}=t^p e_{i}\otimes x^{\otimes
p},$$ it follows that Dyer--Lashof operations are Frobenius linear. This proves
\ref{additivity}
\end{proof}
 
Statement \ref{zero} follows directly from Lemma \ref{lem:zero}.

\begin{proof}[Proof (of Theorem \ref{thm:lower index}, Statement
\ref{power})]
The $p$-fold product is a weight $(1,\dots,1)$ $p$-ary operation $\mu$ living in
the $p$th charge summand of
$$H_{g_1\oplus \dots\oplus g_p,n_1+\dots+n_p}(E_{\infty}(\oplus \S^{g_i,n_i}\Fvp)).$$
When all of the inputs are equal, this may be viewed as an operation $\g$ in
$$H_{\gpower{g}{p},pn}(E_{\infty}(\S^{g,n}\Fvp))$$
under the image of the canonical fold map $\oplus \S^{g,n}\Fvp\to\S^{g,n}\Fvp$. In particular,
whenever $x\in H_{g,n}(X)$,
$$\mu(x\otimes x\otimes \dots\otimes x)=\g(x).$$
Thus, it suffices to show that the class in $
H_{\gpower{g}{p},pn}(E_{\infty}(\S^{g,n}\Fvp))\cong \F$ defining $Q_0$ is equal to $\g$.
Since $\mu$ is the iterated product, it is determined by a point in arity $p$ of
a space operad.

Note that the
map $W\to\cC_{\infty}(p)$ must take the chain $e_0$ to a chain whose image under
the augmentation map is $1\in\F$. Thus, since $\cC_{\infty}$ comes from a space
operad, this chain is homologous to any chain determined by a point. Hence, both
$\g$ and $Q_0$ may be represented as a chain of a point tensored with
$1^{\otimes p}$, and therefore they are the same class in homology.
\end{proof}

\begin{proof}[Proof (of Theorem \ref{thm:lower index}, Statement
\ref{unit})]
Recall that the unit map $\Fvp\to X$ is a map of
$E_{\infty}$-algebras, and therefore by naturality it suffices to prove the
statement for $\Fvp$. However, the $E_{\infty}$-structure for $\Fvp$ factors
through the commutative operad:
$$\Sym^p(\Fvp)\to \F[0]\otimes_{\S_p} \Fvp^{\dc p}\to \Fvp.$$
If $e_j\in \cC_{\infty}(p)$ defines a Dyer--Lashof operation, then the image of
$e_j$ in $\F[0]$ is zero if $j>0$, and therefore this operation is identically
zero.
\end{proof}

\begin{proof}[Proof (of Theorem \ref{thm:lower index}, Statements \ref{external cartan}
and \ref{internal cartan})] 

First we show the external version of the formulas. The operation $\b^{\e}
Q_{i(p-1)}$ on $H_{g,n}\otimes H_{h,m}$ is represented by the chain
$$e_{i(p-1)-\e}\otimes (\iota_1\otimes\iota_2)^p\in \Sym^p(\S^{g,n}\Fvp\dc
\S^{h,m}\Fvp)(\gpower{(g\oplus h)}{p}),$$
where $\iota_i$ denotes $1\in \S^{g_i,n_i}\Fvp(g_i)_{n_i}$.
Evaluating this operation on a class $x\otimes y$, where $x$ is represented by a
map $\S^{g,n}\Fvp\to X$ and $y$ is
represented by $\S^{h,m}\Fvp\to Y$, uses either composition in the commutative
diagram
$$
\begin{tikzcd}
\Sym^p(\S^{g,n}\Fvp\dc \S^{h,m}\Fvp) \arrow[r, "T"] 
\arrow{d} &
\Sym^p(\S^{g,n}\Fvp)\dc \Sym^p(\S^{h,m}\Fvp) \arrow{d}\\
\Sym^p(X\dc Y) \arrow[r]
& \Sym^p(X)\dc \Sym^p(Y)
\end{tikzcd}
$$
followed by the $E_{\infty}$-multiplications on $X$ and $Y$. Recall that the
horizontal arrows in this diagram are defined using the coproduct on
$\cC_{\infty}(p)$ followed by a shuffle map taking
\begin{equation}\label{shuffle}
(\S^{g,n}\Fvp\dc \S^{h,m}\Fvp)^{\dc p}\to (\S^{g,n}\Fvp)^{\dc p}\dc
(\S^{h,m}\Fvp)^{\dc p},
\end{equation}
which in turn is followed by braiding $\cC_{\infty}(p)\otimes
(\S^{g,n}\Fvp)^{\dc p}\to (\S^{g,n}\Fvp)^{\dc p}\otimes \cC_{\infty}(p).$

The canonical comparison map $H_{\gpower{(g\oplus h)}{p}}(X\dc Y)\to
H_{\gpower{g}{p}\oplus \gpower{h}{p}}(X\dc Y)$ is
induced by the shuffle morphism $\psi\colon \gpower{(g\oplus h)}{p}\to
\gpower{g}{p}\oplus \gpower{h}{p}$ in $\fC$.
Combining this with the above diagram, we see that it suffices to compute the
image of $e_{i(p-1)-\e}\otimes(\iota_1\otimes\iota_2)^p$ under the composition
$$
\begin{tikzcd}
\Sym^p(\S^{g,n}\Fvp\dc \S^{h,m}\Fvp)(\gpower{(g\oplus h)}{p})
\ar[d,"{T(\gpower{(g\oplus h)}{p})}"]\\
(\Sym^p(\S^{g,n}\Fvp) \dc \Sym^p(\S^{h,m}\Fvp))(\gpower{(g\oplus h)}{p}) \ar[d, "\psi_*"]\\
(\Sym^p(\S^{g,n}\Fvp) \dc\Sym^p(\S^{h,m}\Fvp))(\gpower{g}{p}\oplus \gpower{h}{p}).
\end{tikzcd}
$$

We know that the map $W\to \cC_{\infty}(p)$ preserves the coproduct, so from the
definition of the coproduct on $W$ (see \ref{eq:Wcoprod}), we know that
the coproduct
$$\cC_{\infty}(p)\to \cC_{\infty}(p)\otimes \cC_{\infty}(p)$$
takes $e_{i(p-1)}$ to
$$\sum_{2j+2k=i(p-1)} e_{2j}\otimes e_{2k}+\sum_{2j+2k+2=i(p-1)}\sum_{0\leq r<s<p}
\a^r e_{2j+1}\otimes \a^s e_{2k+1},$$
where as usual $\a$ denotes a chosen generator of the cyclic group $C_p$.
Recall that the shuffle map \ref{shuffle} is obtained by first applying a
shuffle map in $\Ch_{\F}$ (which introduces a sign) and then applying the inverse shuffle morphism
$\psi^{-1}\colon \gpower{g}{p}\oplus \gpower{h}{p}\to \gpower{(g\oplus h)}{p}.$
Thus, the shuffle maps in the composition $\psi_*\circ T(\gpower{(g\oplus
h)}{p})$ cancel, and this composition takes $e_{i(p-1)}\otimes (\iota_1\otimes \iota_2)^p$
to
\begin{align}\label{eq:cartan1}
(-1)^{\tfrac{nmp(p-1)}{2}}&\Big(\sum_{2j+2k=i(p-1)}
(e_{2j}\otimes \iota_1^p)\otimes (e_{2k}\otimes \iota_2^p)\\
&+(-1)^{np}\sum_{2j+2k+2=i(p-1)}\, \sum_{0\leq r<s<p}
(\a^r e_{2j+1}\otimes \iota_1^p)\otimes (\a^s e_{2k+1}\otimes \iota_2^p)\Big).
\nonumber
\end{align}
Again, the signs here arise from the symmetry in $\Ch_{\F}$: we obtain
$(-1)^{\tfrac{nmp(p-1)}{2}}$ from doing $p(p-1)/2$ applications of the symmetry
$$\S^{h,m}\Fvp\dc \S^{g,n}\Fvp\to \S^{g,n}\Fvp\dc \S^{h,m}\Fvp,$$
and the other signs are obtained from $p$ applications of the
symmetry
$$\cC_{\infty}(p)\otimes \S^{g,n}\Fvp\to \S^{g,n}\Fvp\otimes \cC_{\infty}(p).$$

Note that $\a$ acts on $\iota_1^p$ as multiplication by
$((-1)^n\vp(\tr_{g\oplus g}(\b^{-1}_{g,g}))^{p-1}=1$, and similarly for $\iota_2^p$, so that
$$\sum_{2j+2k+2=i(p-1)}\,\sum_{0\leq r<s<p}
(\a^r e_{2j+1}\otimes \iota_1^p)\otimes (\a^s e_{2k+1}\otimes
\iota_2^p)$$
equals
$$\binom{p}{2}\sum_{2j+2k+2=i(p-1)}(e_{2j+1}\otimes \iota_1^p)\otimes
(e_{2k+1}\otimes \iota_2^p),$$
which is 0 in characteristic $p$.
Applying Lemma \ref{lem:zero} to the first term, we thus know that \ref{eq:cartan1} is equal to
$$(-1)^{\tfrac{nmp(p-1)}{2}}\sum_{j+k=i}(e_{j(p-1)}\otimes
\iota_1^p)\otimes (e_{k(p-1)}\otimes \iota_2^p).$$
Using the fact that $p$ is odd, we obtain the constant in
\ref{external cartan}

Similarly, $\psi_*\circ T(\gpower{(g\oplus h)}{p})$ takes $e_{i(p-1)-1}\otimes (\iota_1\otimes
\iota_2)^p$ to
$$(-1)^{\tfrac{nmp(p-1)}{2}}\sum_{2j+2k=i(p-1)} (-1)^{pn}(e_{2j}\otimes
\iota_1^p)\otimes (e_{2k+1}\otimes \iota_2^p)+ (e_{2j+1}\otimes \iota_1^p)\otimes
(\a e_{2k}\otimes \iota_2^p).$$
Once again using the fact that $\a$ acts trivially and applying Lemma
\ref{lem:zero} proves the external Cartan formula for $\b Q_{i(p-1)}$.

By Lemma \ref{lem:chain prod}, we may define a chain-level product map $X\dc
X\to X$ that preserves the $E_{\infty}$-structure after passing to homology.
If $\S^{g,n}\Fvp\to X$, $\S^{h,m}\Fvp\to X$, and $\S^{g\oplus h,n+m}\Fvp\to X$ are maps
representing classes $x,y,$ and $xy$, respectively, then the diagram
$$
\begin{tikzcd}
H_{*,*}(\Sym^p(\S^{g,n}\Fvp\dc \S^{h,m}\Fvp)) \arrow{r}\arrow{d} & H_{*,*}(\Sym^p(X\dc
X)) \arrow{r}\arrow{d} & H_{*,*}(X\dc X) \arrow{d}\\
H_{*,*}(\Sym^p(\S^{g\oplus h,n+m}\Fvp))\arrow{r} & H_{*,*}(\Sym^p(X))\arrow{r} &
H_{*,*}(X)
\end{tikzcd}
$$
commutes, so the right-hand map takes $\b^{\e}Q_{i(p-1)}(x\otimes y)$ to
$\b^{\e}Q_{i(p-1)}(xy)$.
Since the map $X\dc X\to X$ induces the product on homology, the internal Cartan formulas follow from the
external ones by considering the diagram
$$
\begin{tikzcd}
H_{\gpower{(g\oplus h)}{p},*}(X\dc X) \ar[r, "\psi_*"] \ar[d] &
H_{\gpower{g}{p}\oplus \gpower{h}{p},*}(X\dc X) \ar[d]\\
H_{\gpower{(g\oplus h)}{p},*}(X)\ar[r,"\psi_*"] &
H_{\gpower{g}{p}\oplus\gpower{h}{p}}(X).
\end{tikzcd}
$$
\end{proof}

In order to prove the Adem relations, we require a few lemmas. The first is an
elementary statement about binomial coefficients mod $p$.

\begin{lem}\label{lem:binom}
The following identities hold modulo $p$.
\begin{enumerate}
\item Let $j\geq 0$ and $t>0$. Then
$$\binom{j(p-1)}{1+p+\dots+p^t-j}\equiv 0\bmod{p}$$
unless $j=1+p+\dots+p^t$.
Moreover,
$$\binom{j(p-1)-1}{p^t-j}\equiv 0\bmod{p}$$
unless $j=p^t.$

\item If $i,j\in \Z_{\geq 0}$ and $i<p^t$, then $\binom{i+j}{j}\equiv
\binom{i+j+ap^t}{j+ap^t}\bmod{p}$ for any $0\leq a<p$.
\end{enumerate}
\end{lem}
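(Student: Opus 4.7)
The plan is to prove both parts via the polynomial identity $(1+x)^{p^s} \equiv 1+x^{p^s} \pmod p$ in $\F_p[x]$ (equivalently, via Lucas' theorem on base-$p$ expansions).

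For part (ii), this is a one-step generating-function calculation. I would factor
$$(1+x)^{i+j+ap^t} \equiv (1+x)^{i+j}\,(1+x^{p^t})^a \pmod{p},$$
and read off the coefficient of $x^{j+ap^t}$ on both sides. Expanding $(1+x^{p^t})^a = \sum_{b=0}^{a}\binom{a}{b}x^{bp^t}$ and noting that $\deg (1+x)^{i+j} = i+j < j+p^t$ forces only the term $b=a$ to contribute, one obtains $\binom{a}{a}\binom{i+j}{j}=\binom{i+j}{j}$ as desired.

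For part (i), I would argue by induction on $t$. The cases $j=0$ and $j$ larger than the lower binomial index are immediate from the convention $\binom{n}{k}=0$ for $n<0$ or $k>n\ge 0$. For the remaining $j$, the idea is to compute the coefficient of $x^{N-j}$ (resp.\ $x^{p^t-j}$) in
$$(1+x)^{j(p-1)} \equiv (1+x^p)^j(1+x)^{-j} \pmod p$$
(resp.\ in $(1+x)^{j(p-1)-1}\equiv (1+x^p)^j(1+x)^{-j-1}$) inside the ring of formal power series over $\F_p$, expand via the binomial series, and simplify. Writing $N-1 = pN_{t-1}$ with $N_{t-1} = 1+p+\dots+p^{t-1}$, each summand carries a factor $\binom{p(N_{t-1}-k)}{N-j-pk}$ which by Lucas' theorem vanishes modulo $p$ unless $p \mid (N-j)$, i.e., unless $j\equiv 1 \pmod p$. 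Setting $j=1+pj^{(1)}$, Lucas reduces each remaining binomial to $\binom{N_{t-1}-k}{N_{t-1}-j^{(1)}-k}$, and the Vandermonde--Chu convolution $\sum_k(-1)^k\binom{j}{k}\binom{n-k}{r-k}=\binom{n-j}{r}$ collapses the sum to a single term $\pm\binom{N_{t-1}-j}{N_{t-1}-j^{(1)}} \pmod p$.

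The main obstacle is to recognize this single binomial as an instance of the statement being proved. Since $N_{t-1}-j<0$ in the interesting range, I would interpret $\binom{\cdot}{r}$ as the polynomial $\binom{x}{r}$ evaluated at $x=N_{t-1}-j$ and apply the upper-negation identity $\binom{-n}{k}=(-1)^k\binom{n+k-1}{k}$ to rewrite the binomial as $\pm\binom{(p-1)j^{(1)}}{N_{t-1}-j^{(1)}}\pmod p$ (and analogously $\pm\binom{(p-1)j^{(1)}-1}{p^{t-1}-j^{(1)}}$ for the second identity). This is precisely the original statement with $t$ replaced by $t-1$ and $j$ by $j^{(1)}$, so the inductive hypothesis finishes the argument, the only surviving exception being $j^{(1)}=N_{t-1}$ (resp.\ $j^{(1)}=p^{t-1}$), corresponding to $j=N$ (resp.\ $j=p^t$). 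The base case $t=1$ (and $t=0$ for the first identity) is checked by inspection. The subtlety lies in the sign bookkeeping and in distinguishing the regimes $N_{t-1}-j\ge 0$ (where the single binomial vanishes by combinatorial convention for $j\ge 1$) versus $N_{t-1}-j<0$ (where upper negation produces the inductive input).
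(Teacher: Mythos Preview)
Your proposal is correct. The paper's own proof is a one-line appeal: ``All statements follow by elementary arguments from Lucas's Theorem,'' leaving the base-$p$ digit manipulation entirely to the reader. Your argument is genuinely different in structure.

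For part (ii) your generating-function computation is essentially a repackaging of Lucas and matches what a direct digit argument would give; nothing to add there.

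For part (i) you take a more elaborate route: induction on $t$ via the identity $(1+x)^{j(p-1)}\equiv(1+x^p)^j(1+x)^{-j}$ in $\F_p[[x]]$, a Lucas step to force $j\equiv 1\pmod p$ (resp.\ $p\mid j$), the convolution identity $\sum_k(-1)^k\binom{j}{k}\binom{n-k}{r-k}=[y^r](1+y)^{n-j}$, and then upper negation to recognize the resulting single binomial as the $t-1$ instance. I checked the key reductions: with $N=1+\cdots+p^t$, $N-1=pN_{t-1}$, and $j=1+pj^{(1)}$, the sum does collapse to $\pm\binom{N_{t-1}-j}{N_{t-1}-j^{(1)}}$; when $N_{t-1}-j\ge 0$ this vanishes combinatorially (since $j>j^{(1)}$), and when $N_{t-1}-j<0$ upper negation yields $\pm\binom{(p-1)j^{(1)}}{N_{t-1}-j^{(1)}}$, exactly the inductive input. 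The second identity goes through the same way with $j=pj^{(1)}$ and $j-j^{(1)}-1=(p-1)j^{(1)}-1$. Your base case $t=1$ (and implicitly $t=0$) is routine.

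What each approach buys: the paper's implicit direct-Lucas argument would proceed by analyzing the base-$p$ digits of $j(p-1)$ and $N-j$ simultaneously, which is short once one sees the pattern but requires some case work on carries. Your inductive approach is longer but more mechanical and exposes the recursive structure $j\mapsto j^{(1)}$ explicitly; it also avoids having to guess the right digit decomposition. The one place to be careful in a full write-up is exactly the point you flag: making sure the Vandermonde identity is applied with the polynomial (upper-negation-compatible) convention for $\binom{N_{t-1}-j}{r}$ when $N_{t-1}-j<0$, and tracking the sign $(-1)^{pk}=(-1)^k$ for odd $p$.
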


\begin{proof}
All statements follow by elementary arguments from \emph{Lucas's Theorem}, which
states that if $m,n\geq 0$ with base $p$ expansions $$m=\sum_{k=0}^{\ell} m_k
p^k \text{  and  } n=\sum_{k=0}^{\ell} n_kp^k,$$
then
$$\binom{m}{n}\equiv \prod_{k=0}^{\ell} \binom{m_k}{n_k}\bmod{p}.$$

\end{proof}

We also require a version of singular cochains on a sphere in our category
$\Mod_{\Fvp}$. As is described in \cite{Mayalg}, if the singular cochain complex
$C^*(X)$
of a space $X$ is thought of as a nonpositively-graded chain complex, then the Steenrod operations
coincide with the Dyer--Lashof operations, where $C^*(X)$ may be given an
$E_{\infty}$-algebra structure from a chain-level construction of the cup
product. 

\begin{lem}\label{lem:sphere}
For any $k>0$ there exists $Y_k\in \Alg_{E_{\infty}}(\Mod_{\Fvp})$ such that
$$H_{g,n}(Y_k)\cong \begin{cases}
\F & \text{if }g\cong \1_{\fG} \text{ and } n=-k \text{ or } n=0\\
0 & \text{otherwise,}
\end{cases}$$
and there exists $y\in H_{\1_{\fG},-k}(Y_k)$ such that 
\begin{enumerate}
\item $Q_{i(p-1)}(y)=0$ if
$i\not=k$;
\item  the map
$$H_{\gpower{\1_{\fG}}{p},-k}(Y_k)\to H_{\1_{\fG},-k}(Y_k)$$
induced by the canonical map $(\1_{\fG})^{p}\to \1_{\fG}$ sends
$$Q_{k(p-1)}(y)\mapsto y;$$
\item $\b Q_{i(p-1)}(y)=0$ for all $i$.
\end{enumerate}
\end{lem}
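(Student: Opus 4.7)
The plan is to realize $Y_k$ as the image of the singular cochain algebra of the $k$-sphere under a strong symmetric monoidal functor $\Ch_{\F} \to \Mod_{\Fvp}$. The tensoring of $\Mod_{\Fvp}$ over $\Ch_{\F}$ described at the start of Section \ref{section:einfty} restricts to a functor $V \mapsto V \otimes \Fvp$ that is strong symmetric monoidal: it sends $\F[0]$ to $\Fvp$, and one checks directly that $(V \otimes \Fvp) \dc (W \otimes \Fvp) \cong (V \otimes W) \otimes \Fvp$ using $\Fvp \dc \Fvp \cong \Fvp$. Hence the functor transports $E_{\infty}$-algebras, and in particular it sends $C^*(S^k;\F)$ with its standard $E_{\infty}$-algebra structure (for instance from the Barratt--Eccles operad, compared to $\cC_{\infty}$ up to homotopy) to an $E_{\infty}$-algebra in $\Mod_{\Fvp}$; I set $Y_k := C^*(S^k;\F) \otimes \Fvp$.

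By Proposition \ref{prop:Kunneth} and the fact that $H_{*,*}(\Fvp) \cong \F$ is concentrated at $(\1_{\fG},0)$, we have $H_{g,n}(Y_k) \cong H_n(C^*(S^k;\F)) \otimes H_{g,0}(\Fvp)$, which equals $\F$ at $(\1_{\fG},0)$ (generated by $\mathbbm{1}$) and at $(\1_{\fG},-k)$ (generated by a class $y$ corresponding to the fundamental cocycle), and vanishes otherwise.

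For the Dyer--Lashof operations, I identify $\gpower{\1_{\fG}}{p} \cong \1_{\fG}$ and observe that $H_{\gpower{\1_{\fG}}{p},m}(Y_k) = 0$ unless $m \in \{0,-k\}$. Since $Q_{i(p-1)}(y)$ sits in degree $i(p-1) - pk$ and $\b Q_{i(p-1)}(y)$ in degree $i(p-1) - pk - 1$, an elementary analysis using $\gcd(p,p-1)=1$ shows that only $i=k$ can give a potentially nonzero $Q_{i(p-1)}(y)$, while only sporadic values of $i$ (dictated by divisibility of $k$ or $k+1$ by $p-1$) can yield a potentially nonvanishing Bockstein or higher operation target. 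For the sporadic cases, I would transport the operation back through the monoidal functor to obtain the corresponding Steenrod operation on $H^*(S^k;\F)$; in upper indexing these correspond to positive-index Steenrod powers or their Bocksteins applied to $y$, which land in $H^{>k}(S^k;\F) = 0$ and hence vanish.

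The main obstacle, and the only substantive computation, is to verify that $Q_{k(p-1)}(y) = y$ after the canonical identification. Converting to upper indexing via the relation $2s = i + n$ with $i = k$ and $n = -k$ gives $s = 0$, so in upper indexing this is the claim $Q^0(y) = y$. For $E_{\infty}$-algebras coming from singular cochains of a space, $Q^0$ coincides up to a unit with the identity Steenrod operation $P^0 = \mathrm{id}$; this is a standard chain-level fact tracing the map $W \to \cC_{\infty}(p)$ applied to $e_{k(p-1)} \otimes y^{\otimes p}$ through the $E_{\infty}$ multiplication on $C^*(S^k;\F)$ and comparing with the explicit cup-product model (compare with the arguments of \cite{Mayalg}). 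Any residual multiplicative constant may be absorbed by rescaling $y$, yielding the lemma.
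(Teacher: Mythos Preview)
Your construction and overall strategy match the paper's exactly: push $C^*(S^k;\F)$ through the strong symmetric monoidal functor $-\otimes\Fvp$ and identify the Dyer--Lashof operations on the result with Steenrod operations via \cite{Mayalg}. The paper handles the vanishing claims (i) and (iii) in the same way, by invoking the known behaviour of Steenrod powers on $H^*(S^k)$; your degree analysis plus the appeal to $P^s(y)\in H^{>k}(S^k)=0$ for $s>0$ is fine (though note that your claim ``only $i=k$ can give a potentially nonzero $Q_{i(p-1)}(y)$'' overlooks the degree-$0$ target when $(p-1)\mid k$; that case is disposed of by the same Steenrod argument you give for the $\beta Q$'s).

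There is one genuine error in your treatment of (ii). The sentence ``any residual multiplicative constant may be absorbed by rescaling $y$'' is false: the operations are \emph{Frobenius} linear (Theorem~\ref{thm:lower index}~\ref{additivity}), so replacing $y$ by $\lambda y$ gives $Q_{k(p-1)}(\lambda y)=\lambda^{p}\,Q_{k(p-1)}(y)$, and the ratio $Q_{k(p-1)}(y)/y$ changes by $\lambda^{p-1}$. Over $\F_p$ every nonzero $\lambda$ satisfies $\lambda^{p-1}=1$, so rescaling cannot absorb any nontrivial constant there. The correct input from \cite{Mayalg} is stronger than you state: the \emph{upper}-indexed $Q^s$ on cochains agrees with the Steenrod $P^s$ on the nose, not merely up to a unit, so $Q^0(y)=P^0(y)=y$ exactly. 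The paper's proof is terse on this point as well, but it does not attempt the rescaling; it simply cites the identification with Steenrod operations and leaves the unwinding to the reader. You should do the same: drop the ``up to a unit'' hedge and the rescaling step, and invoke May's identification directly.
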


\begin{proof}
We may think of singular cochains $C^*(S^k;\F)$ on a $k$-sphere as a
negatively-graded chain complex, which is therefore an object in $\Ch_{\F}$. The
free functor
$$\Ch_{\F}\to \Mod_{\Fvp}(\Ch^{\fC}_{\F})$$
taking a chain complex $X$ to $X\otimes \Fvp$  is strong symmetric monoidal, so it induces a
functor
$$\Alg_{E_{\infty}}(\Ch_{\F})\to \Alg_{E_{\infty}}(\Mod_{\Fvp}).$$
If $X\in \Alg_{E_{\infty}}(\Mod_{\Fvp})$, then $X(\1_{\fG})\in \Ch_{\F}$
inherits an $E_{\infty}$-algebra structure using the fact that there is a
canonical map $(\1_{\fG})^{k}\to \1_{\fG}$ for all $k$. Explicitly, we
have maps
$$\cC_{\infty}(k)\otimes_{\S_k} X(\1_{\fG})^{\otimes k}\to
\Sym^k(X)(\1_{\fG}^{k})\to X(\1_{\fG}^{k})\to X(\1_{\fG})$$
that are easily seen to be compatible with the operad composition maps.

The cochain complex of a space may be given an $E_{\infty}$-algebra structure
such that the induced product on homology coincides with the cup product, and
the Dyer--Lashof operations coincide with the Steenrod operations (see sections
7 and 8 of \cite{Mayalg}). We therefore let $Y_k$ be the image of the
$E_{\infty}$-algebra $C^*(S^k;\F)$ in $\Alg_{E_{\infty}}(\Mod_{\Fvp}).$

Any map $\F[n]\to C^*(S^k;\F)$ in $\Ch_{\F}$ representing a homology class induces a map
$\S^n\Fvp\to Y_k$ in $\Mod_{\Fvp}$, such that the diagram
$$
\begin{tikzcd}
E_{\infty}(\F[n]) \ar[r,"\simeq"] \ar[d] & E_{\infty}(\S^n\Fvp)(\1_{\fG}) \ar[d]\\
C^*(S^k;\F) \ar[r,"\simeq"] & Y_k(\1_{\fG})
\end{tikzcd}
$$
in $\Alg_{E_{\infty}}(\Ch_{\F})$ commutes. Thus, the isomorphism $H^{*}(S^k;\F)\to
H_{\1_{\fG},-*}(Y_k)$ preserves the Dyer--Lashof operations, and therefore $Y_k$
satisfies the desired properties.
\end{proof}

\begin{proof}[Proof (of Theorem \ref{thm:lower index}, Statement
\ref{item:adem})]
 The composition of two Dyer--Lashof operations is an operation
in
$$H_{\gpower{g}{p^2},*}(\Sym^{p^2}(\S^{g,n}\Fvp))$$
determined by the composition 
$$
\begin{tikzcd}
\cC_{\infty}(p)\otimes_{C_p} (\cC_{\infty}(p)\otimes (\S^{g,n}\Fvp)^{\dc p})^{\dc p}
\ar[d]\\
\cC_{\infty}(p)\otimes_{C_p} (\cC_{\infty}(p)^p\otimes ((\S^{g,n}\Fvp)^{\dc p})^{\dc
p})\ar[d]\\
\Sym^{p^2}(\S^{g,n}\Fvp).
\end{tikzcd}
$$
That is, if $\iota$ denotes a chosen generator of $\S^{g,n}\Fvp$, then the
composition of operations determined by $e_i$ and $e_j$ is given by
\begin{equation}\label{eq:first sign}
(-1)^{nj(p-1)/2}\theta(e_i\otimes e_j^p)\otimes (\iota^p)^p,
\end{equation}
where $\theta$ is the composition in $\cC_{\infty}$. Here the sign arises from
braiding $\cC(p)$ with $(\S^{g,n}\Fvp)^{\dc p}$ $p(p-1)/2$ times.
Similar reasoning to that of Lemma \ref{lem:zero} shows that
$$H_{\gpower{g}{p^2},*}(\Sym^{p^2}(\S^{g,n}\Fvp))\cong H_{*-p^2n}(\S_{p^2};\F(q)),$$
where $q=(-1)^n\vp(\tr_{g\oplus g}(\b^{-1}_{g,g}))$ and $\F(q)$ is as in the proof of the same lemma.

Moreover, $$H_{\gpower{g}{p^2},*}(\cC_{\infty}(p)\otimes_{C_p} (\cC_{\infty}(p)\otimes
(\S^{g,n}\Fvp)^{\dc p})^{\dc p})\cong
H_{*-p^2n-p\bullet}(C_p;H_{\bullet}(C_p;\F)^p).$$
If we view $\S_{p^2}$ as acting on tuples $(i,j)$ where $1\leq i,j\leq
p$, let $\a\in \S_{p^2}$ be the permutation sending $(i,j)\mapsto (i+1,j)$, and
let $\a_i$ send $(i,j)\mapsto (i,j+1)$ and $(k,j)\mapsto (k,j)$ if $k\not=i$.
Let $\t\leq \S_{p^2}$ be the subgroup generated by $\a$ and the $\a_i$. Then
$$H_*(\t;\F(q))\cong H_*(C_p;H_*(C_p;\F)^p),$$
and the above isomorphisms can be chosen so that the map
$$H_*(\t;\F(q))\to H_*(\S_{p^2};\F(q))$$
induced by inclusion agrees with the map
$$H_{\gpower{g}{p^2},*}(\cC_{\infty}(p)\otimes_{C_p} (\cC_{\infty}(p)^p\otimes
((\S^{g,n}\Fvp)^{\dc p})^{\dc
p}))\to
H_{\gpower{g}{p^2},*}(\Sym^{p^2}(\S^{g,n}\Fvp)).$$

The Adem relations arise from relations in $H_*(\S_{p^2};\F(q))$ coming from
$H_*(\t;\F(q))$.
We
follow the general method of Section 4 in \cite{Mayalg} to obtain these
relations, making modifications for twisted coefficients and for lower indexing.

View $C_p\times C_p$ as a subgroup of $\S_{p^2}$ by thinking of $(1,0)$ as the permutation
$\a$, and thinking of $(0,1)$ as the permutation
$\b$ sending
$(i,j)$ to $(i,j+1)$. Note that $\a\b=\b\a$, so this indeed embeds $C_p\times C_p$ as a
subgroup of $\S_{p^2}$.

Since $\b=\a_1\cdots \a_p$, we see that this inclusion map factors through $\t$.
Thus, we have an induced map
$$H_*(C_p\times C_p;\F(q))\xrightarrow{f} H_*(\t;\F(q))\to H_*(\S_{p^2};\F(q)).$$
May gives an explicit computation of the map $f$ (Lemma 4.6 (ii),
\cite{Mayalg}):
\begin{align}\label{eq:group rel}
f(e_r \otimes e_s)=&\sum_k t(s,k)
e_{r+(2pk-s)(p-1)}\otimes e^p_{s-2k(p-1)}\\
&-\d(r)\d(s-1)\sum_k t(s-1,k)
 e_{r+p+(2pk-s)(p-1)}\otimes
e^p_{s-2k(p-1)-1},
\nonumber
\end{align}
where $$t(j,k)=(-1)^k v(j)\binom{[j/2]-k(p-1)}{k},$$
$$v(j)=(-1)^{j(j-1)(p-1)/4} \left(\tfrac{p-1}{2}!\right)^j,$$
and $\d(j)$ is 1 if $j$ is odd and 0 if $j$ is even. The constant $v(j)$
will be used throughout the paper, but the constant $t(j,k)$ is only in this
proof.

Now let $\g\in \S_{p^2}$ be the permutation that sends $(i,j)\mapsto (j,i)$.
Then $\g\a\g^{-1}=\b$, so the group homomorphism $\S_{p^2}\to \S_{p^2}$ given by
conjugation by $\g$ restricts to a homomorphism $C_p\times C_p\to C_p\times
C_p$. Together with the map $\F(q)\to \F(q)$ that acts by $\g$ on the left, this
induces maps of group homology making the diagram
$$
\begin{tikzcd}
H_*(C_p\times C_p;\F(q)) \arrow[r, "f"] \arrow[d, "\g_*"] & H_*(\t;\F(q))
\arrow[r,"g"] & H_*(\S_{p^2};\F(q)) \arrow[d, "\g_*"]\\
H_*(C_p\times C_p;\F(q))\arrow[r,"f"] & H_*(\t;\F(q)) \arrow[r,"g"] &
H_*(\S_{p^2};\F(q))
\end{tikzcd}
$$
commute. It is a standard fact in group homology that the map $\g_*$ is the
identity on $H_*(\S_{p^2};\F(q))$. Moreover, it is straightforward to compute
$\g_*$ on $H_*(C_p\times C_p;\F(q))$ (May does this in Lemma 4.4); using the
fact that $\g$ may be written as a product of $p(p-1)/2$ transpositions, we have
that
\begin{equation}\label{eq:second sign}
\g_*(e_i\otimes e_j)=(-1)^{ij}q^{\tfrac{p-1}{2}} e_j\otimes e_i.
\end{equation}
Using the equations \ref{eq:first sign}, \ref{eq:group rel}, and \ref{eq:second
sign}, together with the fact that $gf=gf\g_*$, we have the relation
\begin{align}\label{eq:final}
(-1&)^{ns(p-1)/2}\sum_k t(s,k)
\theta(e_{r+(2pk-s)(p-1)}\otimes e^p_{s-2k(p-1)})\otimes\iota^{p^2}\\
&-(-1)^{n(s-1)(p-1)/2}\d(r)\d(s-1)\sum_k t(s-1,k) \theta(e_{r+p+(2pk-s)(p-1)}\otimes
e^p_{s-2k(p-1)-1})\otimes \iota^{p^2}\nonumber \\
&=
(-1)^{rs+nr(p-1)/2}q^{\tfrac{p-1}{2}}
\sum_{\ell} t(r,\ell)
\theta(e_{s+(2p\ell-r)(p-1)}\otimes e^p_{r-2\ell(p-1)})\otimes \iota^{p^2} \nonumber \\
&
-(-1)^{D}q^{\tfrac{p-1}{2}}
\d(s)\d(r-1)\sum_{\ell}
t(r-1,\ell) \theta(e_{s+p+(2p\ell-r)(p-1)}\otimes
e^p_{r-2\ell(p-1)-1})\otimes \iota^{p^2}
\nonumber
\end{align}
in $H_{\gpower{g}{p^2},*}(\Sym^{p^2}(\S^{g,n}\Fvp))$,
where $t(j,k)$ is as defined in \ref{eq:group rel}, and
$$D=rs+n(r-1)(p-1)/2.$$

Since the homology of $\cC_{\infty}(k)$, as well as the group homologies
$H_*(C_p)$, $H_*(\t)$, and $H_*(\S_k)$ may all be computed from integral chain
complexes that have been reduced mod $p$, these homologies all carry a Bockstein
homomorphism $\b$. May proves that 
$$\b(e_k\otimes e_{\ell}^p\otimes \iota^{p^2})=\d(k-1)e_{k-1}\otimes e_{\ell}^p
\otimes \iota^{p^2};$$
hence, it suffices to show only the first and third Adem relations.

Replacing $r$ by $(r+s(p-1))(p-1)$ and $s$ by $s(p-1)$, followed by $j=s-2k$ and
$i=r+s(p-1)-2\ell$ in \ref{eq:final}, and using that in general when $r$ and $s$ have the same
parity, then $v(s(p-1))\equiv v((r+s(p-1))(p-1))\mod{p}$, we obtain
\begin{align}\label{eq:rewritten}
\sum_j (-1)^{\tfrac{s-j}{2}}& 
\binom{j\tfrac{(p-1)}{2}}{\tfrac{s-j}{2}}
Q_{(r+ps-pj)(p-1)}Q_{j(p-1)}\\
&=
q^{\tfrac{p-1}{2}}\sum_i (-1)^{\tfrac{r+s(p-1)-i}{2}}
\binom{\tfrac{i(p-1)}{2}}{\tfrac{r+s(p-1)-i}{2}}
Q_{(r+ps-pi)(p-1)}Q_{i(p-1)},
\nonumber
\end{align}
where we use the notation $Q_kQ_{k'}$ to mean $\theta(e_k\otimes e_{k'}^{p})\otimes
\iota^{p^2}$.
Note that when introducing $i$ and $j$, we implicitly made some
assumptions on the parities of $i$, $j$, $s$, and $r$. However, we know from Lemma
\ref{lem:zero} that $Q_{k'(p-1)}$ will be zero for either all odd $k'$ or all
even $k'$; moreover, $Q_{k'(p-1)}$ has bidegree $(\gpower{g}{p},pn+k'(p-1))$, and
$$\vp(\tr_{\gpower{g}{p}\oplus
\gpower{g}{p}}(\b_{\gpower{g}{p},\gpower{g}{p}})))(-1)^{pn+k'(p-1)}=\vp(\tr_{g\oplus
g}(\b_{g,g}))^p(-1)^n=\vp(\tr_{g\oplus g}(\b_{g,g}))(-1)^n$$
by Equation \ref{eq:trhomo},
so $Q_{k(p-1)}Q_{k'(p-1)}$ will be zero whenever $k$ and $k'$ do not have the
same parity. Thus, we are justified in assuming that $r$ and $s$ have the same
parity, and thus, we may assume that $s$ and $j$ have the same parity, as do $i$
and $r$. There is therefore no issue with fractions occuring in binomial
coefficients or exponents.

The assumption that $r>s$ ensures that no operation appears on both sides of
\ref{eq:rewritten}. To see this, note that the binomial coefficient on the
left-hand side is zero unless
$$\tfrac{s}{p}\leq j\leq s,$$
and the binomial coefficient on the right-hand side is zero unless
$$\tfrac{r+s(p-1)}{p}\leq i\leq r+s(p-1).$$
If $r>s$, then $s<\tfrac{r+s(p-1)}{p}$, so these ranges are disjoint.

We first show that the relations hold for $s=2(1+p+\dots+p^t)$. As mentioned
above, we may assume that $r,s,j,i$ all have the same parity. Therefore,
replacing $r,j,i$ by $2r,2j,2i$ in \ref{eq:rewritten}, we obtain
\begin{align*}
\sum_j &(-1)^{1+\dots+p^t-j} 
\binom{j(p-1)}{1+\dots+p^t-j}
Q_{(2r+ps-2pj)(p-1)}Q_{2j(p-1)}\\
&=
q^{\tfrac{p-1}{2}}\sum_i (-1)^{r+p^{t+1}-1-i}
\binom{i(p-1)}{r+p^{t+1}-1-i}
Q_{(2r+ps-2pi)(p-1)}Q_{2i(p-1)}.
\end{align*}
On the left-hand side, the binomial coefficients are zero unless $j=1+\dots+p^t$
by Lemma \ref{lem:binom}. On the right-hand side, note that
$Q_{(2r+ps-2pi)(p-1)}$ is zero unless $2r+ps-2pi\geq 0$. This inequality implies
that
$$pi-r-p^{t+1}+1\leq 1+p+\dots+p^t<p^{t+1}.$$
Therefore, by Lemma \ref{lem:binom}, if $r-i-1\geq 0$,
\begin{equation}\label{eq:binomrepl}
\binom{(i-\tfrac{s}{2})(p-1)-1}{pi-r-\tfrac{s(p-1)}{2}}=\binom{i(p-1)-p^{t+1}}{r-i-1}\equiv
\binom{i(p-1)}{r-i+p^{t+1}-1}\bmod{p}.
\end{equation}
If $r-i-1<0$, then $\binom{i(p-1)-p^{t+1}}{r-i-1}=0$, and
similar methods to those mentioned in Lemma \ref{lem:binom} show
that $\binom{i(p-1)}{r-i+p^{t+1}-1}\equiv 0\bmod{p}$ as well when
$r-pi+p+\dots+p^{t+1}\geq 0$. When $r-pi+p+\dots+p^{t+1}<0$, it may \emph{not}
be the case that \ref{eq:binomrepl} holds; however, in
this case $Q_{(2r+ps-2pi)(p-1)}=0$, and thus, we are justified in substituting
the binomial coefficients in \ref{eq:binomrepl} for all $i$.
Hence, when $s=2(1+\dots+p^t)$, we have shown that for any $r>s$ and any $x\in
H_{g,n}(X)$,
$Q_{r(p-1)}Q_{s(p-1)}(x)$ equals
\begin{align*}
q^{\tfrac{p-1}{2}}(-1)^{\tfrac{s(p-1)}{2}}\sum_i (-1)^{\tfrac{r-i}{2}}
\binom{(i-s)\tfrac{(p-1)}{2}-1}{\tfrac{pi-r-s(p-1)}{2}}
Q_{(r+ps-pi)(p-1)}Q_{i(p-1)}(x).
\end{align*}

To prove the formula for a general $s$, let $k=2(1+p+\dots+p^t)-s$ for any $t$
large enough that $k\geq 0$.
Let $Y_k\in \Alg_{E_{\infty}}(\Mod_{\Fvp})$ be defined as in Lemma
\ref{lem:sphere}. In particular, let $y\in H_{\1_{\fG},-k}(Y_k)$ be such that
$Q_{k(p-1)}(y)\mapsto y$ under the map induced from $\gpower{\1_{\fG}}{p}\to
\1_{\fG}$, and $Q_{i(p-1)}(y)=0$ if $i\not=k$.
Define
\begin{align*}
Q_{r,s,g,n}\coloneqq Q_{r(p-1)}&Q_{s(p-1)}\\
-q^{\tfrac{p-1}{2}}&
(-1)^{\tfrac{s(p-1)}{2}} \sum_i (-1)^{\tfrac{r-i}{2}}
\binom{(i-s)\tfrac{p-1}{2}-1}{\tfrac{r-i}{2}-1} Q_{(r+ps-pi)(p-1)}Q_{i(p-1)}
\end{align*}
considered as an operation on $H_{g,n}$. (Recall here that
$q=(-1)^n\vp(\tr_{g\oplus g}(\b_{g,g}))$
depends on $n$ and $g$.)
Using the external Cartan
formulas for $X\dc Y_k$, we have that
under the isomorphism on homology induced by the composition
$$((g\oplus \1_{\fG})^{p})^p\to \gpower{(\gpower{g}{p}\oplus
\gpower{\1_{\fG}}{p})}{p}\to \gpower{(\gpower{g}{p}\oplus \1_{\fG})}{p}\to
\gpower{(\gpower{g}{p})}{p}\oplus \gpower{\1_{\fG}}{p}\to
\gpower{(\gpower{g}{p})}{p}\oplus \1_{\fG}$$
in $\fG$,
$$ Q_{r',s',g,n-k}(x\otimes y)\mapsto Q_{r,s,g,n}(x)\otimes y,$$
where $s'=2(1+\dots+p^t)$ and $r'=r+k$ (and as an intermediate step we have
re-indexed the sum with $i'=i+k$). Note then that $r'-s'=r-s>0$, so we have
already shown that $Q_{r',s',g,n-k}(x\otimes y)=0$. Therefore, since
$H_{*,*}(X\dc Y_k)\cong H_{*,*}(X)\dc H_{*,*}(Y_k)$ and $y\not=0$, we must have
$$Q_{r,s,g,n}(x)=0.$$
This proves the first Adem relation. 

To prove the other Adem relation, we replace $r$ by $(r-1+s(p-1))(p-1)$ and $s$
by $s(p-1)-1$, followed by a change of index $j=s-2k$, $i=r-1+s(p-1)-2\ell$ in
\ref{eq:final}. A simplification of constants yields the formula
\begin{align*}
\sum_j& (-1)^{\tfrac{s-j}{2}} \binom{\tfrac{j(p-1)}{2}-1}{\tfrac{s-j}{2}}
Q_{(r+ps-pj)(p-1)}\b Q_{j(p-1)}\\
=&\vp(\tr_{g\oplus g}(\b_{g,g}))^{\tfrac{p-1}{2}}\left(\tfrac{p-1}{2}!\right)\sum_i
(-1)^{\tfrac{r-1-i+r(p-1)}{2}}
\binom{\tfrac{i(p-1)}{2}}{\tfrac{r-1+s(p-1)-i}{2}} \b Q_{(r-1+ps-pi)(p-1)}
Q_{i(p-1)}\\
&- q^{\tfrac{p-1}{2}}\sum_i (-1)^{\tfrac{r-1+s(p-1)-i}{2}}
\binom{\tfrac{i(p-1)}{2}-1}{\tfrac{r-1+s(p-1)-i}{2}} Q_{(r+ps-pi)(p-1)} \b
Q_{i(p-1)},
\end{align*}
where we write, for instance, $Q_{2k}\b Q_{2k'}$ for $\theta(e_{2k}\otimes
e_{2k'-1}^p)\otimes \iota^{p^2}.$
Again we note that for each composition of Dyer--Lashof operations to be
nonzero, we will have that $r$ and $s$ have opposite parities, 
and $i$ and $j$ have the same parities
as $s$. This ensures that all of the fractions in the above formula are
integers.

Note that the binomial coefficients in the second term on the right-hand side are zero
unless
$$\tfrac{r-1+s(p-1)+2}{p}\leq i\leq r+s(p-1)-1,$$
and the binomial coefficients on the left-hand side are zero unless
$$\tfrac{s+2}{p}\leq j\leq s.$$
The assumption that $r\geq s$ ensures that
$$s<s+\tfrac{1}{p}= \tfrac{ps+1}{p}\leq \tfrac{r+1+s(p-1)}{p},$$
so no term can appear on both sides of the equation. 

Now suppose $s=2p^t$ for some $t>0$. Replacing $j$ by $2j$ (since we may assume
that $j$ and $s$ have the same parity), the binomial coefficient on the
left-hand side becomes
$$\binom{j(p-1)-1}{p^t-j},$$
which is 0 unless $j=p^t$ by Lemma \ref{lem:binom}. 
Moreover, replacing $i$ by $2i$, we have that the binomial coefficient in the
first term on the right-hand side becomes
$$\binom{i(p-1)}{\tfrac{r-1}{2}+p^t(p-1)-i}.$$
Since $\b Q_{(r-1+ps-2pi)(p-1)}=0$ unless $r-1+ps-2pi>0$, this implies that
$$pi-\tfrac{r-1}{2}-p^t(p-1)<p^t,$$
so Lemma \ref{lem:binom} implies that
$$\binom{i(p-1)}{\tfrac{r-1}{2}+p^t(p-1)-i}\equiv
\binom{(i-p^t)(p-1)}{\tfrac{(r-1)}{2}-i} \bmod{p}$$
when $(r-1)/2-i\geq 0$; when $(r-1)/2-i<0$, an elementary argument using Lucas'
Theorem shows that the left-hand side must be 0 (and the right-hand side is
clearly 0).
A similar argument for the second term on the right-hand side shows that
$$\binom{i(p-1)-1}{\tfrac{(r-1)}{2}+p^t(p-1)-i}\equiv
\binom{i(p-1)-p^t(p-1)-1}{\tfrac{r-1}{2}-i}\bmod{p}$$
when $r+ps-pi\geq 0$, and when $r+ps-pi<0$, $Q_{(r+ps-pi)(p-1)}$ vanishes.
Thus, we have proven that when $s=2p^t$,
\begin{align*}
Q_{r(p-1)}\b
Q_{s(p-1)}=q_{g,r}\sum_i&
t_{r,s,i}\tfrac{(i-s)(p-1)}{pi-s(p-1)-r+1}
\b Q_{(r-1+ps-pi)(p-1)} Q_{i(p-1)}\\
&-q^{\tfrac{p-1}{2}}(-1)^{\tfrac{s(p-1)}{2}}\sum_i t_{r,s,i} Q_{(r+ps-pi)(p-1)} \b
Q_{i(p-1)},
\end{align*}
where
$$q_{g,r}=\vp(\tr_{g\oplus
g}(\b_{g,g}))^{\tfrac{p-1}{2}}\left(\tfrac{p-1}{2}!\right)(-1)^{\tfrac{r(p-1)}{2}},$$
$q=\vp(\tr_{g\oplus g}(\b_{g,g}))(-1)^n$, and
$$t_{r,s,i}=(-1)^{\tfrac{r-i-1}{2}}\binom{\tfrac{(i-s)(p-1)}{2}-1}{\tfrac{r-1-i}{2}}.$$

To prove this for a general $s$, let
\begin{align*}
\b Q_{r,s,g,n}\coloneqq &Q_{r(p-1)}\b Q_{s(p-1)}-q_{g,r}\sum_i t_{r,s,i}
\tfrac{(i-s)(p-1)}{pi-s(p-1)-r+1}
\b Q_{(r-1+ps-pi)(p-1)} Q_{i(p-1)}\\
&+q^{\tfrac{p-1}{2}}(-1)^{\tfrac{s(p-1)}{2}}\sum_i 
t_{r,s,i} Q_{(r+ps-pi)(p-1)} \b 
Q_{i(p-1)}.
\end{align*}
Once again, let $Y_k$ and $y$ be as in Lemma \ref{lem:sphere}. The external
Cartan formulas show that
$$\b Q_{r',s',g,n-k}(x\otimes y)\mapsto (-1)^{\tfrac{k(p-1)}{2}}\b Q_{r,s,g,n}(x)\otimes y$$
under the map induced by the appropriate morphism $((g\oplus \1_{\fG})^p)^p\to
(g^p)^p\oplus \1_{\fG}$
for $r'=r+k$, $s'=s+k$. Choosing $k>0$ so that $s'=2p^t$ makes $$\b
Q_{r',s',g,n-k}(x\otimes y)=0,$$ and therefore $\b Q_{r,s,g,n}(x)=0$ as well.
This proves the second pair of Adem relations.
\end{proof}

\begin{proof}[Proof (of Theorem \ref{thm:lower index}, Statements
\ref{item:susp1} and \ref{item:susp2})]
Finally, we prove statement \ref{item:susp1} Let $X=\S^{g,n+1}\Fvp$, and let $Y$ be as
in Construction \ref{con:zigzag}. As usual, we write $\iota_n$ for $1\in (\O
X)(g)_n$ and $\iota_{n+1}$ for $1\in X(g)_{n+1}$.
From the definition of $\s$, we see that to show $$\s
Q_{i(p-1)}=(-1)^{\tfrac{n(p-1)}{2}}\left(\tfrac{p-1}{2}\right)!\,Q_{(i-1)(p-1)},$$
it suffices to find a chain $y\in (W\otimes_{C_p} Y^{\dc p})(\gpower{g}{p})$ whose image in
$(W\otimes_{C_p} X^{\dc p})(\gpower{g}{p})$ is
$$(-1)^{\tfrac{n(p-1)}{2}}\left(\tfrac{p-1}{2}\right)!\, e_{(i-1)(p-1)}\otimes
\iota_{n+1}^p$$
and such that $d(y)=e_{i(p-1)}\otimes \iota_n^p$.
Such a $y$ may be constructed exactly as in the proof of Theorem 3.1 in
\cite{Mayalg}, where the classes May calls $a$ and $b$ are replaced by
$\iota_{n+1}$ and $\iota_n$, respectively, and $y$ is the class May denotes by $c$
(for $j=i(p-1)$.) 

Similarly, to show that $$\s \b
Q_{i(p-1)}=-(-1)^{\tfrac{n(p-1)}{2}}\left(\tfrac{p-1}{2}\right)! \b
Q_{(i-1)(p-1)},$$
we use the construction for what May calls $c'$ to construct a chain $$y'\in
(W\otimes_{C_p} Y^{\dc p})(\gpower{g}{p})$$ whose image in $(W\otimes_{C_p}
X^{\dc p})(\gpower{g}{p})$
is $$(-1)^{\tfrac{n(p-1)}{2}}\left(\tfrac{p-1}{2}\right)!\, e_{(i-1)(p-1)-1}\otimes
\iota_{n+1}^p$$ and such that $d(y')=-e_{i(p-1)-1}\otimes \iota_n^p$.

Statement \ref{item:susp2} follows directly from \ref{item:susp1} and
commutativity of diagram \ref{eq:susp diagram}.
\end{proof}

\subsection{Upper indexing}\label{subsection:upper}
It is more common to re-index the Dyer-Lashof operations so that the operations
raise degree. This allows us to define a graded algebra of $\Fvp$ operations that acts
on the $\Fvp$-homology of an $E_{\infty}$-algebra. Since our homology is graded
by $(\ob{\fG})\times \Z$, we will have a double grading on our upper-indexed operations
as well.

\begin{defn}
For $s\in \Z $ and $g\in \fG$, let
$$Q^s_g\colon H_{g,n}\to H_{\gpower{g}{p},n+2s(p-1)}$$
be given by
$$Q^s_g=(-1)^s v(n) Q_{(2s-n)(p-1)},$$
where 
$$v(n)=(-1)^{\tfrac{n(n-1)(p-1)}{4}} [\left(\tfrac{p-1}{2}\right)!]^n,$$ and
$Q_{(2s-n)(p-1)}$ is a lower-indexed operation as defined in Definition
\ref{defn:lowerindex}.
When $2s-n<0$, $Q_{(2s-n)(p-1)}$ is defined to be zero. Similarly, if $s\in \Z$, let 
$$\b Q^s_g\colon H_{g,n}\to H_{\gpower{g}{p},n+2s(p-1)-1}$$
be given by
$$\b Q^s_g=(-1)^s v(n)\b Q_{(2s-n)(p-1)},$$
where $\b Q_{(2s-n)(p-1)}=0$ if $2s-n\leq 0$. We refer to the operations $Q^s_g$
and $\b Q^s_g$ as the \emph{classical} or \emph{untwisted} Dyer--Lashof
operations.

For $s\in\Z$, let
$$Q_g^{s+\tfrac12}\colon H_{g,n} \to H_{\gpower{g}{p},n+(2s+1)(p-1)}$$
be defined by $$Q_g^{s+\tfrac12}=(-1)^{s}[\left(\tfrac{p-1}{2}\right)!]^3 v(n) Q_{(2s+1-n)(p-1)},$$
and let
$$\b Q_g^{s+\tfrac12}\colon H_{g,n}\to H_{\gpower{g}{p},n+(2s+1)(p-1)-1}$$
be defined by $$\b Q_g^{s+\tfrac12}=(-1)^s [\left(\tfrac{p-1}{2}\right)!]^3 v(n)\b Q_{(2s+1-n)(p-1)}.$$
We refer to $Q^{s+\tfrac12}_g$ and $\b Q^{s+\tfrac12}_g$ as the \emph{twisted}
Dyer--Lashof operations.
\end{defn}

\begin{rems} This definition merits a number of remarks.
\begin{enumerate}
\item By Theorem \ref{thm:lower index}, the untwisted operations $Q^s_g$ on
$H_{g,n}$ are zero unless
$\vp(\tr_{g\oplus g}(\b_{g,g}))=1$, and the twisted operations
$Q^{s+\tfrac12}_g$ on $H_{g,n}$ are zero unless
$\vp(\tr_{g\oplus g}(\b_{g,g}))=-1$,
but it is convenient to note that both types of operations are defined for all
$g$.

\item For working with these constants, it is useful to note that
$$[\left(\tfrac{p-1}{2}\right)!]^2\equiv (-1)^{\tfrac{p-1}{2}+1}\bmod{p},$$
so that $\left(\tfrac{p-1}{2}\right)!$ is either $-1$ or a primitive 4th root of
unity, depending on whether $p$ is 1 or 3 mod $4$.

\item We will often omit the fractional notation, choosing instead to write $Q^s_g$,
where $s$ is either an integer or a half integer. We will write $\Z+\tfrac12$ to
denote the set of half integers.

\item
The untwisted operations are defined in precisely the same way as the classical
Dyer--Lashof operations (though now
keeping track of the $\fG$ grading). When $\vp$ is the trivial homomorphism, the choice of constant in this definition
ensures that
\begin{itemize}
\item if $2s=n$ and $x\in H_{g,n}(X)$, then $Q_g^s(x)=x^p$
\item suspension commutes with $Q_g^s$ (without the constant in \ref{item:susp2})
\item the signs in the Cartan formula cancel.
\end{itemize}

 Our choice ensures that the first two conditions hold for the twisted operations;
however, although
the signs in the Cartan formulas do simplify with our choice of constant, we
still have a $-1$ in some cases. 
This choice is also consistent with the customary choice of constant for the
untwisted operations, in that we are able to state the relations for twisted and
untwisted operations simultaneously.
Presumably there are other logical choices of constant, however.
\end{enumerate}
\end{rems}

Now we may rewrite Theorem \ref{thm:lower index} for the upper-indexed
operations. This theorem should look very similar to the classical theorem
for $E_{\infty}$-spaces in \cite{IteratedLoops}, although it lacks a statement
of the Nishida relations, since we do not necessarily have Steenrod operations
in our algebraic setting. A version of the Kudo transgression theorem will be
stated in Section \ref{section:ss}.

\begin{thm}\label{thm:upper index}
The Dyer--Lashof operations on $\Alg_{E_{\infty}}(\Mod_{\Fvp})$ consist of
natural transformations
\begin{align*}
Q^s_g & \colon H_{g,n} \to H_{\gpower{g}{p},n+2s(p-1)}\\
\b Q^s_g & \colon H_{g,n}\to H_{\gpower{g}{p}, n+2s(p-1)-1}
\end{align*}
for every $s\in \Z\sqcup \Z+\tfrac12$ satisfying the following relations.
\begin{enumerate}
\item If $\vp(\tr_{g\oplus g}(\b_{g,g}))=1$, then $Q^s_g=0$ whenever $s\in \Z+\tfrac12$; if
$\vp(\tr_{g\oplus g}(\b_{g,g}))=-1$, then $Q^s_g=0$ whenever $s\in \Z$. \label{item:twisted}
\item (Frobenius linearity) For $x,y\in
H_{g,n}(X)$ and $t\in \F$,
$$Q_g^s(t x+y)=t^p Q_g^s(x)+Q_g^s(y)$$ 
and
$$\b Q_g^s(t x+y)=t^p \b Q_g^s(x)+\b Q_g^s(y).$$ 

\item $Q_g^s=0$ on $H_{g,n}$ if $2s<n$.
\item $\b Q_g^s=0$ on $H_{g,n}$ if $2s\leq n$.
\item If $x\in H_{g,n}(X)$ with $n=2s$, then
$Q_g^s(x)=x^p$. \label{item:power}
\item If $\mathbbm{1}\in H_{\1_{\fG},0}(X)$ is the unit, then
$Q_{\1_{\fG}}^s(\mathbbm{1})=0$ and $\b Q_{\1_{\fG}}^s(\mathbbm{1})=0$ if $s>0$.
\item (Internal and external Cartan formulas) If $x\in
H_{g,n}(X)$ and $y\in H_{h,m}(X)$, then for $s,j,k\in \Z\sqcup \Z+\tfrac12$, the
shuffle map
$$H_{\gpower{(g\oplus h)}{p},*}(X)\to H_{\gpower{g}{p}\oplus\gpower{h}{p},*}(X)$$ takes
$$Q^s_{g\oplus h}(xy)\mapsto \sum_{j+k=s}
(-1)^{2jk(p-1)}Q^j_g(x) Q^k_h(y)$$
and
\begin{align*}
\b Q^s_{g\oplus h}(xy)\mapsto \sum_{j+k=s} (-1)^{2jk(p-1)} \left(\b
Q^j_g(x) Q^k_h(y)+(-1)^n Q^j_g(x)\b Q^k_h(y)\right).
\end{align*}
Similarly, if $x\in H_{g,n}(X)$ and $y\in H_{h,m}(Y)$, then for $s,j,k\in
\Z\sqcup \Z+\tfrac12$, the shuffle map
$$H_{\gpower{g\oplus h}{p},*}(X\dc Y)\to
H_{\gpower{g}{p}\oplus\gpower{h}{p},*}(X\dc Y)$$ takes
$$Q^s_{g\oplus h}(x\otimes y)\mapsto \sum_{j+k=s}
(-1)^{2jk(p-1)}Q^j_g(x)\otimes Q^k_h(y)$$
and
\begin{align*}
\b Q^s_{g\oplus h}(x\otimes y)\mapsto \sum_{j+k=s} (-1)^{2jk(p-1)} \left(\b
Q^j_g(x)\otimes Q^k_h(y)+(-1)^n Q^j_g(x)\otimes \b Q^k_h(y)\right).
\end{align*}

\item  (Adem relations) If $r>ps$, then
$$Q_{\gpower{g}{p}}^rQ_g^s = \vp(\tr_{g\oplus g}(\b_{g,g}))^{\tfrac{p-1}{2}}\sum_i (-1)^{r-i} \binom{(i-s)(p-1)-1}{r-(p-1)s-i-1}
Q_{\gpower{g}{p}}^{r+s-i}Q_g^i$$
and
$$\b Q_{\gpower{g}{p}}^rQ_g^s = \vp(\tr_{g\oplus g}(\b_{g,g}))^{\tfrac{p-1}{2}}\sum_i (-1)^{r-i} \binom{(i-s)(p-1)-1}{r-(p-1)s-i-1} \b
Q_{\gpower{g}{p}}^{r+s-i}Q_g^i;$$
if $r\geq ps$, then
\begin{align*}
Q_{\gpower{g}{p}}^r\b Q_g^s=&\sum_i (-1)^{r-i}\binom{(i-s)(p-1)}{r-(p-1)s-i} \b
Q_{\gpower{g}{p}}^{r+s-i}Q_g^i\\
&-\vp(\tr_{g\oplus g}(\b_{g,g}))^{\tfrac{p-1}{2}}\sum_i
(-1)^{r-i}\binom{(i-s)(p-1)-1}{r-(p-1)s-i} Q_{\gpower{g}{p}}^{r+s-i} \b Q_g^i
\end{align*}
and
$$\b Q_{\gpower{g}{p}}^r\b Q_g^s=-\vp(\tr_{g\oplus g}(\b_{g,g}))^{\tfrac{p-1}{2}}\sum_i (-1)^{r-i}\binom{(i-s)(p-1)-1}{r-(p-1)s-i} \b
Q_{\gpower{g}{p}}^{r+s-i} \b
Q_g^i.$$

\item If $X,\O X\in \Alg_{E_{\infty}}(\Mod_{\Fvp})$ with compatible $E_{\infty}$
structures, in the sense that the lower half of diagram \ref{eq:susp diagram}
commutes, then the suspension isomorphism $\S\colon H_{*,*}(\O X)\to
H_{*,*+1}(X)$ satisfies $$\S Q_g^s(x)=Q_g^s(\S x)$$ and $$\S \b Q_g^s(x)=-\b Q_g^s(\S
x).$$ \label{item:upper susp}

\end{enumerate}
\end{thm}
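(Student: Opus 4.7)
The plan is to derive Theorem \ref{thm:upper index} from Theorem \ref{thm:lower index} by substituting the definitions
$$Q_g^s = (-1)^s v(n)\, Q_{(2s-n)(p-1)}, \qquad \b Q_g^s = (-1)^s v(n)\,\b Q_{(2s-n)(p-1)}$$
for $s\in\Z$ (and the analogous formulas with the extra factor $\left[\left(\tfrac{p-1}{2}\right)!\right]^3$ for $s\in\Z+\tfrac12$) and then simplifying constants modulo $p$. Two identities will be used throughout: $v(n+m) = v(n)v(m)(-1)^{nm(p-1)/2}$, which follows by expanding the exponent in the definition of $v(n)$, and $\left[\left(\tfrac{p-1}{2}\right)!\right]^2 \equiv (-1)^{(p-1)/2+1}\bmod{p}$, which follows from Wilson's theorem by pairing $k$ with $p-k$ in $(p-1)!$.

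The easier statements are essentially immediate. Item \ref{item:twisted} follows from Statement \ref{zero} of Theorem \ref{thm:lower index} after noting that $n+(2s-n)=2s$ is even (resp.\ odd) when $s$ is an integer (resp.\ half-integer). Frobenius linearity is Statement \ref{additivity}, the degree-vanishing items follow from the convention that $Q_{i(p-1)}=0$ for $i<0$ and $\b Q_0=0$, and item \ref{item:power} reduces via Statement \ref{power} to the identity $(-1)^s v(2s)\equiv 1\bmod{p}$, which I will verify by a short computation using Wilson's theorem. The unit statement is Statement \ref{unit}. For item \ref{item:upper susp} the suspension compatibility reduces via Statement \ref{item:susp2} to the single arithmetic identity $v(n+1)/v(n) = (-1)^{n(p-1)/2}\left(\tfrac{p-1}{2}\right)!$, which is immediate from the definition of $v$; the minus sign on $\b Q_g^s$ comes from the corresponding sign in Statement \ref{item:susp2}, and in the half-integer case the extra factor $\left[\left(\tfrac{p-1}{2}\right)!\right]^3$ appears on both sides and cancels.

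For the Cartan formulas I substitute into Statements \ref{external cartan} and \ref{internal cartan}. In the untwisted--untwisted case, the ratio $v(n+m)/(v(n)v(m)) = (-1)^{nm(p-1)/2}$ exactly cancels the $(-1)^{nm(p-1)/2}$ produced by the lower-indexed formula, yielding sign $+1 = (-1)^{2jk(p-1)}$. In the twisted--twisted case (when $s$ is an integer but $j,k$ are half-integers), two substitutions contribute $\left[\left(\tfrac{p-1}{2}\right)!\right]^{-6}\equiv -(-1)^{(p-1)/2}\bmod p$, combined with an extra $-1$ coming from comparing $(-1)^{a+b}$ to $(-1)^{s}$ where $s=a+b+1$; together these give $(-1)^{(p-1)/2}$, which equals $(-1)^{2jk(p-1)}$ by the direct expansion $2(a+\tfrac12)(b+\tfrac12)(p-1)\equiv (p-1)/2\bmod 2$. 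The mixed untwisted--twisted case is similar and produces sign $+1$, again matching $(-1)^{2jk(p-1)}$ since one of $2j,2k$ is then an integer and $p-1$ is even. The $\b Q$ versions follow by the same substitution applied to the lower-indexed $\b Q$ Cartan formula.

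The Adem relations are the main obstacle and will require the most careful bookkeeping. Starting from Statement \ref{item:adem}, I substitute the defining formulas on both sides, being careful that the intermediate class on which the outer operation acts lives in degree $pn+2i(p-1)$ (or $pn+(2i+1)(p-1)$ in the twisted case), so it carries the constant $v(pn+2i(p-1))$; the map $\pi_0\fG\to \Aut_{\fG}(\1_{\fG})$ of \ref{eq:trhomo} shows that $\vp(\tr_{\gpower{g}{p}\oplus \gpower{g}{p}}(\b_{\gpower{g}{p},\gpower{g}{p}})) = \vp(\tr_{g\oplus g}(\b_{g,g}))$, which is what collapses the prefactors to a single power of $\vp(\tr_{g\oplus g}(\b_{g,g}))^{(p-1)/2}$. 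The bulk of the work is a lengthy modular arithmetic computation showing that the resulting ratios of $v$-constants combine with $q_{g,n,s}$, $q_{g,r}$, and $t_{r,s,j}$ and with the reindexing of the binomial coefficients to yield precisely the stated binomial coefficients; in the untwisted case this reproduces May's classical computation from \cite{IteratedLoops}. The most delicate point is the twisted--twisted Adem relation, where factors $\left[\left(\tfrac{p-1}{2}\right)!\right]^{\pm 6}$ arising from half-integer substitutions must be simplified against the parity constraints imposed by the vanishing condition of item \ref{item:twisted}; here the constraint that $\vp(\tr_{g\oplus g}(\b_{g,g}))=-1$ forces the parities of $r,s,i,j$ in Theorem \ref{thm:lower index} in exactly the way needed for all exponents to remain integral and for the final constants to collapse.
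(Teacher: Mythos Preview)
Your proposal is correct and matches the paper's approach exactly: the paper's proof consists of the single sentence ``This follows directly from Theorem \ref{thm:lower index} by re-indexing the operations and carefully simplifying constants.'' You have simply unpacked what that re-indexing and simplification looks like in each case, which is precisely what the paper leaves to the reader.
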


\begin{proof}
This follows directly from Theorem \ref{thm:lower index} by re-indexing the
operations and carefully simplifying constants.
\end{proof}

\begin{rems}
From Remark \ref{rem:product}, we know that $x^p=0$ if $x$ has bidegree $(g,n)$
with $(-1)^n\vp(\tr_{g\oplus g}(\b_{g,g}))=-1$, and from Theorem \ref{thm:lower index}, we know that
$Q_{\1_{\fG}}(x)=0$ if $(-1)^n\vp(\tr_{g\oplus g}(\b_{g,g}))=-1$. Therefore, statement \ref{item:power} is
trivially true in this case.

From Equation \ref{eq:trhomo}, we know that
$$\vp(\tr_{\gpower{g}{p}\oplus
\gpower{g}{p}}(\b_{\gpower{g}{p},\gpower{g}{p}}))=\vp(\tr_{g\oplus
g}(\b_{g,g}))^p=\vp(\tr_{g\oplus g}(\b_{g,g})),$$
where the second equality follows from the fact that $\vp(\tr_{g\oplus
g}(\b_{g,g}))=\pm 1$.
Thus, when considering compositions of upper-indexed operations
$Q^r_{\gpower{g}{p}}Q^s_g$, we may assume
that $r,s\in \Z$ if $\vp(\tr_{g\oplus g}(\b_{g,g}))=1$ and $r,s\in \Z+\tfrac12$ if
$\vp(\tr_{g\oplus g}(\b_{g,g}))=-1$. That is, the composition of an untwisted operation with a
twisted operation is always zero.
\end{rems}

\section{A basis for the homology of free
$E_{\infty}$-algebras}\label{section:basis}

Now we would like to give a complete description of the $\Fvp$-homology of free
$E_{\infty}$-algebras in terms of the $\Fvp$ Dyer--Lashof operations. This is
certainly useful for computations, but, in light of Proposition
\ref{prop:yoneda}, it also
gives a classification of \emph{all} operations. In particular, our result will
show that the Dyer--Lashof operations together with the product generate all
homology operations.

In this section we will compute the homology as a vector space by giving an
explicit basis. As is common with such calculations, it is easier to compute the
entire homology $H_{*,*}(E_{\infty}(X))$ at once, rather than computing
$H_{g,n}(E_{\infty}(X))$ for each $(g,n)$ separately. Thus, we will actually
describe $H_{*,*}(E_{\infty}(X))$ as a free commutative monoid object in
$\Mod_{\Fvp}(\GrVect^{\fC}_{\F})$.

Before stating and proving this result, we will rigorously define bases and free commutative algebras in this
setting and construct an analogue of the Serre spectral sequence that we use in
the proof.

\subsection{Quotients}
In this section and the next we will make frequent use of \emph{quotients}, which in this
context we define using a universal property or a colimit. Let $\Set^{\ob{\fG}\times
\Z}$ denote the category of $(\ob{\fG}\times \Z)$-graded sets, which can equivalently
be described as the category of functors from $\ob{\fG}\times \Z$ to $\Set$, where
$\ob{\fG}\times \Z$ denotes the category with objects the elements of $\ob{\fG}\times \Z$ and
no nonidentity morphisms. There is a free-forgetful adjunction
$$\Set^{\ob{\fG}\times \Z}\rightleftarrows \Mod_{\Fvp}(\GrVect^{\fC})$$
that takes a graded set $S$ to
$$\Fvp\{S\}\coloneqq \bigoplus_{x\in S} \S^{g,n}\Fvp\{x\},$$
where $\S^{g,n}\Fvp\{x\}$ denotes $\S^{g,n}\Fvp$ when $x$ has bidegree $|x|=(g,n)$.

\begin{defn}
Suppose we have a free-forgetful adjunction
$$L\colon \Mod_{\Fvp}(\GrVect^{\fC}) \rightleftarrows
\mathsf{D}$$
to a (pointed) category $\mathsf{D}$, where the left adjoint is denoted $L$.
If $V\in \mathsf{D}$, and we have a map $L(\Fvp\{S\}) \to V$ (which
is equivalent to having a map $S\to V$ of graded sets), then we define
the \emph{quotient of $V$ by the set $S$} to be an object $V/S\in
\mathsf{D}$ together with a map $V\to V/S$ such that the composition
$$L(\Fvp\{S\})\to V\to V/S$$
is zero, and satisfying the following universal property: if $V\to V'$ is
another map such that $L(\Fvp\{S\})\to V\to V'$ is zero, then $V\to V'$ factors
uniquely as
$$
\xymatrix{
V \ar[r] \ar[dr] & V/S \ar[d]\\
& V'.
}
$$
\end{defn}
To see that quotients exist, note that the pushout of the diagram
$$
\xymatrix{
L(\Fvp\{S\}) \ar[r] \ar[d] & V\\
L(0) & 
}
$$
satisfies this universal property. Such pushouts exist any time $\mathsf{D}$ can
be described as algebras over a sifted monad (see Lemma 3.5 of \cite{SOS}), which is the case in all of our
examples.

\subsection{Commutative $\Fvp$-algebras}
We now make sense of the free commutative algebra on a graded set in
$\Set^{\ob{\fG}\times \Z}$ and give a description of a basis for its underlying
object in $\Mod_{\Fvp}(\GrVect^{\fG})$.

Let $\Alg_{\Fvp}(\GrVect^{\fC})$ denote the category of monoid objects in
$\Mod_{\Fvp}(\GrVect^{\fC})$.
Consider the free-forgetful adjunction
$$\Mod_{\Fvp}(\GrVect^{\fC})\rightleftarrows \Alg_{\Fvp}(\GrVect^{\fC}),$$
where we may take the free functor to be given by the tensor algebra
$$V\mapsto T(V)\coloneqq \bigoplus_{k\geq 0} V^{\dc k}.$$
Let $\CAlg_{\Fvp}(\GrVect^{\fC})$ denote the full subcategory of
$\Alg_{\Fvp}(\GrVect^{\fC})$ of commutative monoid objects in
$\Mod_{\Fvp}(\GrVect^{\fC})$.
Then there is a functor
$$\Alg_{\Fvp}(\GrVect^{\fC})\to \CAlg_{\Fvp}(\GrVect^{\fC})$$
that is left adjoint to the inclusion functor, and that is defined by taking an
algebra
$V\in \Alg_{\Fvp}(\GrVect^{\fC})$ to the quotient of $V$ by the $(\ob{\fG}\times
\Z)$-graded set $S$ containing
$$v_1v_2 - (-1)^{n_1n_2} \b_{g_1, g_2}^{-1}(v_2v_1)$$
for every $v_i\in V(g_i)_{n_i}$, where $(g_i,n_i)\in \ob{\fG}\times \Z$. Here, $V(g_i)_{n_i}$ denotes the $n_i$-graded
component of $V(g_i)$, $v_iv_j$ denotes the image of $v_i\otimes v_j$ under
the monoid multiplication map, and $\b_{g_1, g_2}^{-1}(v_2v_1)$ denotes the
image of $v_2v_1$ under the map induced by the inverse of the braiding morphism
$\b_{g_1,g_2}\colon g_1\oplus g_2\to g_2\oplus g_1.$

That the quotient map is a left adjoint follows
from the universal property of the quotient: if $V'\in
\CAlg_{\Fvp}(\GrVect^{\fC})$, then every map $V\to V'$ in $\Alg_{\Fvp}(\GrVect^{\fC})$
sends $S$ to zero, since $V'$ is commutative. Thus, there is a unique map
$V/S\to V'$ in $\Alg_{\Fvp}(\GrVect^{\fC})$, which is then a map in
$\CAlg_{\Fvp}(\GrVect^{\fC})$ since $V/S$ and $V'$ are both commutative. This
gives a map
$$\Hom_{\Alg}(V,V')\to \Hom_{\CAlg}(V/S,V')$$
that is an inverse to the map given by precomposing with the quotient map $V\to
V/S$. 

\begin{defn}
Given $V\in \Mod_{\Fvp}(\GrVect^{\fC})$, the \emph{free commutative algebra on
$V$} is the image of $V$ under the composition
$$\Mod_{\Fvp}(\GrVect^{\fC})\to \Alg_{\Fvp}(\GrVect^{\fC})\to
\CAlg_{\Fvp}(\GrVect^{\fC}).$$

If $U\in \Set^{\ob{\fG}\times\Z}$, then the \emph{free commutative algebra on $U$},
denoted $\mathcal{S}(U)$, is
the free commutative algebra on $\Fvp\{U\}$.
\end{defn}

We can extend the notion of a basis to modules and algebras over $\Fvp$.
\begin{defn}
If $V\in \Mod_{\Fvp}(\GrVect^{\fC})$, then a \emph{basis} for $V$ is a set $S\in
\Set^{\ob{\fG}\times\Z}$ together with an isomorphism $\Fvp\{S\}\to V$. A basis
for
$V\in\Alg_{\Fvp}(\GrVect^{\fG})$ is a basis for the image of $V$ under the
forgetful functor to $\Mod_{\Fvp}(\GrVect^{\fG})$.
\end{defn}

It will be useful
to have a description of a basis for the free commutative algebra on a set, and
thus, we give the following description of quotients in
$\Alg_{\Fvp}(\GrVect^{\fG}).$

\begin{prop}\label{prop:pushout}
Let $V\in \Alg_{\Fvp}(\GrVect^{\fC})$ and $X\in \Mod_{\Fvp}(\GrVect^{\fC})$ with
a map $X\to V$ in $\Mod_{\Fvp}(\GrVect^{\fC})$. Then the pushout of
$$
\xymatrix{
V\dc X\dc V \ar[r] \ar[d] & V\\
0 &
}
$$
in $\Mod_{\Fvp}(\GrVect^{\fC})$ may be endowed with a monoid structure making it
the pushout of
$$
\xymatrix{
T(X) \ar[r] \ar[d] & V\\
T(0) &
}
$$
in $\Alg_{\Fvp}(\GrVect^{\fC})$.
\end{prop}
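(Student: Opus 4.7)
The plan is to build the monoid structure on the module-level pushout and then check the universal property by hand. Let $f \colon X \to V$ be the given map, and let $\mu \colon V \dc V \to V$ denote the multiplication on $V$. The map $V \dc X \dc V \to V$ in the diagram is the composite $(\mathrm{id} \dc f \dc \mathrm{id})$ followed by the threefold multiplication $V \dc V \dc V \to V$. Let $W$ denote the pushout of the displayed square in $\Mod_{\Fvp}(\GrVect^{\fC})$, i.e.\ the cokernel of $V \dc X \dc V \to V$, and let $q \colon V \to W$ denote the quotient map.

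First I would endow $W$ with a multiplication. Since Day convolution in $\Mod_{\Fvp}(\GrVect^{\fC})$ preserves colimits in each variable, the map $q \dc q \colon V \dc V \to W \dc W$ is an epimorphism whose kernel is the sum of the images of $(V \dc X \dc V) \dc V$ and $V \dc (V \dc X \dc V)$ in $V \dc V$. To produce the multiplication $W \dc W \to W$, I would show that the composite $q \circ \mu \colon V \dc V \to W$ vanishes on each of these two subobjects. For the first, the composite $(V \dc X \dc V) \dc V \to V \dc V \to V$ is, by associativity of $\mu$, equal to the composite $(V \dc X \dc V) \dc V \to V \dc X \dc V \to V$ (where the first arrow multiplies the last two factors), so it lands in the image of $V \dc X \dc V \to V$ and is killed by $q$; the other subobject is handled symmetrically. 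Thus $\mu$ descends to a map $\bar\mu \colon W \dc W \to W$, and the associativity, unitality, and commutativity of $\bar\mu$ follow from those of $\mu$ by the same cokernel argument applied one more time. By construction $q$ is a map of monoids.

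To verify the universal property, let $W' \in \Alg_{\Fvp}(\GrVect^{\fC})$ be given with a map of algebras $g \colon V \to W'$ and a map of algebras $T(0) \to W'$ (which is just the unit of $W'$) fitting into a commutative square with $T(X) \to V$ and $T(X) \to T(0)$. The commutativity of this square amounts exactly to the statement that the composite $X \to V \xrightarrow{g} W'$ is zero. Given this, the composite $V \dc X \dc V \to V \xrightarrow{g} W'$ factors as $V \dc X \dc V \xrightarrow{g \dc (gf) \dc g} W' \dc W' \dc W' \to W'$ by the fact that $g$ is an algebra map, and $gf = 0$ forces this to be zero. Hence $g$ descends to a unique map $\bar g \colon W \to W'$ of $\Fvp$-modules, which is an algebra map because $g$ was and because $\bar\mu$ was induced from $\mu$. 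Conversely, any algebra map $W \to W'$ pulls back along $q$ to an algebra map $V \to W'$ killing the image of $X$, so the correspondence is bijective.

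The only potentially subtle point, and hence the main thing to be careful about, is checking that the kernel of $q \dc q$ really is the sum of the two evident subobjects and that $\bar\mu$ is well-defined; this is where one uses that $\dc$ in $\Mod_{\Fvp}(\GrVect^{\fC})$ is exact in each variable (which is immediate because we are working in a category of vector-space-valued functors, where the monoidal structure is computed via ordinary tensor products and colimits). Everything else is a formal diagram chase.
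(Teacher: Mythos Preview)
Your argument is correct and is essentially the standard one: form the module-level cokernel, show the multiplication descends by checking it kills the evident two-sided ideal, and verify the universal property by hand. The paper itself does not give a proof at all---it simply cites Lemma~6.2 of Schwede--Shipley and omits the details---so you have in fact supplied more than the paper does. (For what it is worth, the paper's source contains a commented-out proof following exactly your outline: it describes $W\dc W$ as a pushout with kernel the image of $(V\dc X\dc V)\dc V \oplus V\dc(V\dc X\dc V)$, checks that $q\circ\mu$ kills both summands via associativity, and then verifies the universal property just as you do.)
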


This proposition is a special case of a more general construction found in the
proof of Lemma 6.2 in \cite{SchwedeShipley}, so we omit the proof here.

Note that $\Set^{\ob{\fG}\times\Z}$ may also be equipped with a Day Convolution
monoidal structure using the Cartesian product of sets. Using the symmetry in
$\Set$ together with the (trivial) braiding on $\ob{\fG}\times \Z$, we obtain a
symmetric monoidal structure. We again have
a free-forgetful adjunction
$$\Set^{\ob{\fG}\times\Z}\rightleftarrows \Mon(\Set^{\ob{\fG}\times\Z})
\rightleftarrows\CMon(\Set^{\ob{\fG}\times\Z}),$$
where $\Mon$ denotes monoid objects and $\CMon$ commutative monoid objects.
The free functor $\Set^{\ob{\fG}\times\Z}\to \Mod_{\Fvp}(\GrVect^{\fC})$ is
(strong) monoidal, so it induces a functor
$$\Mon(\Set^{\ob{\fG}\times\Z})\to \Alg_{\Fvp}(\GrVect^{\fC})$$
that is left adjoint to the forgetful functor. In particular, the free
 algebra on a set $S\in \Set^{\ob{\fG}\times\Z}$ is isomorphic to $\Fvp\{T(S)\}$,
where $T(S)\in \Mon(\Set^{\ob{\fG}\times\Z})$ is the free monoid on $S$.

We can use this, together with Proposition \ref{prop:pushout}, to give a
noncanonical basis for the free commutative algebra on a set $U$. Give $U$ a
total ordering, and let $C(U)$ denote the elements of $T(U)$ of the form $s_1\cdots s_j$ where $s_1\leq \cdots
\leq s_j,$ and we may only have $s_i=s_{i+1}$ if $|s_i|=(g,n)$ where
$(-1)^n\vp(\tr_{g\oplus g}(\b_{g,g}))=1$. 

\begin{lem}\label{lem:free comm alg}
The set $C(U)$ is a basis for the free commutative algebra $\cS(U)$ on $U$.
\end{lem}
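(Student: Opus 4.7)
The plan is to verify that $C(U)$ both spans and is linearly independent in $\cS(U)$. For spanning, I would first use Proposition~\ref{prop:pushout} to describe $\cS(U)$ as the quotient of the tensor algebra $T(\Fvp\{U\})$ by the submodule generated by the relations $v_1v_2 - (-1)^{n_1n_2}\b_{g_1,g_2}^{-1}(v_2v_1)$. A natural basis for $T(\Fvp\{U\})$ is given by all words $s_1\cdots s_j$ with $s_i \in U$, placed in the appropriate $\fG$-graded component. Given such a word, successively swapping adjacent factors (each swap incurring a Koszul sign together with the braiding in $\fG$) rewrites it modulo the relations as a scalar multiple of a monotone word. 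Moreover, Remark~\ref{rem:product} tells us $s^2 = 0$ in $\cS(U)$ whenever $s$ has bidegree $(g,n)$ with $(-1)^n\vp(\tr_{g\oplus g}(\b_{g,g}^{-1})) = -1$, so any monotone word failing the admissibility condition on repeats vanishes. This shows $C(U)$ spans $\cS(U)$.

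For linear independence, I would construct an algebra $A$ with underlying $\Fvp$-module $\Fvp\{C(U)\}$ and a commutative algebra structure such that the natural map $\Fvp\{U\} \to A$ extends to an isomorphism $\cS(U) \xrightarrow{\sim} A$. Concretely, I would define a ``sorting'' map $T(\Fvp\{U\}) \to A$ that sends a word $s_1\cdots s_j$ to the appropriate scalar times the monotone reordering (or zero if a repeat violates the admissibility condition). Once this map is well-defined, it is straightforward to check that it is multiplicative with respect to the concatenation product on $T(\Fvp\{U\})$ and kills the commutativity relations, hence descends to an algebra map $\cS(U) \to A$. Composing with the canonical map $A = \Fvp\{C(U)\} \hookrightarrow \cS(U)$ (viewing each monotone monomial as its image in $\cS(U)$) yields a map $A \to A$ that is the identity on the basis $C(U)$, completing the argument.

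The main obstacle is verifying that the sorting map is well-defined, i.e., independent of the sequence of adjacent transpositions used to reach the monotone form. This is fundamentally a coherence statement: the Koszul signs in $\GrVect_{\F}$ together with the braiding morphisms in $\fG$ must assemble into a genuine $\S_k$-action on $(\Fvp\{U\})^{\dc k}$, so that any two orderings of transpositions producing the same permutation produce the same scalar times braiding morphism. This follows from the fact that $\Mod_{\Fvp}(\GrVect^{\fC})$ is symmetric monoidal, as shown in Section~\ref{section:preliminaries}, so that the symmetric group $\S_k$ acts on $k$-fold Day convolutions via a coherent composition of braidings. The only additional subtlety is the nilpotence $s^2 = 0$ when the self-braiding acts by $-1$; this is precisely the condition that forces the corresponding invariants to vanish, so excluding such repeats from $C(U)$ is exactly what is needed to obtain a basis rather than a generating set with extra relations.
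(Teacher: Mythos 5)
Your proof is correct and carries out, in detail, exactly what the paper's terse remark (``It is not hard to show that this map is an isomorphism'') leaves to the reader: the spanning argument via sorting modulo the commutativity relations, and the injectivity via a well-defined sorting retraction whose coherence is guaranteed by the symmetric monoidal structure of $\Mod_{\Fvp}(\GrVect^{\fC})$. You correctly identify the two essential ingredients — the $\S_k$-coherence of the braiding (so the scalar attached to a monotone reordering is independent of the choice of adjacent transpositions) and the vanishing $s^2=0$ from Remark~\ref{rem:product} in the odd-sign case (so disallowed repeats are killed) — which together make $C(U)$ a basis rather than merely a spanning set.
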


To see this, note that since $C(U)$ is a subset of $T(U)$, we have a map
$$\Fvp\{C(U)\}\hookrightarrow \Fvp\{T(U)\}\to \cS(U),$$
where the latter map is a quotient map. It is not hard to show that this map is
an isomorphism.  

If $X\in \Alg_{E_{\infty}}(\Mod_{\Fvp}(\Ch^{\fC}))$, then the product defined
in Section \ref{section:product} endows $H_{*,*}(X)$ with a commutative
monoid structure in $\Mod_{\Fvp}(\GrVect^{\fC})$.

\subsection{An analogue of the Serre spectral sequence}\label{section:ss}
In Construction \ref{con:zigzag}, we defined a filtration on $E_{\infty}(Y)$ for
$Y$ the mapping cone of the identity map  $X\to X$ for any $X\in
\Mod_{\Fvp}$ (in fact there we used the identity map $\O X\to
\O X$, but of course we may instead work with $X$).
In the next
section, we will make
use of the spectral sequence associated to this filtration, so we here prove
several useful facts about it.

For the remainder of this section, we replace the category $\Ch_{\F}^{\fC}$ by
the equivalent category $\Ch(\Vect^{\fC}_{\F})$ of $\Z$-graded chain complexes in
the abelian category $\Vect^{\fC}_{\F}$. We again write $\Fvp$ for our
coefficient system in this category, and we consider the category
$$\Mod_{\Fvp}(\Ch(\Vect^{\fC}_{\F}))$$
of modules over $\Fvp$ in $\Ch(\Vect^{\fC}_{\F})$. The monoidal structure in
this category is again written as $\dc$. Given $X\in
\Mod_{\Fvp}(\Ch(\Vect^{\fC}_{\F}))$, we write $H_n(X)$ to mean the functor
$\fC\to \Vect_{\F}$ that when evaluated on $g\in \fC$ is given by
$H_n(X(g))=H_{g,n}(X)$. Similarly, we replace the category $\GrVect^{\fC}$ by
the equivalent category $(\Vect^{\fC})^{\Z}$.

Our filtration on $E_{\infty}(Y)$ actually consists of filtrations on
$\Sym^k(Y)$ for each $k$, and for now we consider each of these separately.
 The filtration on
$\Sym^{k}(Y)$ is bounded, and hence the associated spectral sequence converges
to
$H_*(\Sym^{k}(Y))$. From Lemma \ref{lem:preservewes}, we know that $H_*(\Sym^{k}(Y))=0$ when $k\geq 1$.
Taking the coproduct over $k$, we obtain a charge-graded
spectral sequence $E$ converging to $H_{*}(E_{\infty}(Y))$.
We write $\arity{E}{k}$ to denote the charge $k$ component of this spectral
sequence; that is, the spectral sequence obtained from the filtration on
$\Sym^k(Y)$.

\begin{lem}\label{lem:E1}
The $E^1$ page of the charge $k$ spectral sequence is isomorphic to
$$\arity{E^1_{t,q}}{k}\cong \bigoplus_r H_{t+q-r}(\Sym^t(\S X))\dc H_{r}(\Sym^{k-t}(X))$$
in $\Mod_{\Fvp}(\Vect_{\F}^{\fC})$.
\end{lem}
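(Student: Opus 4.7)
The plan is to compute $\arity{E^1_{t,q}}{k}$ by directly identifying the associated graded $\gr_t \Sym^k(Y)$ of the filtration from Construction \ref{con:zigzag}, and then applying the K{\"u}nneth theorem. Since $Y$ is the mapping cone of $\id \colon X \to X$, the short exact sequence $0\to F_0Y \to Y \to Y/F_0Y \to 0$ reads $0 \to X \to Y \to \S X \to 0$, so $\gr_0 Y \simeq X$ and $\gr_1 Y \simeq \S X$, with all higher filtration pieces zero. The filtration on $Y^{\dc k}$ by $F_t Y^{\dc k} = \sum_{i_1+\dots+i_k=t} F_{i_1}Y \dc \cdots \dc F_{i_k}Y$ then has associated graded
\[
\gr_t Y^{\dc k} \cong \F[\S_k] \otimes_{\F[\S_t\times\S_{k-t}]} \bigl((\S X)^{\dc t}\dc X^{\dc(k-t)}\bigr),
\]
as the surviving summands are precisely those choosing $t$ of the $k$ slots to be $\S X$ and the rest to be $X$. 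Tensoring with $\cC_{\infty}(k)$ and taking $\S_k$-coinvariants then gives
\[
\gr_t \Sym^k(Y) \cong \cC_{\infty}(k)\otimes_{\S_t\times\S_{k-t}} (\S X)^{\dc t}\dc X^{\dc (k-t)}.
\]

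The next step is to compare this with $\Sym^t(\S X)\dc \Sym^{k-t}(X)$, which by definition is
\[
(\cC_{\infty}(t)\otimes \cC_{\infty}(k-t))\otimes_{\S_t\times\S_{k-t}} (\S X)^{\dc t}\dc X^{\dc(k-t)}.
\]
Both $\cC_{\infty}(k)$ restricted along $\S_t\times\S_{k-t}\hookrightarrow \S_k$ and $\cC_{\infty}(t)\otimes \cC_{\infty}(k-t)$ are augmented, acyclic complexes of projective $\F[\S_t\times\S_{k-t}]$-modules (the former because $\cC_{\infty}(k)$ is $\S_k$-free, the latter because $\cC_{\infty}$ is $\S$-cofibrant). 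Hence any map of augmented $\F[\S_t\times\S_{k-t}]$-complexes between them is a quasi-isomorphism; such a map can be built from operad composition with a chosen point of $\cC_{\infty}(2)$, or produced by the standard lifting argument against the acyclic target. Since every object of $\Mod_{\Fvp}$ is cofibrant and we work over a field, tensoring this quasi-isomorphism over $\F[\S_t\times\S_{k-t}]$ with $(\S X)^{\dc t}\dc X^{\dc(k-t)}$ preserves the quasi-isomorphism, producing a natural equivalence
\[
\gr_t \Sym^k(Y) \simeq \Sym^t(\S X)\dc \Sym^{k-t}(X).
\]

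Finally, applying homology and invoking the K{\"u}nneth theorem (Proposition \ref{prop:Kunneth}, which applies since both factors are modules over $\Fvp$) gives
\[
H_{t+q}(\Sym^t(\S X)\dc \Sym^{k-t}(X)) \cong \bigoplus_{r} H_{t+q-r}(\Sym^t(\S X))\dc H_r(\Sym^{k-t}(X)),
\]
which is the asserted identification of $\arity{E^1_{t,q}}{k}$.

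The main obstacle is the comparison in step two: one has to be careful that the natural multiplication-type map $\cC_{\infty}(t)\otimes \cC_{\infty}(k-t) \to \cC_{\infty}(k)|_{\S_t\times\S_{k-t}}$ really is $(\S_t\times\S_{k-t})$-equivariant and that the resulting quasi-isomorphism descends correctly after taking coinvariants. This is essentially where the $\S$-cofibrancy assumption on $E_{\infty}$-operads earns its keep; once that is in hand, the rest is bookkeeping of chain-degree versus filtration-degree indices.
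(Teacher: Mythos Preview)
Your proof is correct and follows essentially the same approach as the paper. The paper identifies $\gr Y \cong X \oplus \S X$ and then invokes Proposition~\ref{prop:sum to prod} (the weak equivalence $E_{\infty}(X\oplus \S X)\simeq E_{\infty}(X)\dc E_{\infty}(\S X)$) to obtain $\gr_t \Sym^k(Y)\simeq \Sym^t(\S X)\dc \Sym^{k-t}(X)$ in one stroke, whereas you unpack this step by hand via the comparison of $\cC_{\infty}(k)|_{\S_t\times\S_{k-t}}$ with $\cC_{\infty}(t)\otimes\cC_{\infty}(k-t)$ as acyclic projective $\F[\S_t\times\S_{k-t}]$-complexes. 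Your explicit route is exactly the content underlying Proposition~\ref{prop:sum to prod}, so the two arguments are really the same; the paper's packaging is slightly shorter, while yours makes the role of $\S$-cofibrancy more visible.
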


Note the unusual bigrading here: the first grading corresponds to the
\emph{charge} grading of $E_{\infty}(\S X)$ and the second to the homological
degree of the tensor product. See Figure \ref{figure:ss} for an idea of the
shape of this spectral sequence.

\begin{figure}\label{figure:ss}
\centering
\begin{tikzpicture}
\draw[->] (-1.5,0)--(4.5,0) node[below right] {$t$};
\draw[->] (0,-3.5)--(0,3.5) node[above left] {$q$};
\foreach \Point in {(0,-3), (0,-2), (0,-1), (0,0), (0,1), (0,2), (0,3), (1,-3),
(1,-2), (1,-1), (1,0), (1,1), (1,2),
(1,3), (2,-3), (2,-2), (2,-1), (2,0), (2,1), (2,2), (2,3), (3,-3),(3,-2), (3,-1), (3,0),
(3,1), (3,2), (3,3)}{\node at \Point {\textbullet};}
\end{tikzpicture}
\caption{Picture of $\arity{E^1_{t,q}}{3}$. Dots
indicate potentially nonzero classes. In particular, all nonzero classes occur
for $0\leq t\leq 3$. A dot at $(0,q)$ corresponds to $H_{q}(\Sym^3(X))$, and
a dot at $(3,q)$ corresponds to $H_{3+q}(\Sym^3(\S X))$.}
\end{figure}
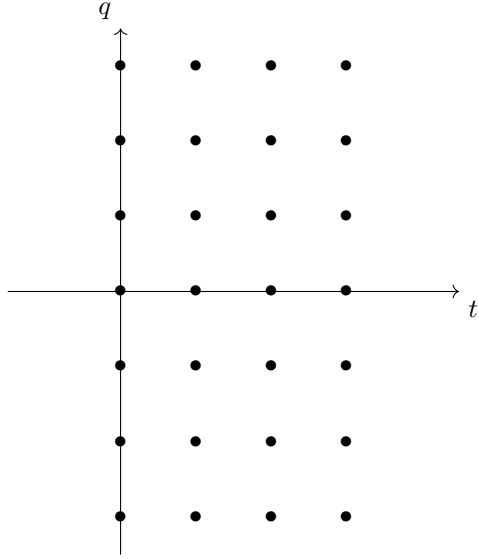

\begin{proof}
Note first that if $\gr$ denotes
associated graded, then  $\gr{Y}\cong X\oplus \S X$, so by Proposition
\ref{prop:sum to prod}, 
$$H_{*}(E_{\infty}(\gr{Y}))\cong H_{*}(E_{\infty}(X))\dc
H_{*}(E_{\infty}(\S X)).$$
Moreover,
$$\gr{\Sym^k(Y)}\cong \Sym^k(\gr{Y}),$$ 
where $F_i\Sym^k(Y)/F_{i-1}\Sym^k(Y)$ corresponds to the component of
$\Sym^k(\gr{Y})$ with the $F_1{Y}/F_0{Y}\cong \S X$ factor repeated $i$ times.
Taking homology, we obtain
\begin{align*}
\arity{E^1_{t,q}}{k}&\cong H_{t+q}(F_t\Sym^k(Y)/F_{t-1}\Sym^k(Y))\\
&\cong H_{t+q}(\Sym^t(\S X)\dc \Sym^{k-t}(X))\\
&\cong H_{t+q-*}(\Sym^t(\S X))\dc H_*(\Sym^{k-t}(X)),
\end{align*}
which gives the desired description of the $E^1$ page of the
spectral sequence.
\end{proof}

As usual, by
choosing a chain representative of the product, we obtain a map
$$E_{\infty}(Y)\dc E_{\infty}(Y)\to E_{\infty}(Y)$$
that in particular sends
$$F_i\Sym^k(Y) \dc F_j\Sym^{\ell}(Y)\to F_{i+j}\Sym^{k+\ell}(Y).$$
This gives the spectral sequence $E$ a multiplicative structure.
Our argument above shows that the associated graded $E^0$ is isomoprhic to
$E_{\infty}(\S X\oplus X)$, and, moreover, that the multiplication on $E^0$ is
compatible under this isomorphism with the chain-level multiplication on
$E_{\infty}(\S X\oplus X)$.
Since the weak equivalence $$E_{\infty}(\S X\oplus X)\to E_{\infty}(\S X)\dc
E_{\infty}(X)$$
of Proposition \ref{prop:sum to prod}
is a map of $E_{\infty}$-algebras, this map preserves the product on homology by naturality
of the operations. Recall that we may describe the product on the tensor product
$E_{\infty}(\S X)\dc E_{\infty}(X)$
as
$$
\begin{tikzcd}
H_*(E_{\infty}(\S X))\dc H_*(E_{\infty}(X))\dc H_*(E_{\infty}(\S X))\dc
H_*(E_{\infty}(X))\arrow{d}\\
H_*(E_{\infty}(\S X))\dc H_*(E_{\infty}(\S X))\dc H_*(E_{\infty}(X))\dc
H_*(E_{\infty}(X))\arrow{d}\\
H_*(E_{\infty}(\S X))\dc H_*(E_{\infty}(X)),
\end{tikzcd}
$$
where in the last step we applied the product to each factor separately.
Thus, the multiplicative structure on the $E^1$ page of our spectral sequence is
compatible with the above multiplication, and, in particular, this
multiplication satisfies the Leibniz rule.

We complete this section by giving a version of the Kudo
transgression theorem \cite{Kudo}. 
By Lemma \ref{lem:E1}, we know that
$$\arity{E^1_{k,*}}{k}\cong H_{k+*}(\Sym^k(\S X)),$$
and $\arity{E^1_{t,*}}{k}=0$ for $t>k$. 

\begin{defn}
A class $x\in H_{k+*}(\Sym^k(\S X))$ \emph{transgresses} to $y\in
H_{*+k-1}(\Sym^k X)$ if $d^r(x)=0$ when $r<k$ and $d^k(x)=y$.
\end{defn}

This makes sense, for $\arity{E^1_{0,*}}{k}\cong H_{*}(\Sym^k(X))$, and
$\arity{E^1_{t,*}}{k}=0$ if $t<0$, so $\arity{E^r_{0,*}}{k}$ is a quotient of
$H_*(\Sym^k(X))$ for each $r$. Since we are working over a field, the quotient
map splits canonically, and therefore we may think of $y$ as an element of
$H_{*+k-1}(\Sym^k X)$.
Now we may state our analogue of the transgression theorem. The proof is quite
similar to that of Theorem 3.4 in \cite{Mayalg}.

\begin{prop}\label{prop:Kudo}
The spectral sequence of Lemma \ref{lem:E1} has the following transgressive
classes.
\begin{enumerate}
\item If $x\in H_*(X)$, then $\S x$ transgresses to $x$. 
\item If $x\in H_*(\Sym^k(\S X))$ transgresses to $y$, then $Q^s x\in
H_*(\Sym^{pk}(\S X))$ transgresses to $Q^s y$, and $\b Q^s x$ transgresses to
$-\b Q^s y$ for any $s\in \Z\sqcup \Z+\tfrac12$.
\item If $x\in H_n(\Sym^k(\S X))(g)$ transgresses to 
$y$, and $\vp(\tr_{g\oplus g}(\b_{g,g}))(-1)^n=1$, then $x^{p-1}\otimes y$ ``transgresses'' to $-\b
Q^{\tfrac{n}{2}}_g(y)$. That is, $x^{p-1}\otimes y$ survives until the $E^{k(p-1)}$
page, and $d^{k(p-1)}(x^{p-1}\otimes y)=-\b Q^{\tfrac{n}{2}}_g(y)$.
\end{enumerate}
\end{prop}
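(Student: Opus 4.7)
The plan is to adapt the proof of May's transgression theorem (Theorem 3.4 of \cite{Mayalg}) to our setting, taking care of the extra $\fG$-grading and the twist by $\vp$.

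For part (1), the statement is essentially tautological from Construction \ref{con:zigzag}. Given a class $x \in H_*(X)$ represented by a cycle $\xi$, the element $\Sigma x \in H_*(\Sym^1(\Sigma X)) = \arity{E^1_{1,*}}{1}$ is represented by a lift $\tilde{\xi} \in Y$ with $d(\tilde{\xi}) = \xi$ under the identification $F_0 Y \cong X$. Hence $d^1(\Sigma x) = x$ in $\arity{E^1_{0,*}}{1} \cong H_*(X)$, which is exactly the claim that $\Sigma x$ transgresses to $x$.

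For part (2), I would argue by naturality of the spectral sequence under the construction $W \otimes_{C_p} (-)^{\dc p} \to \Sym^p(-)$. If $x \in H_{t+*}(\Sym^k(\Sigma X))$ is represented by a cycle $\tilde{\xi}$ in filtration $F_k \Sym^k(Y)$ whose differential $d(\tilde{\xi})$ represents $y \in H_*(\Sym^k(X))$ in filtration $F_0$, then $e_{j} \otimes \tilde{\xi}^{\otimes p}$ is a lift of the class representing $Q_j x$ (or $\bar{Q}_j x$) in filtration $F_{pk} \Sym^{pk}(Y)$. Its differential in $\Sym^{pk}(Y)$ splits as $\pm e_j \otimes d(\tilde{\xi}^{\otimes p}) + d(e_j) \otimes \tilde{\xi}^{\otimes p}$; since $\tilde{\xi}$ is a chain-level cycle up to the transgression correction, the first term lives in filtration $F_0$ and represents $Q_j y$ (respectively $\bar{Q}_j y$), while the second term lives in strictly lower filtration and vanishes in the relevant $E^r$ page because of the differential formulas on $W$ (namely $d(e_{2i})$ is divisible by the norm $1 + \alpha + \cdots + \alpha^{p-1}$, which acts trivially modulo $p$, and the $d(e_{2i-1})$ term shifts to the other type of operation). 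Re-indexing to upper indices via the formulas in Section \ref{subsection:upper} and tracking constants yields $Q^s y$ for $Q^s x$ and the extra sign $-\bar{Q}^s y$ for $\bar{Q}^s x$, where the sign arises because $\bar{Q}^s$ corresponds to an odd-degree generator $e_{2j-1}$ in $W$ whose boundary introduces the extra sign through the Leibniz rule.

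For part (3), the key observation is that $x^{p-1} \otimes y$ lives in filtration $F_{k(p-1)} \Sym^{pk}(Y)$. The plan is to build an explicit chain $z$ in $\Sym^{pk}(Y)$ whose differential iteratively strips off one copy of $\tilde{\xi}$ at a time, using the identity $(\alpha - 1) e_{2j} = d(e_{2j+1})$ in $W$ together with the fact that $\sum_{i=0}^{p-1} \alpha^i$ annihilates $\tilde{\xi}^{\otimes p}$ modulo boundaries (which is where the hypothesis $(-1)^n \vp(\tr_{g\oplus g}(\beta_{g,g})) = 1$ enters, ensuring the $C_p$-action on $(\S^{g,n}\Fvp)^{\dc p}$ is compatible with the cyclic action on $W$). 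Explicitly, writing $z = \sum_i \pm e_{2i+1} \otimes \omega_i$ for appropriate $C_p$-symmetrized monomials $\omega_i$ in $\tilde{\xi}$ and $\tilde{y}$, one shows that $d(z)$ represents $x^{p-1} \otimes y$ modulo $F_{k(p-1)-1}$ while picking up $\pm e_0 \otimes \tilde{y}^{\otimes p}$ type terms in $F_0$, which represent $\bar{Q}_{(n+1)(p-1)-1}(y) = \bar{Q}^{n/2}_g(y)$ after converting to upper indexing. The sign $-$ arises precisely as in May's computation from the parity of the number of Leibniz-style sign swaps.

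The main obstacle is part (3): carefully constructing the transgressing chain $z$ while tracking both the chain-level signs from the braiding in $\Mod_{\Fvp}$ (which, per Section \ref{section:preliminaries}, includes the twist by $\vp(\tr)$) and the combinatorial identities in $W$ modulo $p$. All other steps are essentially formal manipulations once the $E^1$-page identification of Lemma \ref{lem:E1} and the multiplicativity noted after it are in hand.
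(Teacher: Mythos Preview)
Your argument for part (1) is fine and matches the paper. Part (3) is also aligned: both you and the paper defer to May's chain-level construction (Theorem 3.4 of \cite{Mayalg}) for the explicit correcting chain.

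There is a genuine gap in part (2). You take the lift $e_j\otimes \tilde{\xi}^{\otimes p}$ and assert that ``the first term lives in filtration $F_0$ and represents $Q_j y$.'' This is false: $\tilde{\xi}$ is \emph{not} a cycle in $\Sym^k(Y)$ (only its image in $\Sym^k(\S X)$ is), so by Leibniz
\[
d(\tilde{\xi}^{\otimes p})=\sum_{i=1}^{p}\pm\,\tilde{\xi}^{\otimes(i-1)}\otimes d(\tilde{\xi})\otimes \tilde{\xi}^{\otimes(p-i)},
\]
and each summand contains $p-1$ factors of $\tilde{\xi}\in F_1Y$, hence lies in filtration $F_{k(p-1)}$, not $F_0$. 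So the naive lift does not exhibit $Q^s x$ as transgressive; one must correct it by lower-filtration terms, and this is precisely the nontrivial construction of the chains May calls $c$ and $c'$ in the proof of his Theorem~3.1.

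The paper avoids redoing that construction by observing something cleaner: transgression in this spectral sequence is \emph{inverse to the suspension map} $\S\colon H_*(\Sym^k(X))\to H_{*+1}(\Sym^k(\S X))$ of Construction~\ref{con:zigzag} (this follows immediately from the description of transgression as ``lift and differentiate''). Then part (2) is a one-line consequence of the already-proven suspension relation $\S Q^s_g(y)=Q^s_g(\S y)$ and $\S\b Q^s_g(y)=-\b Q^s_g(\S y)$ from Theorem~\ref{thm:upper index}~\ref{item:upper susp}: if $x$ transgresses to $y$ then $x=\S y$, so $Q^s x=Q^s(\S y)=\S Q^s y$ transgresses to $Q^s y$, and similarly for $\b Q^s$ with the sign. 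Your chain-level approach could be made to work, but it would amount to reproving \ref{item:susp1} inside this argument.
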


\begin{proof}
By definition, the differentials in the spectral sequence come directly from the differential on
$E_{\infty}(Y)$. In particular, a transgression may be thought of as a
connecting homomorphism: as in Construction \ref{con:zigzag}, given $x\in
H_*(\Sym^k(\S X))$, we may choose a cycle representing it, choose a lift in
$\Sym^k(Y)$, and apply the differential to obtain a map
$$H_*(\Sym^k(\S X))\to H_{*-1}(F_{k-1}\Sym^k(Y)).$$ With this description, we
see that
$x$ transgresses precisely when there exists a lift $z\in \Sym^k(Y)$ of a cycle
representing $x$ such that $d(z)\in F_0\Sym^k(Y)\cong \Sym^k(X)$.

Thus, we can see from the definition of the suspension map from
\ref{con:zigzag}, $$\S\colon H_*(\Sym^k(X))\to
H_{*+1}(\Sym^k(\S X)),$$ that transgression is inverse to suspension when
restricted appropriately. That
is, the image of $\S$ is precisely the transgressive elements in
$H_{*+1}(\Sym^k(\S X))$, and
if $\S y=x$, then $x$ transgresses to the image of $y$ in
$\arity{E^k_{0,*}}{k}$. Similarly, if $x$ transgresses to $y$, then any lift
of $y\in H_*(\Sym^k(X))$ suspends to $x$.

It then follows that $\S x$ transgresses to $x$ for any $x\in
H_*(X)$. More generally, if $x$ transgresses to $y$, then by Theorem
\ref{thm:upper index} \ref{item:upper susp}, we have that
$$Q^s_g(x)=\S Q^s_g(y)$$
transgresses to $Q^s_g(y)$ and $\b Q^s_g(x)$ transgresses to $-\b Q^s_g(y)$.

The statement about $x^{p-1}\otimes y$ may be proven using a chain-level
construction as in the proof of Theorem 3.4 in \cite{Mayalg}. 
\end{proof}

\subsection{Admissible sequences}\label{section:admissible}
Now we wish to define certain compositions of
Dyer--Lashof operations that will give a basis for the homology of free
$E_{\infty}$-algebras as objects in $\CAlg_{\Fvp}(\GrVect^{\fC})$.

Let us consider tuples $$I=(\e_1,s_1,\dots,\e_k,s_k,g),$$
where $\e_i$ is either 0 or 1, $s_i\in \Z \sqcup (\Z+\tfrac12)$ (that is, $s_i$
is either an integer or a half integer), and $g\in \ob{\fG}$. Any such $I$ determines
a word in the Dyer--Lashof operations
$$Q^I_g\coloneqq \b^{\e_1} Q^{s_1}_{\gpower{g}{p^{k-1}}}\dots \b^{\e_k} Q^{s_k}_{g},$$
where by abuse of notation, $\b^0 Q^s_g$ denotes $Q^s_g$, and $\b^1 Q^s_g$
denotes $\b Q^s_g$.
We say such a sequence $I$ is \emph{admissible} if both of the following
conditions hold:
\begin{enumerate}
\item 
if $\vp(\tr_{g\oplus g}(\b_{g,g}))=1$, then $s_i\in\Z$ for all $1\leq i\leq k$; if $\vp(\tr_{g\oplus
g}(\b_{g,g}))=-1$, then $s_i\in
(\Z+\tfrac12)$ for all $1\leq i\leq k$
\item $ps_i-\e_i\geq s_{i-1}$ for all $1<i\leq k$.
\end{enumerate}
The empty sequence is also admissible, and it determines the identity operation.

Note that the first condition ensures that $Q^I_g$ is a composition of only
untwisted (respectively twisted) operations if $\vp(\tr_{g\oplus g}(\b_{g,g}))=1$ (respectively
$\vp(\tr_{g\oplus g}(\b_{g,g}))=-1$); since the composition of an untwisted and a twisted operation
is always zero, this condition avoids those $Q^I_g$ that are obviously zero for
this reason. The second condition ensures that $Q^I_g$ may not be rewritten
using an Adem relation.

Let the \emph{excess} of a sequence $I=(\e_1,s_1,\dots,\e_k,s_k,g)$ be the integer
$$e(I)\coloneqq 2s_1-\e_1-\sum_{i=2}^k(2s_i(p-1)-\e_i),$$
and the \emph{length} $$\ell(I)=k.$$
The associated operation $Q^I_g$ carries an $(\ob{\fG}\times\Z)$-grading, where
$$|Q^I_g|=(d_{\fG}(I),d_{\Z}(I))\coloneqq \left(\gpower{g}{(p^k-1)}, \sum_{i=1}^k (2s_i(p-1)-\e_i)\right)\in
\ob{\fG}\times \Z.$$
The empty sequence has excess $\infty$, length 0, and bidegree $(\1_{\fG},0)$.

If $I=(\e_1,s_1,\dots,\e_k,s_k,g)$, then we write
\begin{equation}\label{eq:subtuple}
I_{\geq j}=\begin{cases}
(\e_j,s_j,\dots,\e_k,s_k,g) & 1\leq j\leq k\\
\emptyset & j>k.
\end{cases}
\end{equation}
By definition, $I_{\geq j}$ is admissible if $I$ is.

The following elementary properties will be useful in manipulating this definition.

\begin{lem}\label{lem:admprops}
Let $I$ be admissible, and let $x$ be an element of an $(\ob{\fG}\times \Z)$-graded set $U$ with
bidegree $(g,n)$.
\begin{enumerate}
\item If $e(I)+\e_1>n$, then $d_{\Z}(Q^I_g x)\geq p^{\ell(I)}n,$ with equality
if and only if $I=\emptyset$. 
\label{list:bound}
\item For all $j\geq 1$, $e(I)+\e_1\leq e(I_{\geq j})+\e_j.$ 
\label{list:ugh}
\item If $e(I)+\e_1=n$, then
$$I=(\e_1,p^{i-1}s_i,0,p^{i-2}s_i,\dots,0,ps_{i},0,s_i,\e_{i+1},s_{i+1},\dots,\e_k,s_k,g),$$
where $e(I_{\geq i+1})+\e_{i+1}>n$ and $2s_i=n+d_{\Z}(I_{\geq i+1})$. Here
$I_{\geq i+1}$ may be empty, and if $i=1$, then the expression above becomes
$$I=(\e_1,s_1,\e_2,s_2,\dots,\e_k,s_k,g).$$ \label{list:powers}
\end{enumerate}
\end{lem}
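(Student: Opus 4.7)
The plan is to set up uniform notation tying excess and $\Z$-degree, prove (ii) by a direct computation using admissibility, deduce (i) by a short induction that exploits $p \geq 3$, and settle (iii) via a parity argument. Introduce $n_j := n + \sum_{i=j+1}^{k}(2s_i(p-1) - \e_i)$, the $\Z$-degree of $Q^{I_{\geq j+1}}_g x$, so that $n_k = n$, $n_0 = d_\Z(Q^I_g x)$, and $n_{j-1} = n_j + 2s_j(p-1) - \e_j$. A direct expansion of the definition of excess then gives the identity $e(I_{\geq j}) + \e_j = 2s_j - n_j + n$, which converts statements (i)--(iii) into statements about the sequences $(n_j)$ and $(s_j)$.

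For (ii), I would compute using this identity that
$$(e(I_{\geq j+1}) + \e_{j+1}) - (e(I_{\geq j}) + \e_j) = 2(ps_{j+1} - \e_{j+1} - s_j) + \e_{j+1},$$
which is $\geq 0$ by the admissibility inequality $ps_{j+1} - \e_{j+1} \geq s_j$ together with $\e_{j+1} \geq 0$; iterating in $j$ yields (ii). For (i), the hypothesis $e(I) + \e_1 > n$ then propagates to $e(I_{\geq j}) + \e_j > n$ for every $j$, equivalently $2s_j \geq n_j + 1$. Plugging into the recursion for $n_{j-1}$ gives
$$n_{j-1} \geq pn_j + (p - 1 - \e_j) \geq pn_j + 1,$$
using $p \geq 3$ and $\e_j \in \{0,1\}$. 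A simple induction on $k = \ell(I) \geq 1$ then gives $n_0 > p^k n$ strictly, while $I = \emptyset$ gives $n_0 = n = p^{0} n$, which establishes (i) including the equality case.

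For (iii), let $i$ be the largest index with $e(I_{\geq i}) + \e_i = n$; this exists because $e(I) + \e_1 = n$. By (ii) and monotonicity, $e(I_{\geq j}) + \e_j = n$ for every $1 \leq j \leq i$, so $n_j = 2s_j$ for these $j$. In particular, for $2 \leq j \leq i$ we obtain the forced relation $2s_{j-1} = n_{j-1} = 2ps_j - \e_j$. Admissibility requires the $s_{\bullet}$ to be uniformly integer or uniformly half-integer, so the integers $2s_j$ share a common parity; since $p$ is odd, this forces $\e_j = 0$, and hence $s_{j-1} = ps_j$, for $2 \leq j \leq i$. Iterating gives $s_j = p^{i-j}s_i$; the remaining claims $2s_i = n_i = n + d_\Z(I_{\geq i+1})$ and $e(I_{\geq i+1}) + \e_{i+1} > n$ (or $I_{\geq i+1} = \emptyset$) follow at once from the maximality of $i$. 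The only delicate ingredient anywhere is the parity/oddness of $p$ argument in (iii); otherwise the whole lemma is a straightforward computation with the definition of excess and the admissibility inequality, with no substantive analytic obstacle.
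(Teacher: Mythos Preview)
Your proof is correct and follows essentially the same route as the paper: unpack the definition of excess, use the admissibility inequality, and induct. Your introduction of $n_j$ and the identity $e(I_{\geq j})+\e_j = 2s_j - n_j + n$ is a nice bookkeeping device that the paper does not make explicit, and your ordering (proving (ii) first and then invoking it in (i)) is slightly cleaner than the paper's, which proves (i) first and implicitly relies on the propagation $e(I_{\geq 2})+\e_2>n$ in the induction step.

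The one genuine difference is the argument for $\e_j=0$ in part (iii). The paper reaches $2s_1 - 2(ps_2-\e_2) = \e_2$ and observes that the left side is nonpositive by the admissibility \emph{inequality} $s_1\le ps_2-\e_2$ while the right side is nonnegative, forcing both to vanish. You instead use the admissibility \emph{parity} clause (all $s_i$ integers or all half-integers) together with $p$ odd to conclude that $2s_{j-1}$ and $2ps_j$ have the same parity, hence $\e_j$ is even. Both are valid; your argument isolates a different facet of the admissibility hypothesis and is arguably more transparent, though it does rely on $p$ being odd (which is the paper's standing assumption anyway).
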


\begin{proof}
We prove \ref{list:bound} by induction on the length of $I$. If $I=\emptyset$,
then $Q^I_gx=x$, so it is trivially true that $d_{\Z}(Q^I_gx)=d_{\Z}(x)$.
Now suppose that $d_{\Z}(Q^I_gx)\geq p^{\ell(I)}n$ for $0\leq \ell(I)\leq k-1$, and consider $I$
with $\ell(I)=k$. Note that
$$d_{\Z}(Q^I_g x)=n+\sum_{j=1}^k(2s_j(p-1)-\e_j)=2s_1(p-1)-\e_1+d_{\Z}(Q^{I_{\geq
2}}_g x).$$
The assumption that $e(I)+\e_1\geq n+1$ implies that
$$2s_1\geq n+1+\sum_{j=2}^k(2s_j(p-1)-\e_j)=d_{\Z}(Q^{I_{\geq 2}}_g x)+1.$$
Combining these inequalities, we have that
$$d_{\Z}(Q^I_g x)\geq p\, d_{\Z}(Q^{I_{\geq 2}}_g x)+p-1-\e_1>
p\,d_{\Z}(Q^{I_{\geq 2}}_g x).$$
By assumption, $d_{\Z}(Q^{I_{\geq
2}}_g x)\geq p^{k-1}n$, so this implies that $d_{\Z}(Q^I_gx)>p^kn$ for all
$k>0$.

To prove \ref{list:ugh}, we use the inequality $s_1\leq ps_2-\e_2$ from
admissibility of $I$ to see that
$$e(I)+\e_1=e(I_{\geq 2})+2s_1-2(ps_2-\e_2)\leq e(I_{\geq 2})\leq e(I_{\geq
2})+\e_2.$$ The statement then follows by induction using the same argument.

We prove \ref{list:powers} by induction on $\ell(I)$. If $k=1$, then
$e(I)+\e_1=n$ implies directly that $2s_1=n$, so there is nothing to prove.
In general, if $e(I_{\geq 2})+\e_2>n$, then $I$ has the desired form for $i=1$,
since $e(I)+\e_1=2s_1-d_{\Z}(I_{\ge 2})$ by definition of excess.
If $e(I_{\geq 2})+\e_2=n$, then 
$$n=e(I)+\e_1=e(I_{\geq 2})+2s_1-2(ps_2-\e_2)\leq e(I_{\geq 2})+\e_2=n,$$
so we must have that $$2s_1-2(ps_2-\e_2)=\e_2.$$ However, the left-hand side is
nonpositive by admissibility and $\e_2\in \{0,1\}$, so it follows that $\e_2=0$, and therefore
$s_1=ps_2$. Applying the induction hypothesis to $I_{\geq 2}$ gives the desired
result.
\end{proof}

\subsection{A basis for homology}\label{subsect:basis}

Suppose $\myset\in \Set^{\ob{\fG}\times\Z}$. We will make frequent use of the
$(\ob{\fG}\times\Z)$-graded set of formal symbols 
\begin{equation}\label{eq:Q}
Q(\myset)\coloneqq \{Q^I_g x\mid x\in \myset{(g,n)}, I\text{ admissible, }
e(I)+\e_1>n\}, 
\end{equation}
where $Q^I_g x$ has bidegree $$|Q^I_gx|=(\gpower{g}{p^{\ell(I)}},n+d_{\Z}(I)).$$

\begin{defn}
Suppose $\myset$ is a basis for $V\in \Mod_{\Fvp}(\GrVect^{\fC})$. Let
$\W{\myset}{V}$ in $\CAlg_{\Fvp}(\GrVect^{\fC})$ be the free commutative algebra on the
graded set $Q(\myset)$. In other words, $$\W{\myset}{V}=\cS(Q(\myset)).$$
\end{defn}

If $X\in \Mod_{\Fvp}(\Ch^{\fC})$ and $S$ is a choice of basis for
$H_{*,*}(X)$, then there is a map of sets
$$Q(S)\to H_{*,*}(E_{\infty}(X))$$
that sends $x\in S$ to $H_{*,*}(E_{\infty}(X))$ using the canonical monomorphism $$H_{*,*}(X)\to H_{*,*}(E_{\infty}(X)),$$
and
that sends $Q^I_g x$ to the corresponding Dyer--Lashof operation applied to $x$.
This induces a map
$$\Psi_X\colon  \W{S}{H_{*,*}(X)}\to H_{*,*}(E_{\infty}(X)),$$
which we claim is an isomorphism. 

\begin{thm}\label{thm:W}
If $X\in\Mod_{\Fvp}(\Ch^{\fC}_{\F})$, and if $S$ is any basis for $H_{*,*}(X)$,
then the map
$$\Psi_X\colon  \W{S}{H_{*,*}(X)}\to H_{*,*}(E_{\infty}(X))$$
is an isomorphism in $\CAlg_{\Fvp}(\GrVect^{\fC})$.
\end{thm}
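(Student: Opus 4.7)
The plan is to prove Theorem \ref{thm:W} via two reductions followed by a spectral-sequence calculation for the single-generator case, following the strategy of May. First, since $E_{\infty}(-)$ preserves weak equivalences by Lemma \ref{lem:preservewes} and we work over a field, we may replace $X$ by $H_{*,*}(X)$ with zero differential, which decomposes as $\bigoplus_{x \in S}\S^{|x|}\Fvp$. Applying Proposition \ref{prop:sum to prod} gives $H_{*,*}(E_{\infty}(A \oplus B)) \cong H_{*,*}(E_{\infty} A) \dc H_{*,*}(E_{\infty} B)$, while on the algebraic side $\cS(T_1 \sqcup T_2) \cong \cS(T_1) \dc \cS(T_2)$ and $Q(S) = \bigsqcup_{x \in S} Q(\{x\})$. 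By naturality of the Dyer--Lashof operations, $\Psi_X$ decomposes compatibly, reducing the theorem to the case $X = \S^{g,n}\Fvp$ with $S = \{x\}$.

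For the single-generator case, I would analyze the spectral sequence of Section \ref{section:ss} applied to the mapping cone $Y$ of $\id_X$. This converges to $H_{*,*}(E_{\infty}Y)$, which vanishes in positive charge by Lemma \ref{lem:preservewes} since $Y$ is acyclic, and its $E^1$-page in each charge is $H_{*,*}(E_{\infty}(\S X)) \dc H_{*,*}(E_{\infty}(X))$. The Kudo transgression theorem (Proposition \ref{prop:Kudo}) controls the differentials: $\S x$ transgresses to $x$, so each admissible $Q^I(\S x)$ transgresses to $\pm Q^I(x)$, and the higher differential $d^{k(p-1)}$ applied to $(Q^I \S x)^{p-1} \otimes Q^I(x)$ produces a Bockstein class $\b Q^s(Q^I x)$ of the appropriate degree. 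Proceeding by an induction on charge (handled prime-power by prime-power, with the non-prime-power charges following from the chain-level form of Proposition \ref{prop:sum to prod}), and using that the spectral sequence converges to zero, one identifies the surviving classes with exactly the admissible monomials $Q^I x$ with $e(I) + \e_1 > n$, establishing the isomorphism.

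The main obstacle is the careful bookkeeping for the $d^{k(p-1)}$-differentials of Proposition \ref{prop:Kudo}(3), which introduce the Bockstein operations $\b Q^{n/2}$ precisely when the parity condition $(-1)^n\vp(\tr_{g \oplus g}(\b_{g,g})) = 1$ holds. The admissibility condition $ps_i - \e_i \geq s_{i-1}$ mirrors the range where the Adem relations do not apply, while the excess condition $e(I) + \e_1 > n$ excludes exactly the reducible sequences characterized by Lemma \ref{lem:admprops}(3)---those whose prefix computes a $p^j$-th power already visible via the product. Verifying that the spectral sequence cancels precisely the reducible monomials while leaving the admissible ones as surviving basis elements, and matching this against Lemma \ref{lem:free comm alg}'s constraint that generators in gradings where the braiding acts as $-1$ may not be repeated, requires simultaneous tracking of charge, $\fG$-grading, homological degree, and Bockstein parity throughout the calculation.
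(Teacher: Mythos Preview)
Your reduction to a single generator via Proposition \ref{prop:sum to prod} is a reasonable first step, and it is a genuine simplification relative to the paper, which works with arbitrary $X$ throughout. However, the spectral-sequence argument you sketch for $X=\S^{g,n}\Fvp$ has a circularity that you do not address.

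Look at the $E^1$-page in charge $k$ from Lemma \ref{lem:E1}: it contains $H_{*}(\Sym^k(\S X))$ (at $t=k$) and $H_*(\Sym^k(X))$ (at $t=0$), and your induction on charge gives you neither of these. To run the comparison at charge $k$ you need to already know $\arity{\Psi_{\S X}}{k}$, which is the same theorem for a \emph{different} single generator in the \emph{same} charge. Your remark that ``non-prime-power charges follow from Proposition \ref{prop:sum to prod}'' does not help here: that proposition concerns direct sums of distinct objects, not a decomposition of $\Sym^k$ of a single generator.

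The paper breaks this circularity with an additional idea you are missing: a connectivity bootstrap. It proves a suspension claim (Claim \ref{claim:susp}) of the form ``if $\arity{\Psi_{\S X}}{k}$ is an isomorphism in degrees $<N$, then $\arity{\Psi_X}{k}$ is an isomorphism in degrees $<N-1$'', established by building a \emph{model} spectral sequence $\tilde E$ out of the algebraic side (this is where Lemma \ref{lem:SSpieces} enters) and invoking a Zeeman-type comparison theorem. The induction is then started by observing that for $X$ $n$-connected, both sides of $\arity{\Psi_X}{k}$ vanish below degree $k(n+1)$, so the statement is trivially true there; iterating the suspension claim with $\S^r X$ pushes the isomorphism range arbitrarily high. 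Finally a filtered-colimit argument removes the bounded-below hypothesis. None of this machinery appears in your outline, and without it the induction does not close.
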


Before beginning the proof, we require a few additional definitions. Recall that $H_{*,*}(E_{\infty}(X))$ carries an
extra $\N$-grading, the \emph{charge} grading.  We will endow $\W{S}{H_{*,*}(X)}$ with a
charge grading as well and make use of this in the proof.

In general, if $V\in \Mod_{\Fvp}(\GrVect^{\fC})$ carries a charge grading, then we may endow
the free commutative algebra on $V$ with a charge grading as follows. First note
that we may think of this extra grading as a functor
$$\arity{V}{-}\colon \N\to \Mod_{\Fvp}(\GrVect^{\fC}),$$
where $\N$ denotes the category with objects the natural numbers and no
nonidentity morphisms, and such that
$$V\cong\bigoplus_{k\in\N}\arity{V}{k}.$$
Such functors inherit a monoidal structure through Day Convolution, which
here explicitly means that
$$\arity{(V\dc V')}{k}\cong \bigoplus_{i+j=k} \arity{V}{i}\dc \arity{V'}{j}.$$
In particular, the tensor algebra
$T(V)$ inherits a charge grading
$$\arity{T(V)}{k}=\bigoplus_{j\geq 0}\arity{V^{\dc j}}{k}.$$
Here, we define $V^{\dc 0}$ to be concentrated in charge 0.
This is compatible with the monoid structure on $T(V)$, in the sense that we may
view $T(V)$ as a monoid object in the functor category
$\Mod_{\Fvp}(\GrVect^{\fC})^{\N}.$

The free commutative algebra on $V$ is formed by taking a quotient of $T(V)$ by
a set $U$. Note that if the set $U$ \emph{also} carries a charge grading, and if
this grading is compatible with the charge grading on $T(V)$, then the quotient
$T(V)/U$ inherits a charge grading.  
In the case of the free commutative algebra on $V$, every element of $U$ is of the form
$$[v_1,v_2]\coloneqq v_1v_2-(-1)^{n_1n_2}\b_{g_1,g_2}^{-1}(v_2v_1)\in (V\dc
V)(g_1\oplus g_2)_{n_1+n_2}.$$
If each $v_i\in V(g_i)_{n_i}$ is homogeneous of charge $k_i$, then $[v_1,v_2]$
is homogeneous of charge $k_1+k_2$, and hence we can give $U$ a charge grading
coming from that of $V$. Thus, the free commutative algebra on $V$ inherits a
charge grading.

In particular, this allows us
to endow $\W{S}{H_{*,*}(X)}$ with a charge grading: we say an element  $Q^I_g x$ has charge $p^{\ell(I)}$, so that $Q(S)$ has a charge grading, which
then induces one on the free commutative algebra $\W{S}{H_{*,*}(X)}$ on $Q(S)$. 

In general we use the notation $\arity{}{k}$ to denote charge $k$. In particular, we have
that
$$\arity{H_{*,*}(E_{\infty}(X))}{k}=H_{*,*}(\Sym^k(X)).$$

We require one additional technical lemma before the proof of Theorem
\ref{thm:W}.

If $y$ is an element of an $(\ob{\fG}\times\Z)$-graded set $\myset$ with
bigrading $(g,n)$, let
$$q(y)\coloneqq (-1)^n\vp(\tr_{g\oplus g}(\b_{g,g})).$$
We write 
$$Q(\myset)^{-}\coloneqq \{y\in Q(\myset)\mid q(y)=-1\} \text{ and }
Q(\myset)^{+}\coloneqq \{y\in Q(\myset)\mid q(y)=1\},$$
so that $$Q(\myset)=Q(\myset)^{-}\sqcup Q(\myset)^{+}.$$

\begin{lem}\label{lem:SSpieces} 
Let $X\in \Mod_{\Fvp}(\GrVect^{\fC})$, and let $S$ be a basis for $X$.
Then there is an isomorphism in $(\Vect_{\F}^{\fC})^{\Z}$ of
$$\W{\S S}{\S X}\dc \W{S}{X}$$ 
with
\begin{equation}\label{eq:basis1}
\left(\bigdc_{Q^I\S x\in Q(\S S)^{-}} \cS(Q^I\S x)\dc
\cS(Q^Ix)\right)\dc \bigdc_{Q^I\S x\in Q(\S S)^+}M,
\end{equation}
where 
$$M=\bigdc_{i\geq 0} 
\Fvp\{(Q^I\S x)^{jp^i}\}_{0\leq j\leq
p-1} \dc \cS(Q^{\op{i-1}{s}{g}{0}}Q^Ix)\dc
\cS(Q^{\op{i}{s}{g}{1}}Q^I x),$$
for
$$\op{i}{s}{g}{\e}\coloneqq \begin{cases}
(\e,p^is,0,p^{i-1}s,\dots,0,ps,0,s,g) & i\geq 0\\
\emptyset & i=-1,
\end{cases}$$
and $(g,2s)$ the bidegree of $Q^I\S x$.
\end{lem}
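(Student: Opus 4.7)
The plan is to exhibit a natural bijection of bases on both sides. Because $\cS$ is a left adjoint sending disjoint unions of sets to Day convolutions of commutative algebras, the LHS rewrites as $\cS(Q(\S S) \sqcup Q(S))$. The strategy is to partition $Q(\S S) \sqcup Q(S)$ into families indexed by $Q(\S S)$ and then identify $\cS$ of each family with the corresponding factor in the RHS. Since the isomorphism is only asserted in $(\Vect_{\F}^{\fC})^{\Z}$, it will suffice to match the explicit bases $C(\cdot)$ supplied by Lemma \ref{lem:free comm alg}.

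For the partition, let $(g_0,n_0)$ denote the bidegree of $x$. Each $Q^I\S x \in Q(\S S)$ is its own family label, and I assign $Q^{I'} x \in Q(S)$ as follows: admissibility of $I'$ gives $e(I') + \e'_1 \geq n_0 + 1$; if strictly greater, then $Q^{I'}\S x \in Q(\S S)$ and $Q^{I'} x$ joins the family of $I = I'$ (the $i=0$, $\e=0$ slot of $M$). If $e(I') + \e'_1 = n_0 + 1$, Lemma \ref{lem:admprops}\ref{list:powers} applied to $\S x$ uniquely writes $I'$ as the concatenation $\op{i-1}{s}{g_0^{p^{\ell(I)}}}{\e}\cdot I$ with $i \geq 1$, $\e \in \{0,1\}$, $2s = n_0 + 1 + d_{\Z}(I)$, and $Q^I \S x \in Q(\S S)$, and I assign $Q^{I'} x$ to that family. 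The crucial observation isolating the $Q(\S S)^-$ case is that when $Q^I \S x \in Q(\S S)^-$, the number $s = (n_0 + 1 + d_{\Z}(I))/2$ has the wrong parity to appear as an index of an admissible sequence based at $g_0$: admissibility condition (1) forces indices to lie in $\Z$ when $\vp(\tr_{g_0 \oplus g_0}(\b)) = 1$ and in $\Z + \tfrac{1}{2}$ when $=-1$, and a direct computation of $q(Q^I\S x) = (-1)^{2s}\vp(\tr)$ shows $Q^I\S x \in Q(\S S)^-$ exactly when this parity is the opposite of what admissibility requires. Hence the $Q(\S S)^-$ family is simply $\{Q^I\S x,\, Q^I x\}$, whereas the $Q(\S S)^+$ family is the full list $\{Q^I\S x\} \cup \{Q^{\op{i-1}{s}{\cdot}{0}}Q^I x\}_{i \geq 0} \cup \{Q^{\op{i}{s}{\cdot}{1}}Q^I x\}_{i \geq 0}$.

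To finish, I compute $\cS$ of each family using Lemma \ref{lem:free comm alg}, which allows arbitrary powers of generators with $q=1$ and only exponents in $\{0,1\}$ for those with $q=-1$. In the $Q(\S S)^-$ family, $q(Q^I\S x) = -1$ and $q(Q^I x) = 1$, giving exactly $\cS(Q^I \S x)\dc \cS(Q^I x)$. In the $Q(\S S)^+$ family, one checks using $\vp(\tr_{g_0^{p^k}\oplus g_0^{p^k}}(\b)) = \vp(\tr_{g_0\oplus g_0}(\b))^{p^k}$ and the parity of the degree shifts that $Q^I\S x$ and all $Q^{\op{i}{s}{\cdot}{1}}Q^I x$ have $q = 1$, while $Q^I x$ and all $Q^{\op{i-1}{s}{\cdot}{0}}Q^I x$ for $i \geq 1$ have $q=-1$. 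Finally, I rewrite $\cS(Q^I\S x) \cong \bigdc_{i \geq 0}\Fvp\{(Q^I\S x)^{jp^i}\}_{0 \leq j \leq p-1}$ by invoking that every nonnegative integer has a unique base-$p$ expansion, so that combining Day convolutions corresponds to summing exponents. This puts the factor into exactly the form $M$ asserted in the statement. The main obstacle will be the sign and parity arithmetic underlying the $q$-computations along the iterated sequences $\op{i-1}{s}{\cdot}{0}$ and $\op{i}{s}{\cdot}{1}$; none of it is conceptually deep, but it requires carefully tracking $\vp(\tr_{g\oplus g}(\b))$ through the $p$-fold powers of $g$ and consistently checking admissibility condition (1).
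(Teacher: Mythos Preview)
Your proposal is correct and follows essentially the same approach as the paper: both arguments rest on the bijection between $Q(S)$ and the set $Q(\S S)\cup\{Q^{\op{i}{s}{g}{\e}}Q^{I'}x : Q^{I'}\S x\in Q(\S S)^+\}$ obtained from Lemma~\ref{lem:admprops}\ref{list:powers}, together with the base-$p$ decomposition of $\cS(y)$ for $q(y)=1$. The only organizational difference is that the paper decomposes $\W{\S S}{\S X}$ and $\W{S}{X}$ separately (equations \eqref{eq:basis2} and \eqref{eq:basis3}) before recombining, whereas you immediately write the left-hand side as $\cS(Q(\S S)\sqcup Q(S))$ and partition the generating set; both routes arrive at the same identification.
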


\begin{proof}
To see that such an isomorphism exists, note first that
$$\W{\S S}{\S X}\cong \bigdc_{y\in Q(\S S)^-} \cS(y)\dc \bigdc_{y\in Q(\S S)^+} \cS(y).$$
If $y\in Q(\S S)^+$, then $y^2\not=0$, and thus $\{y^i\}_{i\geq 0}$ is a basis for $\cS(y)$; that is, we
have an isomorphism in $(\Vect^{\fC})^{\Z}$ 
$$\Fvp\{y^i\}_{i\geq 0}\cong \cS(y).$$ Any power $y^i$ with $i>0$ may be written uniquely
as a distinct product of elements of
$$\{y,y^2,\dots,y^{p-1},y^p,y^{2p},\dots,y^{(p-1)p},\dots,y^{p^j},y^{2p^j},\dots,y^{(p-1)p^j}\}$$
(this essentially corresponds to writing $i$ in base $p$).
Hence, we have that
$$\bigdc_{i\geq 0} \Fvp\{y^{0p^i},y^{p^i},\dots,y^{(p-1)p^i}\} \cong
\Fvp\{y^i\}_{i\geq 0}$$
in $(\Vect^{\fC})^{\Z}$, and thus, we have that
$$\bigdc_{i\geq 0} \Fvp\{y^{0p^i},y^{p^i},\dots,y^{(p-1)p^i}\}\cong \cS(y)$$
in $(\Vect^{\fC})^{\Z}$. Thus, we have shown that
\begin{equation}\label{eq:basis2}
\W{\S S}{\S X}\cong \bigdc_{y\in Q(\S S)^-} \cS(y) \dc \bigdc_{y\in Q(\S S)^+}
\bigdc_{i\geq 0}\Fvp\{y^{0p^i},y^{p^i},\dots,y^{(p-1)p^i}\} 
\end{equation}
in $(\Vect^{\fC})^{\Z}$.

Now,
\begin{equation}\label{eq:basis3}
\W{S}{X}\cong \bigdc_{Q^I\S x\in Q(\S S)} \cS(Q^Ix)\dc \bigdc_{\substack{Q^Ix\in
Q(S)\\
e(I)+\e_1=d_{\Z}(\S x)}} \cS(Q^I x).
\end{equation}

We claim that if $Q^Ix\in Q(S)$ but $Q^I\S x\not\in Q(\S S)$, then  
$$Q^Ix=\b^{\e_1}Q^{p^js}_{\gpower{g}{p^j}}Q^{p^{j-1}s}_{\gpower{g}{p^{j-1}}}\dots
Q^{ps}_{\gpower{g}{p}}Q^s_g Q^{I'}x,$$
where $Q^{I'}\S x\in Q(\S S)^+$, $g=d_{\fG}(Q^{I'}\S x)$, and $2s=d_{\Z}(Q^{I'}\S x).$
Since $Q^I \S x\not\in Q(\S S)$, but $Q^I x\in Q(S)$, we must have that
$e(I)+\e_1=d_{
\Z}(\S x)$. By Lemma \ref{lem:admprops}, \ref{list:powers}, $I$ must have the
desired form. To see that $Q^{I'}\S x\in Q(\S S)^{+}$, note that
$2s=d_{\Z}(Q^{I'}\S x)$ is only possible if $q(Q^{I'}\S x)=1$, since
the sign of $\vp(\tr_{g\oplus g}(\b_{g,g}))$ determines whether or not $2s$ is
even or odd, and in particular, $\vp(\tr_{g\oplus g}(\b_{g,g}))$ and $(-1)^{2s}$
will always be equal.

We write
$$\op{i}{s}{g}{\e}\coloneqq
(\e,p^is,0,p^{i-1}s,\dots,0,ps,0,s,g)$$
so that we have shown
$$Q^Ix=Q^{\op{j}{s}{g}{\e_1}}Q^{I'}x,$$
where $2s=d_{\Z}(Q^{I'}\S x)$ and $g=d_{\fG}(Q^{I'}\S x)$.

Hence, any $Q^Ix$ with $e(I)+\e_1=d_{\Z}(\S x)$ may be
written uniquely as $Q^{\op{i}{s}{g}{\e}}Q^{I'}x$, where $Q^{I'}\S x\in Q(\S
S)^+$, and when $2s=d_{\Z}(Q^{I'}\S x)$,
$Q^{\op{i}{s}{g}{\e}}Q^{I'}x\in Q(S)$. Thus,
there is an isomorphism
$$\bigdc_{Q^I\S x\in Q(\S S)^+} \bigdc_{i\geq 0}
\cS(Q^{\op{i}{s}{g}{0}}Q^Ix)\dc \cS(Q^{\op{i}{s}{g}{1}}Q^I x)\to \bigdc_{\substack{Q^Ix\in
Q(S)\\
e(I)+\e_1=d_{\Z}(\S x)}} \cS(Q^I x).$$
Combining this with \ref{eq:basis2} and \ref{eq:basis3} proves \ref{eq:basis1}.

\end{proof}

\begin{rem} 
Note that here we choose
$\Fvp\{y^{j}\}_{0\leq j\leq p-1}$ rather than the quotient
$\cS(y)/y^p$ of $\cS(y)$ by the set $\{y^p\}$, since the multiplication on
$\cS(y)/y^p$ does not preserve charge.
\end{rem}

Now we are ready to prove Theorem \ref{thm:W}.

\begin{proof}
It is easy to see that the map
$\Psi_X$
preserves charge, since if $$x\in H_{g,*}(X)\cong H_{g,*}(\Sym^1(X)),$$ then
$Q^s_g(x)\in H_{*,*}(\Sym^p(X))$. More generally, $$Q^I_g(x)\in
H_{*,*}(\Sym^{p^{\ell(I)}}(X)).$$
Thus, it suffices to show that the maps
$$\arity{\Psi_X}{k}\colon \arity{\W{S}{H_{*,*}(X)}}{k}\to H_{*,*}(\Sym^k(X))$$
are isomorphisms for all $k\geq 0$. We will prove this by induction on $k$.
First we will establish the \hyperref[pfpt:basecases]{base cases} $k=0,1$. In the
\hyperref[pfpt:inductionstep]{induction step}, we introduce an intermediate statement,
Claim \ref{claim:susp}. We then show that the \hyperref[pfpt:fromclaim]{theorem
follows} from the claim
and finally give a \hyperref[pfpt:claimproof]{proof
of the claim}.

\subsubsection{Base cases}\label{pfpt:basecases}
Recall that $\W{S}{H_{*,*}(X)}$ is the quotient of the tensor algebra
$T(\Fvp\{Q(S)\})$ by the set of
$$[v_1,v_2]\coloneqq v_1v_2-(-1)^{n_1n_2}\b^{-1}_{g_1,g_2}(v_2v_1)$$
for $v_i\in T(\Fvp\{Q(S)\})(g_i)_{n_i}$.
Let $U$ be the set of all $[v_1,v_2]$. By Proposition \ref{prop:pushout}, we may
construct $\W{S}{H_{*,*}(X)}$ as a quotient in $\Mod_{\Fvp}(\GrVect^{\fC})$
(rather than in $\Alg_{\Fvp}(\GrVect^{\fC})$).
Explicitly,
$\arity{\W{S}{H_{*,*}(X)}}{k}$ is given by the following pushout square in $\Mod_{\Fvp}$:
$$
\begin{tikzcd}
\arity{T(\Fvp\{Q(S)\})\dc \Fvp\{U\} \dc
T(\Fvp\{Q(S)\})}{k} \ar[r] \ar[d] & \arity{T(\Fvp\{Q(S)\})}{k} \ar[d]\\
0 \ar[r] & \arity{\W{S}{H_{*,*}(X)}}{k}. 
\end{tikzcd}
$$

Every element of $Q(S)$ has charge at least 1, so elements of $U$ have charge
at least 2. 
Therefore, the charge 0 component of
$$T(\Fvp\{Q(S)\})\dc \Fvp\{U\} \dc T(\Fvp\{Q(S)\})$$ is 0, and
hence $$\arity{\W{S}{H_{*,*}(X)}}{0}\cong \arity{T(\Fvp\{Q(S)\})}{0}\cong
\Fvp.$$
The monoidal unit map $\Fvp\to \arity{T(\Fvp\{Q(S)\})}{0}\to
\arity{\W{S}{H_{*,*}(X)}}{0}$ is thus an
isomorphism.
Similarly, the monoidal unit map
$$\Fvp\to H_{*,*}(\Sym^0(X))$$
is also an isomorphism, so since $\Psi_X$ is a map of monoids, it follows that
$\arity{\Psi_X}{0}$ is an isomorphism.

Similar logic shows that
$$\arity{\W{S}{H_{*,*}(X)}}{1}\cong \arity{T(\Fvp\{Q(S)\})}{1}.$$
Since the only elements of $Q(S)$ with charge 1 are those of $S$, it follows
that 
$$\arity{T(\Fvp\{Q(S)\})}{1}\cong \Fvp\{S\}\cong H_{*,*}(X).$$
Thus, since $H_{*,*}(X)\cong H_{*,*}(\Sym^1(X))$, and $\Psi_X$ is defined on $S$
using the canonical map
$$H_{*,*}(X)\to H_{*,*}(E_{\infty}(X)),$$
we have that $\arity{\Psi_X}{1}$ is an isomorphism as well.

\subsubsection{Induction step}\label{pfpt:inductionstep}
Before proceeding by induction, we introduce some notational simplifications.
First, we will write $\W{S}{X}$ for $\W{S}{H_{*,*}(X)}$. Second, we replace the
category $\Ch^{\fC}_{\F}$ by the equivalent category $\Ch(\Vect_{\F}^{\fC})$ and
the category $\GrVect^{\fC}$ by the equivalent category
$(\Vect^{\fC})^{\Z}$, as described in Section \ref{section:ss}. We will also
write $\S^r X$ for $\S^{\1_{\fG},r}X$.

Suppose we have shown that for all $X$ and any basis, $\arity{\Psi_X}{\ell}$ is an isomorphism
whenever $\ell\leq k-1$. That $\arity{\Psi_X}{k}$ is an isomorphism will follow
from the following claim.
\begin{claim}\label{claim:susp}
If
$$\arity{\Psi_{\S X}}{k}\colon \arity{\W{\S S}{\S X}}{k}\to H_{*}(\Sym^k(\S
X))$$
is an isomoprhism in degrees $*<N$ and a surjection when $*=N$, then
$$\arity{\Psi_X}{k}\colon \arity{\W{S}{X}}{k}\to H_*(\Sym^k(X))$$
is an isomorphism in degrees $*<N-1$ and a surjection when $*=N-1$.
Here, $\S S$ denotes the set with elements $\S x$ for $x\in S$, where $\S x$ has
$\Z$-degree $m+1$ when $x$ has $\Z$-degree $m$. 
\end{claim}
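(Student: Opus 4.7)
The plan is to prove Claim \ref{claim:susp} by a careful analysis of the spectral sequence $\arity{E}{k}$ constructed in Section \ref{section:ss}. The key point is that $\Sym^k(Y)$ is acyclic for $k\geq 1$ by Lemma \ref{lem:preservewes}, so the charge-$k$ spectral sequence converges to zero. This forces every class on the $E^1$-page to participate in a paired cancellation, and those cancellations will let us read off the unknown $t=0$ column $\arity{H_*(\Sym^k(X))}{}$ from the hypothesis about the $t=k$ column $\arity{H_*(\Sym^k(\S X))}{}$ together with the middle columns, which are already under control by induction.

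First I would use Lemma \ref{lem:E1} together with the inductive hypothesis on $\arity{\Psi_{-}}{\ell}$ for $\ell<k$ to identify most of the $E^1$-page. For each $0<t<k$, both $t$ and $k-t$ are strictly less than $k$, so the inductive hypothesis applied to $\S X$ (with basis $\S S$) and to $X$ (with basis $S$) identifies $\arity{E^1_{t,*}}{k}$ with $\arity{\W{\S S}{\S X}}{t}\dc \arity{\W{S}{X}}{k-t}$. The top column $t=k$ is $\arity{H_*(\Sym^k(\S X))}{}$, which the hypothesis of the claim identifies with $\arity{\W{\S S}{\S X}}{k}$ through degree $N$ (as an isomorphism) and $N$ (as a surjection); the bottom column $t=0$ is the unknown $\arity{H_*(\Sym^k(X))}{}$. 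Thus the $E^1$-page is algebraically identified with $\arity{(\W{\S S}{\S X}\dc \W{S}{X})}{k}$ up to a degree/column in which we have only a surjection, with the single caveat that the $t=0$ column is not yet understood.

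Next I would determine all of the relevant differentials using Proposition \ref{prop:Kudo} and the multiplicative Leibniz rule established in Section \ref{section:ss}. For each admissible generator $Q^I\S x\in Q(\S S)$ with $q(Q^I\S x)=-1$, the transgression $\S x\leadsto x$ composed with $Q^I$ gives a differential sending $Q^I\S x$ to $Q^Ix$ (up to sign), and by the Leibniz rule this cancels $\cS(Q^I\S x)\dc \cS(Q^Ix)$ completely. For each generator with $q(Q^I\S x)=+1$ of bidegree $(g,2s)$, the analogous transgression, together with the third part of Proposition \ref{prop:Kudo} giving that $(Q^I\S x)^{p-1}\otimes Q^Ix$ hits $-\b Q^s(Q^Ix)=-Q^{\op{0}{s}{g}{1}}Q^Ix$, and its iterations on the $p^i$-th powers $(Q^I\S x)^{p^i}$ via the Frobenius, produce differentials hitting the classes $Q^{\op{i-1}{s}{g}{0}}Q^Ix$ and $Q^{\op{i}{s}{g}{1}}Q^Ix$ for every $i\geq 0$. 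This is exactly the pattern of factors $M$ in Lemma \ref{lem:SSpieces}, so the cancellations of $E^1$-classes coming from $Q(\S S)$-generators match precisely the algebraic decomposition of $\arity{(\W{\S S}{\S X}\dc \W{S}{X})}{k}$ given there.

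Finally, since the spectral sequence converges to zero, these cancellations must be bijective wherever the $E^1$-page is fully known. Combining the algebraic identification of $\arity{(\W{\S S}{\S X}\dc \W{S}{X})}{k}$ from Lemma \ref{lem:SSpieces} with the hypothesis (which pins down the top column in the relevant range) forces the bottom column $\arity{H_*(\Sym^k(X))}{}$ to coincide with $\arity{\W{S}{X}}{k}$ via $\arity{\Psi_X}{k}$, and naturality of the constructions ensures that this forced identification is exactly the one given by the Dyer--Lashof operations on classes in $S$. The degree shift from $* < N$ on the $\S X$ side to $* < N-1$ on the $X$ side is the single-unit drop from suspension, picked up by the transgression $\S y\leadsto y$ in the lowest row of the cancellations; surjectivity at $*=N-1$ comes from surjectivity at $*=N$ on the $\S X$ side by the same mechanism. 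The main obstacle will be the careful bookkeeping in the middle paragraph: verifying that the Kudo transgressions, combined with the multiplicative Leibniz rule and the Frobenius-like structure of $p$-th powers in characteristic $p$, produce \emph{exactly} the decomposition of Lemma \ref{lem:SSpieces} with no spurious surviving classes, particularly in the $q(Q^I\S x)=+1$ case where the $\Fvp\{(Q^I\S x)^{jp^i}\}_{0\leq j\leq p-1}$ factors must be matched term-by-term with iterated transgressions.
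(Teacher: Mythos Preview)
Your plan is essentially the paper's, and the ingredients you have identified (the spectral sequence of Section~\ref{section:ss}, its vanishing on $E^\infty$, the inductive identification of the middle columns, Proposition~\ref{prop:Kudo} for the transgressions, the Leibniz rule, and the decomposition of Lemma~\ref{lem:SSpieces}) are exactly the right ones.

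The one place where your outline is not yet a proof is the last paragraph. Saying ``the cancellations must be bijective wherever the $E^1$-page is fully known'' is a heuristic, not an argument: Proposition~\ref{prop:Kudo} only tells you what happens to \emph{transgressive} classes and their products, and you have no a priori control over other differentials in $E$, nor a direct way to convert ``surjection at $q=N-k$ in the $t=k$ column'' into ``surjection at $q=N-1$ in the $t=0$ column.'' The paper makes this step rigorous by packaging your cancellations into an explicit \emph{model} spectral sequence $\tilde E$: for each factor in the decomposition of Lemma~\ref{lem:SSpieces} one writes down a small elementary spectral sequence with the differentials dictated by the transgressions you described, takes their product via the Leibniz rule, and checks that $\tilde E$ also converges to zero. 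Proposition~\ref{prop:Kudo} then guarantees that $\Psi_{\S X}\dc\Psi_X$ induces a genuine map $f\colon \tilde E\to E$ of multiplicative spectral sequences. With this in hand, a Zeeman-type comparison theorem (stated and proved as Theorem~\ref{thm:sscomp} in the appendix, tailored to the charge grading and the fact that $\arity{E^1_{t,*}}{k}=0$ outside $0\le t\le k$) converts the hypotheses on $f^\infty$, on $\arity{f^1_{t,*}}{k}$ for $0<t<k$, and on $\arity{f^1_{k,*}}{k}$ in the given range, into the desired conclusion about $\arity{f^1_{0,*}}{k}=\arity{\Psi_X}{k}$ with exactly the degree shift $N\rightsquigarrow N-1$.
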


\subsubsection{Proof of theorem assuming Claim
\ref{claim:susp}}\label{pfpt:fromclaim}
First we show that the theorem follows from the claim. For $n\in\Z$, we say that
$X$ is \emph{$n$-connected} if $H_m(X)=0$ for $m\leq n$. If $X$ is
$n$-connected, then $H_m(\Sym^k(X))$ is zero whenever $m<k(n+1)$ (to see this,
recall that $\Sym^k$ preserves quasi-isomorphisms, so we may replace $X$ by its
homology; furthermore, $\cC_{\infty}(k)$ is 0 in negative degrees, so the
minimal degree in which a nonzero class may occur is $m=k(n+1)$).
We claim that $\arity{\W{S}{X}}{k}$ is also 0 in degrees $m<k(n+1)$.
By construction $\W{S}{X}$ is spanned by products of elements in $Q(S)$. A product
$$Q^{I_1}_{g_1}x_1\otimes \dots \otimes Q^{I_j}_{g_j}x_j$$ 
has charge $\sum_{i=1}^j p^{\ell(I_i)}$. By Lemma \ref{lem:admprops}, the condition
$e(I)+\e_1>d_{\Z}(x)$ implies that $d_{\Z}(Q^I_g x)\geq p^{\ell(I)}d_{\Z}(x)$, so a product has degree
$$d_{\Z}(Q^{I_1}_{g_1}x_1\otimes \dots \otimes Q^{I_j}_{g_j}x_j)\geq \sum_{i=1}^j
p^{\ell(I_i)}d_{\Z}(x_i).$$
Since $d_{\Z}(x_i)\geq n+1$, we therefore have that a product with charge $k$ has
degree at least $k(n+1)$. Thus, $\arity{\Psi_X}{k}$ is trivially an isomorphism
in degrees $m<k(n+1)$, since both the source and target are
$(k(n+1)-1)$-connected.

Moreover, if $I$ is nonempty and admissible, then $$d_{\Z}(Q^I_g
x)>p^{\ell(I)}d_{\Z}(x)\geq
p^{\ell(I)}(n+1),$$ so the only way to have
$$d_{\Z}(Q^{I_1}_{g_1}x_1\otimes \dots \otimes Q^{I_j}_{g_j}x_j)=k(n+1)$$
while $\sum p^{\ell(I_i)}=k$ is if each $I_i$ is empty, $j=k$, and every
$d_{\Z}(x_i)=n+1$. That is, only $k$-fold products of elements of $H_{n+1}(X)$
may have degree $k(n+1)$ in $\arity{\W{S}{X}}{k}$. However, the map
$$(H_{n+1}(X)^{\dc k})_{\S_k}\to H_{k(n+1)}(\Sym^k(X))$$  is a
surjection. Thus, $\arity{\Psi_X}{k}$ is a surjection in degree
$m=k(n+1)$ when $X$ is $n$-connected.

If $X$ is $n$-connected, then $\S^r X$ is $(r+n)$-connected, so the above
argument implies that $\arity{\Psi_{\S^r X}}{k}$ is an isomorphism in degrees
$m<k(r+n+1)$ and a surjection when $m=k(r+n+1)$. By iterated applications of
Claim \ref{claim:susp}, this implies that $\arity{\Psi_X}{k}$ is an isomorphism
in degrees $m<k(n+1)+r(k-1).$ Since $k-1\geq 1$, $k(n+1)$ is fixed, and $r$ can
be made arbitrarily large, this implies that $\arity{\Psi_X}{k}$ is an
isomorphism in all degrees. This proves the theorem when $X$ is $n$-connected.

Note that any $X$ may be written as a filtered colimit of bounded-below objects
(i.e. objects that are $n$-connected for some $n$).
Explicitly, $X=\colim_{n\in \Z} X_{\geq n}$, where $X_{\geq n}$ is the
truncation of $X$ in the sense that $$(X_{\geq n})_m=\begin{cases} X_m & m\geq
n\\
0 & m<n.
\end{cases}$$
Since $\W{S}{H_*(X)}$ is determined only by $H_*(X)$, we may replace $X$ by its
homology; that is, we may assume that $X$ is a chain complex with trivial
differentials. Then we have maps
$$\dots\to X_{\geq n+1}\to X_{\geq n}\to X_{\geq n-1}\to \dots \to X$$
that are the identity on $(X_{\geq n})_m$ for all $m$, and, as claimed,
$X=\colim_{n\in \Z} X_{\geq n}.$
Let $$S_n=\{x\in S\mid d_{\Z}(x)\geq n\}.$$ Then $S_n\subset S_{n-1}$, and
$S=\cup_{n\in\Z}S_n.$ Similarly, $$Q(S_n)\subset Q(S_{n-1})\text{   and  }
Q(S)=\cup_{n\in\Z}Q(S_n).$$ The inclusion maps $Q(S_n)\to Q(S_{n-1})$ and
$Q(S_n)\to Q(S)$ induce
maps on the free commutative algebras
$$\W{S_n}{H_*(X_{\geq n})}\to \W{S_{n-1}}{H_*(X_{\geq n-1})}$$
and
$$\W{S_n}{H_*(X_{\geq n})}\to \W{S}{H_*(X)},$$
the latter of which is compatible with the former. Moreover, we have a
map
of graded sets
$$Q(S)\to \colim_{m\in\Z}\W{S_m}{H_*(X_{\geq m})}$$
defined using the maps
$$Q(S_n)\to \colim_{m\in\Z}\W{S_m}{H_*(X_{\geq m})}.$$
This defines a map
$$\W{S}{H_*(X)}\to \colim_{m\in\Z}\W{S_m}{H_*(X_{\geq m})}$$
of commutative algebras, which shows that $\W{S}{H_*(X)}$ satisfies the universal
property of the colimit, and hence this map is an isomorphism.
Now, $H_*(E_{\infty}(-))$ commutes with filtered colimits, so we also have that
$$H_*(E_{\infty}(X))\cong \colim_{n\in\Z} H_*(E_{\infty}(X_{\geq n})).$$
Since the diagrams
$$
\xymatrix{
\W{S_n}{H_*(X_{\geq n})} \ar[r] \ar[d] & \W{S_{n-1}}{H_*(X_{\geq n-1})} \ar[d]\\
H_*(E_{\infty}(X_{\geq n})) \ar[r] & H_*(E_{\infty}(X_{\geq n-1}))
}
$$
commute by naturality of the operations, it therefore suffices to prove the
theorem when $X$ is $n$-connected for some $n$.

\subsubsection{Proof of Claim \ref{claim:susp}}\label{pfpt:claimproof}
Thus, it suffices to prove Claim \ref{claim:susp}. To do this, we will use the
charge-graded spectral sequence described in Section \ref{section:ss} with $E^1$ page
$$\arity{E^1_{t,q}}{k}\cong \bigoplus_r H_{t+q-r}(\Sym^t(\S X))\dc
H_r(\Sym^{k-t}(X))$$
converging to 0 if $k>0$ and $\Fvp$ when
$k=0$.
We will construct a model spectral sequence relating $\W{S}{X}$ and $\W{\S S}{\S
X}$ in a
similar way, and the result
will then follow from a spectral sequence comparison theorem.

Now we build the model spectral sequence $\tilde{E}$ with a charge grading
that has
\begin{equation}\label{eq:grading}
\arity{\tilde{E}^1_{t,q}}{\ell}\cong \arity{\W{\S S}{\S X}_{t+q-*}}{t}\dc
\arity{\W{S}{X}_*}{\ell-t}.
\end{equation}

By Lemma \ref{lem:SSpieces}, we have that $\W{\S S}{\S X}\dc \W{S}{X}$ is isomorphic
to
$$
\left(\bigdc_{Q^I\S x\in Q(\S S)^{-}} \cS(Q^I\S x)\dc
\cS(Q^Ix)\right)\dc \bigdc_{Q^I\S x\in Q(\S S)^+}M,
$$ 
where 
$$M=\bigdc_{i\geq 0} 
\Fvp\{(Q^I\S x)^{jp^i}\}_{0\leq j\leq
p-1} \dc \cS(Q^{\op{i-1}{s}{g}{0}}Q^Ix)\dc
\cS(Q^{\op{i}{s}{g}{1}}Q^I x),$$
and $(g,2s)$ is the bidegree of $Q^I\S x$. We will construct an elementary
spectral sequence for each $Q^I\S x\in Q(\S S)^-$ and, separately, one for each
$Q^I\S x\in Q(\S S)^+$. Taking the product of these and using the Leibniz rule
will then define the differentials for $\tilde{E}$.

For every $Q^I\S x\in Q(\S S)^-$, we may define a charge-graded spectral sequence
with $E^1$ page
$$\cS(Q^I\S x)\dc \cS(Q^I x).$$
Explicitly, if $d_{\Z}(Q^I\S x)=n+1$, this sequence has 
$$\arity{E^r_{0,jn}}{jp^{\ell(I)}}=\Fvp\{1\otimes (Q^Ix)^j\}$$
if $r\leq p^{\ell(I)}$ and
$$\arity{E^r_{p^{\ell(I)},n(j+1)+1-p^{\ell(I)}}}{(j+1)p^{\ell(I)}}=\Fvp\{Q^I \S
x\otimes (Q^Ix)^j\}$$
if $r\leq p^{\ell(I)}$ and is otherwise zero. The differential is given by
$$d^{p^{\ell(I)}}(Q^I\S x\otimes 1)=1\otimes (-1)^{\e(I)}Q^I x,$$
where $\e(I)=\sum_{i=1}^{\ell(I)}\e_i$ (that is, $\e(I)$ is the number of times
an operation of the form $\b Q^s$ appears in $I$), 
and on the rest of the $p^{\ell(I)}$ page the differential is defined using the Leibniz rule.
Otherwise the differentials are all zero, and it is clear that the spectral
sequence converges to 0 except for a copy of the unit $\Fvp$ in charge 0.

For every $Q^I\S x\in Q(\S S)^+$ with $2s=d_{\Z}(Q^I\S x)$ and $g=d_{\fG}(Q^I \S
x)$, for each $i\geq 0$ we may similarly define a
charge-graded spectral sequence with $E^1$ page
$$ \Fvp\{(Q^I\S x)^{0 p^i},(Q^I\S x)^{p^i},\dots,(Q^I\S x)^{(p-1)p^i}\} \dc
\cS(Q^{\op{i-1}{s}{g}{0}}Q^Ix)\dc \cS(Q^{\op{i}{s}{g}{1}} Q^Ix).$$
Let $y=(Q^I\S x)^{p^i}$, $z=Q^{\op{i-1}{s}{g}{0}}Q^Ix$, and
$v=Q^{\op{i}{s}{g}{1}}Q^Ix$. In this spectral sequence, $y$ will trangress to
$(-1)^{\e(I)}z$, and $y^{p-1}\otimes z$ will transgress to $(-1)^{\e(I)+1}v$.
Explicitly, if $0\leq j_1\leq p-1$ and $j_2\geq 0$, then
$$\arity{E^r_{j_1p^{i+\ell(I)},q}}{p^{i+\ell(I)}(j_1+pj_2)} =
\Fvp\{y^{j_1}\otimes 1\otimes v^{j_2}\} \text{ for } \begin{cases}
r\leq p^{i+\ell(I)} & j_1>0\\
r\leq (p-1)p^{i+\ell(I)} & j_1=0,
\end{cases}$$
where $q=p^ij_1(2s-p^{\ell(I)})+2j_2(p^{i+1}s-1)$, and
$$\arity{E^r_{j_1p^{i+\ell(I)},q}}{p^{i+\ell(I)}(j_1+pj_2+1)}=\Fvp\{y^{j_1}\otimes
z\otimes v^{j_2}\} \text{ for } \begin{cases}
r\leq p^{i+\ell(I)} & j_1<p-1\\
r\leq (p-1)p^{i+\ell(I)} & j_1=p-1,
\end{cases}$$
where $q=p^ij_1(2s-p^{\ell(I)})+2s p^i-1+2j_2(p^{i+1}s-1).$ Otherwise,
$E^r_{t,q}=0$.
The differentials are defined by
$$d^r(y^{j_1}\otimes 1\otimes v^{j_2})=(-1)^{\e(I)}\vp(\tr_{g\oplus g}(\b_{g,g}))^{j_1-1}j_1 y^{j_1-1} \otimes z\otimes
v^{j_2}$$
for $r=p^{i+\ell(I)}$ if $j_1>0$, and
$$d^r(y^{p-1}\otimes z\otimes v^{j_2})=(-1)^{\e(I)+1}y^0\otimes 1\otimes v^{j_2+1}$$ for
$r=(p-1)p^{i+\ell(I)}$. All other differentials are zero. This spectral sequence converges to 0 except for a copy of $\Fvp$ in charge
0.

Thus, we may define a spectral sequence on the product $\tilde{E}$ of these elementary
spectral sequences using the Leibniz rule. By Proposition \ref{prop:Kudo}, the differentials in $\tilde{E}$ are constructed so that we
have an induced map $f\colon\tilde{E}\to E$
coming from the maps $\Psi_{\S X}$ and $\Psi_X$.

 Since both $\tilde{E}$ and $E$
converge to zero, aside from the unit $\Fvp$ in charge 0, on which the map $f$
is clearly an isomorphism, we have that $f^{\infty}$ is an isomorphism in all
degrees. By assumption
$\arity{\Psi_{\S X}}{\ell}$ is an isomorphism in all degrees when $\ell<k$, an
isomorphism in degrees $*<N$ and a surjection when $*=N$ for $\ell=k$,
so $\arity{f^1_{k,q}}{k}$ is an isomorphism when $q<N-k$ and a surjection when
$q=N-k$.
Thus,
by the spectral sequence comparison theorem \ref{thm:sscomp}, we have that
$\arity{f^1_{0,q}}{k}$ is an isomorphism for $q<N-1$ and a surjection for
$q=N-1$.
We therefore have that $\arity{\Psi_X}{k}$ is an isomorphism, respectively
surjection, in the appropriate degrees. This proves the claim.
\end{proof}

\section{Allowable Dyer--Lashof algebras}\label{section:allowable} 
In practice, the result of Theorem \ref{thm:W} is most useful for computations,
since it gives explicit generators for the homology of a free
$E_{\infty}$-algebra $H_{*,*}(E_{\infty}(X))$. However, our algebraic
description $\W{S}{H_{*,*}(X)}$ merely has the structure of a commutative monoid object in
$\Mod_{\Fvp}(\GrVect^{\fC})$; it does not come equipped with an action of the Dyer--Lashof
operations as $H_{*,*}(E_{\infty}(X))$ does. Moreover, $\W{S}{H_{*,*}(X)}$ was not
functorial in $X$, since it required a choice of basis. In this section we will describe
precisely the algebraic structure with which we have endowed
$H_{*,*}(E_{\infty}(X))$, and we will reformulate Theorem \ref{thm:W} to show that
$H_{*,*}(E_{\infty}(X))$ is a free object with respect to this structure.

\subsection{The Dyer--Lashof algebra}
To begin with, we will define the Dyer--Lashof algebra. Intuitively this should
be the free $\Fvp$-algebra on the set of symbols $\{\b^{\e} Q^s_g\}$ modulo some
relations. However, in order to accomodate the fact that over a general field
$\F$ of characteristic $p$, the Dyer--Lashof operations are Frobenius linear
rather than linear, we must instead define the Dyer--Lashof algebra in a
category of $\F$-$\F$-bimodules. This allows us to encode the relation $Q^s_g
\a=\a^p Q^s_g$ for a scalar $\a$ by defining different right and left vector
space structures.

In the case that $\F=\F_p$, having different left and right vector space
structures is not necessary, and the
Dyer--Lashof algebra can be constructed as a monoid object in
$\Mod_{\Fvp}(\GrVect^{\fC})$. This then allows one to consider left modules over
the Dyer--Lashof algebra in $\Mod_{\Fvp}(\GrVect^{\fC})$. The following
definitions using functors into a category of $\F$-$\F$-bimodules are designed
to carry out this same construction while allowing different left and right
vector space structures.

\subsubsection{The category $\Bimod_{\bifvp}$}
Consider the category  of functors from
$\fC^{\cop} \times \fC$ to the category of $\Z$-graded $\F$-$\F$-bimodules.
As usual, we may take the tensor product of bimodules to obtain a bimodule; we
denote this tensor product by $\tensor[_{\F}]{\otimes}{_{\F}}$ to distinguish it
from the tensor product $\otimes_{\F}$ of $\F$--vector spaces.
\begin{defn}
Let
$(\Bimod_{\F}^{\Z})^{\fC^{\cop}\times \fC}$
denote the category of functors from $\fC^{\cop} \times \fC$ to the category of $\Z$-graded $\F$-$\F$-bimodules.
We equip this functor category with a
monoidal structure, denoted $\bitimes$, as follows:
 for $F,G\in (\Bimod_{\F}^{\Z})^{\fC^{\cop}\times
\fC}$, $F\bitimes G$ is defined either as a coend, or, equivalently, as the coequalizer
$$F\bitimes G\coloneqq \coeq\left(\bigoplus_{\substack{g,g'\in \fG \\ f\colon
g\to g'}} F(g',-)\tensor[_{\F}]{\otimes}{_{\F}} G(-,g) \rightrightarrows
\bigoplus_{g\in \fG}F(g,-)\tensor[_{\F}]{\otimes}{_{\F}} G(-,g)\right),$$
where one arrow is obtained by $$F(g',-)\tensor[_{\F}]{\otimes}{_{\F}}
G(-,g)\xrightarrow{F(\id,\id)\otimes G(\id,f)}
F(g',-)\tensor[_{\F}]{\otimes}{_{\F}} G(-,g'),$$
and the other is obtained by
$$F(g',-)\tensor[_{\F}]{\otimes}{_{\F}} G(-,g)\xrightarrow{F(f,\id)\otimes
G(\id,\id)}
F(g,-)\tensor[_{\F}]{\otimes}{_{\F}} G(-,g).$$
\end{defn}

\begin{rem}
Such functors are sometimes called \emph{profunctors}, and the monoidal
structure we have defined is sometimes called the \emph{composition} of
profunctors, which motivates our choice of notation.
\end{rem}
Associators for this monoidal structure are defined using
the associators for tensor products of bimodules. The unit in this category is
$\Hom_{\F\fG}(-,-)$, which is considered as a bimodule with both left and right scalar
multiplication given by multiplication in $\F$.

We can also define an analogue $\bifvp$ of $\Fvp$ in
$(\Bimod_{\F}^{\Z})^{\fC^{\cop}\times
\fC}$ (here ``bm" stands for ``bimodule").

\begin{defn}
 Let $\bifvp(g,h)$ be the coequalizer of
$$\bigoplus_{\Aut_{\fG}(g)\times \Aut_{\fG}(h)}\Hom_{\F\fG}(g,h)\rightrightarrows
\Hom_{\F\fG}(g,h),$$
where one arrow precomposes by $b\colon g\to g$ and postcomposes by $b'\colon
h\to h$, and the other arrow multiplies by $\vp(\tr_h(b'))$ on the \emph{left}
and by $\vp(\tr_g(b))$ on the \emph{right}.
 Morphisms in $\fG^{\cop}\times \fG$ act as pre- and
postcomposition.
\end{defn}
Note that here the order of multiplication is
actually not important, as $\F$ is commutative, but later on we will need to
keep track of this.
Thus, $\bifvp$ is a quotient of the unit object $\Hom_{\F\fG}(-,-)$. It is not
hard to show that the unit map $\Hom_{\F\fG}(-,-)\bitimes \Hom_{\F\fG}(-,-)\to
\Hom_{\F\fG}(-,-)$ descends to a map $\bifvp\bitimes\bifvp\to\bifvp$ making
$\bifvp$ into a monoid object in $(\Bimod_{\F}^{\Z})^{\fG^{\cop}\times \fG}$.

\begin{defn}
Let $\Bimod_{\bifvp}$ denote the category of $\bifvp$-$\bifvp$-bimodules in
the functor category $(\Bimod_{\F}^{\Z})^{\fG^{\cop}\times \fG}$. This inherits a monoidal structure
$\bitimes_{\bifvp}$ in the usual way, which we will denote simply by $\bitimes$,
since it coincides with $\bitimes$.
\end{defn}
 
Since
$\bifvp$ is once again a quotient of the unit object, a similar argument to that
of Proposition \ref{prop:property} shows that 
being a bimodule over $\bifvp$ is a property rather than extra structure.

\begin{prop}
A functor $X\in (\Bimod_{\F}^{\Z})^{\fG^{\cop}\times \fG}$ may be equipped with
the structure of a bimodule over $\bifvp$ if and only if for every
$(g,h)\in\fG^{\cop}\times \fG$ and every morphism $(b,b')\in
\Aut_{\fG^{\cop}\times \fG}(g,h)$, the induced map
$$X(b,b')\colon X(g,h)\to X(g,h)$$
is given by \emph{left} multiplication by $\vp(\tr_h(b'))$ and
\emph{right} multiplication by $\vp(\tr_g(b^{\cop}))$.
Here $b^{\cop}$ denotes
the ``underlying'' morphism in $\fG$ (that is, a morphism $f\colon g_1\to g_2$ in
$\fG^{\cop}$ is by definition a morphism $g_2\to g_1$ in $\fG$; we denote this
morphism by $f^{\cop}$).
\end{prop}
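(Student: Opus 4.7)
The plan is to directly adapt the argument of Proposition \ref{prop:property} to this two-sided setting. The key observation is that $\bifvp$ is, by construction, a quotient of the monoidal unit $\Hom_{\F\fG}(-,-)$ in $(\Bimod_{\F}^{\Z})^{\fG^{\cop}\times\fG}$: the coequalizer defining $\bifvp(g,h)$ precisely imposes the relation $b' \circ \gamma \circ b \sim \vp(\tr_h(b'))\,\gamma\,\vp(\tr_g(b))$ for $b \in \Aut_{\fG}(g)$, $b' \in \Aut_{\fG}(h)$, and $\gamma \in \Hom_{\F\fG}(g,h)$. Because $\bitimes$ is defined as a coend and therefore preserves colimits in each variable, the induced map
\[
\Hom_{\F\fG}(-,-) \bitimes X \bitimes \Hom_{\F\fG}(-,-) \;\longrightarrow\; \bifvp \bitimes X \bitimes \bifvp
\]
is likewise a quotient of the source.

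Combining this with the unit isomorphism, the source is canonically isomorphic to $X$, so a bimodule structure map $\bifvp \bitimes X \bitimes \bifvp \to X$ compatible with the unit exists if and only if the identity map on $X$ factors through the quotient. If such a factorization exists, it is uniquely determined, the resulting map is automatically compatible with the $\bifvp$-multiplication (by the same quotient argument applied one step further), and each of the induced maps $\bifvp \bitimes X \to X$ and $X \bitimes \bifvp \to X$ is an isomorphism. Thus it remains only to translate ``the canonical action factors through the quotient'' into an explicit condition on $X$.

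To unpack this, recall that the canonical isomorphism is induced by maps
\[
\Hom_{\F\fG}(g',h) \,\tensor[_{\F}]{\otimes}{_{\F}}\, X(g,g') \,\tensor[_{\F}]{\otimes}{_{\F}}\, \Hom_{\F\fG}(x,g) \;\longrightarrow\; X(x,h)
\]
sending $\gamma' \otimes v \otimes \gamma$ to $X(\gamma^{\cop}, \gamma')(v)$. The quotient to $\bifvp$ on the left-hand factor demands, for each $b' \in \Aut_{\fG}(h)$, that $X(\id,b'\circ\gamma')(v) = \vp(\tr_h(b'))\cdot X(\id,\gamma')(v)$; taking $\gamma' = \id_h$ and using functoriality reduces this to the assertion that $X(\id_g,b')\colon X(g,h)\to X(g,h)$ is precisely left $\vp(\tr_h(b'))$-multiplication. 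Symmetrically, the quotient on the right-hand factor yields that $X(b,\id_h)$ is right multiplication by $\vp(\tr_g(b^{\cop}))$. Once these two conditions hold, the general case $X(b,b')$ follows from functoriality $X(b,b') = X(\id,b')\circ X(b,\id)$.

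The main bookkeeping hurdle will be the careful tracking of opposite conventions: which of $\vp(\tr_h(b'))$ versus $\vp(\tr_g(b^{\cop}))$ appears on the left versus the right, and how the contravariant variable interacts with the coend defining $\bitimes$. Once this sign-and-side convention is pinned down, and one verifies in particular that the commutativity of $\F$ makes the combined automorphism relation $b'\circ\gamma\circ b \sim \vp(\tr_h(b'))\,\gamma\,\vp(\tr_g(b))$ in $\bifvp$ decouple cleanly into a purely left condition and a purely right condition on $X$, the argument closes exactly as in Proposition \ref{prop:property}.
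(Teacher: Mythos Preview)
Your proposal is correct and follows precisely the approach the paper indicates: the paper does not give a full proof here but simply states that ``a similar argument to that of Proposition \ref{prop:property}'' applies, since $\bifvp$ is a quotient of the unit object. Your write-up carries out exactly this adaptation, correctly tracking the left/right conventions and the contravariant variable, so there is nothing to add.
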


There is also a functor $$-\biact-\colon \Bimod_{\bifvp}\times
\Mod_{\Fvp}(\GrVect^{\fC})\to \Mod_{\Fvp}(\GrVect^{\fC}),$$
where $M\biact X$ is defined either as a coend, or, equivalently, as the
coequalizer of
$$\bigoplus_{\substack{g,g'\in \fG \\ f\colon
g\to g'}} M(g',-)\tensor[_{\F}]{\otimes}{_{\F}} X(g)\rightrightarrows
\bigoplus_{g\in\fG} M(g,-)\tensor[_{\F}]{\otimes}{_{\F}} X(g),$$
where one arrow is $M(f,\id)\otimes X(\id)$, and the other is $M(\id,\id)\otimes
X(f).$ Note that the resulting object is indeed a module over $\Fvp$, since
an automorphism $f$ of $h$ acts on $M(g,h)$, and hence on $(M\biact X)(h)$, as left multiplication by
$\vp(\tr_h(f))$.

\begin{lem}\label{lem:monad}
If $M$ is a monoid object in $\Bimod_{\bifvp}$, then the endofunctor
$$M\biact-\colon \Mod_{\Fvp}(\GrVect^{\fC})\to
\Mod_{\Fvp}(\GrVect^{\fC})$$
inherits a monad structure.
\end{lem}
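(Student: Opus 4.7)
The plan is to deduce the monad structure from a more basic fact: that the monoidal category $(\Bimod_{\bifvp}, \bitimes, \bifvp)$ \emph{acts} on $\Mod_{\Fvp}(\GrVect^{\fC})$ via the functor $\biact$. Once such an action is in place, it is a purely formal consequence that any monoid in the acting category produces a monad on the category being acted on, with multiplication and unit natural transformations obtained by applying the unitor and associator isomorphisms in a standard way.

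Concretely, I would first construct a natural isomorphism
\[
a_{M,N,X}\colon (M\bitimes N)\biact X \xrightarrow{\cong} M\biact (N\biact X)
\]
for $M,N\in \Bimod_{\bifvp}$ and $X\in \Mod_{\Fvp}(\GrVect^{\fC})$, together with a unit isomorphism $\ell_X\colon \bifvp\biact X\xrightarrow{\cong} X$. Both maps are built from the universal properties of the coends (coequalizers) defining $\bitimes$ and $\biact$: both sides of $a_{M,N,X}$ corepresent the same functor of ``$\fG$-composable triples $(g,h,k)$ with a map $M(g,-)\tensor[_{\F}]{\otimes}{_{\F}} N(h,g)\tensor[_{\F}]{\otimes}{_{\F}} X(k)\to -$ dinatural in the obvious sense,'' so the comparison map is canonically defined and an isomorphism. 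The unit isomorphism $\ell_X$ is exactly the statement, already proved in Proposition~\ref{prop:property} (lifted through the forgetful functor from bimodules), that $\bifvp\biact X\cong X$ since both sides express the left Kan extension along the identity. Verifying the pentagon and triangle coherences for $(a,\ell)$ is a diagram chase on the coend presentations, reducing to coherence of $\tensor[_{\F}]{\otimes}{_{\F}}$ and associativity/unitality of composition in $\F\fG$.

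Given the action $(\biact, a, \ell)$, the monoid structure on $M$ consists of maps $m\colon M\bitimes M\to M$ and $u\colon \bifvp\to M$ satisfying the monoid axioms. The multiplication and unit of the monad $M\biact -$ are then defined levelwise in $X$ as the composites
\[
\mu_X\colon M\biact (M\biact X) \xrightarrow{a^{-1}} (M\bitimes M)\biact X \xrightarrow{m\biact X} M\biact X,
\]
\[
\eta_X\colon X \xrightarrow{\ell^{-1}} \bifvp\biact X \xrightarrow{u\biact X} M\biact X.
\]
Naturality in $X$ is immediate from functoriality of $\biact$ in its right variable. The associativity of $\mu$ is the outer boundary of a hexagon that pastes together two instances of the pentagon for $a$ with the associativity of $m$; the left and right unit laws reduce similarly to the triangle axiom for $(a,\ell)$ combined with the unit axioms for $(m,u)$. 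All of these are standard manipulations in the theory of actegories.

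The main obstacle I anticipate is the careful bookkeeping required to define $a_{M,N,X}$ and verify it is an isomorphism, given that the underlying objects are built as coends involving $\F$-$\F$-bimodule tensor products where the order of left and right multiplication matters (as explicitly noted just after the definition of $\bifvp$). The key subtlety is that in $M(g,h)\tensor[_{\F}]{\otimes}{_{\F}} N(h,k)$ the identifications across the middle $\F$-action must be tracked consistently with the right $\F$-structure descending onto $X(k)$ inside $\biact$; once this is set up correctly, the isomorphism $a$ is essentially forced by Fubini for coends. Everything else in the proof is formal once the actegory structure is in place, so I would not grind through the diagrams explicitly but would simply cite the standard fact that any (strong) action of a monoidal category produces monads from monoids.
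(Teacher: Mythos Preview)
Your proposal is correct and takes essentially the same approach as the paper, which also builds the multiplication by constructing the comparison map $M\biact(M\biact X)\to (M\bitimes M)\biact X$ from the coend presentations and then composing with the monoid product, appealing to the monoid axioms for the monad axioms; the paper is simply terser and does not invoke the actegory language or spell out the unit. One minor imprecision: your citation of Proposition~\ref{prop:property} for the unitor $\bifvp\biact X\cong X$ points to the wrong category (that proposition concerns $\iota_*(\Fvp)\dc X$ in $\Ch_{\F}^{\fG}$), though the analogous argument does give the isomorphism you need.
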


\begin{proof}
The obvious map
$$
\begin{tikzcd}
\bigoplus_{g\in \fG}
M(g,-)\tensor[_{\F}]{\otimes}{_{\F}}\left(\bigoplus_{g'\in \fG}
M(g',g)\tensor[_{\F}]{\otimes}{_{\F}} X(g')\right) \ar[d]\\
 \bigoplus_{g'\in \fG}
\left(\bigoplus_{g\in \fG}M(g,-)\tensor[_{\F}]{\otimes}{_{\F}}
M(g',g)\right) \tensor[_{\F}]{\otimes}{_{\F}} X(g')
\end{tikzcd}
$$
induces a map $M\biact (M\biact X)\to (M\circ M)\biact X$, which, when composed
with the monoidal product map, induces a map $M\biact (M\biact X)\to M\biact X$.
The monoid diagrams for $M$ are precisely what is needed to show that the
endofunctor
$M\biact-$
is a monad.
\end{proof}

In light of Lemma \ref{lem:monad}, it makes sense to consider objects in
$\Mod_{\Fvp}(\GrVect^{\fG})$ as algebras over a monoid object in
$\Bimod_{\bifvp}$.

\subsubsection{Construction of the Dyer--Lashof algebra}
Now we have the machinery to define the Dyer--Lashof algebra $R_{\vp}$ as a
monoid object in $\Bimod_{\bifvp}$.

If $V\in \Bimod_{\bifvp}$, then we can form the free monoid on $V$,
$T_{\F\text{-}\F}(V)$, 
defined by
$$T_{\F\text{-}\F}(V)\coloneqq \bigoplus_{n\geq 0} V^{\bitimes n}.$$
Defining a $T_{\F\text{-}\F}(V)$-algebra structure on $X\in
\Mod_{\Fvp}(\GrVect^{\fC})$, then, amounts to defining a map
$$V\biact X\to X.$$

Consider the set
$$Q=\{\b^{\e} Q^s_g\mid s\in \Z \text{ if } \vp(\tr_{g\oplus g}(\b_{g,g}))=1, s\in \Z+\tfrac12 \text{ if }
\vp(\tr_{g\oplus g}(\b_{g,g}))=-1\}$$
with $\Z$-grading 
$$d_{\Z}(Q^s_g)=2s(p-1) \text{ and } d_{\Z}(\b Q^s_g)=2s(p-1)-1.$$
Let $V(\b^{\e}Q^s_g)\in \Bimod_{\bifvp}$ be defined as follows: take the quotient of
$$\Hom_{\F\fG}(\gpower{g}{p},-)\otimes_{\F_p}\Hom_{\F\fG}(-,g)$$
by the set of $\a^p\otimes 1-1\otimes \a$ for each $\a\in \F$. That is, after
evaluating this tensor product on $(h,h')$, where $h\cong g$ and $h'\cong
\gpower{g}{p}$, we obtain $\F\otimes_{\F_p}\F$, and
we take the quotient of this. Note that this quotient may be described as $\F$ equipped
with the usual left multiplication as its left scalar multiplication, and right
multiplication after applying the Frobenius homomorphism as its right scalar
multiplication.
Now we take the levelwise tensor product of the resulting functor with
$\F[2s(p-1)-\e]$ to obtain a functor concentrated in $\Z$-degree $2s(p-1)-\e$,
and finally we compose on the right and left by $\bifvp$ to obtain an object in
$\Bimod_{\bifvp}$. 

Let 
$$V(Q)\coloneqq \bigoplus_{\b^{\e}Q^s_g\in Q} V(\b^{\e}Q^s_g)\in
\Bimod_{\bifvp}.$$

Finally, let $R_{\vp}$ be the quotient of $T_{\F\text{-}\F}(V(Q))$ by the Adem relations. That is,
we consider the following elements in
$T_{\F\text{-}\F}(V(Q))(g,\gpower{(\gpower{g}{p})}{p})$, where $g$ ranges over
all $g\in \fG$:
$$Q_{\gpower{g}{p}}^rQ_g^s - \vp(\tr_{g\oplus g}(\b_{g,g}))^{\tfrac{p-1}{2}}\sum_i (-1)^{r+i} \binom{(i-s)(p-1)-1}{r-(p-1)s-i-1}
Q_{\gpower{g}{p}}^{r+s-i}Q_g^i$$
and
$$\b Q_{\gpower{g}{p}}^rQ_g^s - \vp(\tr_{g\oplus g}(\b_{g,g}))^{\tfrac{p-1}{2}}\sum_i (-1)^{r+i} \binom{(i-s)(p-1)-1}{r-(p-1)s-i-1} \b
Q_{\gpower{g}{p}}^{r+s-i}Q_g^i$$
for all $r>ps$, and
for all $r\geq ps$, 
\begin{align*}
Q_{\gpower{g}{p}}^r\b Q_g^s-&\sum_i (-1)^{r+i}\binom{(i-s)(p-1)}{r-(p-1)s-i} \b
Q_{\gpower{g}{p}}^{r+s-i}Q_g^i\\
&+\vp(\tr_{g\oplus g}(\b_{g,g}))^{\tfrac{p-1}{2}}\sum_i
(-1)^{r+i}\binom{(i-s)(p-1)-1}{r-(p-1)s-i} Q_{\gpower{g}{p}}^{r+s-i} \b Q_g^i
\end{align*}
and
$$\b Q_{\gpower{g}{p}}^r\b Q_g^s+\vp(\tr_{g\oplus g}(\b_{g,g}))^{\tfrac{p-1}{2}}\sum_i (-1)^{r+i}\binom{(i-s)(p-1)-1}{r-(p-1)s-i} \b
Q_{\gpower{g}{p}}^{r+s-i} \b
Q_g^i,$$
where in all cases $r,s\in \Z$ if $\vp(\tr_{g\oplus g}(\b_{g,g}))=1$ and $r,s\in \Z+\tfrac12$ if
$\vp(\tr_{g\oplus g}(\b_{g,g}))=-1$.
Given an element $v$ in $T_{\F\text{-}\F}(V(Q))(g,\gpower{(\gpower{g}{p})}{p})$ with
$\Z$-degree $n$, we obtain a map
$$X_v\coloneqq\bifvp\bitimes
\S^n(\Hom_{\F\fG}(\gpower{(\gpower{g}{p})}{p},-)\otimes_{\F_p}\Hom_{\F\fG}(-,g))\bitimes
\bifvp \to T_{\F\text{-}\F}(V(Q)),$$
where $\S^n$ here denotes a levelwise tensor product with $\F[n]$.
Taking the coproduct over all Adem relations $v$, we then define $R_{\vp}$ to be
the pushout of the diagram
$$
\begin{tikzcd}
T_{\F\text{-}\F}(\oplus_v X_v) \ar[r] \ar[d] & T_{\F\text{-}\F}(V(Q)) \\
T_{\F\text{-}\F}(0)
\end{tikzcd}
$$
in $\Mon(\Bimod_{\bifvp})$. Thus,
$R_{\vp}$ is a monoid object in the category of $\bifvp$-bimodules.

\begin{defn}
The monoid $R_{\vp}$ is the \emph{$\Fvp$-Dyer--Lashof algebra}.
\end{defn}

The functor $$R_{\vp}\biact -\colon \Mod_{\Fvp}(\GrVect^{\fC}_{\F})\to
\Mod_{\Fvp}(\GrVect^{\fC}_{\F})$$
defines a monad since
$R_{\vp}$ is an associative monoid object. 

\subsection{Allowable Dyer--Lashof modules}
We define an intermediate algebraic structure that consists of objects with an
action of $R_{\vp}$ subject to a degree condition. In order to be consistent
with Peter May's terminology, and to reserve the term ``algebra'' for such
objects that also have a monoid structure, we choose to call these objects
Dyer--Lashof \emph{modules}. However, we remark that this terminology is
particularly confusing here, as our Dyer--Lashof modules are in fact
\emph{algebras over the monad} $R_{\vp}\biact -$.

\begin{defn}
A \emph{left $R_{\vp}$-module} is an algebra over the monad $R_{\vp}\biact -$.
A left $R_{\vp}$-module $X$ is \emph{allowable} if the map
$$\rho\colon R_{\vp}\biact X\to X$$
sends the image of $V(\b^{\e}Q^s_g)\biact X_m$ in $R_{\vp}\biact X$ to 0 when
$2s<m+\e$, where $X_m$ denotes the $\Z$-degree $m$ component of $X$. In
particular this means that
$$\rho(\b^{\e} Q^s_g \otimes x)=0$$
when $x\in X(g)_m$ for $2s<m+\e$.
\end{defn}
The allowable $R_{\vp}$-modules form a full subcategory
$\LMod_{R_{\vp}}^{\text{allow}}$ of the category of left
$R_{\vp}$-modules.

There is a free allowable $R_{\vp}$-module functor
$$D\colon \Mod_{\Fvp}(\GrVect^{\fC}_{\F})\to \LMod_{R_{\vp}}^{\text{allow}},$$
where $D(X)$ is the quotient of $R_{\vp}\biact X$ by the set consisting of the
elements of $$Q^I_g\tensor[_{\F}]{\otimes}{_{\F}} X(g)_m$$ when
$e(I)<m$.
 Note that $D$
factors
through the category of all left $R_{\vp}$-modules: 
$$\Mod_{\Fvp}(\GrVect^{\fC}_{\F})\xrightarrow{R_{\vp}\biact -} \LMod_{R_{\vp}}\to
\LMod_{R_{\vp}}^{\text{allow}},$$
where the second functor is given by taking the quotient defined above. It is
standard to see that $R_{\vp}\biact -$ is left adjoint to the forgetful
functor. As
we have seen before, the universal property of the quotient shows that the
quotient functor is left adjoint to the inclusion functor. Thus, $D$ is left
adjoint to the forgetful functor to $\Mod_{\Fvp}(\GrVect^{\fC})$.

In order to compare the functor $D$ to the basis-level constructions of the
previous section, it will be convenient to have an explicit description of a
basis for free allowable $R_{\vp}$-modules.
We refer the reader to Section \ref{section:admissible} for a reminder of the
definitions and notation of admissible sequences used in the following statement.

\begin{lem}\label{lem:Dbasis}
If $S$ is a basis for $X\in \Mod_{\Fvp}(\GrVect^{\fC})$, then 
$$\{Q^I_g\tensor[_{\F}]{\otimes}{_{\F}} x \mid x\in S(g), I \text{ admissible, }
e(I)\geq d_{\Z}(x)\}$$
is a basis for $D(X)$.
\end{lem}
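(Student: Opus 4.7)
The plan is to reduce the computation of a basis for $D(X)$ to understanding the structure of $R_\vp$ itself, and then to push this basis through the two successive left adjoints $X \mapsto R_\vp \biact X \mapsto D(X)$. Concretely, I would first prove the analogue of the Serre--Cartan basis theorem for the Dyer--Lashof algebra: the admissible monomials (including the empty sequence, giving the identity) form a basis for $R_\vp$ over $\bifvp$. Once this is in hand, applying $R_\vp \biact -$ to $\Fvp\{S\}$ and expanding via the coend definition of $\biact$ gives that $R_\vp \biact X$ has basis $\{Q^I_g \tensor[_\F]{\otimes}{_\F} x : I \text{ admissible}, x \in S(g)\}$, and then the definition of $D(X)$ as the further quotient killing the pairs with $e(I) < d_\Z(x)$ leaves exactly the basis claimed in the lemma.

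For the spanning part of the Serre--Cartan statement, given any word in the generators $\b^{\e_i} Q^{s_i}$, if some adjacent pair $(\b^{\e_{i-1}} Q^{s_{i-1}}, \b^{\e_i} Q^{s_i})$ violates the admissibility inequality $s_{i-1} \leq p s_i - \e_i$, then (checking cases on $\e_i$) the corresponding Adem relation from Theorem \ref{thm:upper index} applies and rewrites this pair as a linear combination of pairs with strictly smaller outer index $s_{i-1}'$. A standard well-founded induction on a lexicographic ordering of the index sequence, analogous to the classical case treated in \cite{Mayalg, IteratedLoops}, yields termination with a sum of admissible monomials.

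For linear independence I would appeal to Theorem \ref{thm:W}. Suppose $\sum_I c_I Q^I = 0$ is a finite relation in $R_\vp(g, h)$ among admissible $I$ of common length and total $\Z$-degree, with $c_I \in \F$. Choose an integer $n$ small enough that $e(I) + \e_1 > n$ for every $I$ appearing in the sum (possible since only finitely many $I$ are involved), and let $X = \S^{g,n}\Fvp$ with canonical class $\iota \in H_{g,n}(X)$. Applying the relation to $\iota$ yields $\sum_I c_I\, Q^I \iota = 0$ in $H_{*,*}(E_\infty(X))$. By Theorem \ref{thm:W}, the classes $\{Q^I \iota\}$ appearing are among the polynomial generators of the free commutative algebra $\W{\{\iota\}}{H_{*,*}(X)} \cong H_{*,*}(E_\infty(X))$, hence are linearly independent over $\F$; this forces every $c_I$ to vanish.

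Finally, that the quotient of $R_\vp \biact X$ by $\{Q^I_g \otimes x : I \text{ admissible}, e(I) < d_\Z(x)\}$ coincides with $D(X)$ is a direct translation of the allowability condition: a short computation with the definition of excess shows that $e(I) < d_\Z(x)$ holds if and only if the outermost operation $\b^{\e_1} Q^{s_1}$ in $Q^I_g x$ violates the inequality $2 s_1 - \e_1 \geq d_\Z(Q^{I_{\geq 2}} x)$, so these are precisely the elements that allowability forces to zero. The main obstacle is the linear-independence step, which depends essentially on Theorem \ref{thm:W}; without it, one would have no means of ruling out hidden relations in $R_\vp$ beyond the Adem relations already imposed.
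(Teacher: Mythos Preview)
Your overall strategy matches the paper's: establish that admissibles span via the Adem relations, then invoke Theorem~\ref{thm:W} for linear independence. The spanning argument is the same in spirit, and the paper carries it out with the nested induction you describe (its Claim~\ref{claim:admissible}).

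There is, however, a real gap in your passage from $R_\vp \biact X$ to $D(X)$. You say the quotient ``killing the pairs with $e(I) < d_\Z(x)$ leaves exactly the basis claimed,'' but $D(X)$ is a quotient in $\LMod_{R_\vp}$, not in $\Mod_{\Fvp}$: one kills the $R_\vp$-submodule \emph{generated} by these elements, not merely their $\Fvp$-span. You have not shown that this submodule coincides with the $\Fvp$-span of the admissible $Q^I \otimes x$ with $e(I) < d_\Z(x)$, so it does not follow that the complementary admissibles remain linearly independent in $D(X)$. Your independence argument for $R_\vp$ itself (choosing $n$ small enough) is correct and gives a nice basis theorem for $R_\vp$, but it does not settle this, because in $D(X)$ the degree $d_\Z(x)$ is fixed and cannot be chosen.

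The paper sidesteps this entirely. Rather than first proving independence in $R_\vp$ and then descending, it observes that $H_{*,*}(E_\infty(X))$ is an allowable $R_\vp$-module, so the canonical map $X \to H_{*,*}(E_\infty(X))$ extends to a map $D(X) \to H_{*,*}(E_\infty(X))$ of allowable modules. It then checks that the images of the claimed basis elements are linearly independent, using Theorem~\ref{thm:W}. This requires one extra wrinkle you have not addressed: when $\e_1 = 0$ and $e(I) = d_\Z(x)$, the image $Q^I(x)$ is not an element of $Q(S)$ but rather a $p$-power $(Q^{I_{\geq j}}(x))^{p^{j-1}}$ of one, by Lemma~\ref{lem:admprops}~\ref{list:powers}; one must verify these powers are distinct basis elements of the free commutative algebra. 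Once this is done, independence in $D(X)$ follows immediately, and your separate argument for $R_\vp$ becomes unnecessary.
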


\begin{proof}
First we show that $R_{\vp}$ is spanned by $\{Q^I_g\}$ where $I$ is admissible.
(Here we may write all linear combinations with constants on the left.) By
construction, $R_{\vp}$ is spanned by $\{Q^I_g\}$ for
\emph{all} $I$, so we show by induction on $\ell(I)$ that each such product may be written as a linear
combination of admissible compositions.

If $\ell(I)=0$ or 1, then there is nothing to prove. Suppose that we have shown
that $Q^I_g$ may be written as a linear combination of admissible compositions
when $1\leq \ell(I)\leq k-1$. To prove the statement for $\ell(I)=k$, we require
a further intermediate statement (see Lecture 6 of \cite{Lurie} for a similar
argument):
\begin{claim}\label{claim:admissible}
 Fix $m\in \Z$, and suppose $s_2\geq p^{k-2}
m+\e_2$ and $s_1\geq p^{k-1}m$. Then either $I$ is already admissible, or $Q^I_g$ may be
written as a linear combination of $Q^{I'}_g$, where $\ell(I')=k$, $I'$ is
admissible, $s_2'>p^{k-2}m+\e_2'$, and $s_1'>p^{k-1}m$.
\end{claim}

We prove Claim \ref{claim:admissible} by induction on
$s_1$.
First note that we may assume without loss of generality that $I_{\geq 2}$ is
admissible, since it has length $k-1$.
The base case for the second induction is when $s_1=p^{k-1}m$. Since $s_2\geq p^{k-2}m+\e_2$, we
have that $$ps_2-\e_2\geq p^{k-1}m+(p-1)\e_2\geq p^{k-1}m=s_1.$$ Thus, $I$ is
already admissible since $I_{\geq 2}$ is.

Now suppose $s_1>p^{k-1}m$. Once again, either $I$ is already admissible, in
which case there is nothing to prove, or $s_1>ps_2-\e_2$. In the latter case, we
apply an Adem relation. We split this into two cases.

First, if $\e_1=\e_2=1$ or $\e_2=0$, then the appropriate Adem relation expresses $\b^{\e_1}
Q^{s_1}_{\gpower{g}{p^{k-1}}}\b^{\e_2}Q^{s_2}_{\gpower{g}{p^{k-2}}}$ as a linear combination of
operations $\b^{\e_1} Q^{s_1+s_2-i}_{\gpower{g}{p^{k-1}}}\b^{\e_2}
Q^i_{\gpower{g}{p^{k-2}}},$ where
$i\leq s_1-s_2(p-1)+\e_2-1$ and $pi-\e_2\geq s_1$ (otherwise the binomial
coefficients are zero). Since $s_1>ps_2-\e_2$, the second inequality tells us
that $i>s_2$, and since $s_2\geq p^{k-2}m+\e_2$, we have that $i>p^{k-2}m+\e_2$.
The first inequality implies that
$$s_1+s_2-i\geq ps_2+1-\e_2\geq p^{k-1}m+(p-1)\e_2+1>p^{k-1}m.$$
Moreover, since $i>s_2$, $s_1+s_2-i<s_1$. Thus, $\b^{\e_1}
Q^{s_1+s_2-i}_{\gpower{g}{p^{k-1}}}\b^{\e_2} Q^i_{\gpower{g}{p^{k-2}}}
Q^{I_{\geq 3}}_g$ both satisfies the
assumptions of Claim \ref{claim:admissible} and has first index $s_1+s_2-i$ strictly smaller than $s_1$.
The induction hypothesis then proves that this composition may be rewritten as a sum
of admissible compositions.

Second, if $\e_1=0$ and $\e_2=1$, then the appropriate Adem relation expresses $
Q^{s_1}_{\gpower{g}{p^{k-1}}}\b Q^{s_2}_{\gpower{g}{p^{k-2}}}$ as a linear combination of terms of two
different compositions. The terms of the form
$Q^{s_1+s_2-i}_{\gpower{g}{p^{k-1}}} \b
Q^{i}_{\gpower{g}{p^{k-2}}}$ have nonzero coefficients only when $$i\leq
s_1-s_2(p-1)+\e_2-1$$ and
$pi-\e_2\geq s_1$, and the same argument as for when $\e_1=\e_2=1$ shows that
this term may be rewritten as a sum of admissibles. The other terms are of the form $\b
Q^{s_1+s_2-i}_{\gpower{g}{p^{k-1}}} Q^i_{\gpower{g}{p^{k-2}}}$ and are nonzero only when $i\leq
s_1-s_2(p-1)$ and $pi\geq s_1$. Since $s_1>ps_2-\e_2$, the latter inequality
implies that $pi>ps_2-1$, and hence $$i\geq s_2\geq p^{k-2}m +1>p^{k-2}m.$$
The former inequality implies that $$s_1+s_2-i\geq p s_2\geq
p^{k-1}m+p>p^{k-1}m.$$
Thus, $$\b
Q^{s_1+s_2-i}_{\gpower{g}{p^{k-1}}} Q^i_{\gpower{g}{p^{k-2}}} Q^{I_{\geq 3}}_g$$
satisfies the
conditions of Claim \ref{claim:admissible}, and the induction hypothesis may be applied to rewrite it as
a linear combination of admissibles when $s_1+s_2-i<s_1$. However, we must also consider the case when $i=s_2$, since this
forces $s_1+s_2-i=s_1$. This also forces $ps_2=pi=s_1$, though, so the
composition $\b Q^{s_1}_{\gpower{g}{p^{k-1}}} Q^{s_2}_{\gpower{g}{p^{k-2}}}
Q^{I_{\geq 3}}_g$ is
already admissible.

This proves Claim \ref{claim:admissible} for all $s_1\geq p^{k-1}m$. 
Since $m$ was arbitrary, and given that for any particular $I$, we may always
choose $m$ small enough so that $I$ satisfies the assumptions of Claim
\ref{claim:admissible}, this
completes the proof that $Q^I_g$ may be written as a linear combination of
admissible compositions.

Now note that $e(I)<n$ precisely when
$Q^I_g\tensor[_{\F}]{\otimes}{_{\F}} x$ is an element of the set by which we
quotient $R_{\vp}\biact X$ to obtain $D(X)$. Therefore, only those $I$
satisfying $e(I)\geq n$ are required to span $D(X)$.

Since $S$ is a basis for $X$, this proves that the canonical map
$$\Fvp\{Q^I_g \tensor[_{\F}]{\otimes}{_{\F}} x\mid x\in S(g),I \text{ admissible,}
\,e(I)\geq d_{\Z}(x)\}\to D(X)$$
is surjective.

By Theorem \ref{thm:upper index}, $H_{*,*}(E_{\infty}(X))$ has the structure of
an allowable $R_{\vp}$-module. Thus, the map $X\to E_{\infty}(X)$ in
$\GrVect^{\fC}_{\F}$ induces a map
$$D(X)\to H_{*,*}(E_{\infty}(X))$$
of allowable $R_{\vp}$-modules that sends $Q^I_g
\tensor[_{\F}]{\otimes}{_{\F}}x\mapsto Q^I_g(x)$. In Theorem \ref{thm:W}, we
gave a basis for $H_{*,*}(E_{\infty}(X))$ consisting of products of elements in
$Q(S)$;
if $I$ is admissible and $e(I)+\e_1>d_{\Z}(x)$ for $x\in S(g)$, then $Q^I_g(x)$
is one of these basis elements. If $\e_1=1$, then $e(I)+\e_1>d_{\Z}(x)$ is
equivalent to $e(I)\geq d_{\Z}(x)$. If $\e_1=0$, and $e(I)=d_{\Z}(x)$, then
Lemma \ref{lem:admprops} \ref{list:powers} implies that $Q^I_g(x)=(Q^{I_{\geq
j}}_g(x))^{p^{j-1}}$, where $e(I_{\geq j})+\e_j>d_{\Z}(x)$ and
$2s_{j-1}=d_{\Z}(Q^{I_{\geq j}}_g(x)).$ Thus, $Q^{I_{\geq j}}_g(x)\in Q(S)$, and
$$(-1)^{d_{\Z}(Q^{I_{\geq j}}_g(x))}\vp(\tr_{g\oplus
g}(\b_{g,g}))=(-1)^{2s_{j-1}}\vp(\tr_{g\oplus g}(\b_{g,g}))=1,$$
so $(Q^{I_{\geq
j}}_g(x))^{p^{j-1}}$ is also an element of the basis we constructed for
$H_{*,*}(E_{\infty}(X))$. Thus,
$$\{Q^I_g\tensor[_{\F}]{\otimes}{_{\F}} x \mid x\in S(g), I \text{ admissible},
\,e(I)\geq d_{\Z}(x)\}$$
must be linearly independent in $D(X)$, since its image in
$H_{*,*}(E_{\infty}(X))$ is linearly independent. 
\end{proof}

Now we will define a monoidal structure on the category
$\LMod_{R_{\vp}}^{\text{allow}}$.
Recall that $R_{\vp}$ was the quotient of the free monoid object
$T_{\F\text{-}\F}(V(Q))$
in $\bifvp$-bimodules. For any $X,Y\in \LMod_{R_{\vp}}^{\text{allow}}$, we
will define a left $R_{\vp}$-module structure on $X\dc Y$.
First we define a map
$$V(Q)\biact(X\dc Y)\to X\dc Y$$
using the external Cartan formulas.
We do this in two steps. If $f\colon h_1\oplus h_2\to g$ is a morphism in $\fG$,
then we define the composition
$$V(\b^{\e}Q^s_g)(g,\gpower{g}{p})\tensor[_{\F}]{\otimes}{_{\F}} (X(h_1)\otimes
Y(h_2))\to X(\gpower{h_1}{p})\otimes Y(\gpower{h_2}{p})\to (X\dc
Y)(\gpower{g}{p}),$$
where the first map sends
$$Q^s_{g}\tensor[_{\F}]{\otimes}{_{\F}} (x\otimes y)\mapsto \sum_{j+k=s}
(-1)^{2jk(p-1)}Q^j_{h_1}(x)\otimes Q^k_{h_2}(y)$$
if $\e=0$ and sends $\b Q^s_{g}\tensor[_{\F}]{\otimes}{_{\F}}(x\otimes y)$ to
\begin{align*}
 \sum_{j+k=s} (-1)^{2jk(p-1)} \left(\b
Q^j_{h_1}(x)\otimes Q^k_{h_2}(y)+(-1)^{d_{\Z}(x)} Q^j_{h_1}(x)\otimes \b
Q^k_{h_2}(y)\right).
\end{align*}
The second map applies the canonical map $X(\gpower{h_1}{p})\otimes
Y(\gpower{h_2}{p})\to (X\dc Y)(\gpower{h_1}{p}\oplus \gpower{h_2}{p})$ followed
by the inverse shuffle map $\gpower{h_1}{p}\oplus \gpower{h_2}{p}\to \gpower{(h_1\oplus
h_2)}{p}$ and finally $$\gpower{f}{p}\colon \gpower{(h_1\oplus h_2)}{p}\to
\gpower{g}{p}.$$
Note that these sums are finite since $X$ and $Y$ are allowable.
This map induces a map
$$V(\b^{\e}Q^s_g)(g,\gpower{g}{p})\tensor[_{\F}]{\otimes}{_{\F}} (X\dc Y)(g)\to
(X\dc Y)(\gpower{g}{p}).$$
Given $g'\cong g$ and $h\cong \gpower{g}{p}$, choose any morphisms $f_1\colon
g\to g'$ and $f_2\colon h\to \gpower{g}{p}$, and define the map
$$V(\b^{\e}Q^s_g)(g',h)\tensor[_{\F}]{\otimes}{_{\F}} (X\dc Y)(g') \to (X\dc
Y)(h)$$
by the diagram
$$
\begin{tikzcd}
V(\b^{\e}Q^s_g)(g',h)\tensor[_{\F}]{\otimes}{_{\F}} (X\dc Y)(g') \ar[r,dashed]
\ar{d}[swap]{(f_1,f_2)\otimes f_1^{-1}} & (X\dc Y)(h)\\
V(\b^{\e}Q^s_g)(g,\gpower{g}{p})\tensor[_{\F}]{\otimes}{_{\F}} (X\dc Y)(g)
\ar[r] & (X\dc Y)(\gpower{g}{p}) \ar{u}[swap]{f_2^{-1}}
\end{tikzcd}
$$
where the bottom arrow is the one we have just defined in terms of the Cartan
formulas. An elementary argument shows that this map is independent of the
choice of $f_1,f_2$, and, moreover, the collection of all such maps induces a
map $V(\b^{\e}Q^s_g)\biact (X\dc Y)\to X\dc Y$, the sum of which gives the
desired map
$$V(Q)\biact (X\dc Y)\to X\dc Y.$$
This then induces a $T_{\F\text{-}\F}(V(Q))$-algebra map
$$\mu\colon T_{\F\text{-}\F}(V(Q))\biact(X\dc Y)\to X\dc Y.$$

By our definition of this map, the action of a symbol
$\b^{\e}Q^s_g$ on $X\dc Y$ satisfies the external Cartan formulas, since the
inverse shuffle map cancels with the shuffle map. 

We claim that this map sends all Adem relations to 0, and thus defines a left
$R_{\vp}$-module structure on $X\dc Y$.

\begin{prop}\label{prop:prodallow}
For $X,Y\in\LMod_{R_{\vp}}^{\text{allow}}$, the map
$$\mu\colon T_{\F\text{-}\F}(V(Q))\biact(X\dc Y)\to X\dc Y$$
factors through a map
$$R_{\vp}\biact(X\dc Y)\to X\dc Y$$
that endows $X\dc Y$ with the structure of a left allowable $R_{\vp}$-module.
\end{prop}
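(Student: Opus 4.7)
The plan is to verify two separate statements: (a) that the map $\mu$ factors through the quotient $R_{\vp}$, i.e., every Adem relation acts as zero on $X \dc Y$; and (b) that the resulting left $R_{\vp}$-module structure is allowable. Statement (b) is a direct check using the Cartan formula defining $\mu$. Suppose $x \in X(g_1)_n$, $y \in Y(g_2)_m$, and $2s < n+m+\e$. The action of $\b^{\e}Q^s$ on $x \otimes y$ is a sum over $j+k = s$ of terms of the form $\b^{\e_1}Q^j(x) \otimes \b^{\e_2}Q^k(y)$ with $\e_1 + \e_2 \leq \e$. For any such $(j,k)$, if both $2j \geq n + \e_1$ and $2k \geq m + \e_2$ held, then $2s = 2j + 2k \geq n + m + \e$, a contradiction; hence at least one factor vanishes by the allowability of $X$ or $Y$, so each summand is zero.

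For (a), the key observation is that the Cartan-defined action is natural in $R_{\vp}$-module maps in each argument: if $f\colon X \to X'$ is a map of left $R_{\vp}$-modules, then $(f \dc \id_Y) \circ \mu = \mu \circ (\id \biact (f \dc \id_Y))$, because the Cartan formula is built entirely from the $R_{\vp}$-actions on $X$ and $Y$ together with the shuffle and braiding morphisms of $\fG$. For any allowable $R_{\vp}$-modules $X, Y$, the counits of the free-forgetful adjunction give surjections $D(X) \twoheadrightarrow X$ and $D(Y) \twoheadrightarrow Y$ of $R_{\vp}$-modules, inducing a surjection $D(X) \dc D(Y) \twoheadrightarrow X \dc Y$. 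By naturality, it suffices to verify that $\mu$ factors through $R_{\vp}$ when $X, Y$ are free allowable modules $D(U), D(V)$.

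For the free case, combine Theorem \ref{thm:W}, Lemma \ref{lem:Dbasis}, and Proposition \ref{prop:sum to prod} to obtain an injection of $R_{\vp}$-modules
$$D(U) \dc D(V) \hookrightarrow H_{*,*}(E_{\infty}(\Fvp\{U\})) \dc H_{*,*}(E_{\infty}(\Fvp\{V\})) \cong H_{*,*}(E_{\infty}(\Fvp\{U\} \oplus \Fvp\{V\})).$$
The right-hand side is the $\Fvp$-homology of an $E_{\infty}$-algebra, so its $R_{\vp}$-module structure is genuinely well-defined, and the Adem relations hold there by Theorem \ref{thm:upper index}. The external Cartan formula from the same theorem says that the middle term, viewed as a tensor product of homologies, carries the Cartan-defined action. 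Hence on the submodule $D(U) \dc D(V)$ the map $\mu$ coincides with the restriction of a well-defined $R_{\vp}$-action, so it factors through $R_{\vp}$.

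The main obstacle is that the isomorphism from Proposition \ref{prop:sum to prod} is stated only as an isomorphism in $\GrVect^{\fC}_{\F}$; one must check it intertwines the $R_{\vp}$-action on the right (coming from the $E_{\infty}$-structure on $\Fvp\{U\} \oplus \Fvp\{V\}$) with the product Cartan action on the left (coming from tensoring the two individual $E_{\infty}$-algebras). This should follow because the map in Proposition \ref{prop:sum to prod} is induced from a map of $E_{\infty}$-algebras, so naturality of the Dyer--Lashof operations in maps of $E_{\infty}$-algebras forces the induced map on homology to preserve the $R_{\vp}$-action; combined with the external Cartan formula, this identifies the two actions on the product.
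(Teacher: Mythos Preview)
Your proof is correct and follows essentially the same approach as the paper: reduce to free allowable modules, inject into the homology of a free $E_\infty$-algebra via Lemma~\ref{lem:Dbasis} and Theorem~\ref{thm:W}, and conclude that the Adem relations hold there by Theorem~\ref{thm:upper index}; the allowability check (b) is identical. The only minor difference is that the paper reduces directly to $D(\S^{g_1,n_1}\Fvp)$ and $D(\S^{g_2,n_2}\Fvp)$ by picking out a single element $x\otimes y$ via maps from one-generator free modules, whereas you reduce to $D(UX)\dc D(UY)$ via the counit surjections---both reductions work, and the paper's is marginally cleaner since the injectivity of $D(\S^{g,n}\Fvp)\hookrightarrow H_{*,*}(E_\infty(\S^{g,n}\Fvp))$ only needs to be checked for a single generator.
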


\begin{proof}
For ease of notation, we write $V$ for $V(Q)$.
We have an induced map
$$R_{\vp}\biact (X\dc Y)\to X\dc Y$$ if every Adem relation in
$T_{\F\text{-}\F}(V)$ acts by zero. 
Explicitly, if $u\in (V\bitimes V)(g,\gpower{(\gpower{g}{p})}{p})$ is an Adem
relation, then we need to check that
$$\mu(u\tensor[_{\F}]{\otimes}{_{\F}} (x\otimes y))=0$$
whenever $x\otimes y\in X(g_1)\otimes Y(g_2)$ with $g_1\oplus g_2\cong g$. Choosing such an
$x\otimes y$ is equivalent to specifying maps
$$\S^{g_1,n_1}\Fvp\to X \text{ and } \S^{g_2,n_2}\Fvp\to Y$$
in $\Mod_{\Fvp}$,
where $d_{\Z}(x)=n_1$, $d_{\Z}(y)=n_2$, or, equivalently, specifying maps
$$D(\S^{g_1,n_1}\Fvp)\to X \text{ and }D(\S^{g_2,n_2}\Fvp)\to Y$$
of allowable $R_{\vp}$-modules. Since $T_{\F\text{-}\F}(V)\biact-$ is a functor,
it therefore suffices to assume that $X=D(\S^{g_1,n_1}\Fvp)$ and
$Y=D(\S^{g_2,n_2}\Fvp)$.

By Theorem \ref{thm:upper index}, $H_{*,*}(E_{\infty}(\S^{g_i,n_i}\Fvp))$ is
an allowable $R_{\vp}$-module, and thus we have a map
$$D(\S^{g_i,n_i}\Fvp)\to H_{*,*}(E_{\infty}(\S^{g_i,n_i}\Fvp))$$
of allowable $R_{\vp}$-modules. Moreover, 
$$H_{*,*}(E_{\infty}(\S^{g_1,n_1}\Fvp))\dc H_{*,*}(E_{\infty}(\S^{g_2,n_2}\Fvp))$$
is also an allowable $R_{\vp}$-module whose $R_{\vp}$-action satisfies the
external Cartan formulas. Thus, writing
$$X=D(\S^{g_1,n_1}\Fvp) \text{ and }Y=D(\S^{g_2.n_2}\Fvp),$$
we have that the diagram
$$
\begin{tikzcd}[cramped, column sep=small]
T_{\F\text{-}\F}(V)\biact (X\dc Y) \arrow{d}\arrow{r} 
& R_{\vp}\biact(H_{*,*}(E_{\infty}(\S^{g_1,n_1}\Fvp))\dc
H_{*,*}(E_{\infty}(\S^{g_2,n_2}\Fvp))) \arrow{d}\\
X\dc Y \arrow{r} & 
H_{*,*}(E_{\infty}(\S^{g_1,n_1}\Fvp))\dc H_{*,*}(E_{\infty}(\S^{g_2,n_2}\Fvp))
\end{tikzcd}
$$
commutes. Here the top arrow is the composition of the quotient map
$T_{\F\text{-}\F}(V)\to R_{\vp}$ with the canonical maps from $X$ and $Y$ to the
free $E_{\infty}$-algebras.
We know that $u\tensor[_{\F}]{\otimes}{_{\F}} (x\otimes y)$ maps to 0 in the
lower-right corner
$$H_{*,*}(E_{\infty}(\S^{g_1,n_1}\Fvp))\dc H_{*,*}(E_{\infty}(\S^{g_2,n_2}\Fvp)).$$
Thus, in order to see that it maps to zero in $D(\S^{g_1,n_1}\Fvp)\dc
D(\S^{g_2,n_2}\Fvp)$, it suffices to show that the map
$$D(\S^{g_1,n_1}\Fvp)\dc D(\S^{g_2,n_2}\Fvp)\to H_{*,*}(E_{\infty}(\S^{g_1,n_1}\Fvp))\dc
H_{*,*}(E_{\infty}(\S^{g_2,n_2}\Fvp))$$
is injective. Since we are working over a field, this in turn is equivalent to
showing that each map
$$D(\S^{g_i,n_i}\Fvp)\to H_{*,*}(E_{\infty}(\S^{g_i,n_i}\Fvp))$$
is injective.

To see this, we use the description of a basis
for $D(\S^{g_i,n_i}\Fvp)$ from Lemma \ref{lem:Dbasis}.
Explicitly, if $x$ is a singleton set in bidegree $(g,n)$, so that $\Fvp\{x\}\cong
\S^{g,n}\Fvp$, we have that
$$\{Q^I_g\tensor[_{\F}]{\otimes}{_{\F}} x \mid I \text{ admissible},
e(I)\geq n\}$$
is a basis for $D(\S^{g,n}\Fvp)$. By Theorem \ref{thm:W}, we know that
$H_{*,*}(E_{\infty}(\S^{g,n}\Fvp))$ is isomorphic to the free commutative algebra on
$$Q(\{x\})=\{Q^I_g(x)\mid  I\text{ admissible}, e(I)+\e_1>n\}.$$ From
Lemma \ref{lem:free comm alg}, we know that the free commutative algebra on
$Q(\{x\})$ has a basis given by (monotone increasing under a total ordering)
products of elements of $Q(\{x\})$. In particular, there is an injection
$$\Fvp\{Q(\{x\})\}\hookrightarrow H_{*,*}(E_{\infty}(\S^{g,n}\Fvp)).$$
Note that 
$$\{Q^I_g(x)\mid I \text{ admissible}, e(I)\geq n\}=Q(\{x\})\sqcup
\{Q^I_g(x)\mid I \text{ admissible}, \e_1=0, e(I)=n\}.$$
Moreover, the argument of Lemma \ref{lem:SSpieces} shows that an admissible $I$ satisfies $\e_1=0$, $e(I)=n$ precisely when
$$Q^I_g(x)=(Q^{I_{\geq 2}}_g(x))^p$$ 
and either $Q^{I_{\geq 2}}_g(x)\in Q(\{x\})$ or $\e_2=0$, $e(I_{\geq 2})=n$.
That is, $Q^I_g(x)=(Q^{I_{\geq i}}_g(x))^{p^{i-1}}$, where $Q^{I_{\geq i}}_g(x)\in
Q(\{x\})$ (here we allow $I_{\geq i}$ to be empty, in which case $i$ is taken to
be $k+1$.) Hence, $D(\S^{g,n}\Fvp)$ has a basis of elements that are either in
$Q(\{x\})$ or are iterated powers of elements of $Q(\{x\})$, which means there
is an injection
$$D(\S^{g,n}\Fvp)\hookrightarrow \mathcal{S}(Q(\{x\}))\cong
H_{*,*}(E_{\infty}(\S^{g,n}\Fvp)).$$
 Since we can choose this map to be consistent with the
canonical map $$D(\S^{g,n}\Fvp)\to H_{*,*}(E_{\infty}(\S^{g,n}\Fvp)),$$ it follows that this
canonical map is an injection.

Hence, by the above argument, we have a map
$$R_{\vp}\biact(X\dc Y)\to X\dc Y$$
for any allowable $R_{\vp}$-modules $X$ and $Y$ making $X\dc Y$ into a left
$R_{\vp}$-module. Moreover, that $X\dc Y$ is allowable follows from the
Cartan formulas: if $x\in X(g)_n$ and $y\in Y(h)_m$, and $2s<n+m$, then for any
$j,k$ with $j+k=s$, we must have either $2j<n$ or $2k<m$, and hence
$Q^j_g(x)\otimes Q^k_h(y)=0$ by allowability of $X$ and $Y$. This shows that
$Q^s_{g\oplus h}$ acts on  $x\otimes y$ by 0 in the appropriate range. A similar
argument shows that $\b Q^s_{g\oplus h}$ acts by zero in the appropriate range.
\end{proof}

We can equip $\Fvp$ with the structure of an allowable $R_{\vp}$-module as well,
by defining a map
$$V(Q)\biact \Fvp\to \Fvp$$
that is zero except on $V(Q_{\1_{\fG}}^0)\biact \Fvp$,
where it is determined by sending $Q^0_{\1_{\fG}}\otimes 1\mapsto 1$. 
Since no Adem relations contain
repeated instances of $Q^0_{\1_{\fG}}$, it is easy to see that this induces a map
$$R_{\vp}\biact \Fvp\to \Fvp$$
giving $\Fvp$ a left $R_{\vp}$-module structure. By construction, $\Fvp$ is
allowable.

Thus, using the product of Proposition \ref{prop:prodallow} and $\Fvp$ as the monoidal unit,
we may equip $\LMod_{R_{\vp}}^{\text{allow}}$ with a monoidal structure, and we
claim that the symmetry in $\Mod_{\Fvp}(\GrVect^{\fG})$ induces a symmetry on
this category as well.

\begin{prop}
The symmetric monoidal structure on $\Mod_{\Fvp}(\GrVect^{\fG})$ endows $\LMod_{R_{\vp}}^{\text{allow}}$ with the structure of a
symmetric monoidal category when products are given the left $R_{\vp}$-module
structure of Proposition \ref{prop:prodallow}.
\end{prop}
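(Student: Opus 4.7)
The plan is to show that the associator, braiding, and unitor isomorphisms of the symmetric monoidal structure on $\Mod_{\Fvp}(\GrVect^{\fG})$ are morphisms of left $R_{\vp}$-modules when source and target are equipped with the allowable $R_{\vp}$-module structures of Proposition \ref{prop:prodallow}. Once this is established, the pentagon, hexagon, and triangle coherence diagrams commute automatically in $\LMod_{R_{\vp}}^{\text{allow}}$ because they already commute in $\Mod_{\Fvp}(\GrVect^{\fG})$, and the forgetful functor to the latter is faithful. The natural coherence conditions on the action map (e.g.\ that $X \dc Y \dc Z$ inherits a well-defined action independent of parenthesization) are subsumed by the same verification.

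First I would check $R_{\vp}$-linearity of the braiding $X \dc Y \to Y \dc X$. By the universal property of $D$ and the functoriality of the construction in Proposition \ref{prop:prodallow}, it suffices to check this when $X = D(\S^{g_1,n_1}\Fvp)$ and $Y = D(\S^{g_2,n_2}\Fvp)$. Composing with the injections $D(\S^{g_i,n_i}\Fvp) \hookrightarrow H_{*,*}(E_{\infty}(\S^{g_i,n_i}\Fvp))$ constructed in the proof of Proposition \ref{prop:prodallow}, and using that the K\"unneth map $H_{*,*}(A) \dc H_{*,*}(B) \to H_{*,*}(A \dc B)$ is an isomorphism (Proposition \ref{prop:Kunneth}), the question reduces to whether the braiding on $H_{*,*}(E_{\infty}(\S^{g_1,n_1}\Fvp) \dc E_{\infty}(\S^{g_2,n_2}\Fvp))$ commutes with the $R_{\vp}$-action there. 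But the monoidal structure on $\Alg_{E_{\infty}}(\Mod_{\Fvp})$ is symmetric with $\Fvp$ as unit, and every structural isomorphism is a map of $E_{\infty}$-algebras, hence induces a map of allowable $R_{\vp}$-modules on homology by naturality of the Dyer--Lashof operations (Theorem \ref{thm:upper index}).

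The same reduction handles associativity and the unitors: for associativity, one reduces to checking that the associator on a triple tensor product of the form $D(\S^{g_1,n_1}\Fvp) \dc D(\S^{g_2,n_2}\Fvp) \dc D(\S^{g_3,n_3}\Fvp)$ is $R_{\vp}$-linear by embedding in $H_{*,*}(E_{\infty}(\S^{g_1,n_1}\Fvp) \dc E_{\infty}(\S^{g_2,n_2}\Fvp) \dc E_{\infty}(\S^{g_3,n_3}\Fvp))$. For the unitor $X \dc \Fvp \to X$, one observes that $\Fvp \to X$ corresponds to the $E_{\infty}$-algebra unit map (see Equation \ref{eq:unit}), and the induced unitor at the level of $E_{\infty}$-algebras is a map of $E_{\infty}$-algebras, hence of allowable $R_{\vp}$-modules on homology; the claim for $D$-modules then follows by the injection argument.

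The main obstacle is ensuring that the reduction through free modules $D(\S^{g_i,n_i}\Fvp)$ is compatible with all three types of structural isomorphism simultaneously. Concretely, one must check that the canonical map
\[ D(\S^{g_1,n_1}\Fvp) \dc \cdots \dc D(\S^{g_k,n_k}\Fvp) \hookrightarrow H_{*,*}(E_{\infty}(\S^{g_1,n_1}\Fvp) \dc \cdots \dc E_{\infty}(\S^{g_k,n_k}\Fvp)) \]
is an injection of allowable $R_{\vp}$-modules, and that it is natural with respect to reparenthesization and permutation of factors. The injectivity is immediate since $D$ preserves $\dc$-products up to injection (by the basis description of Lemma \ref{lem:Dbasis} combined with Theorem \ref{thm:W} and Proposition \ref{prop:sum to prod}), and naturality is formal once one tracks that every structural isomorphism of $\Alg_{E_{\infty}}(\Mod_{\Fvp})$ reduces on underlying objects to the corresponding structural isomorphism of $\Mod_{\Fvp}(\GrVect^{\fG})$ after taking homology.
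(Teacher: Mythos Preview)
Your approach is correct but takes a genuinely different route from the paper. The paper gives a direct elementary verification: it checks that the braiding $X\dc Y\to Y\dc X$ is $R_{\vp}$-linear by explicitly comparing the sign $(-1)^{n_1n_2}$ from the symmetry in $\Ch_{\F}$ against the signs $(-1)^{(n_1+2j(p-1))(n_2+2(s-j)(p-1))}$ appearing in each Cartan term (these agree since $2j(p-1)$ and $2(s-j)(p-1)$ are even), and then invokes the coherence theorem for symmetric monoidal categories to identify the two morphisms $\gpower{(\b_{g_1,g_2}^{-1})}{p}\circ\psi^{-1}$ and $\psi^{-1}\circ\b^{-1}_{\gpower{g_1}{p},\gpower{g_2}{p}}$ in $\fG$. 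No reduction to free modules or embedding in $E_{\infty}$-homology is used.

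Your embedding argument, modeled on the proof of Proposition~\ref{prop:prodallow}, is more conceptual and avoids the sign bookkeeping; it also treats associators and unitors uniformly, which the paper leaves implicit. The tradeoff is that you rely on the monoidal structure on $\Alg_{E_{\infty}}(\Mod_{\Fvp})$ being \emph{symmetric}, whereas the paper only states and proves that it is monoidal (the proposition in Section~\ref{section:einfty} asserts a monoidal structure, not a symmetric one). This is true and standard---the braiding in $\Mod_{\Fvp}$ is easily seen to be a map of $E_{\infty}$-algebras because the coproduct on $\cC_{\infty}(k)$ is cocommutative---but you should either remark on why this holds or note that you are using a fact slightly beyond what the paper establishes.
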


\begin{proof}
It remains to show that the symmetry map $X\dc Y\to Y\dc X$ in $\Mod_{\Fvp}(\GrVect^{\fC})$
is a map of left $R_{\vp}$-modules. Thus, we need to check commutativity of the
diagram
\begin{equation}\label{diagram:lmodsym}
\begin{tikzcd}
R_{\vp}\biact (X\dc Y) \arrow{r}\arrow{d} & R_{\vp}\biact (Y\dc X)
\arrow{d}\\
X\dc Y \arrow{r} & Y\dc X.
\end{tikzcd}
\end{equation}
To see this, recall from Equation \ref{eq:braiding} that the symmetry on $X(g_1)_{n_1}\otimes
Y(g_2)_{n_2}$ is given by applying the symmetry in $\GrVect$, which swaps the
factors and introduces a sign, and acting by the
braiding morphism $\b^{-1}_{g_1,g_2}$. Thus, applying the symmetry first and
then acting by $R_{\vp}$ introduces a sign $(-1)^{n_1n_2}$ and acts by
$\gpower{(\b_{g_1,g_2}^{-1})}{p}\circ \psi^{-1}$ (where as usual $\psi$ is the
shuffle map). On the other hand, acting by $Q_{g}^s$ and \emph{then} applying
the symmetry introduces a sign $(-1)^{(n_1+2j(p-1))(n_2+2(s-j)(p-1))}$ on each
term of the sum in the Cartan formula and acts by $\psi^{-1}\circ
\b^{-1}_{\gpower{g_1}{p},\gpower{g_2}{p}}.$ Since $2j(p-1)$ and $2(s-j)(p-1)$
are both even, these signs agree. Moreover,
$\gpower{(\b_{g_1,g_2}^{-1})}{p}\circ \psi^{-1}$ and
$\psi^{-1}\circ\b^{-1}_{\gpower{g_1}{p},\gpower{g_2}{p}}$ are both morphisms
$$\gpower{g_2}{p}\oplus \gpower{g_1}{p}\to \gpower{(g_1\oplus g_2)}{p}$$
defined using braidings and associators and that preserve the order of the
factors of $g_1$ and $g_2$. Hence, by the coherence axioms for symmetric
monoidal categories, they agree. Similarly, one can check that the signs agree
when acting by $\b Q_g^s$, and thus, Diagram \ref{diagram:lmodsym} commutes.
\end{proof}

\begin{rem}
For the classical Dyer--Lashof algebra (or similarly for the Steenrod algebra),
one may define this monoidal structure by first defining a coproduct map $R_{\vp}\to
R_{\vp}\otimes R_{\vp}$ making $R_{\vp}$ into a Hopf object. However, this does not work in our
situation for several reasons.

First, we allow $\Z$-graded operations rather
than merely nonnegatively-graded operations, so the sum
$$\sum Q^rQ^{s-r}$$
will not be finite. Even if we considered only nonnegatively-graded operations,
if $\ob{\fG}$ is infinite, then
$$\sum_{h_1\oplus h_2\cong h} Q^r_{h_1}Q^{r-s}_{h_2}$$
will be infinite as well. Thus, we cannot obviously define such a coproduct. It
was therefore important to restrict our attention to \emph{allowable}
$R_{\vp}$-modules in order to use this formula.

Moreover, if $\F$ is not equal to its prime subfield, then we must use the above
construction with bimodules, and the category of $\F$-$\F$-bimodules is not
braided monoidal. Hence, we could not do the usual construction
$$R_{\vp}\otimes X\otimes Y\to R_{\vp}\otimes R_{\vp}\otimes X\otimes Y\to
R_{\vp}\otimes X\otimes
R_{\vp}\otimes Y\to X\otimes Y$$
even if we had defined a coproduct on $R_{\vp}$,
since doing so requires braiding with a bimodule.
\end{rem}

\subsection{Allowable Dyer--Lashof algebras}
Now we are ready to define allowable Dyer--Lashof algebras, and we will show
that $H_{*,*}(E_{\infty}(X))$ is isomorphic to the free allowable Dyer--Lashof
algebra on $H_{*,*}(X)$.

Using the monoidal structure on $\LMod_{R_{\vp}}^{\text{allow}}$, the usual tensor algebra
construction defines a left adjoint $T$ to the forgetful functor
$$\LMod_{R_{\vp}}^{\text{allow}}\mathrel{\mathop{\rightleftarrows}^{T}}
\Mon(\LMod_{R_{\vp}}^{\text{allow}})$$
from the category of monoid objects in $\LMod_{R_{\vp}}^{\text{allow}}$.
 As usual, a quotient construction provides a left adjoint
$$\Mon(\LMod_{R_{\vp}}^{\text{allow}})\rightleftarrows
\CMon(\LMod_{R_{\vp}}^{\text{allow}})$$
to the forgetful functor from the category of commutative monoid objects in
$\LMod_{R_{\vp}}^{\text{allow}}$.

\begin{defn}
An \emph{allowable $R_{\vp}$-algebra} is a commutative monoid object $X$ in
$\LMod_{R_{\vp}}^{\text{allow}}$ with structure maps
$$\rho\colon R_{\vp}\biact X\to X \text{ in } \Mod_{\Fvp}(\GrVect^{\fC})$$ and
$$\mu\colon X\dc X\to X \text{ in } \LMod_{R_{\vp}}^{\text{allow}}$$ such that
if $x\in X(g)_m$ with $\vp(\tr_{g\oplus g}(\b_{g,g}))(-1)^m=1$, then when $2s=m$,
$$\rho(Q^s_g\otimes x)=x^{\otimes p},$$
where $x^{\otimes p}=\mu(x,\mu(x,\dots,\mu(x,x)))$ is the $p$-fold iterated
product.
\end{defn}
The allowable $R_{\vp}$-algebras form a full subcategory
$\Alg_{R_{\vp}}^{\text{allow}}$ of $\CMon(\LMod_{R_{\vp}}^{\text{allow}})$.

\begin{rem}
Note that by definition of the $R_{\vp}$ action on $X\dc X$ using the Cartan formulas, we
automatically have that the Cartan formulas hold in $X$. Moreover, the
assumption that $\vp(\tr_{g\oplus g}(\b_{g,g}))(-1)^m=1$ guarantees that
$Q^s_g\in Q$ if $2s=m$.
From the definition of the $R_{\vp}$-module structure on the monoidal unit
$\Fvp$, we automatically have that $Q^s_{\1_{\fG}}\otimes 1\mapsto 0$ when $s\not=0$.
\end{rem}

We have defined the category $\Alg_{R_{\vp}}^{\text{allow}}$ precisely so that
the operations on the homology of an $E_{\infty}$-algebra define an allowable
$R_{\vp}$-algebra structure. It is a direct consequence of Theorem
\ref{thm:upper index} that this is indeed true:

\begin{cor}
If $X\in \Alg_{E_{\infty}}(\Mod_{\Fvp})$, then $H_{*,*}(X)$ inherits the
structure of an allowable $R_{\vp}$-algebra.
\end{cor}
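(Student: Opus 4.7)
The plan is to verify each piece of the allowable $R_\vp$-algebra structure on $H_{*,*}(X)$ directly from the relations established in Theorem \ref{thm:upper index} and Proposition \ref{prop:product}. First, I would construct the left $R_\vp$-module structure. The symbols $\b^\e Q^s_g$ act on $H_{*,*}(X)$ via the natural transformations of Theorem \ref{thm:upper index}, which Frobenius linearity (statement \ref{thm:upper index}.2) makes compatible with the $\F$-$\F$-bimodule structure used to define $V(Q)$: the right action by a scalar $\a \in \F$ corresponds to $Q^s_g(\a x) = \a^p Q^s_g(x)$, which is precisely the relation built into the definition of $V(\b^\e Q^s_g)$. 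This assembles into a map $V(Q)\biact H_{*,*}(X) \to H_{*,*}(X)$, which extends freely to a map from $T_{\F\text{-}\F}(V(Q))\biact H_{*,*}(X)$. The Adem relations in $R_\vp$ are exactly those of Theorem \ref{thm:upper index}, so the action descends to a map $R_\vp \biact H_{*,*}(X) \to H_{*,*}(X)$ making $H_{*,*}(X)$ a left $R_\vp$-module.

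Next I would verify allowability: Theorem \ref{thm:upper index} states that $Q^s_g = 0$ on $H_{g,m}$ when $2s < m$ and $\b Q^s_g = 0$ when $2s \leq m$, which is exactly the condition that the action of $V(\b^\e Q^s_g)\biact H_{*,*}(X)_m$ vanishes when $2s < m + \e$. For the commutative monoid structure, I would take the product defined in Section \ref{section:product}; Proposition \ref{prop:product} shows that under the associator and braiding of $\fG$ it behaves associatively and commutatively with the appropriate Koszul signs, which (in combination with the description of the braiding on Day convolution from Equation \ref{eq:braiding}) is exactly what is needed for commutativity in $\Mod_{\Fvp}(\GrVect^{\fC})$. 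Proposition \ref{prop:unit} supplies the unit $\mathbbm{1} \in H_{\1_\fG,0}(X)$.

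The key compatibility to check is that the product $H_{*,*}(X) \dc H_{*,*}(X) \to H_{*,*}(X)$ is a map of left $R_\vp$-modules with respect to the product $R_\vp$-module structure on $H_{*,*}(X) \dc H_{*,*}(X)$ constructed in Proposition \ref{prop:prodallow}. By construction of that product structure, this is equivalent to the internal Cartan formulas, which are statement 7 of Theorem \ref{thm:upper index} after identifying the shuffle map $\gpower{(g \oplus h)}{p} \to \gpower{g}{p} \oplus \gpower{h}{p}$ with the one used in Proposition \ref{prop:prodallow}. Finally, statement \ref{item:power} of Theorem \ref{thm:upper index} gives $Q^s_g(x) = x^p$ whenever $2s = m$ and $x \in H_{g,m}(X)$; as noted in the remarks following Theorem \ref{thm:upper index}, this case only occurs when $\vp(\tr_{g \oplus g}(\b_{g,g}))(-1)^m = 1$, which is exactly the hypothesis in the definition of allowable $R_\vp$-algebra.

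The main obstacle is really bookkeeping rather than substance: one must match up the sign conventions and shuffle maps between Theorem \ref{thm:upper index}, Proposition \ref{prop:product}, and the definition of the $R_\vp$-action on $X \dc Y$ in Proposition \ref{prop:prodallow}, verifying that all the signs $(-1)^{2jk(p-1)}$ and braiding morphisms line up so that the Cartan formulas of Theorem \ref{thm:upper index} say exactly that the product is an $R_\vp$-module map. Since all these ingredients have already been assembled, the corollary follows essentially by unwinding definitions.
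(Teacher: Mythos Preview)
Your proposal is correct and follows exactly the approach the paper intends. The paper gives no proof beyond the remark that the category $\Alg_{R_\vp}^{\text{allow}}$ was defined precisely so that the corollary is a direct consequence of Theorem~\ref{thm:upper index}; your write-up simply spells out that verification in detail.
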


We define a free allowable $R_{\vp}$-algebra functor
$$\newW\colon \Mod_{\Fvp}(\GrVect^{\fC}_{\F})\to
\Alg_{R_{\vp}}^{\text{allow}}$$
 that factors as 
$$\Mod_{\Fvp}(\GrVect^{\fC})\xrightarrow{D} \LMod_{R_{\vp}}^{\text{allow}}\to
\CMon(\LMod_{R_{\vp}}^{\text{allow}})\to \Alg_{R_{\vp}}^{\text{allow}},$$
where we have shown that the first two functors are left adjoints, and we still
need to define the last functor.
To obtain an allowable $R_{\vp}$-algebra from an object $X$ in
$\CMon(\LMod_{R_{\vp}}^{\text{allow}})$ with $R_{\vp}$ action map $\rho\colon
R_{\vp}\biact
X\to X$, we take the quotient of $X$ by the set containing all elements of
$$\rho\left(Q^{\tfrac{m}{2}}_g \otimes x\right)- x^{p}$$ for $x\in X(g)_m$ with
$\vp(\tr_{g\oplus g}(\b_{g,g}))(-1)^m=1.$
Once again the universal property of the quotient proves that this quotient
functor is left adjoint to the inclusion functor. Thus, $\newW$ is defined as a
composition of left adjoints and is therefore itself left adjoint to the
forgetful functor.

In fact, we claim $\newW$ is simply a reformulation of
the construction $\W{S}{H_{*,*}(X)}$ from Section \ref{subsect:basis}; however,
$\newW$ comes
equipped with an action of the Dyer--Lashof operations, whereas
$\W{S}{H_{*,*}(X)}$ merely
has a product. We will now give a comparison of $\W{S}{H_{*,*}(X)}$ and
$\newW(H_{*,*}(X))$ in
$\Alg_{\Fvp}(\GrVect^{\fC})$.

Recall that $\W{S}{H_{*,*}(X)}$ is the free commutative algebra on a set $Q(S)$. Thus, in
order to construct a map $\W{S}{H_{*,*}(X)}\to \newW(H_{*,*}(X))$ in $\Alg_{\Fvp}$,
it suffices to construct a map
$$Q(S)\to \newW(H_{*,*}(X))$$ of graded sets. There is a map of sets
$Q(S)\to R_{\vp}\biact H_{*,*}(X)$ sending $Q^I_g x\mapsto Q^I_g\otimes x$. Thus,
we have a map
$$Q(S)\to R_{\vp}\biact H_{*,*}(X)\to D(H_{*,*}(X))\to T(D(H_{*,*}(X)))\to
\newW(H_{*,*}(X)),$$
where $R_{\vp}\biact -\to D(-)$ and $T(D(-))\to \newW(-)$ are quotient maps.
In other words, the map $R_{\vp}\biact H_{*,*}(X)\to \newW(H_{*,*}(X))$ in $\LMod_{R_{\vp}}$ is adjoint to the
identity map $\newW(H_{*,*}(X))\to \newW(H_{*,*}(X))$ in
$\Alg_{R_{\vp}}^{\text{allow}}$.
This induces a map
$$\W{S}{H_{*,*}(X)}\to \newW(H_{*,*}(X))$$
of algebras.

\begin{prop}\label{prop:same}
Let $X\in \Mod_{\Fvp}(\Ch^{\fC})$, and let $S$ be a basis for $H_{*,*}(X)$.
Then the map $$\W{S}{H_{*,*}(X)}\to \newW(H_{*,*}(X))$$
 in $\Alg_{\Fvp}(\GrVect^{\fC})$ is an isomorphism.
\end{prop}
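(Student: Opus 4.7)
The plan is to factor the given map through $H_{*,*}(E_{\infty}(X))$ and exploit Theorem \ref{thm:W}. Specifically, applying the free/forgetful adjunction for $\newW$ to the canonical map $H_{*,*}(X)\to H_{*,*}(E_{\infty}(X))$ (and using that $H_{*,*}(E_{\infty}(X))$ is an allowable $R_{\vp}$-algebra by Theorem \ref{thm:upper index}) yields a map $\newW(H_{*,*}(X))\to H_{*,*}(E_{\infty}(X))$ of allowable $R_{\vp}$-algebras. I would first check — directly from the construction of the comparison map and unpacking the Yoneda correspondence used to define $\Psi_X$ — that the composite
$$\W{S}{H_{*,*}(X)}\to \newW(H_{*,*}(X))\to H_{*,*}(E_{\infty}(X))$$
agrees with $\Psi_X$. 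Since $\Psi_X$ is an isomorphism by Theorem \ref{thm:W}, this immediately gives injectivity of $\W{S}{H_{*,*}(X)}\to \newW(H_{*,*}(X))$, and reduces the proposition to showing the first arrow is surjective.

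For surjectivity, I would argue that the image of $Q(S)$ generates $\newW(H_{*,*}(X))$ as a commutative $\Fvp$-algebra. By construction, $\newW(H_{*,*}(X))$ is a commutative algebra quotient of the free commutative algebra on the underlying object of $D(H_{*,*}(X))$. By Lemma \ref{lem:Dbasis}, $D(H_{*,*}(X))$ has basis $\{Q^I_g\tensor[_{\F}]{\otimes}{_{\F}} x : x\in S(g), I\text{ admissible},\, e(I)\ge d_{\Z}(x)\}$. Partition this basis according to whether $e(I)+\e_1>d_{\Z}(x)$ (i.e. $Q^I_g x\in Q(S)$) or $\e_1=0$ and $e(I)=d_{\Z}(x)$. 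For the second class, Lemma \ref{lem:admprops}(\ref{list:powers}) gives the factorization
$$I = (0,p^{i-1}s_i,0,p^{i-2}s_i,\dots,0,ps_i,0,s_i,\e_{i+1},s_{i+1},\dots,\e_k,s_k,g),$$
with $2s_i = d_{\Z}(Q^{I_{\geq i+1}}_g x)$ and $Q^{I_{\geq i+1}}_g x\in Q(S)$. Using the defining relation $Q^{d_{\Z}(y)/2}_h y = y^p$ of $\newW$ (applicable in the required bidegrees, since the parity constraint built into admissibility ensures $\vp(\tr_{h\oplus h}(\b_{h,h}))(-1)^{d_{\Z}(y)}=1$ at each stage), I would induct on $i$ to write $Q^I_g x = (Q^{I_{\geq i+1}}_g x)^{p^{i}}$ in $\newW(H_{*,*}(X))$. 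This shows every basis element of $D(H_{*,*}(X))$ is a product of elements in the image of $Q(S)$, so the generators of $\newW$ lie in the image of $\W{S}{H_{*,*}(X)}$, yielding surjectivity.

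The main obstacle is the surjectivity step — specifically, verifying that the iterated application of the $y^p = Q^{d_{\Z}(y)/2} y$ relation in $\newW$ reproduces exactly those basis elements of $D(H_{*,*}(X))$ that are missing from $Q(S)$. This requires a careful degree-bookkeeping check at each stage of the induction: one must confirm that the intermediate elements $Q^{I_{\geq j}}_g x$ genuinely satisfy the hypothesis of the $p$-th power relation (in particular that $2s_{j-1}$ equals $d_{\Z}(Q^{I_{\geq j}}_g x)$ and that the twist parity condition holds), which follows from the explicit form of $I$ provided by Lemma \ref{lem:admprops}(\ref{list:powers}) together with the observation that admissibility forces integer or half-integer indices in a way compatible with the braiding sign. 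Once this is in place, combining injectivity and surjectivity gives the desired isomorphism in $\Alg_{\Fvp}(\GrVect^{\fC})$.
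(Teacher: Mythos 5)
Your proposal is correct and takes essentially the same approach as the paper: injectivity via factoring $\Psi_X$ through $\newW(H_{*,*}(X))$ and invoking Theorem \ref{thm:W}, then surjectivity by partitioning the Lemma \ref{lem:Dbasis} basis of $D(H_{*,*}(X))$ into $Q(S)$ and its complement, using Lemma \ref{lem:admprops}(\ref{list:powers}) together with the $p$-th power relation in $\newW$ to absorb the complementary elements. The parity/twist bookkeeping you flag as the main obstacle is indeed handled exactly as you indicate — admissibility condition (i) forces $s_i$ to be integral or half-integral according to the sign of $\vp(\tr_{g\oplus g}(\b_{g,g}))$, so $(-1)^{2s_{j-1}}\vp(\tr_{g\oplus g}(\b_{g,g}))=1$ at each stage.
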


Before proving this, we first show that this map factors the isomorphism
$$\W{S}{H_{*,*}(X)}\to H_{*,*}(E_{\infty}(X)).$$ 
The usual map
$$H_{*,*}(X)\to H_{*,*}(E_{\infty}(X))$$
in $\GrVect^{\fC}$ induces a map
$$\newW(H_{*,*}(X))\to H_{*,*}(E_{\infty}(X))$$
of allowable $R_{\vp}$-algebras that is natural in $X$. As the
isomorphism $\W{S}{H_{*,*}(X)}\to H_{*,*}(E_{\infty}(X))$ was defined using the map
$$S\to H_{*,*}(X)\to H_{*,*}(E_{\infty}(X)),$$
the maps $Q(S)\to H_{*,*}(E_{\infty}(X))$ and $Q(S)\to
\newW(H_{*,*}(X))\to H_{*,*}(E_{\infty}(X))$ agree. Since $\W{S}{H_{*,*}(X)}$ is the
free commutative algebra on $Q(S)$, it therefore follows that
the diagram
$$
\xymatrix{
\W{S}{H_{*,*}(X)} \ar[r]\ar[d]  & H_{*,*}(E_{\infty}(X))\\
\newW(H_{*,*}(X))\ar[ur] &
}
$$
commutes, where the diagonal arrow is considered a map in
$\Alg_{\Fvp}(\GrVect^{\fC})$ after forgetting the $R_{\vp}$-module structure.

\begin{proof}
Since the map $\W{S}{H_{*,*}(X)}\to H_{*,*}(E_{\infty}(X))$ is an isomorphism by Theorem
\ref{thm:W}, the map $\W{S}{H_{*,*}(X)}\to \newW(H_{*,*}(X))$ must be injective.

Now we show that this map is surjective as well.
From Lemma \ref{lem:Dbasis}, we know that 
$$Q'=\{Q^I_g\otimes x\mid I \text{ admissible},\, x\in S(g)_n,\,e(I)\geq n\}$$
is a basis for $D(H_{*,*}(X))$. 
Moreover,  we have that
$$Q'\cong Q(S)\sqcup \{Q^I_g\otimes x\mid I \text{ admissible},\, x\in S(g)_n,
\,\e_1=0,\,e(I)=n\}.$$
Thus, the underlying commutative algebra of $\newW(H_{*,*}(X))$ is a
quotient of the free commutative algebra $\mathcal{S}(Q')$ on $Q'$.
Choose a total ordering on $Q'$ so that a basis of $\mathcal{S}(Q')$ is given by
$\{q_1\cdots q_j\}$, where $q_i\in Q'$, $q_i\leq q_{i+1}$, and $q_i=q_{i+1}$ is
only allowed when $q_i$ has bidegree $(g,n)$ for $\vp(\tr_{g\oplus g}(\b_{g,g}))(-1)^n=1$. Here $j$
is any nonnegative integer, and $j=0$ corresponds to the empty product.

Restricting this total ordering to the subset $Q(S)$ of $Q'$, we obtain a basis
for $\W{S}{H_{*,*}(X)}$ of elements $\{q_1\cdots q_j\}$ as above, except that we require
$q_i\in Q(S)$. Thus, the obvious map $\W{S}{H_{*,*}(X)}\to \mathcal{S}(Q')$ is an
injection. 
The cokernel of this map has a basis of elements $\{q_1\cdots q_j\}$ as above,
but where at least one $q_i\in Q'\setminus Q(S)$. In other words,
$q_i=Q^I_g\otimes x$, where $\e_1=0$ and $e(I)=d_{\Z}(x)$.
By Lemma \ref{lem:admprops} \ref{list:powers}, the condition $\e_1=0$, $e(I)=n$ is equivalent to having that the image of
$Q^I_g\otimes x$ in $\newW(H_{*,*}(X))$ is $(Q^{I_{\geq i}}_g\otimes
x)^{p^{i-1}}$, where $Q^{I_{\geq i}}_g\otimes x\in Q(S)$ and $2\leq i\leq k+1$
is uniquely determined. Since $(Q^{I_{\geq i}}_g\otimes x)^{p^{i-1}}$ for
$Q^{I_{\geq i}}_g\otimes x\in Q(S)$ is in the
image of the composition
$$\W{S}{H_{*,*}(X)}\to \mathcal{S}(Q')\to \newW(H_{*,*}(X)),$$
it follows that this composition is surjective (since the latter map is
surjective, and every element in the cokernel of the first map is mapped to
something in the span of $\W{S}{H_{*,*}(X)}$).

\end{proof}

Putting this together with the result of Theorem \ref{thm:W}, we have that the
free functor $\newW$ describes the homology of free $E_{\infty}$-algebras.
That is, the homology of a free $E_{\infty}$-algebra is isomorphic to a free allowable
$R_{\vp}$-algebra.

\begin{cor}\label{cor:free}
There is a natural isomorphism
$$\newW(H_{*,*}(-))\to H_{*,*}(E_{\infty}(-))$$
of functors from $\Mod_{\Fvp}(\Ch^{\fC})$ to $\Alg_{R_{\vp}}^{\text{allow}}$.
\end{cor}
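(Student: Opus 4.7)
The plan is to build the natural transformation by adjunction and then deduce that it is an isomorphism by combining Theorem \ref{thm:W} with Proposition \ref{prop:same}; the main content is verifying that the basis-free construction $\newW$ recovers the same object that $\W{S}{-}$ does after choosing a basis, and that the comparison is natural.

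First, I would define the natural transformation. By Theorem \ref{thm:upper index}, $H_{*,*}(E_{\infty}(X))$ carries a canonical allowable $R_{\vp}$-algebra structure, and the Dyer--Lashof operations, the product, and the unit are natural in maps of $E_{\infty}$-algebras; hence $H_{*,*}(E_{\infty}(-))$ is a functor $\Mod_{\Fvp}(\Ch^{\fC})\to\Alg^{\text{allow}}_{R_{\vp}}$. The unit $X\to E_{\infty}(X)$ of the free-forgetful adjunction induces a natural map $H_{*,*}(X)\to H_{*,*}(E_{\infty}(X))$ in $\Mod_{\Fvp}(\GrVect^{\fC})$, which by the adjunction $\newW\dashv U$ produced just before the statement corresponds to a natural map $$\eta_X\colon \newW(H_{*,*}(X))\to H_{*,*}(E_{\infty}(X))$$ in $\Alg^{\text{allow}}_{R_{\vp}}$.

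Next, I would show that $\eta_X$ is an isomorphism on underlying objects of $\Alg_{\Fvp}(\GrVect^{\fC})$. Choose any basis $S$ of $H_{*,*}(X)$. Applying the forgetful functor $\Alg^{\text{allow}}_{R_{\vp}}\to\Alg_{\Fvp}(\GrVect^{\fC})$ and using the commutative triangle recorded just before the proof of Proposition \ref{prop:same}, one has a factorization
$$\W{S}{H_{*,*}(X)}\xrightarrow{\alpha}\newW(H_{*,*}(X))\xrightarrow{\eta_X} H_{*,*}(E_{\infty}(X))$$
whose composition is precisely the isomorphism $\Psi_X$ of Theorem \ref{thm:W}. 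Proposition \ref{prop:same} identifies $\alpha$ as an isomorphism, so $\eta_X$ is an isomorphism of $\Fvp$-algebras. Since the forgetful functor $\Alg^{\text{allow}}_{R_{\vp}}\to\Alg_{\Fvp}(\GrVect^{\fC})$ is conservative (it is faithful and reflects isomorphisms, because the $R_{\vp}$-action is a morphism in $\Mod_{\Fvp}(\GrVect^{\fC})$ and an inverse in $\Alg_{\Fvp}$ automatically intertwines the $R_{\vp}$-actions of the source and target), $\eta_X$ is in fact an isomorphism of allowable $R_{\vp}$-algebras.

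Finally, I would record naturality. Given a morphism $f\colon X\to Y$ in $\Mod_{\Fvp}(\Ch^{\fC})$, the naturality of the unit $X\to E_{\infty}(X)$ together with functoriality of $H_{*,*}(E_{\infty}(-))$ shows that the diagram commuting up to the adjunction defining $\eta$ is natural in $X$; hence $\eta_Y\circ\newW(H_{*,*}(f))=H_{*,*}(E_{\infty}(f))\circ\eta_X$. The only real subtlety, and the main thing to be careful about, is that $\alpha$ depended on the choice of basis $S$, so naturality cannot be proven basis-by-basis; this is why the adjunction definition of $\eta_X$, rather than the composite $\Psi_X\circ\alpha^{-1}$, is used in the first step, making naturality automatic from the naturality of the unit and of $H_{*,*}(-)$.
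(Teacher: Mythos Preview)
Your proposal is correct and follows essentially the same route as the paper: the natural transformation is defined via the adjunction $\newW\dashv U$ applied to the canonical inclusion $H_{*,*}(X)\to H_{*,*}(E_{\infty}(X))$ (which the paper records just before Proposition~\ref{prop:same}), and the fact that it is an isomorphism is read off from the commuting triangle combining Theorem~\ref{thm:W} with Proposition~\ref{prop:same}. The paper's argument is terser---it simply says ``putting this together''---but your extra remarks on conservativity of the forgetful functor and on why naturality must be argued via the adjunction rather than via $\Psi_X\circ\alpha^{-1}$ are accurate and worth making explicit.
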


\appendix
\section{A spectral sequence comparison theorem}\label{app:SScomp}
The following is an analogue of the usual spectral sequence comparison theorem
of Zeeman
\cite{Zeeman}. Here we work with spectral sequences in the abelian category
$(\Vect_{\F}^{\fC})^{\N}$ of $\N$-graded objects of $\Vect^{\fC}_{\F}$. The $\N$
grading here corresponds to the \emph{charge} grading in the rest of the paper,
so we always write
$$\arity{F}{k}$$
for the summand of $F$ with grading $k$.

\begin{thm}\label{thm:sscomp}
Suppose $\tilde{E}$ and $E$ are spectral sequences in 
$(\Vect_{\F}^{\fC})^{\N}$ such that
$\arity{\tilde{E}^r_{p,q}}{k}=\arity{E^r_{p,q}}{k}=0$ if $p<0$ or $p>k$.
Moreover suppose that
$$\arity{E^1_{p,q}}{k}\cong \bigoplus_t \arity{E^1_{p,t}}{p}\dc
\arity{E^1_{0,q-t}}{k-p}$$
and
$$\arity{\tilde{E}^1_{p,q}}{k}\cong \bigoplus_t \arity{\tilde{E}^1_{p,t}}{p}\dc
\arity{\tilde{E}^1_{0,q-t}}{k-p}.$$
Let $\Psi\colon \tilde{E}\to E$ be a map of spectral sequences, and write
$\arity{\Psi^r_{p,q}}{k}$ to denote the corresponding map
$$\arity{\tilde{E}^r_{p,q}}{k}\to \arity{E^r_{p,q}}{k}.$$ If
\begin{itemize}
\item $\arity{\Psi^{\infty}_{p,q}}{k}$ is an isomorphism for all $p,q$;
\item $\arity{\Psi^1_{p,q}}{\ell}$ is an isomorphism for all $p,q$ if $\ell<k$;
\item $\arity{\Psi^1_{k,q}}{k}$ is an isomorphism for $q\leq M$ and a surjection for
$q=M+1$,
\end{itemize}
then
$$\arity{\Psi^1_{0,q}}{k}\colon \arity{\tilde{E}^1_{0,q}}{k}\to \arity{E^1_{0,q}}{k}$$
is an isomorphism for $q\leq M+k-1$ and a surjection for $q=M+k$.
\end{thm}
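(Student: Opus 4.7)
The plan is to establish the result via a combination of a Künneth-type reduction in the middle columns and a Zeeman-style descending induction along the spectral sequence pages, exploiting the ``first quadrant'' shape $0 \le p \le k$ of the charge $k$ piece.

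First I reduce to the edge columns $p = 0$ and $p = k$. The tensor decomposition
\[
\arity{E^1_{p,q}}{k} \cong \bigoplus_t \arity{E^1_{p,t}}{p} \dc \arity{E^1_{0,q-t}}{k-p}
\]
together with the hypothesis that $\arity{\Psi^1}{\ell}$ is an iso for $\ell < k$ immediately gives that $\arity{\Psi^1_{p,q}}{k}$ is already an isomorphism for all $0 < p < k$ and all $q$, since in each summand both tensor factors live in charges strictly smaller than $k$.

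Next I promote this to all later pages. Because the top column $p = k$ at charge $k$ admits no incoming differentials, the page $\arity{E^{s+1}_{k, q'}}{k}$ is simply the kernel of the outgoing differential $d^s$, whose target sits in a middle column. An ascending induction on $s$, using the kernel five-lemma and a parallel tracking of ranges for the middle columns, shows that $\arity{\Psi^r_{k, q'}}{k}$ remains an iso for $q' \le M$ and a surjection for $q' = M + 1$ for every $r \ge 1$. The same sort of ascending induction on $s$ shows that $\arity{\Psi^r_{p, q'}}{k}$ is an isomorphism in the relevant range of $q'$ for each middle column $0 < p < k$.

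The central argument is then a descending induction on $r$ from $r = k + 1$ down to $r = 1$ for the bottom column $p = 0$. The base case is immediate: the charge $k$ piece degenerates at $E^{k+1}$, so $\arity{E^{k+1}_{0, q}}{k} = \arity{E^{\infty}_{0, q}}{k}$, and $\arity{\Psi^{\infty}_{0,q}}{k}$ is an iso for all $q$ by hypothesis. For the inductive step from $r + 1$ down to $r$, the four-lemma applied to the pair of right-exact sequences
\[
\arity{E^r_{r, q - r + 1}}{k} \xrightarrow{d^r} \arity{E^r_{0, q}}{k} \to \arity{E^{r+1}_{0, q}}{k} \to 0,
\]
together with the surjectivity of $\arity{\Psi^r_{r, q - r + 1}}{k}$ from the previous step, propagates iso/surj of $\arity{\Psi^{r+1}_{0, q}}{k}$ to iso/surj of $\arity{\Psi^r_{0, q}}{k}$. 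The range shrinks only when $r = k$, where the surjectivity hypothesis ``$q - k + 1 \le M + 1$'' forces $q \le M + k$; the iso range becomes $q \le M + k - 1$, and this range is preserved by the subsequent inductive steps at $r = k - 1, \ldots, 1$, yielding the stated conclusion at $r = 1$.

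The expected main obstacle is the delicate bookkeeping of ranges through the two intertwined inductions, especially verifying that the middle-column inputs $\arity{\Psi^r_{r, q - r + 1}}{k}$ with $r < k$ remain isomorphisms in the full range $q \le M + k - 1$ required by the final step of the descending induction. The relevant simultaneous induction must track how the top-row constraint propagates across the pages through the subquotient operations, and the key observation to close the argument is that no additional range loss accumulates beyond the single shift by $k - 1$ contributed at $r = k$.
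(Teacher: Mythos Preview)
Your overall architecture is right---Künneth reduction to the middle columns, an ascending pass to control the later pages, and then working back to the $p=0$ column---and this is essentially what the paper does. However, there is a real gap in the injectivity half of your descending induction.

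The four-lemma applied to the right-exact sequence
\[
\arity{E^r_{r,\,q-r+1}}{k} \xrightarrow{d^r} \arity{E^r_{0,q}}{k} \to \arity{E^{r+1}_{0,q}}{k} \to 0
\]
does give you \emph{surjectivity} of $\arity{\Psi^r_{0,q}}{k}$ from surjectivity of $\arity{\Psi^r_{r,\,q-r+1}}{k}$ and of $\arity{\Psi^{r+1}_{0,q}}{k}$. It does \emph{not} give injectivity: surjectivity on the left term and injectivity on the right term are not enough to force injectivity in the middle. To get injectivity you must extend one step to the left by $Z^r_{r,\,q-r+1}=\ker d^r$ and check that the induced map $z^r_{r,\,q-r+1}$ on these cycles is \emph{surjective} in the required range. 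Equivalently, you need that the map $b^r_{0,q}$ on images $B^r_{0,q}=\operatorname{im} d^r$ is injective, which in turn requires $z^r_{r,\,q-r+1}$ surjective together with $\arity{\Psi^r_{r,\,q-r+1}}{k}$ injective.

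Establishing that $z^r_{r,\,q-r+1}$ is surjective for $q\le M+k-1$ is exactly the nontrivial content you have elided. You cannot read it off from column $p=0$ (that would be circular), and it is \emph{not} a consequence of your ascending induction for $\arity{\Psi^r_{r,*}}{k}$ alone. The paper handles it by observing that for $t>0$ the differential $d^{r+t}$ out of column $p=r$ lands in negative $p$ and hence vanishes, so $Z^r_{r,*}$ has a finite filtration with associated graded $E^\infty_{r,*}$ and the boundaries $B^{r+t}_{r,*}$ for $0\le t\le k-2r$; one then checks surjectivity on each piece using the $\Psi^\infty$ hypothesis and the ascending claim. This splits into cases according to the sign of $k-2r$, and the range $q\le M+k-1$ emerges precisely from the piece $t=k-2r$ (whose source sits in the top column $p=k$). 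Your final paragraph asserts that ``no additional range loss accumulates,'' but this is exactly the computation that must be carried out, and it is where the single shift by $k-1$ actually enters for \emph{every} $r$, not just $r=k$.

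As a structural remark: the paper sidesteps the descending induction entirely by using the vector-space splitting $\arity{E^1_{0,q}}{k}\cong \arity{E^\infty_{0,q}}{k}\oplus\bigoplus_{r=1}^{k}\arity{B^r_{0,q}}{k}$ and then proving iso/surj of each $b^r_{0,q}$ directly. Your descending-induction framing is equivalent once the $z^r_{r,*}$ input is supplied, but the paper's organization makes it clearer that the $\Psi^\infty$ hypothesis and the ascending claim are the only inputs needed.
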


\begin{proof}
Write $\arity{Z^r_{p,q}}{\ell}$ (respectively $\arity{\tilde{Z}^r_{p,q}}{\ell}$) for the kernel
of $\arity{d^r_{p,q}}{\ell}$ ($\arity{\tilde{d}^r_{p,q}}{\ell}$) and
$\arity{B^r_{p,q}}{\ell}$
($\arity{\tilde{B}^r_{p,q}}{\ell}$) for the
image of $\arity{d^r_{p+r,q-r+1}}{\ell}$ ($\arity{\tilde{d}^r_{p+r,q-r+1}}{\ell}$) in
$\arity{E^r_{p,q}}{\ell}$. We write $\arity{z^r_{p,q}}{\ell}$ for the map
$\arity{\Psi^r_{p,q}}{\ell}$
restricted to $\arity{\tilde{Z}^r_{p,q}}{\ell}$ and $\arity{b^r_{p,q}}{\ell}$ for the
restriction to $\arity{\tilde{B}^r_{p,q}}{\ell}$. We make repeated use of the short
exact sequences
$$0\to \arity{B^r_{p,q}}{\ell}\to \arity{Z^r_{p,q}}{\ell}\to
\arity{E^{r+1}_{p,q}}{\ell}\to 0$$
and
$$0\to \arity{Z^r_{p,q}}{\ell}\to \arity{E^r_{p,q}}{\ell}\xrightarrow{d^r}
\arity{B^r_{p-r,q+r-1}}{\ell}\to 0,$$
as well as the same exact sequences in $\tilde{E}$.
In particular, these short exact sequences give the following implications:
\begin{enumerate}
\item If $\arity{\Psi^r_{p,q}}{\ell}$ is injective, then
$\arity{z^r_{p,q}}{\ell}$ and $\arity{b^r_{p,q}}{\ell}$ are both
injective.
\item If $\arity{\Psi^r_{p+r,q-r+1}}{\ell}$ is surjective, then $\arity{b^r_{p,q}}{\ell}$ is surjective.
\item If $\arity{\Psi^r_{p,q}}{\ell}$ is surjective and
$\arity{b^r_{p-r,q+r-1}}{\ell}$ is injective, then
$\arity{z^r_{p,q}}{\ell}$ is surjective.
\item If $\arity{b^r_{p,q}}{\ell}$ is surjective and $\arity{z^r_{p,q}}{\ell}$ is injective, then
$\arity{\Psi^{r+1}_{p,q}}{\ell}$ is injective.
\item If $\arity{z^r_{p,q}}{\ell}$ is surjective, then
$\arity{\Psi^{r+1}_{p,q}}{\ell}$ is surjective.
\end{enumerate}

Note that given the description of the $E^1$ pages of the spectral sequences as
products and the assumption that $\arity{\Psi_{p,q}^1}{\ell}$ is an isomorphism when
$\ell<k$, we see that $\arity{\Psi_{p,q}^1}{k}$ is always an isomorphism if $0<p<k$.

For the remainder of the proof, we use $\ell=k$, and therefore we omit this from
the notation. We begin by proving the following intermediate statement.
\begin{claim}
Suppose $1\leq r\leq k-1$. Then
\begin{align*}
b^r_{p,q} \text{ is }&\text{injective for all } q \text{ if } 1\leq p\leq k\\
 &\text{surjective }\begin{cases}
\text{ for all } q & 0\leq p\leq k-r-1 \text{ or } k-r<p\\
q\leq M+r & p=k-r
\end{cases}
\end{align*}
and
\begin{align*}
z^r_{p,q}\text{ is } &\text{injective } \begin{cases}
\text{for all }q & 1\leq p\leq k-r\\
q\leq M+k-p & k-r<p\leq k
\end{cases}\\
&\text{surjective } \begin{cases}
\text{for all }q & r+1\leq p<k\\
q\leq M+1 & p=k.
\end{cases}
\end{align*}
In particular, using statements (iv) and (v), this implies that
\begin{align*}
\Psi^{r+1}_{p,q} \text{ is }&\text{injective } \begin{cases}
\text{for all }q & 1\leq p<k-r\\
q\leq M+k-p & k-r\leq p\leq k
\end{cases}\\
&\text{surjective }\begin{cases}
\text{for all }q & r+1\leq p<k\\
q\leq M+1 & p=k.
\end{cases}
\end{align*}
\end{claim}
We prove this by induction on $r$. Note that for all $p>k-r$,
$B^r_{p,q}=\tilde{B}^r_{p,q}=0$, since our spectral sequences are zero for
$p>k$, so it suffices to consider the range $p\leq k-r$ when proving the 
above statements about $b^r_{p,q}$.
First consider $r=1$. Since $\Psi^1_{p,q}$ is injective for all $q$ when $1\leq
p\leq k-1$ and for $q\leq M$ when $p=k$, the injectivity statements for
$b^1_{p,q}$ and $z^1_{p,q}$ hold by statement (i). Since $\Psi^1_{p+1,q}$ is
surjective for all $q$ when $1\leq p+1\leq k-1$ and for $q\leq M+1$ when
$p+1=k$, the surjectivity statement for $b^1_{p,q}$ follows by statement (ii).
Finally, $\Psi^1_{p,q}$ is surjective for all $q$ when $1\leq p\leq k-1$ and for
$q\leq M+1$ when $p=k$, and $b^1_{p-1,q}$ is injective for all $q$ when $1\leq
p-1\leq k$, so by statement (iii) we have that $z^1_{p,q}$ is surjective on the
intersection of these ranges, namely for all $q$ when $2\leq p\leq k-1$ and for
$q\leq M+1$ when $p=k$. This proves the base case $r=1$.

Suppose we have the claim for $b^{r-1}_{p,q}$ and $z^{r-1}_{p,q}$, so that we
may assume that
\begin{align*}
\Psi^{r}_{p,q} \text{ is }&\text{injective } \begin{cases}
\text{for all }q & 1\leq p\leq k-r\\
q\leq M+k-p & k-r<p\leq k
\end{cases}\\
&\text{surjective }\begin{cases}
\text{for all }q & r\leq p<k\\
q\leq M+1 & p=k.
\end{cases}
\end{align*}
The injectivity ranges for $\Psi^r_{p,q}$ immediately give the injectivity
ranges for $z^r_{p,q}$ and $b^r_{p,q}$ by statement (i).
Since $\Psi^r_{p+r,q-r+1}$ is surjective for all $q$ when $r\leq p+r<k$ and for
$q-r+1\leq M+1$ when $p+r=k$, we have that $b^r_{p,q}$ is surjective for all $q$
when $0\leq p<k-r$ and for $q\leq M+r$ when $p=k-r$ by statement (ii).
Finally, $\Psi^r_{p,q}$ is surjective for all $q$ when $r\leq p<k$ and for
$q\leq M+1$ when $p=k$, and $b^r_{p-r,q+r-1}$ is injective for all $q$ if $1\leq
p-r\leq k$, so $z^r_{p,q}$ is surjective in the intersection of this range,
namely when $r+1\leq p\leq k-1$ for all $q$ and for $q\leq M+1$ if $p=k$.
This proves the claim.

For the remainder of the proof we will make a number of arguments using the
claim together with the shape of both spectral sequences. For ease of notation,
we only write most statements in $E$, but since $\tilde{E}$ has the same shape,
the same statements will hold for $\tilde{E}$ as well.

Note that $d^r\colon E^r_{0,q}\to 0$ is always zero, so $Z^r_{0,q}\cong E^r_{0,q}$.
Since $d^r\colon E^r_{r,q-r+1}\to E^r_{0,q}$ is zero if $r>k$, we therefore have that $$E^1_{0,q}\cong
E_{0,q}^{\infty}\oplus
\bigoplus_{r=1}^k B^r_{0,q},$$
and a similar statement holds for $\tilde{E}^1_{0,q}$. These splittings are
compatible with the maps $b^r_{0,q}$ and $\Psi^{\infty}_{0,q}$, and hence, to show that $\Psi^1_{0,q}$ is
injective/surjective, it suffices to show that all the $b^r_{0,q}$ are
injective/surjective (since we have assumed that $\Psi^{\infty}$ is always an
isomorphism).

From the claim we know that $b^r_{0,q}$ is surjective for all $q$ if $1\leq
r\leq k-1$. By definition, $d^k\colon E^k_{k,q-k+1}\to B^k_{0,q}$ is surjective, as is
$\tilde{d}^k$ in the same degree. From the claim,
we know that $\Psi^k_{k,q-k+1}$ is surjective when $q-k+1\leq M+1$. Putting this
all together, we have that all of the $b^r_{0,q}$ (and thus $\Psi^1_{0,q}$ as
well) are surjective when $q\leq M+k$. Note that this holds when $k=1$ as well,
since in this case we only need to consider $b_{0,q}^k$.

Now we investigate when the $b^r_{0,q}$ are injective by considering the short
exact sequences
$$0\to Z^r_{r,q-r+1}\to E^r_{r,q-r+1}\to B^r_{0,q}\to 0.$$ If $z^r_{r,q-r+1}$ is
surjective and $\Psi^r_{r,q-r+1}$ is injective, then $b^r_{0,q}$ is injective.

First suppose $k-r<r$.
Then $B^{r+t}_{r,q-r+1}=0$ for all $t\geq 0$ since our spectral sequences are
zero for $p>k$, so $Z^{r+t}_{r,q-r+1}\cong E^{r+t+1}_{r,q-r+1}$ for any $t\geq
0$. Moreover, $d^{r+t}_{r,q-r+1}=0$ for any $t>0$, so $Z^{r+t}_{r,q-r+1}\cong
E^{r+t}_{r,q-r+1}$ for any $t>0$. Thus, 
$Z^r_{r,q-r+1}\cong E^{\infty}_{r,q-r+1}$, so
$z_{r,q-r+1}^r$ is surjective (in fact an isomorphism) for all $q$.
Moreover, from the claim, we
know that $\Psi^r_{r,q-r+1}$ is injective when $q-r+1\leq M+k-r$, that is, when
$q\leq M+k-1$, so $b^r_{0,q}$ is injective when $q\leq M+k-1$. Note that this is
also true when $k=1$, since $\Psi^1_{k,q}$ is injective when $q\leq M$ by
assumption.

If $k-r=r$, then we have $E^{r+1}_{r,q-r+1}\cong E^{\infty}_{r,q-r+1}$, so there
is a short exact sequence
$$0\to B^r_{r,q-r+1}\to Z^r_{r,q-r+1}\to E^{\infty}_{r,q-r+1}\to 0.$$
We know that $\Psi^{\infty}$ is always surjective, and
 $b^r_{r,q-r+1}$ is surjective when
$q-r+1\leq M+r$. Thus, $z^r_{r,q-r+1}$ is surjective when $q\leq M+2r-1=M+k-1$. Moreover, $\Psi^r_{r,q-r+1}$
is injective for all $q$, so $b^r_{0,q}$ is injective for $q\leq M+k-1$.

Finally, if $k-r>r$, then $d^{r+t}_{r,*}=0$ for all $t>0$, and thus
$$Z^r_{r,q-r+1}\cong E^{\infty}_{r,q-r+1}\oplus \bigoplus_{t=0}^{k-2r} B^{r+t}_{r,q-r+1}.$$
We have that $b^{r+t}_{r,q-r+1}$ is surjective for all $q$ unless $r=k-r-t$, in
which case we need $q-r+1\leq M+r+t$,
or, equivalently $q\leq M+k-1$. Once again,
$\Psi^r_{r,q-r+1}$ is injective for all $q$, so $b^r_{0,q}$ is injective whenever
$q\leq M+k-1$.

Thus, we have shown that $\Psi^1_{0,q}$ is injective if $q\leq M+k-1$ and
surjective if $q\leq M+k$.
\end{proof}

\newpage
\bibliographystyle{alpha}
\bibliography{Einfty_ref.bib}

\end{document}